\newif\ifjournal
\newsavebox\myboxA
\newsavebox\myboxB
\newlength\mylenA
\newcommand*\xoverline[2][0.75]{%
    \sbox{\myboxA}{$\m@th#2$}%
    \setbox\myboxB\null%
    \ht\myboxB=\ht\myboxA%
    \dp\myboxB=\dp\myboxA%
    \wd\myboxB=#1\wd\myboxA%
    \sbox\myboxB{$\m@th\overline{\copy\myboxB}$}%
    \setlength\mylenA{\the\wd\myboxA}%
    \addtolength\mylenA{-\the\wd\myboxB}%
    \ifdim\wd\myboxB<\wd\myboxA%
       \rlap{\hskip 0.5\mylenA\usebox\myboxB}{\usebox\myboxA}%
    \else
        \hskip -0.5\mylenA\rlap{\usebox\myboxA}{\hskip 0.5\mylenA\usebox\myboxB}%
    \fi}
\newcommand{%
    \ifjournal
    \else
        \begin{appendix}

\section{Bounded mean residual time implies exponential tail}
\label{app:Rsmax-finite-implies-exponential}
In this section, we show that if a distribution has a bounded mean residual time given any age, it must have an exponentially decaying tail, so all of its moments must be finite. This fact has also been proved in \citep{Dow_91_bdd_sync_overhead}. Given this fact, \Cref{assump:bdd-exp-remain} and \Cref{assump:bdd-exp-remain-het} automatically imply that the service-time distributions have exponentially decaying tails and finite moments.

\begin{restatable}{lemma}{rmaximpliestail}
    \label{eq:rmax-implies-tail}
    For any non-negative random variable $V$, suppose there exists a constant $C > 0$ such that
    \begin{equation}
        \label{eq:bdd-mst:condition}
        \E{V-t \given V\geq t} \leq C \quad \forall t\geq 0,
    \end{equation}
    Then we have
    \begin{equation}
        \label{eq:bdd-mst:exp-tail-bound}
        \Prob{V \geq t} \leq C\exp(-(t-1)/C) \quad \forall t\geq 1,
    \end{equation}
    and consequently, $\E{V^m}< \infty$ for all $m\in\Znn$. 
\end{restatable}

\begin{proof}
    Let $\overline{F}(t)$ be the complementary CDF of $V$. The condition \eqref{eq:bdd-mst:condition} can be rewritten as 
    \begin{equation}
        \label{eq:pf-tail-bound:integral-eq}
        \int_t^\infty \overline{F}(u) \odv u \leq C \overline{F}(t) \quad \forall t\geq 0.
    \end{equation}
    We will mimic the proof of Gronwall's inequality to derive an exponential tail bound for $\overline{F}(t)$. 
    We define the function $G(t$) as
    \[  
        G(t) = \exp(t/C) \int_t^\infty \overline{F}(u) \odv u.
    \]
    Because $\overline{F}(t)$ is bounded on $[0,\infty)$, it is not hard to see that $\int_t^\infty \overline{F}(u) \odv u$ is locally Lipschitz continuous, and thus almost everywhere differentiable. For any $t$ at which $\int_t^\infty \overline{F}(u) \odv u$ is differentiable, $G(t)$ is also differentiable, and $G'(t)$ is given by
    \[
        G'(t) = \exp(t/C) \left(\frac{1}{C}\int_t^\infty \overline{F}(u) \odv u - \overline{F}(t) \right),
    \]
    which is non-positive by \eqref{eq:pf-tail-bound:integral-eq}. 
    Moreover,  $G(t)$ is also locally Lipschitz continuous. Therefore, on any compact interval, $G(t)$ is absolutely continuous and satisfies the fundamental theorem of calculus. In particular,
    \begin{equation*}
        G(t) = G(0) + \int_0^t G'(u) \odv u 
        \leq G(0) 
        = \int_0^\infty \overline{F}(u) \odv (u)   \leq C \overline{F}(0) 
        = C,
    \end{equation*}
    where the second inequality is due to \eqref{eq:pf-tail-bound:integral-eq}.
    By the definition of $G(t)$, we have
    \begin{equation*}
        \int_t^\infty \overline{F}(u) \odv u \leq C \exp(-t/C).
    \end{equation*}
    Because $\overline{F}(t)$ is non-increasing and non-negative, we can bound $\overline{F}(t+1)$ as
    \[
        \overline{F}(t+1) \leq \int_t^{t+1} \overline{F}(u) \odv u \leq C \exp(-t/C).
    \]
    This proves \eqref{eq:bdd-mst:exp-tail-bound}. The finiteness of $\E{V^m}$ follows straightforwardly from the exponential tail bound. 
   \qedhere  
\end{proof}

\section{Stability of modified \ggn}
\label{app:stability}
In this section, we discuss the conditions under which the modified \ggn queue is positive Harris recurrent and has a finite mean queue length. 
As mentioned in \Cref{remark:stability}, in addition to $\rho < 1$, some regularity conditions are required on the distributions of the interarrival time and service times. 
Our results have been proved by directly assuming the positive Harris recurrence and the finiteness of mean queue length (Assumptions~\ref{assump:modified-ggn-stable} and \ref{assump:bdd-exp-remain-het}). 
In this section, we will provide a more detailed discussion on these conditions, and sketch why they are likely to imply Assumptions~\ref{assump:modified-ggn-stable} and \ref{assump:bdd-exp-remain-het}, without giving formal proofs. 
Throughout this section, we focus on modified \ggn queues with heterogeneous servers and simply refer to them as ``modified \ggn queue''.

We consider adapting the results of  \citet{DaiMey_95_moment_stability}, which considers queueing networks consisting of single-server stations. 
In particular, their Theorem 6.2 shows that if such a queueing network is fluid stable and satisfies certain conditions on the interarrival-time and service-time distributions \citep[][Conditions~A1)--A3)]{DaiMey_95_moment_stability}, it is is positive Harris recurrence, with certain finite steady-state moments. 
We conjecture that a similar result can be shown for the modified \ggn queue. %
Specifically, in addition to \Cref{assump:bdd-exp-remain-het}, we further assume that the interarrival-time distribution $A$ is \emph{spread out and unbounded}, and the service-time distribution $S_i$ is \emph{spread out} for each server $i\in[n]$. \footnote{A distribution $\nu$ is spread out if there exists $n$, such that the distribution of the sum of $n$ independent variables following $\nu$ dominates a non-negative absolutely-continuous measure (\citep[][Section VII.1]{Asm_03}).} 
These assumptions are analogous to Conditions A1)--A3) of \citep{DaiMey_95_moment_stability}. We conjecture that, under these assumptions, a modified \ggn queue is positive Harris recurrent with finite mean queue length whenever $\rho < 1$.

Instead of providing a rigorous full proof, 
we highlight an intermediate step to provide intuition on why our conjectured result is likely to be true. 
In the proof of \citep[Proposition 5.1]{DaiMey_95_moment_stability}, an important fact is is that the queue lengths of the queueing network converge to zero in probability under fluid scaling, i.e.,  
\begin{equation}
    \label{eq:daimeyn-q-limit}
    \lim_{\abs{x}\to\infty} \frac{1}{\abs{x}} \abs{Q^x(t_0\abs{x})} = 0 \quad \text{in probability},
\end{equation}
where $x$ is a state of the queueing network, $\abs{x}$ represents the sum of all queue lengths and residual times in state $x$, and $Q^x(t)$ represents the queue length process with the initial state $x$. This fact follows from the assumed stability of the queueing network's fluid model, utilizing the results in \citep[][Section IV]{Dai_95_stability}.

To show \eqref{eq:daimeyn-q-limit} for the modified \ggn queue, we invoke the following formula of queue length: 
\begin{equation}
    \label{eq:Q-explicit-formula}
    Q(t) = \max\left\{Q(0) + N_a(t) - \sumi N_i(t), \; \sup_{0\leq t'\leq t} \Big(N_a(t',t) - \sumi N_i(t',t) \Big) \right\},
\end{equation}
where $N_a(t)$ denotes the number of real-job arrivals by time $t$, $N_i(t)$ denotes the number of completions at server $i$ by time $t$, and $N_a(t', t) \triangleq N_a(t)-N_a(t')$, $N_t(t',t) \triangleq N_i(t)-N_i(t')$.   
The special case of \eqref{eq:Q-explicit-formula} is proved in \citep[][Corollary~1]{GamGol_13_GGn_HW}, under the additional assumptions that $Q(0) = 0$, the servers are homogeneous and no events occur simultaneously; dropping these assumptions is straightforward from their proof. 

We intuitively argue that \eqref{eq:Q-explicit-formula} implies \eqref{eq:daimeyn-q-limit}: 
Let $x = (r_1, \dots, r_n, r_a, q)$ be the initial state of the modified \ggn queue. Then \eqref{eq:Q-explicit-formula} implies that
\begin{equation}
\label{eq:pf-Q-fluid-scaling}
\begin{aligned}
    &\frac{1}{\abs{x}} Q^x(t_0\abs{x}) \\
    &=  \max\left\{ \frac{1}{\abs{x}} \Big(q + N_a(t_0\abs{x}) - \sumi N_i(t_0\abs{x}) \Big), \, \frac{1}{\abs{x}}\sup_{0\leq t'\leq t_0\abs{x}} \Big(N_a(t',t_0\abs{x}) - \sumi N_i(t', t_0\abs{x}) \Big) \right\}.
\end{aligned}
\end{equation}
Due to the properties of renewal processes, as $\abs{x}$ gets large, $N_a(t_0\abs{x}) - \sumi N_i(t_0\abs{x})$ is concentrate around their means
$\Lambda t_0 \abs{x}- \mutotal t_0\abs{x}$, if the residual times are initialized at equilibrium. For other initial residual times, with high probability, $N_a(t_0\abs{x}) - \sumi N_i(t_0\abs{x})$ should not be significantly greater than 
\[
\Lambda t_0 \abs{x} + 1 - \mutotal \Big(t_0\abs{x} - \sumi r_i\Big) + o(\abs{x}) = 
-(\mutotal-\Lambda) t_0\abs{x} + 1 + \sumi \mu_i r_i + o(\abs{x}). 
\]
Therefore, intuitively, 
\begin{align*}
    \frac{1}{\abs{x}} \Big(q + N_a(t_0\abs{x}) - \sumi N_i(t_0\abs{x}) \Big) 
    &\leq  \frac{q -(\mutotal - \Lambda)t_0 \abs{x} + 1 + \sumi \mu_i r_i  + o(\abs{x})}{\abs{x}}  \\
    &\leq \max_i \mu_i - (\mutotal - \Lambda)t_0 + o(1) \quad \text{with high probability,}
\end{align*}
So, taking $t_0$ to be a sufficiently large constant, the first term inside the maximum of \eqref{eq:pf-Q-fluid-scaling} is negative with high probability. 
Moreover, $\sup_{0\leq t'\leq t_0\abs{x}} \big(N_a(t',t_0\abs{x}) - \sumi N_i(t', t_0\abs{x})\big)$ should also scale sublinearly in $\abs{x}$, so the second term inside the maximum of \eqref{eq:pf-Q-fluid-scaling} converges to zero in probability as $\abs{x}\to\infty$. We have thus informally justified \eqref{eq:daimeyn-q-limit} for the modified \ggn queue. 

Intuitively, \eqref{eq:daimeyn-q-limit} implies that no matter how large the initial state $x$ is, the queue length becomes significantly shorter after a time proportional to $\abs{x}$. 
After some technical work that mimic those in Proposition 5.1, 5.3, and 5.4 of \citep{DaiMey_95_moment_stability}, one can show that the state of the modified \ggn queue would return to a compact subset of the state space reasonably fast and thus satisfy a certain drift condition (Proposition 6.1 of \citep{DaiMey_95_moment_stability}). 
Furthermore, the spread out and unbounded conditions of $A$ and $S_i$'s can be used to show that compact subsets under the system's dynamics are petite. 
Theorem~5.3 of \citep{MeyTwe_93_stability} then implies that the modified \ggn queue is positive Harris recurrence, with a finite mean queue length.

\section{Proof of modified \ggn queue's dominance}
\label{app:modified-dominance}

In this section, we prove that the queue length distribution of the modified \ggn queue stochastically dominates that of the \ggn queue. We focus on the general case of heterogeneous servers. 
The proof for homogeneous \ggn can be also found in \citep[][Proposition~1]{GamGol_13_GGn_HW}; our proof follows essentially the same logic, but we include it for completeness.

\modifieddominance*

\begin{proof}
We prove the lemma by constructing a coupling for the sample paths of the \ggn queue and the modified \ggn queue. First, by the assumption of the lemma, two systems have the same initial queue length $Q(0)$, the same set of initially busy servers that are serving real jobs $\mathcal{I}(0)$, and the same initial residual service times for the real jobs in service $(R_{\si}(0))_{i\in \mathcal{I}(0)}$. 
For each $k\in\Zp$, consider the $k$-th real job that begins service after time $0$. We let this job arrive at time $\taua^k$ in both systems. 
This job may begin service at different times in the original system and in the modified system, denoted as $\tau_s^k$ and $\hat{\tau}_s^k$, respectively. 
However, when this job begins service, we route it to the same server and let it have the same service time $S^k$ in both systems. 
Note that here, since we allow the service time distributions to be heterogeneous across servers, the sequence $\{S^k\}_{k=1}^\infty$ is not necessarily i.i.d., but this does not complicates our proof.

Let $Q(t)$ and $\hat{Q}(t)$ denote the queue lengths in the original and modified systems, respectively. They can be represented as 
\begin{align}
    \label{eq:pf-dominance:Q}
    Q(t) &= \abs{\{k\in\Zp \colon \taua^k \leq t, \tau_s^k > t\}} \\
    \label{eq:pf-dominance:Qhat}
    \hat{Q}(t) &= \abs{\{k\in\Zp \colon \taua^k \leq t, \hat{\tau}_s^k > t\}}.
\end{align}
Our goal is to show that $Q(t) \leq \hat{Q}(t)$ for all $t\geq 0$. By \eqref{eq:pf-dominance:Q} and \eqref{eq:pf-dominance:Qhat}, it suffices to show that $\tau_s^k \leq \hat{\tau}_s^k$ for all $k\in\Zp$. 

We define some additional notation. 
For $k\in\Zp$, we let $Z^k(t)$ be the number of jobs that begins service no later than the $k$-th job, but has not left the system by time $t$ in the original system. We define $\hat{Z}^k(t)$ similarly for the real jobs in the modified system (excluding the virtual jobs). 
We let $Z^0(t)$ and $\hat{Z}^0(t)$ be the number of real jobs initially in the two systems and have not left by time $t$. 
Formally, for $k\in\Znn$,
\begin{align*}
    Z^k(t) &\triangleq \abs{\{k'\in\Zp\colon k'\leq k, \tau_s^{k'} + S^{k'} > t\}} + \abs{\{i\in\mathcal{I}\colon R_{\si}(0) > t\}} \\
    \hat{Z}^k(t) &\triangleq \abs{\{k'\in\Zp\colon k'\leq k, \hat{\tau}_s^{k'} + S^{k'} > t\}} + \abs{\{i\in\mathcal{I}\colon R_{\si}(0) > t\}}.
\end{align*}
We let $\hat{V}(t)$ be the number of virtual jobs in service at time $t$. %

Now we prove by induction that $\tau_s^k \leq \hat{\tau}_s^k$ for all $k\in\Zp$, i.e., each job in the original \ggn queue begins service no later than in the modified \ggn queue.
\begin{itemize}
    \item \textbf{Base step}: $k=1$. Because job $1$ is the first job that begins service after time $0$, we have %
    \begin{align*}
        \tau_s^1 &= \inf\{t > \taua^1 \colon Z^0(t) \leq n-1\}  \\
        \hat{\tau}_s^1 &= \inf\{t > \taua^1 \colon \hat{Z}^0(t) + \hat{V}(t) \leq n-1\}.
    \end{align*}
   By definitions, $Z^0(t) = \hat{Z}^0(t)$ for all $t\geq 0$, and $\hat{V}(t) \geq 0$, so we have $\tau_s^1 \leq \hat{\tau}_s^1$. 
   \item \textbf{Induction step}: Suppose we have proved $\tau_s^{k'} \leq \hat{\tau}_s^{k'}$ for $1\leq k'\leq k$, and we want to show that $\tau_s^{k+1} \leq \hat{\tau}_s^{k+1}$. Observe that %
   \begin{align*}
        \tau_s^{k+1} &= \inf\{t > \taua^{k+1} \colon Z^k(t) \leq n-1\}  \\
        \hat{\tau}_s^{k+1} &= \inf\{t > \taua^{k+1} \colon \hat{Z}^k(t) + \hat{V}(t) \leq n-1\}.
    \end{align*}
    Because $\tau_s^{k'} + S^{k'} \leq \hat{\tau}_s^{k'} + S^{k'}$ for all $1\leq k'\leq k$, we have $Z^k(t) \leq \hat{Z}^k(t)$ for all $t\geq 0$. Moreover, $\hat{V}(t) \geq 0$, so we get $\tau_s^{k+1} \leq \hat{\tau}_s^{k+1}$. 
\end{itemize}
By induction, we have shown that $\tau_s^k \leq \hat{\tau}_s^k$ for all $k\in\Zp$. Combining the induction results and the formula of $Q(t)$ and $\hat{Q}(t)$ in \eqref{eq:pf-dominance:Q} and \eqref{eq:pf-dominance:Qhat}, we have finished the proof. \qedhere 
\end{proof}

\section{Proofs of supporting lemmas}
\label{app:technical-lemmas}
We prove the supporting lemmas for homogeneous \ggn (\Cref{sec:lemmas-homo}) and heterogeneous \ggn (\Cref{sec:hetero}) together. 
This section is organized as follows. We first introduce some additional notation and definitions in \Cref{app:notation-support-lemma}. Then we prove BAR (Lemmas~\ref{lem:bar} and \ref{lem:bar-het}) in \Cref{app:pf-BAR}. Next, in \Cref{app:pf-Rbdd}, we prove the lemmas of bounded mean residual times (Lemmas~\ref{lem:R-bdd} and \ref{lem:R-bdd-het}). In \Cref{app:pf-uncorrelation}, we prove the uncorrelation lemmas (Lemmas~\ref{lem:indep-R-Qtilde} and \ref{lem:indep-R-Qtilde-het}). 
Finally, in \Cref{app:pf-equalities-and-finiteness}, we prove the preliminary steady-state equalities (Lemmas~\ref{lem:apply-bar-lots-of} and \ref{lem:apply-bar-lots-of-het}) and finiteness results (Lemmas~\ref{lem:expfinite} and \ref{lem:expfinite-het}). 

Throughout \Cref{app:technical-lemmas}, we focus on the modified \ggn, whose service-time distributions are allowed to be heterogeneous across servers. %
We focus on the case where $\rho < 1$, and use the convention that the system is in steady state at time $0$, consistent with \Cref{sec:steady-state-anaysis:bar}. 
Without out loss of generality, we assume that no events (arrivals or completions) happen at the same time point (\Cref{assump:simultaneous}); see \Cref{remark:simultaneous} for the necessary changes to proof without this assumption.

\subsection{Additional notation and definitions}
\label{app:notation-support-lemma}
In this subsection, we introduce some additional notation and definitions regarding the modified \ggn queue and the leave-one-out systems. Since we do not consider the original \ggn queue throughout this section, we will reuse some notation in \Cref{sec:model-homo} and \Cref{app:modified-dominance} that was reserved for the original \ggn queue.

Recall that the modified \ggn queue is driven by a set of independent renewal processes corresponding to the arrivals and completions. In the main body, we describe these renewal processes mainly using the residual time processes (i.e., forward recurrence times), $R_a(t)$ and $\{R_{\si}(t)\}_{i\in[n]}$; here, we define the corresponding ages processes (i.e., backward recurrence times): at each time $t$, 
\begin{itemize}
    \item Let $\agea(t)$ be the time elapsed since the last arrival.  
    \item Let $\agej(t)$ be the time elapsed since the last completion of the $j$-th server, for each $j\in[n]$.  
\end{itemize}

Next, we define the alternative state representation based on the ages: we let $\Ysuper(t) = (Y(t), \Qdropstable(t))$, where $Y(t) \triangleq (\age_1(t), \dots, \age_n(t), \agea(t), Q(t))$, and recall that the vector $\Qdropstable$ represents the queue lengths of the set of leave-one-out systems whose loads are below $1$. 

We let $\Ysuper(0)$ follow the stationary distribution. 
Each sample path of $\{\Ysuper(t)\}_{t\geq 0}$ is determined by the initial state $\Ysuper(0)$, and the sequences $\{A^{k}\}_{k\in\Znn}$ and $\{S_i^{k}\}_{k\in\Znn}$ for $i\in[n]$, defined as follows:
\begin{itemize}
    \item $A^0 - \agea(0)$ is the time of the first arrival, and $A^k$ is the interarrival time between the $k$-th and $(k+1)$-th arrival events for $k\in\Zp$. Then $A^k\sim\Prob{A\in\,\cdot\,}$ for $k\in\Zp$, and $A^0\sim\Prob{A\in\,\cdot\,\given A\geq \agea(0)}$, where $A$ denotes a generic interarrival time independent of anything else. 
    \item $S_i^0-\agei(0)$ is the completion time of the initial job on the $i$-th server, and $S_i^k$ is the time between the $k$-th and $(k+1)$-th completion events on the $i$-th server, for each $i\in[n]$ and $k\in\Zp$. Then $S_i^k\sim\Prob{S_i\in\,\cdot\,}$ for $k\in\Zp$, and $S_i^0\sim\Prob{S_i\in\,\cdot\,\given S_i\geq \agei(0)}$, where $S_i$ denotes a generic service time of the $i$-th server and is independent of anything else.
\end{itemize}
The above-defined random variables and $\Ysuper(0)$ are ``mutually independent'' in the following sense:
$A^0$ is independent of $\Ysuper(0)$ conditioned on $\agea(0)$; $S_i^0$ is independent of $\Ysuper(0)$ conditioned on $\agei(0)$; for each $k\in\Zp$, $A^k$ is independent of everything else, i.e., $\Ysuper(0), \{S_i^k\}_{i\in[n], \ell\in\Znn}$ and $\{A^\ell\}_{\ell\neq k}$; for each $k\in\Zp$ and $i\in[n]$, $S_i^k$ is independent of everything else, i.e., $\Ysuper(0), \{S_j^\ell\}_{(j,\ell)\neq (i,k)}$, and $\{A^\ell\}_{\ell\in\Znn}$. 

Now we can write some basic quantities using the stochastic primitives defined above:

For each $k\in\Zp$, the $k$-th arrival time is $\taua^k = \sum_{\ell=0}^{k-1} A^{\ell}-\agea(0)$, and we let $\taua^0 = -\agea(0)$. 
For each time $t\geq 0$, the number of arrivals by time $t$ is given by $\Na(t) \triangleq \min \{k \in\Znn\colon \taua^k + A^k > t\}$; the age at time $t$ is $\agea(t) = t - \taua^{\Na(t)}$; the residual arrival time at time $t$ is $R_{a}(t) = A^{\Na(t)} - \agea(t)$. 

Similarly, on the $i$-th server, for each $k\in\Zp$, the $k$-th completion time is $\taui^k = \sum_{\ell=0}^{k-1} S_i^{\ell}-\agei(0)$, and we let $\taui^0=-\agei(0)$. 
For each time $t\geq 0$, the number of completions by time $t$ is $\Ni(t) = \min\{k\in\Znn \colon \taui^k + S_i^k > t\}$; the age at time $t$ is $\agei(t) = t - \taui^{\Ni(t)}$; the residual completion time at time $t$ is $R_{\si}(t) = S_i^{\Ni(t)} - \agei(t)$. 

We then define a set of filtrations, corresponding to the arrival process, completion processes, and the whole system: 
\begin{align*}
    \cF_{a,t} &= \sigma\Big(\agea(0), A^0, A^1, \dots, A^{\Na(t)-1}, \indibracbig{A^{\Na(t)} > t-\taua^{\Na(t)}}\Big) \quad \forall t\geq 0\\
    \cF_{i,t} &= \sigma\Big(\agei(0), S_i^0, S_i^1, \dots, S_i^{\Ni(t)-1}, \indibracbig{S_i^{\Ni(t)} > t - \taui^{\Ni(t)}}\Big) \quad \forall i\in[n], t\geq 0 \\
    \cF_t &= \sigma\Big(\Ysuper(0), \cF_{a,t}, \cup_{i\in[n]} \cF_{i,t}\Big) \quad \forall t\geq 0.
\end{align*}
We denote $\cF_{a,\infty} = \cup_{t\geq 0} \cF_{a,t}$, $\cF_{i,\infty} = \cup_{t\geq 0} \cF_{i,t}$ for $i\in[n]$, and $\cF_{\infty} = \cup_{t\geq 0} \cF_{t}$. 

It is not hard to see that $\Na(t), \agea(t)\in \cF_{a,t}$ for $t\geq 0$, but $R_{a}(t)\neq \cF_{a,t}$. Similarly, $\Ni(t), \agei(t)\in\cF_{i,t}$ for each $i\in[n]$ and $t\geq 0$, but $R_{\si}(t)\neq \cF_{i,t}$. 
Furthermore, since $Q(t)$ is determined by the initial state and the arrival and completion events by time $t$, we have $Q(t) \in \cF_t$. 
Similarly, we have $\Qdropj(t) \in \sigma\big(\Qdropj(0), \cF_{a,t}, \cup_{i\neq j} \cF_{i,t} \big)$ for $j\in[n]$ and $t\geq 0$.

\subsection{Proof of BAR}
\label{app:pf-BAR}

In this subsection, we prove BAR for the heterogeneous case \Cref{lem:bar-het}, which will imply \Cref{lem:bar} for the homogeneous case as a special case. 
The arguments we use are adapted from Section~6 of \citep[][]{BraDaiMiy_23_bar_ss}.

\barhet*

\begin{proof}
    We let $\Xsuper(0)$ follow the stationary distribution $\pi$ at time $0$. 
    Let $\tau_1, \tau_2, \dots$ be the times of all arrivals or completion events after time $0$, indexed in temporal order. As assumed throughout our proofs, no events occur simultaneously, and we have sketched what should be changed for the general case in~\Cref{remark:simultaneous}. 

    \paragraph*{Proving BAR}
    In the first part of the proof, we prove the main BAR euqation \eqref{eq:bar-het} utilizing the fundamental theorem of calculus. 
    We start by examining the differentiability of $f(\Xsuper(t))$ between two consecutive events, $\tau^k$ and $\tau^{k+1}$, for each $k\in\Zp$. 
    Observe that when $t\in(\tau^k, \tau^{k+1})$, $\odv \Xsuper_\ell(t) / \odv t = - 1$ for $\ell=1,2,\dots, n+1$, and the other coordinates of $\Xsuper(t)$ remain constant. This implies that $\Xsuper(t)$ is Lipschitz continuous for $t\in[\tau^k, \tau^{k+1}]$. 
    Given the local Lipschitz continuity of $f(\cdot)$ and the compactness of $[\tau^k, \tau^{k+1}]$, we deduce that $f(\Xsuper(t))$ is Lipschitz continuous for $t\in[\tau^k, \tau^{k+1}]$. 
    Since Lipschitz continuity implies absolute continuity, $f(\Xsuper(t))$ is almost everywhere differentiable on $[\tau^k, \tau^{k+1}]$. Then for any $t$ where $f(\Xsuper(t))$ is differentiable, the derivative of $f(\Xsuper(t))$ agrees with the right derivative:
    \[
        \frac{\odv f(\Xsuper(t))}{\odv t} = - \sum_{\ell=1}^{n+1} \partial^+_\ell f(\Xsuper(t)) \triangleq \opinner f(\Xsuper(t)).
    \]
    Moreover, because the fundamental theorem of calculus holds for absolutely continuous functions, we have
    \begin{align}
        \nonumber
        f(\Xsuper(\tau^{k+1}\wedge t -)) - f(\Xsuper(\tau^{k}))  
        &= \int_{\tau^k}^{\tau^{k+1}\wedge t} \frac{\odv f(\Xsuper(u))}{\odv u} \odv u \\
        \label{eq:}
        &= \int_{\tau^k}^{\tau^{k+1}\wedge t} \opinner f(\Xsuper(u)) \odv u,
    \end{align}
    for $1\leq k < N(t)\triangleq\min\{k'\colon \tau^{k'} > t\}$. 
    Similarly, we can apply the fundamental theorem of calculus to $f(\Xsuper(\tau^{1}\wedge t)) - f(\Xsuper(0))$. Summing up all the resulting equations, we get
    \begin{equation}
        f(\Xsuper(t)) - f(\Xsuper(0)) = \int_0^t \opinner f(\Xsuper(u)) \odv u + \sum_{k=1}^{N(t)-1} \big(f(\Xsuper(\tau^k)) - f(\Xsuper(\tau^k-))\big).
    \end{equation}
    Rearranging the terms based on different types of events, we get
    \begin{equation}
        \label{eq:pf-bar:result-of-fct-1}
        \begin{aligned}
        f(\Xsuper(t)) - f(\Xsuper(0)) 
        &= \int_0^t \opinner f(\Xsuper(u)) \odv u + \sum_{k=1}^\infty \big(f(\Xsuper(\taua^k)) - f(\Xsuper(\taua^k-))\big) \indibrac{\taua^k \leq t}  \\
        &\mspace{23mu} +  \sumi \sum_{k=1}^\infty \big(f(\Xsuper(\taui^k)) - f(\Xsuper(\taui^k-))\big) \indibrac{\taui^k \leq t}.
        \end{aligned}
    \end{equation}
    
    Now we take the expectations on both sides of \eqref{eq:pf-bar:result-of-fct-1} to get 
    \begin{equation}
        \label{eq:pf-bar:result-of-fct-2}
        \begin{aligned}
        \E{f(\Xsuper(t)) - f(\Xsuper(0))}
        &= \E{\int_0^t \opinner f(\Xsuper(u)) \odv u} \\
        &\mspace{23mu} + \E{\sum_{k=1}^\infty \big(f(\Xsuper(\taua^k)) - f(\Xsuper(\taua^k-))\big) \indibrac{\taua^k \leq t}}  \\
        &\mspace{23mu} +  \sumi \E{\sum_{k=1}^\infty \big(f(\Xsuper(\taui^k)) - f(\Xsuper(\taui^k-))\big) \indibrac{\taui^k \leq t}}.
        \end{aligned}
    \end{equation}
    Because $\Xsuper(0)$ follows the stationary distribution $\pi$, $\Xsuper(0) \overset{d}{=} \Xsuper(u) \sim \pi$ for all $u\geq 0$. This implies that
    \begin{align*}
        \E{f(\Xsuper(t)) - f(\Xsuper(0))} &= 0 \\
        \E{\int_0^t \opinner f(\Xsuper(u)) \odv u} 
         &= \int_0^t \E{\opinner f(\Xsuper(u))}  \odv u \\
         &= t \, \Ep{\pi}{\opinner f(\Xsuper)},
    \end{align*}
    where the interchange of $\E{\cdot}$ with $\int_0^t\cdot$ is valid because $\opinner f(\Xsuper(u)) = - \sum_{\ell=1}^{n+1} \partial^+_\ell f(\Xsuper(u))$ is bounded for $u\in[0,t]$, due to the local Lipschitz continuity of $f(\cdot)$. Therefore, substituting the above two equations back to \eqref{eq:pf-bar:result-of-fct-2} and taking $t=1$, we get
    \begin{equation}
        \label{eq:pf-bar:simplify-continuous-terms}
        \begin{aligned}
        0&= \Ep{\pi}{\opinner f(\Xsuper)} + \E{\sum_{k=1}^\infty \big(f(\Xsuper(\taua^k)) - f(\Xsuper(\taua^k-))\big) \indibrac{\taua^k \leq 1}}  \\
        &\mspace{23mu} +  \sumi \E{\sum_{k=1}^\infty \big(f(\Xsuper(\taui^k)) - f(\Xsuper(\taui^k-))\big) \indibrac{\taui^k \leq 1}}.
        \end{aligned}
    \end{equation}
    Note that the second and third terms of \eqref{eq:pf-bar:simplify-continuous-terms} have similar forms as the Palm expectations: as defined in \Cref{sec:steady-state-anaysis:bar}, we have
    \begin{align*}
        \Ep{a}{f(\Xsuper\aft) - f(\Xsuper\bef)} &= \frac{1}{\Lambda} \E{\sum_{k=1}^\infty \big(f(\Xsuper(\taua^k)) - f(\Xsuper(\taua^k-))\big) \indibrac{\taua^k \leq 1}}  \\
        \Ep{\si}{f(\Xsuper\aft) - f(\Xsuper\bef)} &=  \frac{1}{\mu_i} \E{\sum_{k=1}^\infty \big(f(\Xsuper(\taui^k)) - f(\Xsuper(\taui^k-))\big) \indibrac{\taui^k \leq 1}} \quad \forall i\in[n].
    \end{align*}
    Substituting the definitions into \eqref{eq:pf-bar:simplify-continuous-terms}, we get the main equation of BAR:
    \begin{equation}
        \tag{\ref{eq:bar-het}}
        \Ep{\sdsuper}{\opinner f(\Xsuper)} + \Lambda \Ep{a}{f(\Xsuper\aft) -f(\Xsuper\bef)} + \sumi \mu_i \Ep{\si}{f(\Xsuper\aft) - f(\Xsuper\bef)} = 0.
    \end{equation} 

    \paragraph*{Characterizing Palm expectations} In the remainder of the proof, we prove \Cref{lem:bar-het}~(i) and \Cref{lem:bar-het}~(ii), which characterize the joint distribution of $\Xsuper\aft$ and $\Xsuper\bef$ under the Palm expectations $\Pnb_a$ and $\Pnb_{\si}$ for $i\in[n]$. 

    We start by proving \Cref{lem:bar-het}~(i). First, because $R_a(\taua^k-)=0$ for all $k\in\Zp$, by the definition of $\Pnb_a$, we have
    \[
        \Probp{a}{R_a=0} = \frac{1}{\Lambda}\E{\sum_{k=1}^\infty \indibrac{R_a(\taua^k-)=0, \taua^k\leq 1}} = 1.
    \]
    Next, to understand the relations between the coordinates of $\Xsuper\aft$ and $\Xsuper\bef$ under $\Pnb_a$, we examine the expectation $\Ep{a}{f(\Xsuper\bef)g(\Xsuper\aft-\Xsuper\bef)}$ for any bounded measurable functions $f, g\colon \R^{n+1} \times \Z^{1+\abs{\indexstable}} \to \R$:
    \begin{align}
        \nonumber
        \Ep{a}{f(\Xsuper\bef)g(\Xsuper\aft-\Xsuper\bef)} 
        &= \frac{1}{\Lambda} \E{\sum_{k=1}^\infty f\big(\Xsuper(\taua^k-)\big)g\big(\Xsuper(\taua^k) - \Xsuper(\taua^k-)\big)  \indibrac{\taua^k \leq 1}} \\
        \nonumber
        &= \frac{1}{\Lambda} \sum_{k=1}^\infty \E{f\big(\Xsuper(\taua^k-)\big)g\big(\Xsuper(\taua^k) - \Xsuper(\taua^k-)\big)  \indibrac{\taua^k \leq 1}}.
    \end{align}
    By the definitions of the sample paths, for each $k\in\Zp$, at the time of arrival $\taua^k$, 
    \begin{equation}
        \Xsuper(\taua^k) = \Xsuper(\taua^k-) + A^k \bm{e}_a + \bm{e}_q + \sum_{j\in\indexstable} \bm{e}_{q,-j}, 
    \end{equation}
    where $\bm{e}_a, \bm{e}_q, \bm{e}_{q,-j} \in\spacesuper$ are one-hot vectors with the non-zero entries at the coordinates of $R_a$, $Q$, and $\Qdropj$, respectively. %
    Note that $A^k$ is independent of $(\taua^k, \Xsuper(\taua^k-))$ and has the same distribution as $A$, so
    \begin{align}
        \nonumber
        \Ep{a}{f(\Xsuper\bef)g(\Xsuper\aft-\Xsuper\bef)} 
        &=  \frac{1}{\Lambda} \sum_{k=1}^\infty \E{f\big(\Xsuper(\taua^k-)\big)\indibrac{\taua^k \leq 1}}\, \Ebigg{g\Big( A \bm{e}_a + \bm{e}_q + \sum_{j\in\indexstable} \bm{e}_{q,-j}\Big)} \\
        \label{eq:pf-bar:characterize-pa-interm}
        &=  \Ep{a}{f\big(\Xsuper\bef\big)} \, \Ebigg{g\Big( A \bm{e}_a + \bm{e}_q + \sum_{j\in\indexstable} \bm{e}_{q,-j}\Big)}.
    \end{align}
    Because $f$ and $g$ can be arbitrary bounded measurable functions, \eqref{eq:pf-bar:characterize-pa-interm} implies that under $\Pnb_a$, $\Xsuper\aft - \Xsuper\bef$ is independent of $\Xsuper\bef$, and has the same distribution as $A \bm{e}_a + \bm{e}_q + \sum_{j\in\indexstable} \bm{e}_{q,-j}$. This proves \Cref{lem:bar-het}~(i). 

    To show \Cref{lem:bar-het}~(ii), note that for each $i\in[n]$ and $k\in\Zp$, we have $R_{\si}(\taui^k-) = 0$, which implies that $R_{\si}\bef = 0$ with probability $1$ under $\Pnb_{\si}$. 
    Moreover, by definitions,
    \begin{equation}
        \label{eq:pf-bar:completion-event-update}
        \Xsuper(\taui^k) = \Xsuper(\taui^k-) + S_i^k \bm{e}_{i} - \indibrac{Q(\taui^k-) > 0} \bm{e}_q - \sum_{j\in\indexstable \backslash \{i\}} \indibrac{\Qdropj(\taui^k-) > 0} \bm{e}_{q,-j},
    \end{equation}
    where $\bm{e}_i\in\spacesuper$ denotes the one-hot vector with the non-zero entry at the coordinate of $R_{\si}$. Because $S_i^k$ is independent of $(\taui^k, \Xsuper(\taui^k-))$ and has the same distribution as $S_i$, for any bounded measurable functions $f\colon\R^{n+1} \times \Z^{1+\abs{\indexstable}} \to \R$ and $g\colon \R\to\R$, we have
    \begin{align}
        \nonumber
        &\Ep{\si}{f(\Xsuper\bef)g(R_{\si}\aft-R_{\si}\bef)}  \\
        \nonumber
        &\qquad = \frac{1}{\Lambda} \sum_{k=1}^\infty \E{f\big(\Xsuper(\taui^k-)\big)g\big(R_{\si}(\taui^k) - R_{\si}(\taui^k-)\big)  \indibrac{\taui^k \leq 1}} \\
        \nonumber
        &\qquad =  \frac{1}{\Lambda} \sum_{k=1}^\infty \E{f\big(\Xsuper(\taui^k-)\big)\indibrac{\taui^k \leq 1}}\, \Ebig{g(S_i)} \\
        \label{eq:pf-bar:characterize-pi-interm}
        &\qquad =  \Ep{\si}{f\big(\Xsuper\bef\big)} \, \Ebig{g(S_i)}.
    \end{align}
    This implies that under $\Pnb_{\si}$, $R_{\si}\aft - R_{\si}\bef$ is independent of $\Xsuper\bef$ and has the same distribution as $S_i$. 
    In addition, for any bounded measurable function $f\colon \R^{n+1} \times \Z^{1+\abs{\indexstable}} \times \Z\to\R$, 
    \begin{align}
        \nonumber
        \Ep{\si}{f(\Xsuper\bef, Q\aft)}
        &= \frac{1}{\Lambda} \sum_{k=1}^\infty \E{f\big(\Xsuper(\taui^k-), Q(\taui^k)\big) \indibrac{\taui^k \leq 1}} \\
        \nonumber
        &= \frac{1}{\Lambda} \sum_{k=1}^\infty \E{f\Big(\Xsuper(\taui^k-),\; Q(\taui^k-) - \indibrac{ Q(\taui^k-) > 0} \Big) \indibrac{\taui^k \leq 1}}  \\
        \nonumber
        &= \Ep{\si}{f\big(\Xsuper\bef,\; Q\bef - \indibrac{Q\bef>0}\big)},
    \end{align}
    which implies that $Q\aft = Q\bef - \indibrac{Q\bef > 0}$ under the distribution $\Pnb_{\si}$ with probability $1$. Similarly, we can show that $\Qdropj\aft = \Qdropj\bef - \indibracbig{\Qdropj\bef > 0}$ for $j\in\indexstable\backslash \{i\}$, $R_a\aft = R_a\bef$, and $R_{\sj}\aft = R_{\sj}\bef$ for $j\in[n]\backslash \{i\}$ under $\Pnb_{\si}$ with probability $1$. 
    This proves \Cref{lem:bar-het}~(ii). 
    \qedhere
\end{proof}

\subsection{Proof of bounded mean residual times (Lemmas~\ref{lem:R-bdd} and \ref{lem:R-bdd-het})}
\label{app:pf-Rbdd}

In this section, we prove \Cref{lem:R-bdd} and \Cref{lem:R-bdd-het}, restated as follows: 

\Rbdd*

\Rbddhet*

Below, we focus on proving \Cref{lem:R-bdd-het} since it is a strict generalization of \Cref{lem:R-bdd}.

\begin{proof}[Proof of \Cref{lem:R-bdd-het}]
    We first show \eqref{eq:R-sj-bound-stationary} for $\nu=\pi$. By the construction of the sample paths in \Cref{app:notation-support-lemma} and the tower property of conditional expectations,  
    \begin{align}
        \nonumber
        \Ep{\pi}{R_{\sj} \given Q, \Qdropstable}
        &= \E{R_{\sj}(0) \given Q(0), \Qdropstable(0)} \\
        \nonumber
        &= \E{\E{R_{\sj}(0) \given Q(0), \Qdropstable(0), \agej(0)}\given Q(0), \Qdropstable(0)} \\
        \label{eq:pf-R-bdd:Epi-R-intermediate}
        &= \E{\E{S_j^0 - \agej(0) \given Q(0), \Qdropstable(0), \agej(0)}\given Q(0), \Qdropstable(0)}.
    \end{align}
    The inner conditional expectation in the last expression can be bounded as follows: 
    \begin{align}
        \label{eq:pf-R-bdd:apply-cond-indep}
        \E{S_j^0 - \agej(0) \given Q(0), \Qdropstable(0), \agej(0)} &=\E{S_j^0 - \agej(0) \given \agej(0)} \\
        \label{eq:pf-R-bdd:apply-S0-law}
        &= \E{S_j - \agej(0) \given S_j\geq \agej(0)} \\
        \label{eq:pf-R-bdd:apply-assump-bdd}
        &\leq \Rsmaxj / \mu_j.
    \end{align}
    where \eqref{eq:pf-R-bdd:apply-cond-indep} is because $S_j^0$ is by definition independent of $Q(0)$ and $\Qdropstable(0)$ given $\agej(0)$; \eqref{eq:pf-R-bdd:apply-S0-law} is by the distribution of $S_j^0$; \eqref{eq:pf-R-bdd:apply-assump-bdd} is due to the definition $\Rsmaxj \triangleq \sup_{t'\geq 0} \E{\mu_i S_i - t' \given \mu_i S_i\geq t'}$. Substituting the bound in \eqref{eq:pf-R-bdd:apply-assump-bdd} back to \eqref{eq:pf-R-bdd:Epi-R-intermediate} yields $\mu_j \Ep{\pi}{R_{\sj}\given Q, \Qdropstable} \leq \Rsmaxj$.

    Next, we prove \eqref{eq:R-sj-bound-stationary} for $\nu=\Pnb_{\si}$ with $i\neq j$. Let $(q,\qdropstable)$ be any realization of $(Q, \Qdropstable)$. 
    By the definition of $\Ep{\si}{\cdot}$, we have
    \begin{align}
        \nonumber
        &\mspace{23mu} \Ep{\si}{R_{\sj} \indibracbig{(Q, \Qdropstable)=(q,\qdropstable)}} \\
        \nonumber
        &= \frac{1}{\mu_i}\E{\sum_{k=1}^\infty \indibracBig{\taui^k \leq 1, \big(Q(\taui^k-), \Qdropstable(\taui^k-)\big)=(q,\qdropstable)} R_{\sj}(\taui^k-)} \\
        \nonumber
        &= \frac{1}{\mu_i} \sum_{k=1}^\infty \mathbb{E}\Bigg[\mathbb{E}\Bigg[\indibracBig{\taui^k \leq 1, \big(Q(\taui^k-), \Qdropstable(\taui^k-)\big)=(q,\qdropstable)} \\
        \nonumber
        &\mspace{110mu} \cdot R_{\sj}(\taui^k-) \;\Bigg|\; \Ysuper(0), \cF_{a,\infty}, \{\cF_{\ell,\infty}\}_{\ell\neq j}, \cF_{j,\taui^k}\Bigg]\Bigg] \\
        \label{eq:pf-R-bdd:E-si-intermediate}
        &= \frac{1}{\mu_i} \sum_{k=1}^\infty \mathbb{E}\Bigg[\indibracbig{\taui^k \leq 1} \indibracbig{\big(Q(\taui^k-), \Qdropstable(\taui^k-)\big)=(q,\qdropstable)} \\
        \nonumber
        &\mspace{80mu} \cdot\E{R_{\sj}(\taui^k-)  \given \Ysuper(0), \cF_{a,\infty}, \{\cF_{\ell,\infty}\}_{\ell\neq j}, \cF_{j,\taui^k}}\Bigg],
    \end{align}
    where \eqref{eq:pf-R-bdd:E-si-intermediate} is because $\taui^k$, $Q(\taui^k-)$, and $\Qdropstable(\taui^k-)$ are measurable with respect to the sigma algebra \[\sigma\Big(\Ysuper(0), \cF_{a,\infty}, \{\cF_{\ell,\infty}\}_{\ell\neq j}, \cF_{j,\taui^k}\Big).\]
    To bound the conditional expectation of $R_{\sj}(\taui^k-)$ in \eqref{eq:pf-R-bdd:E-si-intermediate}, we rewrite it as:
    \begin{align}
        \nonumber
        &\E{R_{\sj}(\taui^k-)  \given \Ysuper(0), \cF_{a,\infty}, \{\cF_{\ell,\infty}\}_{\ell\neq j}, \cF_{j,\taui^k}} \\
        \nonumber
        &\qquad = \E{S_{j}^{N_j(\taui^k-)} - \agej(\taui^k-)  \given \Ysuper(0), \cF_{a,\infty}, \{\cF_{\ell,\infty}\}_{\ell\neq j}, \cF_{j,\taui^k}} \\
        \label{eq:pf-R-bdd:E-si-intermediate-2}
        &\qquad = \sum_{m=0}^\infty \E{\Big(S_{j}^{m}- \agej(\taui^k-)\Big) \indibrac{N_j(\taui^k-)=m} \given \Ysuper(0), \cF_{a,\infty}, \{\cF_{\ell,\infty}\}_{\ell\neq j}, \cF_{j,\taui^k}}.
    \end{align}
    Because $\taui^k$ is measurable with respect $\sigma\big(\Ysuper(0), \cF_{i,\infty}\big)$, we can rewrite \eqref{eq:pf-R-bdd:E-si-intermediate-2} as $\sum_{m=0}^\infty U_m(\taui^k-)$, where for each $t \geq 0$, 
    $U_m(t)$ is the random variable define as
    \begin{align*}
        U_m(t) 
        &\triangleq \E{\Big(S_{j}^{m} - \agej(t)\Big) \indibrac{N_j(t)=m} \given \Ysuper(0), \cF_{a,\infty}, \{\cF_{\ell,\infty}\}_{\ell\neq j}, \cF_{j,t}}. 
    \end{align*}
    For each $t\geq 0$ and $m\in\Znn$, $U_m(t)$ can be bounded as
    \begin{align}
        \label{eq:pf-R-bdd:apply-cF-independence}
         U_m(t) &= \E{\Big(S_{j}^{m} - \agej(t)\Big) \indibracbig{N_j(t)=m} \given \cF_{j,t}} \\
         \label{eq:pf-R-bdd:apply-defs-N-and-cF}
        &= \E{\Big(S_{j}^{m} - (t-\tauj^m)\Big) \indibracbig{\tauj^m \leq t < \tauj^m + S_{j}^{m}} \given \agej(0),  \{S_j^{\ell}\}_{\ell=0}^{m-1}, \indibracbig{S_j^{m} > (t - \tauj^m)}} \\
        \nonumber
        &= \left(\E{S_{j}^{m} \given \agej(0),  \{S_j^{\ell}\}_{\ell=0}^{m-1}, \indibracbig{S_j^{m} > (t - \tauj^m)}} - (t-\tauj^m) \right) \cdot \indibracbig{\tauj^m \leq t < \tauj^m + S_{j}^{m}} \\ 
        \label{eq:pf-R-bdd:apply-Sm-def}
        &= \left(\E{S_{j} \given S_j > t-\tauj^m} - (t-\tauj^m) \right) \cdot \indibracbig{\tauj^m \leq t < \tauj^m + S_{j}^{m}}  \\
        \label{eq:pf-R-bdd:apply-assump-bdd-2}
        &\leq (\Rsmaxj / \mu_j) \cdot \indibracbig{\tauj^m \leq t < \tauj^m + S_{j}^{m}}. 
     \end{align}
     Here, \eqref{eq:pf-R-bdd:apply-cF-independence} is follows from the conditional independence between $(S_j^m, \agej(t), \Nj(t))$ and $\sigma\Big(\Ysuper(0), \cF_{a,\infty}, \{\cF_{\ell,\infty}\}_{\ell\neq j}\Big)$ given $\agej(0)$, and the fact that $\agej(0)\in\cF_{j,t}$. Furthermore, \eqref{eq:pf-R-bdd:apply-defs-N-and-cF} is due to the definitions of $\Nj(t)$ and $\cF_{j,t}$;  
    \eqref{eq:pf-R-bdd:apply-Sm-def} is by the definition of $S_j^m$ and its independence with $\big(\agej(0), \{S_j^{\ell}\}_{\ell=0}^{m-1}, \tauj^m)$ if $m\geq 1$; 
    \eqref{eq:pf-R-bdd:apply-assump-bdd-2} follows from the definition  $\Rsmaxj \triangleq \sup_{t'\geq 0} \E{\mu_j S_j - t' \given \mu_i S_j\geq t'}$.  
    Substituting \eqref{eq:pf-R-bdd:apply-assump-bdd-2} back to \eqref{eq:pf-R-bdd:E-si-intermediate-2} implies that for $k\in\Zp$,
    \begin{align*}
        &\E{R_{\sj}(\taui^k-)  \given \Ysuper(0), \cF_{a,\infty}, \{\cF_{\ell,\infty}\}_{\ell\neq j}, \cF_{j,\taui^k}} \\
        &\qquad \leq \sum_{m=0}^\infty U_m(\taui^k-) \\
        &\qquad \leq (\Rsmaxj / \mu_j) \cdot \sum_{m=0}^\infty \indibracbig{\tauj^m < \taui^k \leq \tauj^m + S_{j}^{m}} \\
        & \qquad = (\Rsmaxj / \mu_j) \cdot \indibrac{\tauj^0 < \taui^k < \infty} \\
        & \qquad \leq \Rsmaxj / \mu_j \quad a.s.
    \end{align*}
    Further substituting the above inequality back to  \eqref{eq:pf-R-bdd:E-si-intermediate} yields
    \begin{align*}
        &\Ep{\si}{R_{\sj} \indibracbig{(Q, \Qdropstable)=(q,\qdropstable)}} \\
        &\qquad \leq (\Rsmaxj / \mu_j)\cdot \frac{1}{\mu_i} \sum_{k=1}^\infty \E{\indibracbig{\taui^k \leq 1} \indibracbig{\big(Q(\taui^k-), \Qdropstable(\taui^k-)\big)=(q,\qdropstable)}} \\
        &\qquad = (\Rsmaxj / \mu_j) \cdot \Probp{\si}{\big(Q, \Qdropstable\big)=(q,\qdropstable)},
    \end{align*}
    where $(q,\qdropstable)$ is an arbitrary realization of $(Q, \Qdropstable)$. This proves $\Ep{\si}{R_{\sj}\given Q, \Qdropstable} \leq \Rsmaxj/\mu_j$. 

    The proof for \eqref{eq:R-a-bound-stationary} mirrors the proof for \eqref{eq:R-sj-bound-stationary} with the following two modifications: 
    \begin{enumerate}
        \item All quantities associated with the $j$-th server's completion process are swapped with the corresponding quantities associated with the arrival process. 
        \item The definition of $\Ramin$ is used in place of $\Rsmaxj$. 
    \end{enumerate}
    Given the similarity, the detailed proof for \eqref{eq:R-a-bound-stationary} is omitted. 
    \qedhere
\end{proof}

\subsection{Proof of the uncorrelation lemmas (Lemmas~\ref{lem:indep-R-Qtilde} and \ref{lem:indep-R-Qtilde-het})}
\label{app:pf-uncorrelation}

In this subsection, we prove the uncorrelation lemmas, \Cref{lem:indep-R-Qtilde} and \Cref{lem:indep-R-Qtilde-het}, restated as follows: 

\indep*

\indephet*

We focus on the uncorrelation lemma for the heterogeneous setting (\Cref{lem:indep-R-Qtilde-het}), since it is a strict generalization of \Cref{lem:indep-R-Qtilde}. 
The proof of \Cref{lem:indep-R-Qtilde-het} is based on the following lemma:

\begin{lemma}
    \label{lem:E-R-t-conv}
    Consider the modified \ggn queue with heterogeneous servers. For any $j\in[n]$, consider the residual service time process of the $j$-th server, $\{R_{\sj}(t)\}_{t\geq 0}$, and assume that it has non-lattice service times. 
    Then for any $\epsilon > 0$, there exists a deterministic $T_0 > 0$ such that for any $t \geq T_0 + R_{\sj}(0)$, we have 
    \[
        \abs{\E{R_{\sj}(t) \given R_{\sj}(0)} - \Ep{\pi}{R_{\sj}}} \leq \epsilon \quad \text{w.p. } 1.
    \]
\end{lemma}

\begin{proof}[Proof of \Cref{lem:E-R-t-conv}]
    We first consider the case when $R_{\sj}(0) = 0$, so $R_{\sj}(t)$ is a pure renewal process. Because the inter-event time  $S_j$ of this renewal process is non-lattice (\Cref{assump:non-lattice-het}), 
    we can invoke the Key Renewal Theorem \citep[][Theorem V.4.3]{Asm_03}. Restated in our setting, this theorem implies that 
    \begin{equation*}
        \lim_{t\to\infty} \E{R_{\sj}(t) \given R_{\sj}(0) = 0}  = \Ep{\pi}{R_{\sj}}, 
    \end{equation*}
    if we can show that $z(t) \triangleq  \E{\max(S_j-t, 0)}$ is \emph{directly Riemann integrable} and satisfies
    \begin{equation}
        \label{eq:pf-E-R-t-conv:z-equation}
        \E{R_{\sj}(t) \given R_{\sj}(0) = 0} = z(t) + \int_0^t \E{R_{\sj}(t- u) \given R_{\sj}(0) = 0}  F_j(\odv u), 
    \end{equation}
    where $\int \,\cdot\, F_j(\odv u)$ denotes the integral with respect to $u$ when $u$ follows the distribution of $S_j$. 

    We first show that $z(t)$ is directly Riemann integrable. By Proposition V.4.1 of \citep{Asm_03}, $z(t)$ is directly Riemann integrable if it is non-increasing and Lebesgue integrable on $[0,\infty)$. Obviously, $z(t)$ is non-increasing. Moreover, $z(t)$ is non-negative and its Lebesgue integral can be bounded as
    \begin{align*}
        \int_0^\infty z(t) 
        &= \int_0^\infty \E{\max(S_j-t, 0)} \odv t \\
        &= \E{\int_0^\infty \max(S_j-t, 0) \odv t} \\
        &= \frac{1}{2}\E{S_j^2} \\
        &\leq \frac{1}{2}(\Rsmaxj)^2,
    \end{align*}
    which is finite by \Cref{assump:bdd-exp-remain-het} (or \Cref{assump:bdd-exp-remain} in the homogeneous case). 

    Next, we show that $z(t)$ satisfies \eqref{eq:pf-E-R-t-conv:z-equation}. Recall that $S_j^1$ is the service time of the job that starts service at time $R_{\sj}(0)$. We decompose $\E{R_{\sj}(t) \given R_{\sj}(0) = 0}$ based on whether $S_j^1 > t$ to get the following: 
    \begin{align*}
        \E{R_{\sj}(t) \given R_{\sj}(0) = 0}
        &= \E{R_{\sj}(t) \indibracbig{S_j^1  > t} \given R_{\sj}(0) = 0} \\
        &\mspace{23mu} + \E{R_{\sj}(t) \indibracbig{S_j^1  \leq t} \given R_{\sj}(0) = 0} \\
        &= \E{\max(S_j^1-t, 0)} +  \int_0^t \E{R_{\sj}(t)\given R_{\sj}(0) = 0, S_j^1 =u} F_j(\odv u) \\
        &= z(t) + \int_0^t \E{R_{\sj}(t- u) \given R_{\sj}(0) = 0}  F_j(\odv u). 
    \end{align*}

    The above arguments verify the assumptions of the Key Renewal Theorem. Therefore, given $R_{\sj}(0)= 0$, for any $\epsilon > 0$, there exists $T_0 > 0$ such that for any $t \geq T_0$, 
    \begin{equation}
        \label{eq:pf-E-R-t-conv:pure-renewal-result}
        \abs{\E{R_{\sj}(t) \given R_{\sj}(0) = 0}  - \Ep{\pi}{R_{\sj}}} \leq \epsilon. 
    \end{equation}
    In general, when $R_{\sj}(0)\neq 0$, $R_{\sj}(t)$ is a delayed renewal process and we have 
    \[
        \E{R_{\sj}\big(t + R_{\sj}(0)\big) \given R_{\sj}(0)} = \E{R_{\sj}(t) \given R_{\sj}(0) = 0} \quad \forall t> 0. 
    \]
    Substituting the last equation into \eqref{eq:pf-E-R-t-conv:pure-renewal-result} finishes the proof. 
\end{proof}

Now we are ready to prove \Cref{lem:indep-R-Qtilde-het}. 

\begin{proof}[Proof of \Cref{lem:indep-R-Qtilde-het}]
    By the definition of the Palm expectation $\Ep{\si}{\cdot}$, for arbitrary $T>0$, we have
    \begin{align}
        \nonumber
        &\Ep{\si}{\indibrac{\Qdropj=0}R_{\sj}} \\
        \nonumber
        &\qquad = \frac{1}{\mu_i T} \E{\sum_{k=1}^\infty \indibrac{\taui^k\leq T,\Qdropj(\taui^k-)=0} R_{\sj}(\taui^k-)} \\
        \nonumber
        &\qquad = \frac{1}{\mu_i T} \sum_{k=1}^\infty \E{\E{\indibrac{\taui^k\leq T,\Qdropj(\taui^k-)=0} R_{\sj}(\taui^k-) \given \Ysuper(0), \cF_{a,\infty}, \{\cF_{\ell,\infty}\}_{\ell\neq j}}}  \\
        \label{eq:pf-indep:take-out-Q}
        &\qquad = \frac{1}{\mu_i T} \sum_{k=1}^\infty \E{\indibrac{\taui^k\leq T,\Qdropj(\taui^k-)=0} \cdot \E{R_{\sj}(\taui^k-) \given \Ysuper(0), \cF_{a,\infty}, \{\cF_{\ell,\infty}\}_{\ell\neq j}}} \\
       \label{eq:pf-indep:rewrite-as-V}
        &\qquad = \frac{1}{\mu_i T} \sum_{k=1}^\infty \E{\indibrac{\taui^k\leq T,\Qdropj(\taui^k-)=0} \cdot \Rmean(\taui^k-)},
    \end{align}
    where \eqref{eq:pf-indep:take-out-Q} is because $\big(\taui^k, \Qdropj(\taui^k-)\big) \in \sigma\Big(\Ysuper(0), \cF_{a,\infty}, \{\cF_{\ell,\infty}\}_{\ell\neq j}\Big)$; $\Rmean(\cdot)$ is defined as
    \[
        \Rmean(t) \triangleq \E{R_{\sj}(t) \given \Ysuper(0), \cF_{a,\infty}, \{\cF_{\ell,\infty}\}_{\ell\neq j}} \quad \forall t\geq 0,
    \]
    and \eqref{eq:pf-indep:rewrite-as-V} is due to the fact that $\taui^k\in\cF_{i,\infty}$. Note that because the process $R_{\sj}(t)$ is conditionally independent of $\Ysuper(0)$ given $\agej(0)$, we can simplify $\Rmean(t)$ into
    \begin{equation}
        \Rmean(t) = \E{R_{\sj}(t) \given \agej(0)}.
    \end{equation}
    In the subsequent argument, we will use the fact that
    \begin{equation}
        \label{eq:pf-indep:Vt-conditional-rule}
        \Rmean(t) = \E{\E{R_{\sj}(t) \given R_{\sj}(0)} \given \agej(0)},
    \end{equation}
    which is true because 
    $R_{\sj}(t) = R_{\sj}(0) + \sum_{\ell=1}^{\Nj(t)} S_j^\ell - t$ and $\agej(0) = S_j^0 - R_{\sj}(0)$ are conditionally independent given $R_{\sj}(0)$.

    Next, we bounds the difference between $\Rmean(\taui^k-)$ and $\Ep{\pi}{R_{\sj}}$ for each $k\in\Zp$.
    Recall that by \Cref{lem:E-R-t-conv}, for any $\epsilon > 0$, there exists a deterministic $T_0 > 0$ such that for any $t \geq T_0 + R_{\sj}(0)$, we have 
    \begin{equation}
        \label{eq:E-R-t-conv}
        \abs{\E{R_{\sj}(t) \given R_{\sj}(0)} - \Ep{\pi}{R_{\sj}}} \leq \epsilon \quad \text{w.p. } 1.
    \end{equation}
    Let the random variable $K^\epsilon \triangleq \min\{k' \in\Zp \colon \taui^{k'+1} > T_0 + R_{\sj}(0)\}$. Then by \eqref{eq:pf-indep:Vt-conditional-rule} and \eqref{eq:E-R-t-conv}, we have 
    \begin{equation}
        \label{eq:E-R-at-tau-k-conv}
         \abs{\Rmean(\taui^k-) - \Ep{\pi}{R_{\sj}}} \leq \epsilon \quad  \text{ if } k\geq K^\epsilon + 1. 
    \end{equation}
    Moreover, we argue that 
    \begin{equation}
        \label{eq:K-eps-bdd-exp}
        \E{K^\epsilon} \leq \mu_i T_0 + \mu_i \Ep{\pi}{R_{\sj}} + \E{(\mu_i S_i)^2} < \infty. 
    \end{equation}
    To see this, recall that $\taui^{k'+1} = \sum_{\ell=0}^{k'} S_i^\ell - \agei(0) = \sum_{\ell=1}^{k'} S_i^\ell + R_{\si}(0)$, where $S_i^\ell$ are i.i.d. for $\ell\geq 1$. 
    We can thus rewrite $K^\epsilon = \inf\{k'\in\Zp \colon \sum_{\ell=1}^{k'} S_i^\ell > T_0 + R_{\sj}(0) - R_{\si}(0)\}$, which shows that $K^\epsilon$ is a stopping time with respect to the discrete-time random walk $\big(\sum_{\ell=1}^{k'} S_i^\ell \big)_{k'=1}^\infty$. By Wald's equation and Lorden's inequality \citep{Lor_70_lorden_excess}, we have
    \begin{align*}
        \E{K^\epsilon \given R_{\sj}(0), R_{\si}(0)} \E{S_i}
        &= \E{\sum_{\ell=1}^{K^\epsilon} S_i^\ell \given R_{s,j}(0), R_{s,i}(0)} \\
        &\leq \max(T_0 + R_{\sj}(0) - R_{\si}(0), 0) + \mu_i \E{S_i^2},
    \end{align*}
    which implies \eqref{eq:K-eps-bdd-exp} after rearranging the terms. 
    
    Now we can finish the proof utilizing \eqref{eq:pf-indep:rewrite-as-V},  \eqref{eq:E-R-at-tau-k-conv}, and \eqref{eq:K-eps-bdd-exp}.
    First, by the definition of $\Probp{\si}{\Qdropj=0}$, we have that for any $T>0$,
    \begin{equation*}
         \Probp{\si}{\Qdropj=0}
         = \frac{1}{\mu_i T} \E{\sum_{k=1}^\infty \indibrac{\taui^k\leq T,\Qdropj(\taui^k-)=0 }}
    \end{equation*}   
    Combining the last equality with the equality in \eqref{eq:pf-indep:rewrite-as-V}, we get
    \begin{align*}
        &\abs{\Ep{\si}{\indibrac{\Qdropj=0}R_{\sj}} - \Probp{\si}{\Qdropj=0} \Ep{\pi}{R_{\sj}}} \\
        &\qquad = \frac{1}{\mu_i T}\Bigg\lvert \E{\sum_{k=1}^\infty \indibrac{\taui^k\leq T,\Qdropj(\taui^k-)=0} \cdot \Rmean(\taui^k-)} \\
        &\quad \mspace{70mu} -   \E{\sum_{k=1}^\infty \indibrac{\taui^k\leq T,\Qdropj(\taui^k-)=0 }}\Ep{\pi}{R_{\sj}} \Bigg\rvert \\
        &\qquad =  \frac{1}{\mu_i T} \abs{\E{ \sum_{k=1}^\infty\indibrac{\taui^k\leq T, \Qdropj(\taui^k-)=0} \left(\Rmean(\taui^k-)  - \Ep{\pi}{R_{\sj}}\right)}} \\
        &\qquad \leq \frac{1}{\mu_i T} \E{\sum_{k=1}^\infty \indibrac{\taui^k\leq T} \cdot \absBig{\Rmean(\taui^k-)  - \Ep{\pi}{R_{\sj}}}}.
    \end{align*}
    Fixing any $\epsilon > 0$, we break the infinite sum above into two parts based on $K^\epsilon$: 
    \begin{align}
        \nonumber
        &\abs{\Ep{\si}{\indibrac{\Qdropj=0}R_{\sj}} - \Probp{\si}{\Qdropj=0} \Ep{\pi}{R_{\sj}}} \\
        \nonumber
        &\qquad \leq \frac{1}{\mu_i T} \E{\sum_{k=1}^\infty \indibrac{\taui^k\leq T} \cdot \absBig{\Rmean(\taui^k-)  - \Ep{\pi}{R_{\sj}}}} \\
        \label{eq:pf-indep:before-split-use-Keps}
        &\qquad =  \frac{1}{\mu_i T} \E{\sum_{k=1}^{K^\epsilon} \indibrac{\taui^k\leq T} \abs{\Rmean(\taui^k-)  - \Ep{\pi}{R_{\sj}}}}  \\
        \nonumber
        &\qquad \mspace{23mu} +  \frac{1}{\mu_i T} \E{\sum_{k=K^\epsilon+1}^\infty \indibrac{\taui^k\leq T} \abs{\Rmean(\taui^k-)  - \Ep{\pi}{R_{\sj}}}} \\
        \label{eq:pf-indep:split-use-Keps}
        &\qquad \leq \frac{2\Rsmaxj\E{K^\epsilon}}{\mu_i T} +  \frac{\epsilon}{\mu_i T} \E{\sum_{k=K^\epsilon+1}^\infty \indibrac{\taui^k\leq T}} \\
        \label{eq:pf-indep:sum-indic-exp}
        &\qquad \leq  \frac{2\Rsmaxj \E{K^\epsilon}}{\mu_i T} + \epsilon.
    \end{align}
    Here, to get \eqref{eq:pf-indep:split-use-Keps}, we bound the expectation in \eqref{eq:pf-indep:before-split-use-Keps} using the argument that
    \[
        \abs{\Rmean(\taui^k-) - \Ep{\pi}{R_{\sj}}} \leq \Rmean(\taui^k-) + \Ep{\pi}{R_{\sj}} \leq 2\Rsmaxj \quad a.s.
    \]
    To get \eqref{eq:pf-indep:sum-indic-exp}, we use the fact that $\E{\sum_{k=1}^\infty \indibrac{\taui^k\leq T}} = \mu_i T$. 
    Now, because $\E{K^\epsilon}<\infty$, and $T$ and $\epsilon$ are arbitrary, first taking $T\to\infty$ and then taking $\epsilon \to 0$ in \eqref{eq:pf-indep:sum-indic-exp} finish the proof. \qedhere
\end{proof}

\subsection{Proofs of some steady-state equalities and finiteness results}
\label{app:pf-equalities-and-finiteness}

In this subsection, we will prove several basic equalities about the residual times of the completion processes and the idle probability of the queue (Lemmas~\ref{lem:apply-bar-lots-of} and \ref{lem:apply-bar-lots-of-het}). We will also prove that modified G/G/n have finite mean queue lengths under the Palm distributions $\Pnb_{a}$ and $\Pnb_{\si}$ (Lemmas~\ref{lem:expfinite} and \ref{lem:expfinite-het}). 
All the results in this subsection are derived from BAR (\Cref{lem:bar-het}), which we restate below for ease of reference:

\barhet*

Now, we restate \Cref{lem:apply-bar-lots-of} and its generalization in the heterogeneous setting, \Cref{lem:apply-bar-lots-of-het}. We will directly prove \Cref{lem:apply-bar-lots-of-het}. 

\applybarpreliminary*

\applybarpreliminaryhet*

\begin{proof}[Proof of \Cref{lem:apply-bar-lots-of-het}]    
    \noindent \textbf{Proof of \eqref{eq:E-pi-R-s-het}.} For any truncation threshold $\kappa > 0$, consider the test function $f(\Xsuper) = (R_{\si} \wedge \kappa)^2$, which is bounded, locally Lipschitz continuous, and has right partial derivatives. Consequently, Basic Adjoint Relationship \eqref{eq:bar-het} implies that
    \[
        -2\Ep{\pi}{R_{\si} \indibrac{R_{\si} < \kappa}} + \Lambda \Ep{\si}{\big((R_{\si}+S_i) \wedge \kappa\big)^2 - \big(R_{\si} \wedge \kappa\big)^2} = 0.
    \]
    Because $R_{\si} = 0$ with probability $1$ under the measure $\Pnb_{\si}$, rearranging the terms, we get
    \begin{equation*}
        \Ep{\pi}{R_{\si} \indibrac{R_{\si} < \kappa}} = \frac{1}{2} \Lambda \E{(S_i\wedge\kappa)^2}. 
    \end{equation*}
    Observe that $R_{\si} \indibrac{R_{\si} < \kappa}$ and $(S_i\wedge\kappa)^2$ are both non-negative and non-decreasing in $\kappa$, by the monotone convergence theorem, 
    \begin{equation}
        \Ep{\pi}{R_{\si}} = \lim_{\kappa\to\infty} \Ep{\pi}{R_{\si} \indibrac{R_{\si} < \kappa}} = \lim_{\kappa\to\infty} \frac{1}{2} \Lambda \E{(S_i\wedge\kappa)^2} = \frac{1}{2} \Lambda \E{S_i^2},
    \end{equation}
    which implies \eqref{eq:E-pi-R-s-het}. 

    \noindent \textbf{Proof of \eqref{eq:E-pi-R-a-het}.} 
    This proof is identical to the proof of \eqref{eq:E-pi-R-s-het} except that $R_{\si}$ is replaced by $R_a$. 

    \noindent \textbf{Proof of \eqref{eq:E-s-R-s-het}.}
    For any $\kappa > 0$, we take the test function $f(\Xsuper) = (R_{\si} \wedge \kappa)(R_{\sj} \wedge \kappa)$ in \eqref{eq:bar-het}:
    \begin{align*}
        & - \Ep{\pi}{\indibrac{R_{\si}<\kappa} (R_{\sj}\wedge\kappa) + \indibrac{R_{\sj}<\kappa} (R_{\si}\wedge\kappa)} \\
        &+ \mu_i \Ep{\si}{(S_i\wedge\kappa)(R_{\sj}\wedge\kappa) - 0} + \mu_j \Ep{\sj}{(S_j\wedge\kappa)(R_{\si}\wedge\kappa) - 0}
        =0,
    \end{align*}
    where in the second and third expectations we have used the facts that $R_{\sell} = 0$ with probability $1$ under $\Pnb_{\sell}$ for any $\ell\in[n]$. 
    Because $S_i$ is independent of $R_{\sj}$ under the measures $\Pnb_{\si}$, and $S_j$ is independent of $R_{\si}$ under the measure $\Pnb_{\sj}$, we get
    \begin{equation*}
        \begin{aligned}
        &\Ep{\pi}{\indibrac{R_{\si}<\kappa} (R_{\sj}\wedge\kappa) + \indibrac{R_{\sj}<\kappa} (R_{\si}\wedge\kappa)} \\
        &\qquad =\mu_i \E{S_i\wedge\kappa} \Ep{\si}{R_{\sj}\wedge\kappa} +  \mu_j \E{S_j\wedge\kappa}\Ep{\sj}{R_{\si}\wedge\kappa}. 
        \end{aligned}
    \end{equation*}
    Because $\indibrac{R_{\si}<\kappa} (R_{\sj}\wedge\kappa)$, $\indibrac{R_{\sj}<\kappa} (R_{\si}\wedge\kappa)$, $S_i\wedge\kappa$, $S_j\wedge\kappa$, $R_{\sj}\wedge\kappa$, and $R_{\si}\wedge\kappa$ are all non-negative and non-decreasing in $\kappa$, by the monotone convergence theorem, we have the following after letting $\kappa\to\infty$: 
    \begin{equation*}
        \Ep{\pi}{R_{\sj} + R_{\si}} = \mu_i \E{S_i} \Ep{\si}{R_{\sj}} + \mu_j \E{S_j} \Ep{\sj}{R_{\si}}. 
    \end{equation*}
    Note that $\E{S_i} = 1/\mu_i$, $\E{S_j}=1/\mu_j$, so the last equality simplifies into
    \begin{equation}
         \label{eq:pf-Rsj-Rsi}
         \Ep{\pi}{R_{\sj} + R_{\si}} =  \Ep{\si}{R_{\sj}} + \Ep{\sj}{R_{\si}}.
    \end{equation}
    By \eqref{eq:E-pi-R-s-het}, the left-hand side of \eqref{eq:pf-Rsj-Rsi} equals $\mu_i\Eplain{S_i^2}/2 +  \mu_j\Eplain{S_j^2}/2$. This proves \eqref{eq:E-s-R-s-het}. 

    \noindent \textbf{Proof of \eqref{eq:E-sa-R-sa-het}.} For any $\kappa > 0$, we take the test function $f(\Xsuper) = (R_{\si} \wedge \kappa)(R_a \wedge \kappa)$ in \eqref{eq:bar-het}. The rest of the proof is identical to the proof of \eqref{eq:E-s-R-s-het} except that $R_{\sj}$ is replaced by $R_a$. 

    \noindent \textbf{Proof of \eqref{eq:E-s-Q-het}.} For any integer $\kappa > 0$, taking the test function $f(\Xsuper) = Q\wedge\kappa$ in \eqref{eq:bar-het} yields 
    \begin{align*}
        \Lambda \Ep{a}{(Q+1)\wedge\kappa - Q\wedge\kappa} + \mutotal\Ep{s}{(Q-1)^+\wedge\kappa - Q\wedge\kappa} &= 0, \\
        \implies \quad \Lambda \Ep{a}{\indibrac{Q\leq \kappa-1}} - \mutotal \Ep{s}{\indibrac{0<Q\leq \kappa}} &= 0.
    \end{align*}
    Because each term in the expectations is bounded, letting $\kappa\to\infty$ and invoking the bounded convergence theorem, we get
    \begin{equation}
        \Lambda - \mutotal \Probp{s}{Q > 0} = 0.
    \end{equation}
    Rearranging the terms yield \eqref{eq:E-s-Q}.

    \noindent \textbf{Proof of \eqref{eq:E-s-Qtilde-het}.} This proof is identical to the proof of \eqref{eq:E-s-Q-het} except that $Q$ is replaced by $\Qdropj$.  
    \qedhere
\end{proof}

Next, we restate \Cref{lem:expfinite} and its generalization in the heterogeneous setting \Cref{lem:expfinite-het}. We will directly prove \Cref{lem:expfinite-het}.

\expfinite*

\expfinitehet*

\begin{proof}[Proof of \Cref{lem:expfinite-het}]
    For any positive integer $\kappa$, we consider the test function $f(\Xsuper) = (Q\wedge\kappa - 2\Lambda R_a\wedge\kappa)^2$, which is bounded, locally Lipschitz continuous, and has right partial derivatives. Consequently, Basic Adjoint Relationship \eqref{eq:bar-het} implies that 
    \begin{align}
        0 
        \label{eq:pf-exp-finite-cont-term}
        &= \Ep{\pi}{2(Q\wedge\kappa - 2\Lambda R_a\wedge\kappa) \cdot (2\Lambda\indibrac{R_a \leq \kappa})} \\
        \label{eq:pf-exp-finite-arr-term}
        &\mspace{23mu} + \Lambda \Ep{a}{ \big((Q+1)\wedge\kappa - 2\Lambda A\wedge\kappa\big)^2 - (Q\wedge\kappa)^2} \\
        \label{eq:pf-exp-finite-comp-term}
        &\mspace{23mu} + \mutotal \Ep{s}{\big((Q-\indibrac{Q>0})\wedge\kappa - 2\Lambda R_a\wedge\kappa\big)^2 - (Q\wedge\kappa - 2\Lambda R_a\wedge\kappa)^2},
    \end{align}
    where to get the term in \eqref{eq:pf-exp-finite-arr-term}, we have used the fact that $R_a = 0$ with probability $1$ under $\Pnb_a$. 
    
    We first simplify the term in \eqref{eq:pf-exp-finite-arr-term}: substituting $(Q+1)\wedge\kappa = Q\wedge\kappa + \indibrac{Q \leq \kappa - 1}$ and rearranging the terms, we get
    \begin{align*}
        &\big((Q+1)\wedge\kappa - 2\Lambda A\wedge\kappa\big)^2 - (Q\wedge\kappa)^2 \\
        &\qquad = 2 (Q\wedge\kappa) \cdot (\indibrac{Q \leq \kappa-1} - 2\Lambda A\wedge\kappa) + (\indibrac{Q\leq\kappa-1} - 2\Lambda A\wedge\kappa)^2.
    \end{align*}
    Because $A$ is independent of $Q$ under the measure $\Pnb_a$, we have
    \begin{align*}
        &\Ep{a}{ \big((Q+1)\wedge\kappa - 2\Lambda A\wedge\kappa\big)^2 - (Q\wedge\kappa)^2} \\
         &\qquad = 2 \Ep{a}{(Q\wedge\kappa) \indibrac{Q \leq \kappa-1}} - 4\Ep{a}{Q\wedge\kappa}\E{ \Lambda A\wedge\kappa} \\
         &\qquad \mspace{23mu} + \Ep{a}{(\indibrac{Q\leq\kappa-1} - 2\Lambda A\wedge\kappa)^2}. 
    \end{align*}

    We then simplify the term in \eqref{eq:pf-exp-finite-comp-term}. Substituting $(Q-\indibrac{Q>0})\wedge\kappa = Q\wedge\kappa - \indibrac{0 < Q \leq \kappa}$ and rearranging the terms, we get 
    \begin{align*}
        &\Ep{s}{\big((Q-\indibrac{Q>0})\wedge\kappa - 2\Lambda R_a\wedge\kappa\big)^2 - (Q\wedge\kappa - 2\Lambda R_a\wedge\kappa)^2} \\
        &\qquad = \Ep{s}{-2(Q\wedge\kappa - 2\Lambda R_a\wedge\kappa) \indibrac{0<Q\leq\kappa} +  \indibrac{0<Q\leq\kappa}} \\
        &\qquad = - 2\Ep{s}{(Q\wedge\kappa)\indibrac{0<Q\leq\kappa}}
        + 4\Ep{s}{(\Lambda R_a\wedge\kappa) \indibrac{0<Q\leq\kappa}} 
        +  \Probp{s}{0<Q\leq\kappa}.
    \end{align*}

    Substituting the above calculations back into \eqref{eq:pf-exp-finite-arr-term} and \eqref{eq:pf-exp-finite-comp-term}, we get 
    \begin{align*}
        0
        &= \Ep{\pi}{2(Q\wedge\kappa - 2\Lambda R_a\wedge\kappa) \cdot (2\Lambda\indibrac{R_a \leq \kappa})} + 2 \Lambda \Ep{a}{(Q\wedge\kappa) \indibrac{Q \leq \kappa-1}} \\
        &\mspace{23mu}  - 4\Lambda\Ep{a}{Q\wedge\kappa}\E{ \Lambda A\wedge\kappa}  + \Lambda\Ep{a}{(\indibrac{Q\leq\kappa-1} - 2\Lambda A\wedge\kappa)^2} \\
        &\mspace{23mu}
        - 2\mutotal\Ep{s}{(Q\wedge\kappa)\indibrac{0<Q\leq\kappa}}
        + 4\mutotal\Ep{s}{(\Lambda R_a\wedge\kappa) \indibrac{0<Q\leq\kappa}} 
        +  \mutotal\Probp{s}{0<Q\leq\kappa}. 
    \end{align*}
    Moving the terms involving $\Ep{a}{Q\wedge\kappa}$ and $\Ep{s}{Q\wedge\kappa}$ to the left-hand side, we get 
    \begin{align}
        \label{eq:pf-exp-finite-intermediate-1}
        & - 2 \Lambda \Ep{a}{(Q\wedge\kappa) \indibrac{Q \leq \kappa-1}} + 4\Lambda\Ep{a}{Q\wedge\kappa}\E{ \Lambda A\wedge\kappa}  + 2\mutotal\Ep{s}{(Q\wedge\kappa)\indibrac{0<Q\leq\kappa}} \\
        \nonumber
        &\qquad = \Ep{\pi}{2(Q\wedge\kappa - 2\Lambda R_a\wedge\kappa) \cdot (2\Lambda\indibrac{R_a \leq \kappa})}
        + \Lambda\Ep{a}{(\indibrac{Q\leq\kappa-1} - 2\Lambda A\wedge\kappa)^2}  \\
        \nonumber
        &\qquad \mspace{23mu}
        +  4\mutotal\Ep{s}{(\Lambda R_a\wedge\kappa) \indibrac{0<Q\leq\kappa}}
        +  \mutotal\Probp{s}{0<Q\leq\kappa} \\
        \nonumber
        &\qquad \leq 4\Lambda \Ep{\pi}{Q} + \Lambda\E{(2\Lambda A)^2} + \Lambda + 4\mutotal\Ep{s}{\Lambda R_a} + \mutotal. 
    \end{align}
    Note that each term in the last expression is finite. In particular, $\Ep{\pi}{Q} < \infty$ due to \Cref{assump:modified-ggn-stable-het}, 
    and $\Ep{s}{\Lambda R_a} < \infty$ due to the following argument:
    \begin{align*}
        \Ep{s}{\Lambda R_a} &= \frac{1}{\mutotal}\sumi \mu_i \Ep{\si}{\Lambda R_a} \\
        &\leq \frac{\Lambda}{\mutotal} \sumi \mu_i \big(\Ep{\si}{R_a} + \Epplain{a}{R_{\si}} \big) \\
        &= \frac{\Lambda}{\mutotal} \sumi \frac{\mu_i}{2} \Big(\mu_i\E{S_i^2} + \Lambda \E{A^2}\Big)\\
        &< \infty,
    \end{align*}
    where the last equality follows from \eqref{eq:E-sa-R-sa-het} of \Cref{lem:apply-bar-lots-of-het}. 
    Rearranging the terms in \eqref{eq:pf-exp-finite-intermediate-1} and using the fact that $\indibrac{Q \leq \kappa-1} \leq 1$, we get 
    \begin{equation}
        \label{eq:pf-exp-finite-intermediate-2} 
        \begin{aligned}
        &\Lambda \Ep{a}{Q\wedge\kappa} \Big(4\E{ \Lambda A\wedge\kappa} - 2\Big)  + 2\mutotal\Ep{s}{(Q\wedge\kappa)\indibrac{0<Q\leq\kappa}} \\
        &\qquad \leq 4\Lambda \Ep{\pi}{Q} + \Lambda\E{(2\Lambda A)^2} + \Lambda + 4\mutotal\Ep{s}{\Lambda R_a} + \mutotal \\
        &\qquad < \infty.
        \end{aligned}
    \end{equation}

    Now we try to take $\kappa \to\infty$ in \eqref{eq:pf-exp-finite-intermediate-2}. 
    Because $\lim_{\kappa\to\infty}\E{\Lambda A\wedge \kappa} = \Lambda \E{A} = 1$, there exists $\kappa_0$ such that for all $\kappa > \kappa_0$, we have $\E{\Lambda A\wedge \kappa} > 1/2$ and $4\E{ \Lambda A\wedge\kappa} - 2 > 0$. Because $Q\wedge\kappa$ and $(Q\wedge\kappa)\indibrac{0<Q\leq\kappa}$ are both non-negative and non-decreasing in $\kappa$, by the monotone convergence theorem and \eqref{eq:pf-exp-finite-intermediate-2}, we obtain
    \begin{align}
        \nonumber
       & 2\Lambda\Ep{a}{Q} + 2\mutotal\Ep{s}{Q} \\
       \nonumber
       &\qquad = \lim_{\kappa\to\infty} \left(\Lambda\Ep{a}{Q\wedge\kappa} \Big(4\E{ \Lambda A\wedge\kappa} - 2\Big)  + 2\mutotal\Ep{s}{(Q\wedge\kappa)\indibrac{0<Q\leq\kappa}}\right) \\
       \label{eq:pf-exp-finite-intermediate-3}
       &\qquad \leq 4\Lambda \Ep{\pi}{Q} + \Lambda\E{(2\Lambda A)^2} + \Lambda + 4\mutotal\Ep{s}{\Lambda R_a} + \mutotal \\
       &\qquad < \infty.
    \end{align}
    Recall that $\Ep{s}{\cdot} = \sumi \mu_i\Ep{\si}{\cdot} / \mutotal$, so \eqref{eq:pf-exp-finite-intermediate-3} implies that $\Ep{a}{Q} < \infty$ and $\Ep{\si}{Q} < \infty$ for all $i\in[n]$. This completes the proof. \qedhere

\end{proof}

        \end{appendix}
    \fi
}{%
    \ifjournal
    \else
        \begin{appendix}

\section{Bounded mean residual time implies exponential tail}
\label{app:Rsmax-finite-implies-exponential}
In this section, we show that if a distribution has a bounded mean residual time given any age, it must have an exponentially decaying tail, so all of its moments must be finite. This fact has also been proved in \citep{Dow_91_bdd_sync_overhead}. Given this fact, \Cref{assump:bdd-exp-remain} and \Cref{assump:bdd-exp-remain-het} automatically imply that the service-time distributions have exponentially decaying tails and finite moments.

\begin{restatable}{lemma}{rmaximpliestail}
    \label{eq:rmax-implies-tail}
    For any non-negative random variable $V$, suppose there exists a constant $C > 0$ such that
    \begin{equation}
        \label{eq:bdd-mst:condition}
        \E{V-t \given V\geq t} \leq C \quad \forall t\geq 0,
    \end{equation}
    Then we have
    \begin{equation}
        \label{eq:bdd-mst:exp-tail-bound}
        \Prob{V \geq t} \leq C\exp(-(t-1)/C) \quad \forall t\geq 1,
    \end{equation}
    and consequently, $\E{V^m}< \infty$ for all $m\in\Znn$. 
\end{restatable}

\begin{proof}
    Let $\overline{F}(t)$ be the complementary CDF of $V$. The condition \eqref{eq:bdd-mst:condition} can be rewritten as 
    \begin{equation}
        \label{eq:pf-tail-bound:integral-eq}
        \int_t^\infty \overline{F}(u) \odv u \leq C \overline{F}(t) \quad \forall t\geq 0.
    \end{equation}
    We will mimic the proof of Gronwall's inequality to derive an exponential tail bound for $\overline{F}(t)$. 
    We define the function $G(t$) as
    \[  
        G(t) = \exp(t/C) \int_t^\infty \overline{F}(u) \odv u.
    \]
    Because $\overline{F}(t)$ is bounded on $[0,\infty)$, it is not hard to see that $\int_t^\infty \overline{F}(u) \odv u$ is locally Lipschitz continuous, and thus almost everywhere differentiable. For any $t$ at which $\int_t^\infty \overline{F}(u) \odv u$ is differentiable, $G(t)$ is also differentiable, and $G'(t)$ is given by
    \[
        G'(t) = \exp(t/C) \left(\frac{1}{C}\int_t^\infty \overline{F}(u) \odv u - \overline{F}(t) \right),
    \]
    which is non-positive by \eqref{eq:pf-tail-bound:integral-eq}. 
    Moreover,  $G(t)$ is also locally Lipschitz continuous. Therefore, on any compact interval, $G(t)$ is absolutely continuous and satisfies the fundamental theorem of calculus. In particular,
    \begin{equation*}
        G(t) = G(0) + \int_0^t G'(u) \odv u 
        \leq G(0) 
        = \int_0^\infty \overline{F}(u) \odv (u)   \leq C \overline{F}(0) 
        = C,
    \end{equation*}
    where the second inequality is due to \eqref{eq:pf-tail-bound:integral-eq}.
    By the definition of $G(t)$, we have
    \begin{equation*}
        \int_t^\infty \overline{F}(u) \odv u \leq C \exp(-t/C).
    \end{equation*}
    Because $\overline{F}(t)$ is non-increasing and non-negative, we can bound $\overline{F}(t+1)$ as
    \[
        \overline{F}(t+1) \leq \int_t^{t+1} \overline{F}(u) \odv u \leq C \exp(-t/C).
    \]
    This proves \eqref{eq:bdd-mst:exp-tail-bound}. The finiteness of $\E{V^m}$ follows straightforwardly from the exponential tail bound. 
   \qedhere  
\end{proof}

\section{Stability of modified \ggn}
\label{app:stability}
In this section, we discuss the conditions under which the modified \ggn queue is positive Harris recurrent and has a finite mean queue length. 
As mentioned in \Cref{remark:stability}, in addition to $\rho < 1$, some regularity conditions are required on the distributions of the interarrival time and service times. 
Our results have been proved by directly assuming the positive Harris recurrence and the finiteness of mean queue length (Assumptions~\ref{assump:modified-ggn-stable} and \ref{assump:bdd-exp-remain-het}). 
In this section, we will provide a more detailed discussion on these conditions, and sketch why they are likely to imply Assumptions~\ref{assump:modified-ggn-stable} and \ref{assump:bdd-exp-remain-het}, without giving formal proofs. 
Throughout this section, we focus on modified \ggn queues with heterogeneous servers and simply refer to them as ``modified \ggn queue''.

We consider adapting the results of  \citet{DaiMey_95_moment_stability}, which considers queueing networks consisting of single-server stations. 
In particular, their Theorem 6.2 shows that if such a queueing network is fluid stable and satisfies certain conditions on the interarrival-time and service-time distributions \citep[][Conditions~A1)--A3)]{DaiMey_95_moment_stability}, it is is positive Harris recurrence, with certain finite steady-state moments. 
We conjecture that a similar result can be shown for the modified \ggn queue. %
Specifically, in addition to \Cref{assump:bdd-exp-remain-het}, we further assume that the interarrival-time distribution $A$ is \emph{spread out and unbounded}, and the service-time distribution $S_i$ is \emph{spread out} for each server $i\in[n]$. \footnote{A distribution $\nu$ is spread out if there exists $n$, such that the distribution of the sum of $n$ independent variables following $\nu$ dominates a non-negative absolutely-continuous measure (\citep[][Section VII.1]{Asm_03}).} 
These assumptions are analogous to Conditions A1)--A3) of \citep{DaiMey_95_moment_stability}. We conjecture that, under these assumptions, a modified \ggn queue is positive Harris recurrent with finite mean queue length whenever $\rho < 1$.

Instead of providing a rigorous full proof, 
we highlight an intermediate step to provide intuition on why our conjectured result is likely to be true. 
In the proof of \citep[Proposition 5.1]{DaiMey_95_moment_stability}, an important fact is is that the queue lengths of the queueing network converge to zero in probability under fluid scaling, i.e.,  
\begin{equation}
    \label{eq:daimeyn-q-limit}
    \lim_{\abs{x}\to\infty} \frac{1}{\abs{x}} \abs{Q^x(t_0\abs{x})} = 0 \quad \text{in probability},
\end{equation}
where $x$ is a state of the queueing network, $\abs{x}$ represents the sum of all queue lengths and residual times in state $x$, and $Q^x(t)$ represents the queue length process with the initial state $x$. This fact follows from the assumed stability of the queueing network's fluid model, utilizing the results in \citep[][Section IV]{Dai_95_stability}.

To show \eqref{eq:daimeyn-q-limit} for the modified \ggn queue, we invoke the following formula of queue length: 
\begin{equation}
    \label{eq:Q-explicit-formula}
    Q(t) = \max\left\{Q(0) + N_a(t) - \sumi N_i(t), \; \sup_{0\leq t'\leq t} \Big(N_a(t',t) - \sumi N_i(t',t) \Big) \right\},
\end{equation}
where $N_a(t)$ denotes the number of real-job arrivals by time $t$, $N_i(t)$ denotes the number of completions at server $i$ by time $t$, and $N_a(t', t) \triangleq N_a(t)-N_a(t')$, $N_t(t',t) \triangleq N_i(t)-N_i(t')$.   
The special case of \eqref{eq:Q-explicit-formula} is proved in \citep[][Corollary~1]{GamGol_13_GGn_HW}, under the additional assumptions that $Q(0) = 0$, the servers are homogeneous and no events occur simultaneously; dropping these assumptions is straightforward from their proof. 

We intuitively argue that \eqref{eq:Q-explicit-formula} implies \eqref{eq:daimeyn-q-limit}: 
Let $x = (r_1, \dots, r_n, r_a, q)$ be the initial state of the modified \ggn queue. Then \eqref{eq:Q-explicit-formula} implies that
\begin{equation}
\label{eq:pf-Q-fluid-scaling}
\begin{aligned}
    &\frac{1}{\abs{x}} Q^x(t_0\abs{x}) \\
    &=  \max\left\{ \frac{1}{\abs{x}} \Big(q + N_a(t_0\abs{x}) - \sumi N_i(t_0\abs{x}) \Big), \, \frac{1}{\abs{x}}\sup_{0\leq t'\leq t_0\abs{x}} \Big(N_a(t',t_0\abs{x}) - \sumi N_i(t', t_0\abs{x}) \Big) \right\}.
\end{aligned}
\end{equation}
Due to the properties of renewal processes, as $\abs{x}$ gets large, $N_a(t_0\abs{x}) - \sumi N_i(t_0\abs{x})$ is concentrate around their means
$\Lambda t_0 \abs{x}- \mutotal t_0\abs{x}$, if the residual times are initialized at equilibrium. For other initial residual times, with high probability, $N_a(t_0\abs{x}) - \sumi N_i(t_0\abs{x})$ should not be significantly greater than 
\[
\Lambda t_0 \abs{x} + 1 - \mutotal \Big(t_0\abs{x} - \sumi r_i\Big) + o(\abs{x}) = 
-(\mutotal-\Lambda) t_0\abs{x} + 1 + \sumi \mu_i r_i + o(\abs{x}). 
\]
Therefore, intuitively, 
\begin{align*}
    \frac{1}{\abs{x}} \Big(q + N_a(t_0\abs{x}) - \sumi N_i(t_0\abs{x}) \Big) 
    &\leq  \frac{q -(\mutotal - \Lambda)t_0 \abs{x} + 1 + \sumi \mu_i r_i  + o(\abs{x})}{\abs{x}}  \\
    &\leq \max_i \mu_i - (\mutotal - \Lambda)t_0 + o(1) \quad \text{with high probability,}
\end{align*}
So, taking $t_0$ to be a sufficiently large constant, the first term inside the maximum of \eqref{eq:pf-Q-fluid-scaling} is negative with high probability. 
Moreover, $\sup_{0\leq t'\leq t_0\abs{x}} \big(N_a(t',t_0\abs{x}) - \sumi N_i(t', t_0\abs{x})\big)$ should also scale sublinearly in $\abs{x}$, so the second term inside the maximum of \eqref{eq:pf-Q-fluid-scaling} converges to zero in probability as $\abs{x}\to\infty$. We have thus informally justified \eqref{eq:daimeyn-q-limit} for the modified \ggn queue. 

Intuitively, \eqref{eq:daimeyn-q-limit} implies that no matter how large the initial state $x$ is, the queue length becomes significantly shorter after a time proportional to $\abs{x}$. 
After some technical work that mimic those in Proposition 5.1, 5.3, and 5.4 of \citep{DaiMey_95_moment_stability}, one can show that the state of the modified \ggn queue would return to a compact subset of the state space reasonably fast and thus satisfy a certain drift condition (Proposition 6.1 of \citep{DaiMey_95_moment_stability}). 
Furthermore, the spread out and unbounded conditions of $A$ and $S_i$'s can be used to show that compact subsets under the system's dynamics are petite. 
Theorem~5.3 of \citep{MeyTwe_93_stability} then implies that the modified \ggn queue is positive Harris recurrence, with a finite mean queue length.

\section{Proof of modified \ggn queue's dominance}
\label{app:modified-dominance}

In this section, we prove that the queue length distribution of the modified \ggn queue stochastically dominates that of the \ggn queue. We focus on the general case of heterogeneous servers. 
The proof for homogeneous \ggn can be also found in \citep[][Proposition~1]{GamGol_13_GGn_HW}; our proof follows essentially the same logic, but we include it for completeness.

\modifieddominance*

\begin{proof}
We prove the lemma by constructing a coupling for the sample paths of the \ggn queue and the modified \ggn queue. First, by the assumption of the lemma, two systems have the same initial queue length $Q(0)$, the same set of initially busy servers that are serving real jobs $\mathcal{I}(0)$, and the same initial residual service times for the real jobs in service $(R_{\si}(0))_{i\in \mathcal{I}(0)}$. 
For each $k\in\Zp$, consider the $k$-th real job that begins service after time $0$. We let this job arrive at time $\taua^k$ in both systems. 
This job may begin service at different times in the original system and in the modified system, denoted as $\tau_s^k$ and $\hat{\tau}_s^k$, respectively. 
However, when this job begins service, we route it to the same server and let it have the same service time $S^k$ in both systems. 
Note that here, since we allow the service time distributions to be heterogeneous across servers, the sequence $\{S^k\}_{k=1}^\infty$ is not necessarily i.i.d., but this does not complicates our proof.

Let $Q(t)$ and $\hat{Q}(t)$ denote the queue lengths in the original and modified systems, respectively. They can be represented as 
\begin{align}
    \label{eq:pf-dominance:Q}
    Q(t) &= \abs{\{k\in\Zp \colon \taua^k \leq t, \tau_s^k > t\}} \\
    \label{eq:pf-dominance:Qhat}
    \hat{Q}(t) &= \abs{\{k\in\Zp \colon \taua^k \leq t, \hat{\tau}_s^k > t\}}.
\end{align}
Our goal is to show that $Q(t) \leq \hat{Q}(t)$ for all $t\geq 0$. By \eqref{eq:pf-dominance:Q} and \eqref{eq:pf-dominance:Qhat}, it suffices to show that $\tau_s^k \leq \hat{\tau}_s^k$ for all $k\in\Zp$. 

We define some additional notation. 
For $k\in\Zp$, we let $Z^k(t)$ be the number of jobs that begins service no later than the $k$-th job, but has not left the system by time $t$ in the original system. We define $\hat{Z}^k(t)$ similarly for the real jobs in the modified system (excluding the virtual jobs). 
We let $Z^0(t)$ and $\hat{Z}^0(t)$ be the number of real jobs initially in the two systems and have not left by time $t$. 
Formally, for $k\in\Znn$,
\begin{align*}
    Z^k(t) &\triangleq \abs{\{k'\in\Zp\colon k'\leq k, \tau_s^{k'} + S^{k'} > t\}} + \abs{\{i\in\mathcal{I}\colon R_{\si}(0) > t\}} \\
    \hat{Z}^k(t) &\triangleq \abs{\{k'\in\Zp\colon k'\leq k, \hat{\tau}_s^{k'} + S^{k'} > t\}} + \abs{\{i\in\mathcal{I}\colon R_{\si}(0) > t\}}.
\end{align*}
We let $\hat{V}(t)$ be the number of virtual jobs in service at time $t$. %

Now we prove by induction that $\tau_s^k \leq \hat{\tau}_s^k$ for all $k\in\Zp$, i.e., each job in the original \ggn queue begins service no later than in the modified \ggn queue.
\begin{itemize}
    \item \textbf{Base step}: $k=1$. Because job $1$ is the first job that begins service after time $0$, we have %
    \begin{align*}
        \tau_s^1 &= \inf\{t > \taua^1 \colon Z^0(t) \leq n-1\}  \\
        \hat{\tau}_s^1 &= \inf\{t > \taua^1 \colon \hat{Z}^0(t) + \hat{V}(t) \leq n-1\}.
    \end{align*}
   By definitions, $Z^0(t) = \hat{Z}^0(t)$ for all $t\geq 0$, and $\hat{V}(t) \geq 0$, so we have $\tau_s^1 \leq \hat{\tau}_s^1$. 
   \item \textbf{Induction step}: Suppose we have proved $\tau_s^{k'} \leq \hat{\tau}_s^{k'}$ for $1\leq k'\leq k$, and we want to show that $\tau_s^{k+1} \leq \hat{\tau}_s^{k+1}$. Observe that %
   \begin{align*}
        \tau_s^{k+1} &= \inf\{t > \taua^{k+1} \colon Z^k(t) \leq n-1\}  \\
        \hat{\tau}_s^{k+1} &= \inf\{t > \taua^{k+1} \colon \hat{Z}^k(t) + \hat{V}(t) \leq n-1\}.
    \end{align*}
    Because $\tau_s^{k'} + S^{k'} \leq \hat{\tau}_s^{k'} + S^{k'}$ for all $1\leq k'\leq k$, we have $Z^k(t) \leq \hat{Z}^k(t)$ for all $t\geq 0$. Moreover, $\hat{V}(t) \geq 0$, so we get $\tau_s^{k+1} \leq \hat{\tau}_s^{k+1}$. 
\end{itemize}
By induction, we have shown that $\tau_s^k \leq \hat{\tau}_s^k$ for all $k\in\Zp$. Combining the induction results and the formula of $Q(t)$ and $\hat{Q}(t)$ in \eqref{eq:pf-dominance:Q} and \eqref{eq:pf-dominance:Qhat}, we have finished the proof. \qedhere 
\end{proof}

\section{Proofs of supporting lemmas}
\label{app:technical-lemmas}
We prove the supporting lemmas for homogeneous \ggn (\Cref{sec:lemmas-homo}) and heterogeneous \ggn (\Cref{sec:hetero}) together. 
This section is organized as follows. We first introduce some additional notation and definitions in \Cref{app:notation-support-lemma}. Then we prove BAR (Lemmas~\ref{lem:bar} and \ref{lem:bar-het}) in \Cref{app:pf-BAR}. Next, in \Cref{app:pf-Rbdd}, we prove the lemmas of bounded mean residual times (Lemmas~\ref{lem:R-bdd} and \ref{lem:R-bdd-het}). In \Cref{app:pf-uncorrelation}, we prove the uncorrelation lemmas (Lemmas~\ref{lem:indep-R-Qtilde} and \ref{lem:indep-R-Qtilde-het}). 
Finally, in \Cref{app:pf-equalities-and-finiteness}, we prove the preliminary steady-state equalities (Lemmas~\ref{lem:apply-bar-lots-of} and \ref{lem:apply-bar-lots-of-het}) and finiteness results (Lemmas~\ref{lem:expfinite} and \ref{lem:expfinite-het}). 

Throughout \Cref{app:technical-lemmas}, we focus on the modified \ggn, whose service-time distributions are allowed to be heterogeneous across servers. %
We focus on the case where $\rho < 1$, and use the convention that the system is in steady state at time $0$, consistent with \Cref{sec:steady-state-anaysis:bar}. 
Without out loss of generality, we assume that no events (arrivals or completions) happen at the same time point (\Cref{assump:simultaneous}); see \Cref{remark:simultaneous} for the necessary changes to proof without this assumption.

\subsection{Additional notation and definitions}
\label{app:notation-support-lemma}
In this subsection, we introduce some additional notation and definitions regarding the modified \ggn queue and the leave-one-out systems. Since we do not consider the original \ggn queue throughout this section, we will reuse some notation in \Cref{sec:model-homo} and \Cref{app:modified-dominance} that was reserved for the original \ggn queue.

Recall that the modified \ggn queue is driven by a set of independent renewal processes corresponding to the arrivals and completions. In the main body, we describe these renewal processes mainly using the residual time processes (i.e., forward recurrence times), $R_a(t)$ and $\{R_{\si}(t)\}_{i\in[n]}$; here, we define the corresponding ages processes (i.e., backward recurrence times): at each time $t$, 
\begin{itemize}
    \item Let $\agea(t)$ be the time elapsed since the last arrival.  
    \item Let $\agej(t)$ be the time elapsed since the last completion of the $j$-th server, for each $j\in[n]$.  
\end{itemize}

Next, we define the alternative state representation based on the ages: we let $\Ysuper(t) = (Y(t), \Qdropstable(t))$, where $Y(t) \triangleq (\age_1(t), \dots, \age_n(t), \agea(t), Q(t))$, and recall that the vector $\Qdropstable$ represents the queue lengths of the set of leave-one-out systems whose loads are below $1$. 

We let $\Ysuper(0)$ follow the stationary distribution. 
Each sample path of $\{\Ysuper(t)\}_{t\geq 0}$ is determined by the initial state $\Ysuper(0)$, and the sequences $\{A^{k}\}_{k\in\Znn}$ and $\{S_i^{k}\}_{k\in\Znn}$ for $i\in[n]$, defined as follows:
\begin{itemize}
    \item $A^0 - \agea(0)$ is the time of the first arrival, and $A^k$ is the interarrival time between the $k$-th and $(k+1)$-th arrival events for $k\in\Zp$. Then $A^k\sim\Prob{A\in\,\cdot\,}$ for $k\in\Zp$, and $A^0\sim\Prob{A\in\,\cdot\,\given A\geq \agea(0)}$, where $A$ denotes a generic interarrival time independent of anything else. 
    \item $S_i^0-\agei(0)$ is the completion time of the initial job on the $i$-th server, and $S_i^k$ is the time between the $k$-th and $(k+1)$-th completion events on the $i$-th server, for each $i\in[n]$ and $k\in\Zp$. Then $S_i^k\sim\Prob{S_i\in\,\cdot\,}$ for $k\in\Zp$, and $S_i^0\sim\Prob{S_i\in\,\cdot\,\given S_i\geq \agei(0)}$, where $S_i$ denotes a generic service time of the $i$-th server and is independent of anything else.
\end{itemize}
The above-defined random variables and $\Ysuper(0)$ are ``mutually independent'' in the following sense:
$A^0$ is independent of $\Ysuper(0)$ conditioned on $\agea(0)$; $S_i^0$ is independent of $\Ysuper(0)$ conditioned on $\agei(0)$; for each $k\in\Zp$, $A^k$ is independent of everything else, i.e., $\Ysuper(0), \{S_i^k\}_{i\in[n], \ell\in\Znn}$ and $\{A^\ell\}_{\ell\neq k}$; for each $k\in\Zp$ and $i\in[n]$, $S_i^k$ is independent of everything else, i.e., $\Ysuper(0), \{S_j^\ell\}_{(j,\ell)\neq (i,k)}$, and $\{A^\ell\}_{\ell\in\Znn}$. 

Now we can write some basic quantities using the stochastic primitives defined above:

For each $k\in\Zp$, the $k$-th arrival time is $\taua^k = \sum_{\ell=0}^{k-1} A^{\ell}-\agea(0)$, and we let $\taua^0 = -\agea(0)$. 
For each time $t\geq 0$, the number of arrivals by time $t$ is given by $\Na(t) \triangleq \min \{k \in\Znn\colon \taua^k + A^k > t\}$; the age at time $t$ is $\agea(t) = t - \taua^{\Na(t)}$; the residual arrival time at time $t$ is $R_{a}(t) = A^{\Na(t)} - \agea(t)$. 

Similarly, on the $i$-th server, for each $k\in\Zp$, the $k$-th completion time is $\taui^k = \sum_{\ell=0}^{k-1} S_i^{\ell}-\agei(0)$, and we let $\taui^0=-\agei(0)$. 
For each time $t\geq 0$, the number of completions by time $t$ is $\Ni(t) = \min\{k\in\Znn \colon \taui^k + S_i^k > t\}$; the age at time $t$ is $\agei(t) = t - \taui^{\Ni(t)}$; the residual completion time at time $t$ is $R_{\si}(t) = S_i^{\Ni(t)} - \agei(t)$. 

We then define a set of filtrations, corresponding to the arrival process, completion processes, and the whole system: 
\begin{align*}
    \cF_{a,t} &= \sigma\Big(\agea(0), A^0, A^1, \dots, A^{\Na(t)-1}, \indibracbig{A^{\Na(t)} > t-\taua^{\Na(t)}}\Big) \quad \forall t\geq 0\\
    \cF_{i,t} &= \sigma\Big(\agei(0), S_i^0, S_i^1, \dots, S_i^{\Ni(t)-1}, \indibracbig{S_i^{\Ni(t)} > t - \taui^{\Ni(t)}}\Big) \quad \forall i\in[n], t\geq 0 \\
    \cF_t &= \sigma\Big(\Ysuper(0), \cF_{a,t}, \cup_{i\in[n]} \cF_{i,t}\Big) \quad \forall t\geq 0.
\end{align*}
We denote $\cF_{a,\infty} = \cup_{t\geq 0} \cF_{a,t}$, $\cF_{i,\infty} = \cup_{t\geq 0} \cF_{i,t}$ for $i\in[n]$, and $\cF_{\infty} = \cup_{t\geq 0} \cF_{t}$. 

It is not hard to see that $\Na(t), \agea(t)\in \cF_{a,t}$ for $t\geq 0$, but $R_{a}(t)\neq \cF_{a,t}$. Similarly, $\Ni(t), \agei(t)\in\cF_{i,t}$ for each $i\in[n]$ and $t\geq 0$, but $R_{\si}(t)\neq \cF_{i,t}$. 
Furthermore, since $Q(t)$ is determined by the initial state and the arrival and completion events by time $t$, we have $Q(t) \in \cF_t$. 
Similarly, we have $\Qdropj(t) \in \sigma\big(\Qdropj(0), \cF_{a,t}, \cup_{i\neq j} \cF_{i,t} \big)$ for $j\in[n]$ and $t\geq 0$.

\subsection{Proof of BAR}
\label{app:pf-BAR}

In this subsection, we prove BAR for the heterogeneous case \Cref{lem:bar-het}, which will imply \Cref{lem:bar} for the homogeneous case as a special case. 
The arguments we use are adapted from Section~6 of \citep[][]{BraDaiMiy_23_bar_ss}.

\barhet*

\begin{proof}
    We let $\Xsuper(0)$ follow the stationary distribution $\pi$ at time $0$. 
    Let $\tau_1, \tau_2, \dots$ be the times of all arrivals or completion events after time $0$, indexed in temporal order. As assumed throughout our proofs, no events occur simultaneously, and we have sketched what should be changed for the general case in~\Cref{remark:simultaneous}. 

    \paragraph*{Proving BAR}
    In the first part of the proof, we prove the main BAR euqation \eqref{eq:bar-het} utilizing the fundamental theorem of calculus. 
    We start by examining the differentiability of $f(\Xsuper(t))$ between two consecutive events, $\tau^k$ and $\tau^{k+1}$, for each $k\in\Zp$. 
    Observe that when $t\in(\tau^k, \tau^{k+1})$, $\odv \Xsuper_\ell(t) / \odv t = - 1$ for $\ell=1,2,\dots, n+1$, and the other coordinates of $\Xsuper(t)$ remain constant. This implies that $\Xsuper(t)$ is Lipschitz continuous for $t\in[\tau^k, \tau^{k+1}]$. 
    Given the local Lipschitz continuity of $f(\cdot)$ and the compactness of $[\tau^k, \tau^{k+1}]$, we deduce that $f(\Xsuper(t))$ is Lipschitz continuous for $t\in[\tau^k, \tau^{k+1}]$. 
    Since Lipschitz continuity implies absolute continuity, $f(\Xsuper(t))$ is almost everywhere differentiable on $[\tau^k, \tau^{k+1}]$. Then for any $t$ where $f(\Xsuper(t))$ is differentiable, the derivative of $f(\Xsuper(t))$ agrees with the right derivative:
    \[
        \frac{\odv f(\Xsuper(t))}{\odv t} = - \sum_{\ell=1}^{n+1} \partial^+_\ell f(\Xsuper(t)) \triangleq \opinner f(\Xsuper(t)).
    \]
    Moreover, because the fundamental theorem of calculus holds for absolutely continuous functions, we have
    \begin{align}
        \nonumber
        f(\Xsuper(\tau^{k+1}\wedge t -)) - f(\Xsuper(\tau^{k}))  
        &= \int_{\tau^k}^{\tau^{k+1}\wedge t} \frac{\odv f(\Xsuper(u))}{\odv u} \odv u \\
        \label{eq:}
        &= \int_{\tau^k}^{\tau^{k+1}\wedge t} \opinner f(\Xsuper(u)) \odv u,
    \end{align}
    for $1\leq k < N(t)\triangleq\min\{k'\colon \tau^{k'} > t\}$. 
    Similarly, we can apply the fundamental theorem of calculus to $f(\Xsuper(\tau^{1}\wedge t)) - f(\Xsuper(0))$. Summing up all the resulting equations, we get
    \begin{equation}
        f(\Xsuper(t)) - f(\Xsuper(0)) = \int_0^t \opinner f(\Xsuper(u)) \odv u + \sum_{k=1}^{N(t)-1} \big(f(\Xsuper(\tau^k)) - f(\Xsuper(\tau^k-))\big).
    \end{equation}
    Rearranging the terms based on different types of events, we get
    \begin{equation}
        \label{eq:pf-bar:result-of-fct-1}
        \begin{aligned}
        f(\Xsuper(t)) - f(\Xsuper(0)) 
        &= \int_0^t \opinner f(\Xsuper(u)) \odv u + \sum_{k=1}^\infty \big(f(\Xsuper(\taua^k)) - f(\Xsuper(\taua^k-))\big) \indibrac{\taua^k \leq t}  \\
        &\mspace{23mu} +  \sumi \sum_{k=1}^\infty \big(f(\Xsuper(\taui^k)) - f(\Xsuper(\taui^k-))\big) \indibrac{\taui^k \leq t}.
        \end{aligned}
    \end{equation}
    
    Now we take the expectations on both sides of \eqref{eq:pf-bar:result-of-fct-1} to get 
    \begin{equation}
        \label{eq:pf-bar:result-of-fct-2}
        \begin{aligned}
        \E{f(\Xsuper(t)) - f(\Xsuper(0))}
        &= \E{\int_0^t \opinner f(\Xsuper(u)) \odv u} \\
        &\mspace{23mu} + \E{\sum_{k=1}^\infty \big(f(\Xsuper(\taua^k)) - f(\Xsuper(\taua^k-))\big) \indibrac{\taua^k \leq t}}  \\
        &\mspace{23mu} +  \sumi \E{\sum_{k=1}^\infty \big(f(\Xsuper(\taui^k)) - f(\Xsuper(\taui^k-))\big) \indibrac{\taui^k \leq t}}.
        \end{aligned}
    \end{equation}
    Because $\Xsuper(0)$ follows the stationary distribution $\pi$, $\Xsuper(0) \overset{d}{=} \Xsuper(u) \sim \pi$ for all $u\geq 0$. This implies that
    \begin{align*}
        \E{f(\Xsuper(t)) - f(\Xsuper(0))} &= 0 \\
        \E{\int_0^t \opinner f(\Xsuper(u)) \odv u} 
         &= \int_0^t \E{\opinner f(\Xsuper(u))}  \odv u \\
         &= t \, \Ep{\pi}{\opinner f(\Xsuper)},
    \end{align*}
    where the interchange of $\E{\cdot}$ with $\int_0^t\cdot$ is valid because $\opinner f(\Xsuper(u)) = - \sum_{\ell=1}^{n+1} \partial^+_\ell f(\Xsuper(u))$ is bounded for $u\in[0,t]$, due to the local Lipschitz continuity of $f(\cdot)$. Therefore, substituting the above two equations back to \eqref{eq:pf-bar:result-of-fct-2} and taking $t=1$, we get
    \begin{equation}
        \label{eq:pf-bar:simplify-continuous-terms}
        \begin{aligned}
        0&= \Ep{\pi}{\opinner f(\Xsuper)} + \E{\sum_{k=1}^\infty \big(f(\Xsuper(\taua^k)) - f(\Xsuper(\taua^k-))\big) \indibrac{\taua^k \leq 1}}  \\
        &\mspace{23mu} +  \sumi \E{\sum_{k=1}^\infty \big(f(\Xsuper(\taui^k)) - f(\Xsuper(\taui^k-))\big) \indibrac{\taui^k \leq 1}}.
        \end{aligned}
    \end{equation}
    Note that the second and third terms of \eqref{eq:pf-bar:simplify-continuous-terms} have similar forms as the Palm expectations: as defined in \Cref{sec:steady-state-anaysis:bar}, we have
    \begin{align*}
        \Ep{a}{f(\Xsuper\aft) - f(\Xsuper\bef)} &= \frac{1}{\Lambda} \E{\sum_{k=1}^\infty \big(f(\Xsuper(\taua^k)) - f(\Xsuper(\taua^k-))\big) \indibrac{\taua^k \leq 1}}  \\
        \Ep{\si}{f(\Xsuper\aft) - f(\Xsuper\bef)} &=  \frac{1}{\mu_i} \E{\sum_{k=1}^\infty \big(f(\Xsuper(\taui^k)) - f(\Xsuper(\taui^k-))\big) \indibrac{\taui^k \leq 1}} \quad \forall i\in[n].
    \end{align*}
    Substituting the definitions into \eqref{eq:pf-bar:simplify-continuous-terms}, we get the main equation of BAR:
    \begin{equation}
        \tag{\ref{eq:bar-het}}
        \Ep{\sdsuper}{\opinner f(\Xsuper)} + \Lambda \Ep{a}{f(\Xsuper\aft) -f(\Xsuper\bef)} + \sumi \mu_i \Ep{\si}{f(\Xsuper\aft) - f(\Xsuper\bef)} = 0.
    \end{equation} 

    \paragraph*{Characterizing Palm expectations} In the remainder of the proof, we prove \Cref{lem:bar-het}~(i) and \Cref{lem:bar-het}~(ii), which characterize the joint distribution of $\Xsuper\aft$ and $\Xsuper\bef$ under the Palm expectations $\Pnb_a$ and $\Pnb_{\si}$ for $i\in[n]$. 

    We start by proving \Cref{lem:bar-het}~(i). First, because $R_a(\taua^k-)=0$ for all $k\in\Zp$, by the definition of $\Pnb_a$, we have
    \[
        \Probp{a}{R_a=0} = \frac{1}{\Lambda}\E{\sum_{k=1}^\infty \indibrac{R_a(\taua^k-)=0, \taua^k\leq 1}} = 1.
    \]
    Next, to understand the relations between the coordinates of $\Xsuper\aft$ and $\Xsuper\bef$ under $\Pnb_a$, we examine the expectation $\Ep{a}{f(\Xsuper\bef)g(\Xsuper\aft-\Xsuper\bef)}$ for any bounded measurable functions $f, g\colon \R^{n+1} \times \Z^{1+\abs{\indexstable}} \to \R$:
    \begin{align}
        \nonumber
        \Ep{a}{f(\Xsuper\bef)g(\Xsuper\aft-\Xsuper\bef)} 
        &= \frac{1}{\Lambda} \E{\sum_{k=1}^\infty f\big(\Xsuper(\taua^k-)\big)g\big(\Xsuper(\taua^k) - \Xsuper(\taua^k-)\big)  \indibrac{\taua^k \leq 1}} \\
        \nonumber
        &= \frac{1}{\Lambda} \sum_{k=1}^\infty \E{f\big(\Xsuper(\taua^k-)\big)g\big(\Xsuper(\taua^k) - \Xsuper(\taua^k-)\big)  \indibrac{\taua^k \leq 1}}.
    \end{align}
    By the definitions of the sample paths, for each $k\in\Zp$, at the time of arrival $\taua^k$, 
    \begin{equation}
        \Xsuper(\taua^k) = \Xsuper(\taua^k-) + A^k \bm{e}_a + \bm{e}_q + \sum_{j\in\indexstable} \bm{e}_{q,-j}, 
    \end{equation}
    where $\bm{e}_a, \bm{e}_q, \bm{e}_{q,-j} \in\spacesuper$ are one-hot vectors with the non-zero entries at the coordinates of $R_a$, $Q$, and $\Qdropj$, respectively. %
    Note that $A^k$ is independent of $(\taua^k, \Xsuper(\taua^k-))$ and has the same distribution as $A$, so
    \begin{align}
        \nonumber
        \Ep{a}{f(\Xsuper\bef)g(\Xsuper\aft-\Xsuper\bef)} 
        &=  \frac{1}{\Lambda} \sum_{k=1}^\infty \E{f\big(\Xsuper(\taua^k-)\big)\indibrac{\taua^k \leq 1}}\, \Ebigg{g\Big( A \bm{e}_a + \bm{e}_q + \sum_{j\in\indexstable} \bm{e}_{q,-j}\Big)} \\
        \label{eq:pf-bar:characterize-pa-interm}
        &=  \Ep{a}{f\big(\Xsuper\bef\big)} \, \Ebigg{g\Big( A \bm{e}_a + \bm{e}_q + \sum_{j\in\indexstable} \bm{e}_{q,-j}\Big)}.
    \end{align}
    Because $f$ and $g$ can be arbitrary bounded measurable functions, \eqref{eq:pf-bar:characterize-pa-interm} implies that under $\Pnb_a$, $\Xsuper\aft - \Xsuper\bef$ is independent of $\Xsuper\bef$, and has the same distribution as $A \bm{e}_a + \bm{e}_q + \sum_{j\in\indexstable} \bm{e}_{q,-j}$. This proves \Cref{lem:bar-het}~(i). 

    To show \Cref{lem:bar-het}~(ii), note that for each $i\in[n]$ and $k\in\Zp$, we have $R_{\si}(\taui^k-) = 0$, which implies that $R_{\si}\bef = 0$ with probability $1$ under $\Pnb_{\si}$. 
    Moreover, by definitions,
    \begin{equation}
        \label{eq:pf-bar:completion-event-update}
        \Xsuper(\taui^k) = \Xsuper(\taui^k-) + S_i^k \bm{e}_{i} - \indibrac{Q(\taui^k-) > 0} \bm{e}_q - \sum_{j\in\indexstable \backslash \{i\}} \indibrac{\Qdropj(\taui^k-) > 0} \bm{e}_{q,-j},
    \end{equation}
    where $\bm{e}_i\in\spacesuper$ denotes the one-hot vector with the non-zero entry at the coordinate of $R_{\si}$. Because $S_i^k$ is independent of $(\taui^k, \Xsuper(\taui^k-))$ and has the same distribution as $S_i$, for any bounded measurable functions $f\colon\R^{n+1} \times \Z^{1+\abs{\indexstable}} \to \R$ and $g\colon \R\to\R$, we have
    \begin{align}
        \nonumber
        &\Ep{\si}{f(\Xsuper\bef)g(R_{\si}\aft-R_{\si}\bef)}  \\
        \nonumber
        &\qquad = \frac{1}{\Lambda} \sum_{k=1}^\infty \E{f\big(\Xsuper(\taui^k-)\big)g\big(R_{\si}(\taui^k) - R_{\si}(\taui^k-)\big)  \indibrac{\taui^k \leq 1}} \\
        \nonumber
        &\qquad =  \frac{1}{\Lambda} \sum_{k=1}^\infty \E{f\big(\Xsuper(\taui^k-)\big)\indibrac{\taui^k \leq 1}}\, \Ebig{g(S_i)} \\
        \label{eq:pf-bar:characterize-pi-interm}
        &\qquad =  \Ep{\si}{f\big(\Xsuper\bef\big)} \, \Ebig{g(S_i)}.
    \end{align}
    This implies that under $\Pnb_{\si}$, $R_{\si}\aft - R_{\si}\bef$ is independent of $\Xsuper\bef$ and has the same distribution as $S_i$. 
    In addition, for any bounded measurable function $f\colon \R^{n+1} \times \Z^{1+\abs{\indexstable}} \times \Z\to\R$, 
    \begin{align}
        \nonumber
        \Ep{\si}{f(\Xsuper\bef, Q\aft)}
        &= \frac{1}{\Lambda} \sum_{k=1}^\infty \E{f\big(\Xsuper(\taui^k-), Q(\taui^k)\big) \indibrac{\taui^k \leq 1}} \\
        \nonumber
        &= \frac{1}{\Lambda} \sum_{k=1}^\infty \E{f\Big(\Xsuper(\taui^k-),\; Q(\taui^k-) - \indibrac{ Q(\taui^k-) > 0} \Big) \indibrac{\taui^k \leq 1}}  \\
        \nonumber
        &= \Ep{\si}{f\big(\Xsuper\bef,\; Q\bef - \indibrac{Q\bef>0}\big)},
    \end{align}
    which implies that $Q\aft = Q\bef - \indibrac{Q\bef > 0}$ under the distribution $\Pnb_{\si}$ with probability $1$. Similarly, we can show that $\Qdropj\aft = \Qdropj\bef - \indibracbig{\Qdropj\bef > 0}$ for $j\in\indexstable\backslash \{i\}$, $R_a\aft = R_a\bef$, and $R_{\sj}\aft = R_{\sj}\bef$ for $j\in[n]\backslash \{i\}$ under $\Pnb_{\si}$ with probability $1$. 
    This proves \Cref{lem:bar-het}~(ii). 
    \qedhere
\end{proof}

\subsection{Proof of bounded mean residual times (Lemmas~\ref{lem:R-bdd} and \ref{lem:R-bdd-het})}
\label{app:pf-Rbdd}

In this section, we prove \Cref{lem:R-bdd} and \Cref{lem:R-bdd-het}, restated as follows: 

\Rbdd*

\Rbddhet*

Below, we focus on proving \Cref{lem:R-bdd-het} since it is a strict generalization of \Cref{lem:R-bdd}.

\begin{proof}[Proof of \Cref{lem:R-bdd-het}]
    We first show \eqref{eq:R-sj-bound-stationary} for $\nu=\pi$. By the construction of the sample paths in \Cref{app:notation-support-lemma} and the tower property of conditional expectations,  
    \begin{align}
        \nonumber
        \Ep{\pi}{R_{\sj} \given Q, \Qdropstable}
        &= \E{R_{\sj}(0) \given Q(0), \Qdropstable(0)} \\
        \nonumber
        &= \E{\E{R_{\sj}(0) \given Q(0), \Qdropstable(0), \agej(0)}\given Q(0), \Qdropstable(0)} \\
        \label{eq:pf-R-bdd:Epi-R-intermediate}
        &= \E{\E{S_j^0 - \agej(0) \given Q(0), \Qdropstable(0), \agej(0)}\given Q(0), \Qdropstable(0)}.
    \end{align}
    The inner conditional expectation in the last expression can be bounded as follows: 
    \begin{align}
        \label{eq:pf-R-bdd:apply-cond-indep}
        \E{S_j^0 - \agej(0) \given Q(0), \Qdropstable(0), \agej(0)} &=\E{S_j^0 - \agej(0) \given \agej(0)} \\
        \label{eq:pf-R-bdd:apply-S0-law}
        &= \E{S_j - \agej(0) \given S_j\geq \agej(0)} \\
        \label{eq:pf-R-bdd:apply-assump-bdd}
        &\leq \Rsmaxj / \mu_j.
    \end{align}
    where \eqref{eq:pf-R-bdd:apply-cond-indep} is because $S_j^0$ is by definition independent of $Q(0)$ and $\Qdropstable(0)$ given $\agej(0)$; \eqref{eq:pf-R-bdd:apply-S0-law} is by the distribution of $S_j^0$; \eqref{eq:pf-R-bdd:apply-assump-bdd} is due to the definition $\Rsmaxj \triangleq \sup_{t'\geq 0} \E{\mu_i S_i - t' \given \mu_i S_i\geq t'}$. Substituting the bound in \eqref{eq:pf-R-bdd:apply-assump-bdd} back to \eqref{eq:pf-R-bdd:Epi-R-intermediate} yields $\mu_j \Ep{\pi}{R_{\sj}\given Q, \Qdropstable} \leq \Rsmaxj$.

    Next, we prove \eqref{eq:R-sj-bound-stationary} for $\nu=\Pnb_{\si}$ with $i\neq j$. Let $(q,\qdropstable)$ be any realization of $(Q, \Qdropstable)$. 
    By the definition of $\Ep{\si}{\cdot}$, we have
    \begin{align}
        \nonumber
        &\mspace{23mu} \Ep{\si}{R_{\sj} \indibracbig{(Q, \Qdropstable)=(q,\qdropstable)}} \\
        \nonumber
        &= \frac{1}{\mu_i}\E{\sum_{k=1}^\infty \indibracBig{\taui^k \leq 1, \big(Q(\taui^k-), \Qdropstable(\taui^k-)\big)=(q,\qdropstable)} R_{\sj}(\taui^k-)} \\
        \nonumber
        &= \frac{1}{\mu_i} \sum_{k=1}^\infty \mathbb{E}\Bigg[\mathbb{E}\Bigg[\indibracBig{\taui^k \leq 1, \big(Q(\taui^k-), \Qdropstable(\taui^k-)\big)=(q,\qdropstable)} \\
        \nonumber
        &\mspace{110mu} \cdot R_{\sj}(\taui^k-) \;\Bigg|\; \Ysuper(0), \cF_{a,\infty}, \{\cF_{\ell,\infty}\}_{\ell\neq j}, \cF_{j,\taui^k}\Bigg]\Bigg] \\
        \label{eq:pf-R-bdd:E-si-intermediate}
        &= \frac{1}{\mu_i} \sum_{k=1}^\infty \mathbb{E}\Bigg[\indibracbig{\taui^k \leq 1} \indibracbig{\big(Q(\taui^k-), \Qdropstable(\taui^k-)\big)=(q,\qdropstable)} \\
        \nonumber
        &\mspace{80mu} \cdot\E{R_{\sj}(\taui^k-)  \given \Ysuper(0), \cF_{a,\infty}, \{\cF_{\ell,\infty}\}_{\ell\neq j}, \cF_{j,\taui^k}}\Bigg],
    \end{align}
    where \eqref{eq:pf-R-bdd:E-si-intermediate} is because $\taui^k$, $Q(\taui^k-)$, and $\Qdropstable(\taui^k-)$ are measurable with respect to the sigma algebra \[\sigma\Big(\Ysuper(0), \cF_{a,\infty}, \{\cF_{\ell,\infty}\}_{\ell\neq j}, \cF_{j,\taui^k}\Big).\]
    To bound the conditional expectation of $R_{\sj}(\taui^k-)$ in \eqref{eq:pf-R-bdd:E-si-intermediate}, we rewrite it as:
    \begin{align}
        \nonumber
        &\E{R_{\sj}(\taui^k-)  \given \Ysuper(0), \cF_{a,\infty}, \{\cF_{\ell,\infty}\}_{\ell\neq j}, \cF_{j,\taui^k}} \\
        \nonumber
        &\qquad = \E{S_{j}^{N_j(\taui^k-)} - \agej(\taui^k-)  \given \Ysuper(0), \cF_{a,\infty}, \{\cF_{\ell,\infty}\}_{\ell\neq j}, \cF_{j,\taui^k}} \\
        \label{eq:pf-R-bdd:E-si-intermediate-2}
        &\qquad = \sum_{m=0}^\infty \E{\Big(S_{j}^{m}- \agej(\taui^k-)\Big) \indibrac{N_j(\taui^k-)=m} \given \Ysuper(0), \cF_{a,\infty}, \{\cF_{\ell,\infty}\}_{\ell\neq j}, \cF_{j,\taui^k}}.
    \end{align}
    Because $\taui^k$ is measurable with respect $\sigma\big(\Ysuper(0), \cF_{i,\infty}\big)$, we can rewrite \eqref{eq:pf-R-bdd:E-si-intermediate-2} as $\sum_{m=0}^\infty U_m(\taui^k-)$, where for each $t \geq 0$, 
    $U_m(t)$ is the random variable define as
    \begin{align*}
        U_m(t) 
        &\triangleq \E{\Big(S_{j}^{m} - \agej(t)\Big) \indibrac{N_j(t)=m} \given \Ysuper(0), \cF_{a,\infty}, \{\cF_{\ell,\infty}\}_{\ell\neq j}, \cF_{j,t}}. 
    \end{align*}
    For each $t\geq 0$ and $m\in\Znn$, $U_m(t)$ can be bounded as
    \begin{align}
        \label{eq:pf-R-bdd:apply-cF-independence}
         U_m(t) &= \E{\Big(S_{j}^{m} - \agej(t)\Big) \indibracbig{N_j(t)=m} \given \cF_{j,t}} \\
         \label{eq:pf-R-bdd:apply-defs-N-and-cF}
        &= \E{\Big(S_{j}^{m} - (t-\tauj^m)\Big) \indibracbig{\tauj^m \leq t < \tauj^m + S_{j}^{m}} \given \agej(0),  \{S_j^{\ell}\}_{\ell=0}^{m-1}, \indibracbig{S_j^{m} > (t - \tauj^m)}} \\
        \nonumber
        &= \left(\E{S_{j}^{m} \given \agej(0),  \{S_j^{\ell}\}_{\ell=0}^{m-1}, \indibracbig{S_j^{m} > (t - \tauj^m)}} - (t-\tauj^m) \right) \cdot \indibracbig{\tauj^m \leq t < \tauj^m + S_{j}^{m}} \\ 
        \label{eq:pf-R-bdd:apply-Sm-def}
        &= \left(\E{S_{j} \given S_j > t-\tauj^m} - (t-\tauj^m) \right) \cdot \indibracbig{\tauj^m \leq t < \tauj^m + S_{j}^{m}}  \\
        \label{eq:pf-R-bdd:apply-assump-bdd-2}
        &\leq (\Rsmaxj / \mu_j) \cdot \indibracbig{\tauj^m \leq t < \tauj^m + S_{j}^{m}}. 
     \end{align}
     Here, \eqref{eq:pf-R-bdd:apply-cF-independence} is follows from the conditional independence between $(S_j^m, \agej(t), \Nj(t))$ and $\sigma\Big(\Ysuper(0), \cF_{a,\infty}, \{\cF_{\ell,\infty}\}_{\ell\neq j}\Big)$ given $\agej(0)$, and the fact that $\agej(0)\in\cF_{j,t}$. Furthermore, \eqref{eq:pf-R-bdd:apply-defs-N-and-cF} is due to the definitions of $\Nj(t)$ and $\cF_{j,t}$;  
    \eqref{eq:pf-R-bdd:apply-Sm-def} is by the definition of $S_j^m$ and its independence with $\big(\agej(0), \{S_j^{\ell}\}_{\ell=0}^{m-1}, \tauj^m)$ if $m\geq 1$; 
    \eqref{eq:pf-R-bdd:apply-assump-bdd-2} follows from the definition  $\Rsmaxj \triangleq \sup_{t'\geq 0} \E{\mu_j S_j - t' \given \mu_i S_j\geq t'}$.  
    Substituting \eqref{eq:pf-R-bdd:apply-assump-bdd-2} back to \eqref{eq:pf-R-bdd:E-si-intermediate-2} implies that for $k\in\Zp$,
    \begin{align*}
        &\E{R_{\sj}(\taui^k-)  \given \Ysuper(0), \cF_{a,\infty}, \{\cF_{\ell,\infty}\}_{\ell\neq j}, \cF_{j,\taui^k}} \\
        &\qquad \leq \sum_{m=0}^\infty U_m(\taui^k-) \\
        &\qquad \leq (\Rsmaxj / \mu_j) \cdot \sum_{m=0}^\infty \indibracbig{\tauj^m < \taui^k \leq \tauj^m + S_{j}^{m}} \\
        & \qquad = (\Rsmaxj / \mu_j) \cdot \indibrac{\tauj^0 < \taui^k < \infty} \\
        & \qquad \leq \Rsmaxj / \mu_j \quad a.s.
    \end{align*}
    Further substituting the above inequality back to  \eqref{eq:pf-R-bdd:E-si-intermediate} yields
    \begin{align*}
        &\Ep{\si}{R_{\sj} \indibracbig{(Q, \Qdropstable)=(q,\qdropstable)}} \\
        &\qquad \leq (\Rsmaxj / \mu_j)\cdot \frac{1}{\mu_i} \sum_{k=1}^\infty \E{\indibracbig{\taui^k \leq 1} \indibracbig{\big(Q(\taui^k-), \Qdropstable(\taui^k-)\big)=(q,\qdropstable)}} \\
        &\qquad = (\Rsmaxj / \mu_j) \cdot \Probp{\si}{\big(Q, \Qdropstable\big)=(q,\qdropstable)},
    \end{align*}
    where $(q,\qdropstable)$ is an arbitrary realization of $(Q, \Qdropstable)$. This proves $\Ep{\si}{R_{\sj}\given Q, \Qdropstable} \leq \Rsmaxj/\mu_j$. 

    The proof for \eqref{eq:R-a-bound-stationary} mirrors the proof for \eqref{eq:R-sj-bound-stationary} with the following two modifications: 
    \begin{enumerate}
        \item All quantities associated with the $j$-th server's completion process are swapped with the corresponding quantities associated with the arrival process. 
        \item The definition of $\Ramin$ is used in place of $\Rsmaxj$. 
    \end{enumerate}
    Given the similarity, the detailed proof for \eqref{eq:R-a-bound-stationary} is omitted. 
    \qedhere
\end{proof}

\subsection{Proof of the uncorrelation lemmas (Lemmas~\ref{lem:indep-R-Qtilde} and \ref{lem:indep-R-Qtilde-het})}
\label{app:pf-uncorrelation}

In this subsection, we prove the uncorrelation lemmas, \Cref{lem:indep-R-Qtilde} and \Cref{lem:indep-R-Qtilde-het}, restated as follows: 

\indep*

\indephet*

We focus on the uncorrelation lemma for the heterogeneous setting (\Cref{lem:indep-R-Qtilde-het}), since it is a strict generalization of \Cref{lem:indep-R-Qtilde}. 
The proof of \Cref{lem:indep-R-Qtilde-het} is based on the following lemma:

\begin{lemma}
    \label{lem:E-R-t-conv}
    Consider the modified \ggn queue with heterogeneous servers. For any $j\in[n]$, consider the residual service time process of the $j$-th server, $\{R_{\sj}(t)\}_{t\geq 0}$, and assume that it has non-lattice service times. 
    Then for any $\epsilon > 0$, there exists a deterministic $T_0 > 0$ such that for any $t \geq T_0 + R_{\sj}(0)$, we have 
    \[
        \abs{\E{R_{\sj}(t) \given R_{\sj}(0)} - \Ep{\pi}{R_{\sj}}} \leq \epsilon \quad \text{w.p. } 1.
    \]
\end{lemma}

\begin{proof}[Proof of \Cref{lem:E-R-t-conv}]
    We first consider the case when $R_{\sj}(0) = 0$, so $R_{\sj}(t)$ is a pure renewal process. Because the inter-event time  $S_j$ of this renewal process is non-lattice (\Cref{assump:non-lattice-het}), 
    we can invoke the Key Renewal Theorem \citep[][Theorem V.4.3]{Asm_03}. Restated in our setting, this theorem implies that 
    \begin{equation*}
        \lim_{t\to\infty} \E{R_{\sj}(t) \given R_{\sj}(0) = 0}  = \Ep{\pi}{R_{\sj}}, 
    \end{equation*}
    if we can show that $z(t) \triangleq  \E{\max(S_j-t, 0)}$ is \emph{directly Riemann integrable} and satisfies
    \begin{equation}
        \label{eq:pf-E-R-t-conv:z-equation}
        \E{R_{\sj}(t) \given R_{\sj}(0) = 0} = z(t) + \int_0^t \E{R_{\sj}(t- u) \given R_{\sj}(0) = 0}  F_j(\odv u), 
    \end{equation}
    where $\int \,\cdot\, F_j(\odv u)$ denotes the integral with respect to $u$ when $u$ follows the distribution of $S_j$. 

    We first show that $z(t)$ is directly Riemann integrable. By Proposition V.4.1 of \citep{Asm_03}, $z(t)$ is directly Riemann integrable if it is non-increasing and Lebesgue integrable on $[0,\infty)$. Obviously, $z(t)$ is non-increasing. Moreover, $z(t)$ is non-negative and its Lebesgue integral can be bounded as
    \begin{align*}
        \int_0^\infty z(t) 
        &= \int_0^\infty \E{\max(S_j-t, 0)} \odv t \\
        &= \E{\int_0^\infty \max(S_j-t, 0) \odv t} \\
        &= \frac{1}{2}\E{S_j^2} \\
        &\leq \frac{1}{2}(\Rsmaxj)^2,
    \end{align*}
    which is finite by \Cref{assump:bdd-exp-remain-het} (or \Cref{assump:bdd-exp-remain} in the homogeneous case). 

    Next, we show that $z(t)$ satisfies \eqref{eq:pf-E-R-t-conv:z-equation}. Recall that $S_j^1$ is the service time of the job that starts service at time $R_{\sj}(0)$. We decompose $\E{R_{\sj}(t) \given R_{\sj}(0) = 0}$ based on whether $S_j^1 > t$ to get the following: 
    \begin{align*}
        \E{R_{\sj}(t) \given R_{\sj}(0) = 0}
        &= \E{R_{\sj}(t) \indibracbig{S_j^1  > t} \given R_{\sj}(0) = 0} \\
        &\mspace{23mu} + \E{R_{\sj}(t) \indibracbig{S_j^1  \leq t} \given R_{\sj}(0) = 0} \\
        &= \E{\max(S_j^1-t, 0)} +  \int_0^t \E{R_{\sj}(t)\given R_{\sj}(0) = 0, S_j^1 =u} F_j(\odv u) \\
        &= z(t) + \int_0^t \E{R_{\sj}(t- u) \given R_{\sj}(0) = 0}  F_j(\odv u). 
    \end{align*}

    The above arguments verify the assumptions of the Key Renewal Theorem. Therefore, given $R_{\sj}(0)= 0$, for any $\epsilon > 0$, there exists $T_0 > 0$ such that for any $t \geq T_0$, 
    \begin{equation}
        \label{eq:pf-E-R-t-conv:pure-renewal-result}
        \abs{\E{R_{\sj}(t) \given R_{\sj}(0) = 0}  - \Ep{\pi}{R_{\sj}}} \leq \epsilon. 
    \end{equation}
    In general, when $R_{\sj}(0)\neq 0$, $R_{\sj}(t)$ is a delayed renewal process and we have 
    \[
        \E{R_{\sj}\big(t + R_{\sj}(0)\big) \given R_{\sj}(0)} = \E{R_{\sj}(t) \given R_{\sj}(0) = 0} \quad \forall t> 0. 
    \]
    Substituting the last equation into \eqref{eq:pf-E-R-t-conv:pure-renewal-result} finishes the proof. 
\end{proof}

Now we are ready to prove \Cref{lem:indep-R-Qtilde-het}. 

\begin{proof}[Proof of \Cref{lem:indep-R-Qtilde-het}]
    By the definition of the Palm expectation $\Ep{\si}{\cdot}$, for arbitrary $T>0$, we have
    \begin{align}
        \nonumber
        &\Ep{\si}{\indibrac{\Qdropj=0}R_{\sj}} \\
        \nonumber
        &\qquad = \frac{1}{\mu_i T} \E{\sum_{k=1}^\infty \indibrac{\taui^k\leq T,\Qdropj(\taui^k-)=0} R_{\sj}(\taui^k-)} \\
        \nonumber
        &\qquad = \frac{1}{\mu_i T} \sum_{k=1}^\infty \E{\E{\indibrac{\taui^k\leq T,\Qdropj(\taui^k-)=0} R_{\sj}(\taui^k-) \given \Ysuper(0), \cF_{a,\infty}, \{\cF_{\ell,\infty}\}_{\ell\neq j}}}  \\
        \label{eq:pf-indep:take-out-Q}
        &\qquad = \frac{1}{\mu_i T} \sum_{k=1}^\infty \E{\indibrac{\taui^k\leq T,\Qdropj(\taui^k-)=0} \cdot \E{R_{\sj}(\taui^k-) \given \Ysuper(0), \cF_{a,\infty}, \{\cF_{\ell,\infty}\}_{\ell\neq j}}} \\
       \label{eq:pf-indep:rewrite-as-V}
        &\qquad = \frac{1}{\mu_i T} \sum_{k=1}^\infty \E{\indibrac{\taui^k\leq T,\Qdropj(\taui^k-)=0} \cdot \Rmean(\taui^k-)},
    \end{align}
    where \eqref{eq:pf-indep:take-out-Q} is because $\big(\taui^k, \Qdropj(\taui^k-)\big) \in \sigma\Big(\Ysuper(0), \cF_{a,\infty}, \{\cF_{\ell,\infty}\}_{\ell\neq j}\Big)$; $\Rmean(\cdot)$ is defined as
    \[
        \Rmean(t) \triangleq \E{R_{\sj}(t) \given \Ysuper(0), \cF_{a,\infty}, \{\cF_{\ell,\infty}\}_{\ell\neq j}} \quad \forall t\geq 0,
    \]
    and \eqref{eq:pf-indep:rewrite-as-V} is due to the fact that $\taui^k\in\cF_{i,\infty}$. Note that because the process $R_{\sj}(t)$ is conditionally independent of $\Ysuper(0)$ given $\agej(0)$, we can simplify $\Rmean(t)$ into
    \begin{equation}
        \Rmean(t) = \E{R_{\sj}(t) \given \agej(0)}.
    \end{equation}
    In the subsequent argument, we will use the fact that
    \begin{equation}
        \label{eq:pf-indep:Vt-conditional-rule}
        \Rmean(t) = \E{\E{R_{\sj}(t) \given R_{\sj}(0)} \given \agej(0)},
    \end{equation}
    which is true because 
    $R_{\sj}(t) = R_{\sj}(0) + \sum_{\ell=1}^{\Nj(t)} S_j^\ell - t$ and $\agej(0) = S_j^0 - R_{\sj}(0)$ are conditionally independent given $R_{\sj}(0)$.

    Next, we bounds the difference between $\Rmean(\taui^k-)$ and $\Ep{\pi}{R_{\sj}}$ for each $k\in\Zp$.
    Recall that by \Cref{lem:E-R-t-conv}, for any $\epsilon > 0$, there exists a deterministic $T_0 > 0$ such that for any $t \geq T_0 + R_{\sj}(0)$, we have 
    \begin{equation}
        \label{eq:E-R-t-conv}
        \abs{\E{R_{\sj}(t) \given R_{\sj}(0)} - \Ep{\pi}{R_{\sj}}} \leq \epsilon \quad \text{w.p. } 1.
    \end{equation}
    Let the random variable $K^\epsilon \triangleq \min\{k' \in\Zp \colon \taui^{k'+1} > T_0 + R_{\sj}(0)\}$. Then by \eqref{eq:pf-indep:Vt-conditional-rule} and \eqref{eq:E-R-t-conv}, we have 
    \begin{equation}
        \label{eq:E-R-at-tau-k-conv}
         \abs{\Rmean(\taui^k-) - \Ep{\pi}{R_{\sj}}} \leq \epsilon \quad  \text{ if } k\geq K^\epsilon + 1. 
    \end{equation}
    Moreover, we argue that 
    \begin{equation}
        \label{eq:K-eps-bdd-exp}
        \E{K^\epsilon} \leq \mu_i T_0 + \mu_i \Ep{\pi}{R_{\sj}} + \E{(\mu_i S_i)^2} < \infty. 
    \end{equation}
    To see this, recall that $\taui^{k'+1} = \sum_{\ell=0}^{k'} S_i^\ell - \agei(0) = \sum_{\ell=1}^{k'} S_i^\ell + R_{\si}(0)$, where $S_i^\ell$ are i.i.d. for $\ell\geq 1$. 
    We can thus rewrite $K^\epsilon = \inf\{k'\in\Zp \colon \sum_{\ell=1}^{k'} S_i^\ell > T_0 + R_{\sj}(0) - R_{\si}(0)\}$, which shows that $K^\epsilon$ is a stopping time with respect to the discrete-time random walk $\big(\sum_{\ell=1}^{k'} S_i^\ell \big)_{k'=1}^\infty$. By Wald's equation and Lorden's inequality \citep{Lor_70_lorden_excess}, we have
    \begin{align*}
        \E{K^\epsilon \given R_{\sj}(0), R_{\si}(0)} \E{S_i}
        &= \E{\sum_{\ell=1}^{K^\epsilon} S_i^\ell \given R_{s,j}(0), R_{s,i}(0)} \\
        &\leq \max(T_0 + R_{\sj}(0) - R_{\si}(0), 0) + \mu_i \E{S_i^2},
    \end{align*}
    which implies \eqref{eq:K-eps-bdd-exp} after rearranging the terms. 
    
    Now we can finish the proof utilizing \eqref{eq:pf-indep:rewrite-as-V},  \eqref{eq:E-R-at-tau-k-conv}, and \eqref{eq:K-eps-bdd-exp}.
    First, by the definition of $\Probp{\si}{\Qdropj=0}$, we have that for any $T>0$,
    \begin{equation*}
         \Probp{\si}{\Qdropj=0}
         = \frac{1}{\mu_i T} \E{\sum_{k=1}^\infty \indibrac{\taui^k\leq T,\Qdropj(\taui^k-)=0 }}
    \end{equation*}   
    Combining the last equality with the equality in \eqref{eq:pf-indep:rewrite-as-V}, we get
    \begin{align*}
        &\abs{\Ep{\si}{\indibrac{\Qdropj=0}R_{\sj}} - \Probp{\si}{\Qdropj=0} \Ep{\pi}{R_{\sj}}} \\
        &\qquad = \frac{1}{\mu_i T}\Bigg\lvert \E{\sum_{k=1}^\infty \indibrac{\taui^k\leq T,\Qdropj(\taui^k-)=0} \cdot \Rmean(\taui^k-)} \\
        &\quad \mspace{70mu} -   \E{\sum_{k=1}^\infty \indibrac{\taui^k\leq T,\Qdropj(\taui^k-)=0 }}\Ep{\pi}{R_{\sj}} \Bigg\rvert \\
        &\qquad =  \frac{1}{\mu_i T} \abs{\E{ \sum_{k=1}^\infty\indibrac{\taui^k\leq T, \Qdropj(\taui^k-)=0} \left(\Rmean(\taui^k-)  - \Ep{\pi}{R_{\sj}}\right)}} \\
        &\qquad \leq \frac{1}{\mu_i T} \E{\sum_{k=1}^\infty \indibrac{\taui^k\leq T} \cdot \absBig{\Rmean(\taui^k-)  - \Ep{\pi}{R_{\sj}}}}.
    \end{align*}
    Fixing any $\epsilon > 0$, we break the infinite sum above into two parts based on $K^\epsilon$: 
    \begin{align}
        \nonumber
        &\abs{\Ep{\si}{\indibrac{\Qdropj=0}R_{\sj}} - \Probp{\si}{\Qdropj=0} \Ep{\pi}{R_{\sj}}} \\
        \nonumber
        &\qquad \leq \frac{1}{\mu_i T} \E{\sum_{k=1}^\infty \indibrac{\taui^k\leq T} \cdot \absBig{\Rmean(\taui^k-)  - \Ep{\pi}{R_{\sj}}}} \\
        \label{eq:pf-indep:before-split-use-Keps}
        &\qquad =  \frac{1}{\mu_i T} \E{\sum_{k=1}^{K^\epsilon} \indibrac{\taui^k\leq T} \abs{\Rmean(\taui^k-)  - \Ep{\pi}{R_{\sj}}}}  \\
        \nonumber
        &\qquad \mspace{23mu} +  \frac{1}{\mu_i T} \E{\sum_{k=K^\epsilon+1}^\infty \indibrac{\taui^k\leq T} \abs{\Rmean(\taui^k-)  - \Ep{\pi}{R_{\sj}}}} \\
        \label{eq:pf-indep:split-use-Keps}
        &\qquad \leq \frac{2\Rsmaxj\E{K^\epsilon}}{\mu_i T} +  \frac{\epsilon}{\mu_i T} \E{\sum_{k=K^\epsilon+1}^\infty \indibrac{\taui^k\leq T}} \\
        \label{eq:pf-indep:sum-indic-exp}
        &\qquad \leq  \frac{2\Rsmaxj \E{K^\epsilon}}{\mu_i T} + \epsilon.
    \end{align}
    Here, to get \eqref{eq:pf-indep:split-use-Keps}, we bound the expectation in \eqref{eq:pf-indep:before-split-use-Keps} using the argument that
    \[
        \abs{\Rmean(\taui^k-) - \Ep{\pi}{R_{\sj}}} \leq \Rmean(\taui^k-) + \Ep{\pi}{R_{\sj}} \leq 2\Rsmaxj \quad a.s.
    \]
    To get \eqref{eq:pf-indep:sum-indic-exp}, we use the fact that $\E{\sum_{k=1}^\infty \indibrac{\taui^k\leq T}} = \mu_i T$. 
    Now, because $\E{K^\epsilon}<\infty$, and $T$ and $\epsilon$ are arbitrary, first taking $T\to\infty$ and then taking $\epsilon \to 0$ in \eqref{eq:pf-indep:sum-indic-exp} finish the proof. \qedhere
\end{proof}

\subsection{Proofs of some steady-state equalities and finiteness results}
\label{app:pf-equalities-and-finiteness}

In this subsection, we will prove several basic equalities about the residual times of the completion processes and the idle probability of the queue (Lemmas~\ref{lem:apply-bar-lots-of} and \ref{lem:apply-bar-lots-of-het}). We will also prove that modified G/G/n have finite mean queue lengths under the Palm distributions $\Pnb_{a}$ and $\Pnb_{\si}$ (Lemmas~\ref{lem:expfinite} and \ref{lem:expfinite-het}). 
All the results in this subsection are derived from BAR (\Cref{lem:bar-het}), which we restate below for ease of reference:

\barhet*

Now, we restate \Cref{lem:apply-bar-lots-of} and its generalization in the heterogeneous setting, \Cref{lem:apply-bar-lots-of-het}. We will directly prove \Cref{lem:apply-bar-lots-of-het}. 

\applybarpreliminary*

\applybarpreliminaryhet*

\begin{proof}[Proof of \Cref{lem:apply-bar-lots-of-het}]    
    \noindent \textbf{Proof of \eqref{eq:E-pi-R-s-het}.} For any truncation threshold $\kappa > 0$, consider the test function $f(\Xsuper) = (R_{\si} \wedge \kappa)^2$, which is bounded, locally Lipschitz continuous, and has right partial derivatives. Consequently, Basic Adjoint Relationship \eqref{eq:bar-het} implies that
    \[
        -2\Ep{\pi}{R_{\si} \indibrac{R_{\si} < \kappa}} + \Lambda \Ep{\si}{\big((R_{\si}+S_i) \wedge \kappa\big)^2 - \big(R_{\si} \wedge \kappa\big)^2} = 0.
    \]
    Because $R_{\si} = 0$ with probability $1$ under the measure $\Pnb_{\si}$, rearranging the terms, we get
    \begin{equation*}
        \Ep{\pi}{R_{\si} \indibrac{R_{\si} < \kappa}} = \frac{1}{2} \Lambda \E{(S_i\wedge\kappa)^2}. 
    \end{equation*}
    Observe that $R_{\si} \indibrac{R_{\si} < \kappa}$ and $(S_i\wedge\kappa)^2$ are both non-negative and non-decreasing in $\kappa$, by the monotone convergence theorem, 
    \begin{equation}
        \Ep{\pi}{R_{\si}} = \lim_{\kappa\to\infty} \Ep{\pi}{R_{\si} \indibrac{R_{\si} < \kappa}} = \lim_{\kappa\to\infty} \frac{1}{2} \Lambda \E{(S_i\wedge\kappa)^2} = \frac{1}{2} \Lambda \E{S_i^2},
    \end{equation}
    which implies \eqref{eq:E-pi-R-s-het}. 

    \noindent \textbf{Proof of \eqref{eq:E-pi-R-a-het}.} 
    This proof is identical to the proof of \eqref{eq:E-pi-R-s-het} except that $R_{\si}$ is replaced by $R_a$. 

    \noindent \textbf{Proof of \eqref{eq:E-s-R-s-het}.}
    For any $\kappa > 0$, we take the test function $f(\Xsuper) = (R_{\si} \wedge \kappa)(R_{\sj} \wedge \kappa)$ in \eqref{eq:bar-het}:
    \begin{align*}
        & - \Ep{\pi}{\indibrac{R_{\si}<\kappa} (R_{\sj}\wedge\kappa) + \indibrac{R_{\sj}<\kappa} (R_{\si}\wedge\kappa)} \\
        &+ \mu_i \Ep{\si}{(S_i\wedge\kappa)(R_{\sj}\wedge\kappa) - 0} + \mu_j \Ep{\sj}{(S_j\wedge\kappa)(R_{\si}\wedge\kappa) - 0}
        =0,
    \end{align*}
    where in the second and third expectations we have used the facts that $R_{\sell} = 0$ with probability $1$ under $\Pnb_{\sell}$ for any $\ell\in[n]$. 
    Because $S_i$ is independent of $R_{\sj}$ under the measures $\Pnb_{\si}$, and $S_j$ is independent of $R_{\si}$ under the measure $\Pnb_{\sj}$, we get
    \begin{equation*}
        \begin{aligned}
        &\Ep{\pi}{\indibrac{R_{\si}<\kappa} (R_{\sj}\wedge\kappa) + \indibrac{R_{\sj}<\kappa} (R_{\si}\wedge\kappa)} \\
        &\qquad =\mu_i \E{S_i\wedge\kappa} \Ep{\si}{R_{\sj}\wedge\kappa} +  \mu_j \E{S_j\wedge\kappa}\Ep{\sj}{R_{\si}\wedge\kappa}. 
        \end{aligned}
    \end{equation*}
    Because $\indibrac{R_{\si}<\kappa} (R_{\sj}\wedge\kappa)$, $\indibrac{R_{\sj}<\kappa} (R_{\si}\wedge\kappa)$, $S_i\wedge\kappa$, $S_j\wedge\kappa$, $R_{\sj}\wedge\kappa$, and $R_{\si}\wedge\kappa$ are all non-negative and non-decreasing in $\kappa$, by the monotone convergence theorem, we have the following after letting $\kappa\to\infty$: 
    \begin{equation*}
        \Ep{\pi}{R_{\sj} + R_{\si}} = \mu_i \E{S_i} \Ep{\si}{R_{\sj}} + \mu_j \E{S_j} \Ep{\sj}{R_{\si}}. 
    \end{equation*}
    Note that $\E{S_i} = 1/\mu_i$, $\E{S_j}=1/\mu_j$, so the last equality simplifies into
    \begin{equation}
         \label{eq:pf-Rsj-Rsi}
         \Ep{\pi}{R_{\sj} + R_{\si}} =  \Ep{\si}{R_{\sj}} + \Ep{\sj}{R_{\si}}.
    \end{equation}
    By \eqref{eq:E-pi-R-s-het}, the left-hand side of \eqref{eq:pf-Rsj-Rsi} equals $\mu_i\Eplain{S_i^2}/2 +  \mu_j\Eplain{S_j^2}/2$. This proves \eqref{eq:E-s-R-s-het}. 

    \noindent \textbf{Proof of \eqref{eq:E-sa-R-sa-het}.} For any $\kappa > 0$, we take the test function $f(\Xsuper) = (R_{\si} \wedge \kappa)(R_a \wedge \kappa)$ in \eqref{eq:bar-het}. The rest of the proof is identical to the proof of \eqref{eq:E-s-R-s-het} except that $R_{\sj}$ is replaced by $R_a$. 

    \noindent \textbf{Proof of \eqref{eq:E-s-Q-het}.} For any integer $\kappa > 0$, taking the test function $f(\Xsuper) = Q\wedge\kappa$ in \eqref{eq:bar-het} yields 
    \begin{align*}
        \Lambda \Ep{a}{(Q+1)\wedge\kappa - Q\wedge\kappa} + \mutotal\Ep{s}{(Q-1)^+\wedge\kappa - Q\wedge\kappa} &= 0, \\
        \implies \quad \Lambda \Ep{a}{\indibrac{Q\leq \kappa-1}} - \mutotal \Ep{s}{\indibrac{0<Q\leq \kappa}} &= 0.
    \end{align*}
    Because each term in the expectations is bounded, letting $\kappa\to\infty$ and invoking the bounded convergence theorem, we get
    \begin{equation}
        \Lambda - \mutotal \Probp{s}{Q > 0} = 0.
    \end{equation}
    Rearranging the terms yield \eqref{eq:E-s-Q}.

    \noindent \textbf{Proof of \eqref{eq:E-s-Qtilde-het}.} This proof is identical to the proof of \eqref{eq:E-s-Q-het} except that $Q$ is replaced by $\Qdropj$.  
    \qedhere
\end{proof}

Next, we restate \Cref{lem:expfinite} and its generalization in the heterogeneous setting \Cref{lem:expfinite-het}. We will directly prove \Cref{lem:expfinite-het}.

\expfinite*

\expfinitehet*

\begin{proof}[Proof of \Cref{lem:expfinite-het}]
    For any positive integer $\kappa$, we consider the test function $f(\Xsuper) = (Q\wedge\kappa - 2\Lambda R_a\wedge\kappa)^2$, which is bounded, locally Lipschitz continuous, and has right partial derivatives. Consequently, Basic Adjoint Relationship \eqref{eq:bar-het} implies that 
    \begin{align}
        0 
        \label{eq:pf-exp-finite-cont-term}
        &= \Ep{\pi}{2(Q\wedge\kappa - 2\Lambda R_a\wedge\kappa) \cdot (2\Lambda\indibrac{R_a \leq \kappa})} \\
        \label{eq:pf-exp-finite-arr-term}
        &\mspace{23mu} + \Lambda \Ep{a}{ \big((Q+1)\wedge\kappa - 2\Lambda A\wedge\kappa\big)^2 - (Q\wedge\kappa)^2} \\
        \label{eq:pf-exp-finite-comp-term}
        &\mspace{23mu} + \mutotal \Ep{s}{\big((Q-\indibrac{Q>0})\wedge\kappa - 2\Lambda R_a\wedge\kappa\big)^2 - (Q\wedge\kappa - 2\Lambda R_a\wedge\kappa)^2},
    \end{align}
    where to get the term in \eqref{eq:pf-exp-finite-arr-term}, we have used the fact that $R_a = 0$ with probability $1$ under $\Pnb_a$. 
    
    We first simplify the term in \eqref{eq:pf-exp-finite-arr-term}: substituting $(Q+1)\wedge\kappa = Q\wedge\kappa + \indibrac{Q \leq \kappa - 1}$ and rearranging the terms, we get
    \begin{align*}
        &\big((Q+1)\wedge\kappa - 2\Lambda A\wedge\kappa\big)^2 - (Q\wedge\kappa)^2 \\
        &\qquad = 2 (Q\wedge\kappa) \cdot (\indibrac{Q \leq \kappa-1} - 2\Lambda A\wedge\kappa) + (\indibrac{Q\leq\kappa-1} - 2\Lambda A\wedge\kappa)^2.
    \end{align*}
    Because $A$ is independent of $Q$ under the measure $\Pnb_a$, we have
    \begin{align*}
        &\Ep{a}{ \big((Q+1)\wedge\kappa - 2\Lambda A\wedge\kappa\big)^2 - (Q\wedge\kappa)^2} \\
         &\qquad = 2 \Ep{a}{(Q\wedge\kappa) \indibrac{Q \leq \kappa-1}} - 4\Ep{a}{Q\wedge\kappa}\E{ \Lambda A\wedge\kappa} \\
         &\qquad \mspace{23mu} + \Ep{a}{(\indibrac{Q\leq\kappa-1} - 2\Lambda A\wedge\kappa)^2}. 
    \end{align*}

    We then simplify the term in \eqref{eq:pf-exp-finite-comp-term}. Substituting $(Q-\indibrac{Q>0})\wedge\kappa = Q\wedge\kappa - \indibrac{0 < Q \leq \kappa}$ and rearranging the terms, we get 
    \begin{align*}
        &\Ep{s}{\big((Q-\indibrac{Q>0})\wedge\kappa - 2\Lambda R_a\wedge\kappa\big)^2 - (Q\wedge\kappa - 2\Lambda R_a\wedge\kappa)^2} \\
        &\qquad = \Ep{s}{-2(Q\wedge\kappa - 2\Lambda R_a\wedge\kappa) \indibrac{0<Q\leq\kappa} +  \indibrac{0<Q\leq\kappa}} \\
        &\qquad = - 2\Ep{s}{(Q\wedge\kappa)\indibrac{0<Q\leq\kappa}}
        + 4\Ep{s}{(\Lambda R_a\wedge\kappa) \indibrac{0<Q\leq\kappa}} 
        +  \Probp{s}{0<Q\leq\kappa}.
    \end{align*}

    Substituting the above calculations back into \eqref{eq:pf-exp-finite-arr-term} and \eqref{eq:pf-exp-finite-comp-term}, we get 
    \begin{align*}
        0
        &= \Ep{\pi}{2(Q\wedge\kappa - 2\Lambda R_a\wedge\kappa) \cdot (2\Lambda\indibrac{R_a \leq \kappa})} + 2 \Lambda \Ep{a}{(Q\wedge\kappa) \indibrac{Q \leq \kappa-1}} \\
        &\mspace{23mu}  - 4\Lambda\Ep{a}{Q\wedge\kappa}\E{ \Lambda A\wedge\kappa}  + \Lambda\Ep{a}{(\indibrac{Q\leq\kappa-1} - 2\Lambda A\wedge\kappa)^2} \\
        &\mspace{23mu}
        - 2\mutotal\Ep{s}{(Q\wedge\kappa)\indibrac{0<Q\leq\kappa}}
        + 4\mutotal\Ep{s}{(\Lambda R_a\wedge\kappa) \indibrac{0<Q\leq\kappa}} 
        +  \mutotal\Probp{s}{0<Q\leq\kappa}. 
    \end{align*}
    Moving the terms involving $\Ep{a}{Q\wedge\kappa}$ and $\Ep{s}{Q\wedge\kappa}$ to the left-hand side, we get 
    \begin{align}
        \label{eq:pf-exp-finite-intermediate-1}
        & - 2 \Lambda \Ep{a}{(Q\wedge\kappa) \indibrac{Q \leq \kappa-1}} + 4\Lambda\Ep{a}{Q\wedge\kappa}\E{ \Lambda A\wedge\kappa}  + 2\mutotal\Ep{s}{(Q\wedge\kappa)\indibrac{0<Q\leq\kappa}} \\
        \nonumber
        &\qquad = \Ep{\pi}{2(Q\wedge\kappa - 2\Lambda R_a\wedge\kappa) \cdot (2\Lambda\indibrac{R_a \leq \kappa})}
        + \Lambda\Ep{a}{(\indibrac{Q\leq\kappa-1} - 2\Lambda A\wedge\kappa)^2}  \\
        \nonumber
        &\qquad \mspace{23mu}
        +  4\mutotal\Ep{s}{(\Lambda R_a\wedge\kappa) \indibrac{0<Q\leq\kappa}}
        +  \mutotal\Probp{s}{0<Q\leq\kappa} \\
        \nonumber
        &\qquad \leq 4\Lambda \Ep{\pi}{Q} + \Lambda\E{(2\Lambda A)^2} + \Lambda + 4\mutotal\Ep{s}{\Lambda R_a} + \mutotal. 
    \end{align}
    Note that each term in the last expression is finite. In particular, $\Ep{\pi}{Q} < \infty$ due to \Cref{assump:modified-ggn-stable-het}, 
    and $\Ep{s}{\Lambda R_a} < \infty$ due to the following argument:
    \begin{align*}
        \Ep{s}{\Lambda R_a} &= \frac{1}{\mutotal}\sumi \mu_i \Ep{\si}{\Lambda R_a} \\
        &\leq \frac{\Lambda}{\mutotal} \sumi \mu_i \big(\Ep{\si}{R_a} + \Epplain{a}{R_{\si}} \big) \\
        &= \frac{\Lambda}{\mutotal} \sumi \frac{\mu_i}{2} \Big(\mu_i\E{S_i^2} + \Lambda \E{A^2}\Big)\\
        &< \infty,
    \end{align*}
    where the last equality follows from \eqref{eq:E-sa-R-sa-het} of \Cref{lem:apply-bar-lots-of-het}. 
    Rearranging the terms in \eqref{eq:pf-exp-finite-intermediate-1} and using the fact that $\indibrac{Q \leq \kappa-1} \leq 1$, we get 
    \begin{equation}
        \label{eq:pf-exp-finite-intermediate-2} 
        \begin{aligned}
        &\Lambda \Ep{a}{Q\wedge\kappa} \Big(4\E{ \Lambda A\wedge\kappa} - 2\Big)  + 2\mutotal\Ep{s}{(Q\wedge\kappa)\indibrac{0<Q\leq\kappa}} \\
        &\qquad \leq 4\Lambda \Ep{\pi}{Q} + \Lambda\E{(2\Lambda A)^2} + \Lambda + 4\mutotal\Ep{s}{\Lambda R_a} + \mutotal \\
        &\qquad < \infty.
        \end{aligned}
    \end{equation}

    Now we try to take $\kappa \to\infty$ in \eqref{eq:pf-exp-finite-intermediate-2}. 
    Because $\lim_{\kappa\to\infty}\E{\Lambda A\wedge \kappa} = \Lambda \E{A} = 1$, there exists $\kappa_0$ such that for all $\kappa > \kappa_0$, we have $\E{\Lambda A\wedge \kappa} > 1/2$ and $4\E{ \Lambda A\wedge\kappa} - 2 > 0$. Because $Q\wedge\kappa$ and $(Q\wedge\kappa)\indibrac{0<Q\leq\kappa}$ are both non-negative and non-decreasing in $\kappa$, by the monotone convergence theorem and \eqref{eq:pf-exp-finite-intermediate-2}, we obtain
    \begin{align}
        \nonumber
       & 2\Lambda\Ep{a}{Q} + 2\mutotal\Ep{s}{Q} \\
       \nonumber
       &\qquad = \lim_{\kappa\to\infty} \left(\Lambda\Ep{a}{Q\wedge\kappa} \Big(4\E{ \Lambda A\wedge\kappa} - 2\Big)  + 2\mutotal\Ep{s}{(Q\wedge\kappa)\indibrac{0<Q\leq\kappa}}\right) \\
       \label{eq:pf-exp-finite-intermediate-3}
       &\qquad \leq 4\Lambda \Ep{\pi}{Q} + \Lambda\E{(2\Lambda A)^2} + \Lambda + 4\mutotal\Ep{s}{\Lambda R_a} + \mutotal \\
       &\qquad < \infty.
    \end{align}
    Recall that $\Ep{s}{\cdot} = \sumi \mu_i\Ep{\si}{\cdot} / \mutotal$, so \eqref{eq:pf-exp-finite-intermediate-3} implies that $\Ep{a}{Q} < \infty$ and $\Ep{\si}{Q} < \infty$ for all $i\in[n]$. This completes the proof. \qedhere

\end{proof}

        \end{appendix}
    \fi
}
\newcommand{\valAppTail}{A} 
\newcommand{\valAppStable}{B}
\newcommand{\valAppDom}{C}
\newcommand{\valAppSupp}{D}
\newcommand{\valAppBAR}{D.2}
\newcommand{\valAppBdd}{D.3}
\newcommand{\valAppIndep}{D.4}
\newcommand{\valAppEquality}{D.5}
\newcommand{\smartref}[3]{%
    \ifjournal
        #2~#3 of the Supplementary Material%
    \else
        \ifcsvoid{r@#1}{%
            \textbf{?? (Missing Label #1)}%
        }{%
            \edef\actualval{\getrefnumber{#1}}%
            \edef\expectedval{#3}%
            \ifx\actualval\expectedval
                #2~\ref{#1}%
            \else
                \PackageError{SmartRef}{%
                    Reference Mismatch! Label '#1' is actually \actualval, but you wrote #3%
                }{%
                    Change the third argument of \noexpand\smartref{#1}{#2}{#3} to \actualval.%
                }%
            \fi
        }%
    \fi
}
\newcommand{\ggn}{$GI/GI/n$\xspace}
\newcommand{\ggone}{$GI/GI/1$\xspace}
\newcommand{\mgn}{$M/GI/n$\xspace}
\newcommand{\R}{\mathbb{R}}
\newcommand{\Rnn}{\mathbb{R}_{\geq 0}}
\newcommand{\Z}{\mathbb{Z}}
\newcommand{\Znn}{\mathbb{Z}_{\geq 0}}
\newcommand{\Zp}{\mathbb{Z}_{\geq 1}}
\newcommand{\odv}{\mathnormal{d}}
\newcommand{\E}[1]{\mathbb{E}\left[#1\right]}
\newcommand{\Eplain}[1]{\mathbb{E}[#1]}
\newcommand{\Ebig}[1]{\mathbb{E}\big[#1\big]}
\newcommand{\Ebigg}[1]{\mathbb{E}\bigg[#1\bigg]}
\newcommand{\Prob}[1]{\mathbb{P}\left[#1\right]}
\newcommand{\Ep}[2]{\mathbb{E}_{#1}{\left[#2\right]}}
\newcommand{\Epplain}[2]{\mathbb{E}_{#1}{[#2]}}
\newcommand{\Epbig}[2]{\mathbb{E}_{#1}{\big[#2\big]}}
\newcommand{\EpBig}[2]{\mathbb{E}_{#1}{\Big[#2\Big]}}
\newcommand{\Epbigg}[2]{\mathbb{E}_{#1}{\bigg[#2\bigg]}}
\newcommand{\Probp}[2]{\mathbb{P}_{#1}\left[#2\right]}
\newcommand{\Probpbig}[2]{\mathbb{P}_{#1}\big[#2\big]}
\newcommand{\Pnb}{\mathbb{P}}
\newcommand{\Var}[1]{\textnormal{Var}\left[#1\right]}
\newcommand{\given}{\;\middle|\;}
\newcommand{\givenplain}{\;|\;}
\newcommand{\givenbig}{\;\big|\;}
\newcommand{\indibrac}[1]{\vmathbb{1}{\left\{#1\right\}}}
\newcommand{\indibracbig}[1]{\vmathbb{1}{\big\{#1\big\}}}
\newcommand{\indibracBig}[1]{\vmathbb{1}{\Big\{#1\Big\}}}
\newcommand{\abs}[1]{\left\lvert#1\right\rvert}
\newcommand{\absBig}[1]{\Big\lvert#1\Big\rvert}
\newcommand{\sumi}{\sum_{i=1}^n}
\newcommand{\sumj}{\sum_{j=1}^n}
\newcommand{\Rsmax}{R_{s}^{\max}}
\newcommand{\Ramin}{R_{a}^{\min}}
\newcommand{\sone}{{s,1}}
\newcommand{\stwo}{{s,2}}
\newcommand{\sn}{{s,n}}
\newcommand{\si}{{s,i}}
\newcommand{\sj}{{s,j}}
\newcommand{\sell}{{s,\ell}}
\newcommand{\Rsi}{R_{\si}}
\newcommand{\Rsj}{R_{\sj}}
\newcommand{\Qdrop}{\widetilde{Q}}
\newcommand{\Qdropj}{Q_{-j}}
\newcommand{\Qdropone}{Q_{-1}}
\newcommand{\Qdropn}{Q_{-n}}
\newcommand{\Xsuper}{\xoverline{X}}
\newcommand{\xsup}{\bar{x}}
\newcommand{\mutotal}{\mu_{\Sigma}}
\newcommand{\rhodropj}{\rho_{-j}}
\newcommand{\Qdropstable}{\widetilde{Q}^s}
\newcommand{\qdropstable}{\widetilde{q}^s}
\newcommand{\indexstable}{J^s}
\newcommand{\Rsmaxj}{R_{\sj}^{\max}}
\newcommand{\Rsmaxi}{R_{\si}^{\max}}
\newcommand{\spacemain}{\mathcal{X}}
\newcommand{\spacesuper}{\xoverline{\mathcal{X}}}
\newcommand{\aft}{^+}
\newcommand{\bef}{^-}
\newcommand{\sdsuper}{\pi} %
\newcommand{\opinner}{\mathcal{A}}
\newcommand{\taui}{\tau_{i}}
\newcommand{\tauj}{\tau_{j}}
\newcommand{\taua}{\tau_a}
\newcommand{\age}{\alpha}
\newcommand{\agei}{\alpha_{i}}
\newcommand{\agej}{\alpha_{j}}
\newcommand{\agea}{\alpha_{a}}
\newcommand{\Ysuper}{\overline{Y}}
\newcommand{\Na}{N_a}
\newcommand{\Ni}{N_i}
\newcommand{\Nj}{N_j}
\newcommand{\cF}{\mathcal{F}}
\newcommand{\dQ}{\Delta Q}
\newcommand{\Df}[1]{\Delta #1}
\newcommand{\df}[1]{\dot{#1}}
\newcommand{\Qproxy}{V}
\newcommand{\Rmean}{\overline{R}}
\theoremstyle{plain}%
\newtheorem{theorem}{Theorem}
\newtheorem*{theorem*}{Theorem}
\newtheorem{lemma}{Lemma}
\theoremstyle{definition}
\newtheorem{assumption}{Assumption}
\newtheorem{example}{Example}
\newtheorem{remark}{Remark}
\newif\ifcomment
\newif\ifoutline
\begin{document}
\begin{frontmatter}
    \title{A new $1/(1-\rho)$-scaling bound for multiserver queues \\via a leave-one-out technique}
    
    \runtitle{A new $1/(1-\rho)$-scaling bound for multiserver queues}
    
    \begin{aug}
    \author[A]{\fnms{Yige}~\snm{Hong}\ead[label=e1]{yigeh@cs.cmu.edu}}
    \address[A]{Computer Science Department,
    Carnegie Mellon University \printead[presep={ ,\ }]{e1}}
    \end{aug}
    \begin{abstract}   
    Bounding the queue length in a multiserver queue is a central challenge in queueing theory. Even for the classical \ggn queue with homogeneous servers, it is highly non-trivial to derive a simple and accurate bound for the steady-state queue length that holds for all problem parameters.
A recent breakthrough by \citet{LiGol_25} establishes a universal bound of order $O(1/(1-\rho))$ that holds for any load $\rho < 1$ and any number of servers $n$. This order is tight in many well-known scaling regimes, including classical heavy-traffic, Halfin-Whitt and Nondegenerate-Slowdown. 
However, their bounds entail large constant factors and a highly intricate proof, suggesting room for further improvement. 

In this paper, we present a new universal bound of order $O(1/(1-\rho))$ for the \ggn queue. 
Our bound, while restricted to the light-tailed case and the first moment of the queue length, has a more interpretable and often tighter leading constant. Our proof is relatively simple, utilizing a modified \ggn queue, the stationarity of a quadratic test function, and a novel leave-one-out coupling technique.

Finally, we also extend our method to \ggn queues with fully heterogeneous service-time distributions.

    \end{abstract}
    
    \begin{keyword}[class=MSC]
    \kwd[Primary ]{60K25}
    \kwd[; secondary ]{68M20}
    \kwd{90B22}
    \end{keyword}
        
    \begin{keyword}
    \kwd{multiserver queue}
    \kwd{steady-state analysis}
    \kwd{basic adjoint relationship}
    \kwd{asymptotic independence}
    \kwd{coupling}
    \end{keyword}
\end{frontmatter}

\section{Introduction}
\label{sec:intro}
The \ggn queue is one of the most fundamental multiserver queueing models, with a wide range of applications \citep[see, e.g.,][]{Wor_09_ggn_applications}. %
The \ggn queue consists of $n$ servers and a central queue, where external jobs arrive over time and wait in the queue for service in the first-come, first-served order. 
When a server frees up, it picks the first job in the queue and starts serving it until the job is completed.
An illustration of the \ggn queue is provided in \Cref{fig:g-g-n}, 
The name ``GI'' refers to ``general'' and ``independent'', indicating that the interarrival times and service times are two i.i.d. sequences, where elements of each sequence follow a general distribution.

In the theory of the \ggn queue, a natural goal is to find a simple formula that predicts the \emph{queue length} (the number of jobs in the queue waiting for service). %
A prominent line of work focuses on finding accurate bounds or approximations when the system \emph{scales up} in a certain sense. 
This pursuit has been remarkably successful for the single-server case, dating back to the seminal work of \citet{kin_62_bound}. Kingman derives a simple upper bound for the steady-state mean queue length of the \ggone queue, $\Ep{\pi}{Q}$: 
\begin{equation}
\label{eq:kingman}
\Ep{\pi}{Q} \leq \frac{\rho^2\Var{\mu S}+\Var{\lambda A}}{2(1-\rho)},
\end{equation}
where $S$ is the service time, $A$ is the interarrival time, $\mu\triangleq 1/\E{S}$, $\lambda\triangleq 1/\E{A}$, and $\rho \triangleq \lambda/\mu$ is the \emph{load}. 
Despite its simplicity, Kingman's bound is surprisingly accurate. In particular, it is asymptotically tight in the so-called \emph{heavy-traffic} scaling regime where $\rho\uparrow 1$ \citep{kin_62_approx}, so it precisely captures the queue's behavior in the scenario where the queueing delay becomes significant.

\begin{figure}
    \includegraphics[width=9cm]{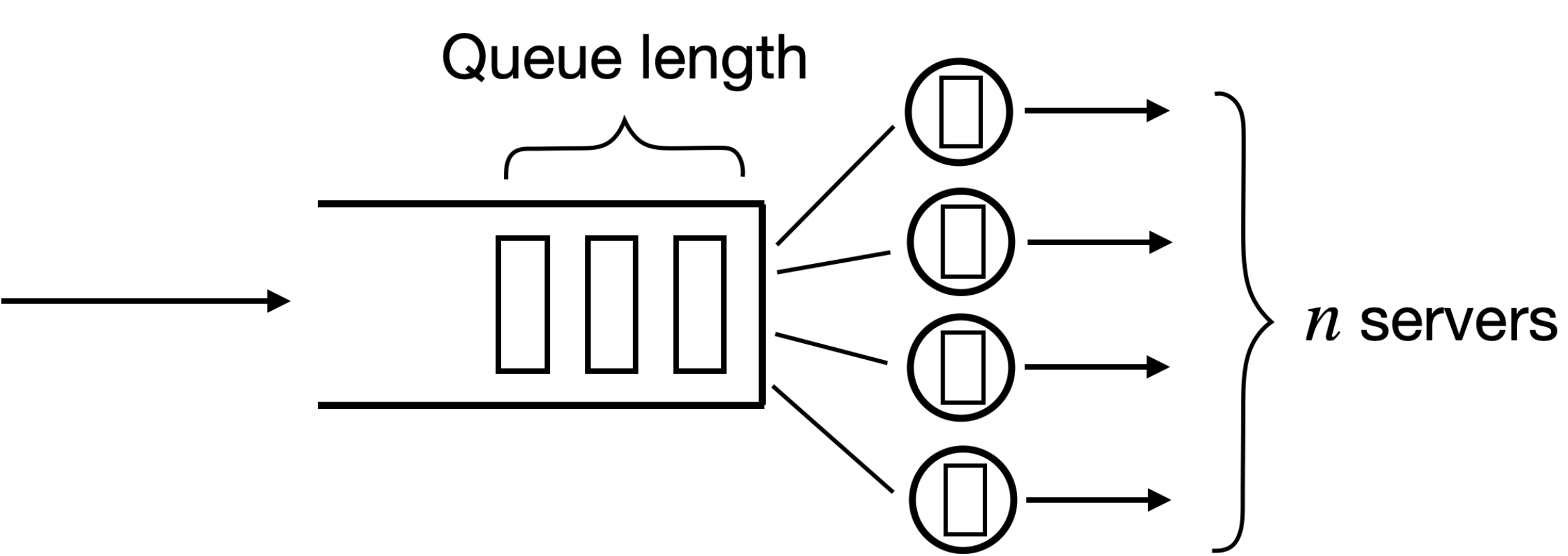}
    \centering
    \caption{An illustration of \ggn queue, where each circle denotes a server, and each rectangle denotes a job.}
    \label{fig:g-g-n}
\end{figure}

In contrast to the single-server case, finding a similarly simple and accurate upper bound for the \ggn queue is a much greater challenge. This complexity arises from a rich set of combinations of two basic scalings: the \emph{heavy-traffic} scaling, where the load $\rho\uparrow 1$, and the \emph{many-server} scaling, where the number of servers $n\to\infty$. 
In practice, these two parameters often scale jointly to balance utilization and delay \citep{Whi_92_sqrt_staffing,BorManRei_04_sqrt_staff,GarManRei_02_sqrt_staff,ManMom_08_qed,Ata_12}, with different scaling rates giving rise to dramatically different asymptotic behaviors. %
For example, in the classical heavy-traffic regime ($\rho\uparrow 1$ with fixed $n$), the limiting queue length distribution of \ggn queue is similar to that of \ggone, depending only on the first two moments of the input distributions \citep[see, e.g.,][]{Kol_74_heavy_1}. 
In contrast, in the Halfin-Whitt (HW) regime ($n\to\infty$ and $\rho = 1 - \Theta(1/\sqrt{n})$) or the Non-Degenerate Slowdown (NDS) regime ($n\to\infty$ and $\rho = 1 - \Theta(1/n)$),
the limiting behavior is far more complex and can depend intricately on the full input distributions \citep[see, e.g.,][]{BraDai_17,Ree_09,AghRam_20_hw,AtaSol_11_NDS}. 
\footnote{We use standard asymptotic notation for non-negative functions $f$ and $g$: $f = O(g)$ if $\limsup f/g < \infty$; $f = o(g)$ if $f/g \to 0$; $f = \Omega(g)$ if $g = O(f)$; $f = \omega(g)$ if $f/g \to \infty$; and $f = \Theta(g)$ if $f = O(g)$ and $g = O(f)$. Unless otherwise stated, these limits are taken as $\rho \to 1$ or $n \to \infty$, with the underlying unitized distributions $S / \mathbb{E}[S]$ and $A/\mathbb{E}[A]$ fixed.}

A long-standing challenge has been to establish a queue length bound of the order $O\big(1/(1-\rho)\big)$ that holds \emph{universally} for all $\rho < 1$ and $n$. 
Achieving this is significant because this order is known to be tight in a broad class of important regimes, including the classical heavy-traffic, NDS, and Halfin-Whitt \citep{Kol_74_heavy_1,AghRam_20_hw,AtaSol_11_NDS}, although better orders could arise when $\rho = 1-\omega(1/\sqrt{n})$ \citep{PraDanMag_24_exp_tail}. 
However, most prior results are tailored to specific scaling regimes, making them either invalid or loose outside of the specific regimes.
A universal bound of the order $O\big(1/(1-\rho)\big)$ has remained elusive until the recent breakthrough by \citet{LiGol_25}, which provides the first such result, stated in their Corollary~2 as:
\begin{equation}
    \label{eq:intro:li-gol-bound}
    \begin{aligned}
    &\Ep{\pi}{Q} \\
    &\leq \left(2.1 \times 10^{21}  \E{(\mu S)^2}  \Big(\E{(\mu S)^2}^{1+\epsilon} + \E{(\mu S)^{2+\epsilon}}\Big) \big(\frac{1}{\epsilon}\big)^4 + 49 \E{(\Lambda A)^2}\right) \frac{1}{1-\rho},
    \end{aligned}
\end{equation}
where $\Lambda = 1/\E{A}$ and $\epsilon\in (0,0.5)$. 
In addition to this bound on the mean queue length, \citet{LiGol_25} also establishes several high-order moment and tail bounds of the same order. 

Although \citet{LiGol_25} shows the existence of a universal bound of the correct order, $O\big(1/(1-\rho)\big)$, the numerical constant in their bound is astronomically large. This stands in stark contrast to Kingman's bound for the single-server case, suggesting room for improvement. 
Indeed, \citet{LiGol_25} themselves conjecture that Kingman's bound may hold for any \mgn queue. This leads to the central question of our paper: \emph{Can we prove a universal $O\big(1/(1-\rho)\big)$ bound for the \ggn queue with a small constant factor?}

\paragraph*{Our results}
We prove that, in the \ggn queue, the steady-state mean queue length satisfies
\begin{equation}
    \label{eq:intro:our-ggn-bound}
    \Ep{\pi}{Q} \leq \frac{2\Rsmax + \rho \Var{\Lambda A} - \rho}{2(1-\rho)} + \Big(\frac{1}{2}\E{(\Lambda A)^2} - \Ramin\Big),
\end{equation}
where $\Rsmax \triangleq \sup_{t\geq0}\E{\mu S - t \given \mu S\geq t}$ (finite for a large class of light-tailed distributions), and the second term $\E{(\Lambda A)^2}/2 - \Ramin$ is a constant determined by the unitized arrival-time distribution. 
This bound holds assuming $\E{A^2}<\infty$, $\Rsmax < \infty$ (\Cref{assump:bdd-exp-remain}), $S$ is non-lattice (\Cref{assump:non-lattice}), and a certain positive Harris recurrence condition (\Cref{assump:modified-ggn-stable}). 
In the special case of the \mgn queue, \eqref{eq:intro:our-ggn-bound} further simplifies into
\begin{equation}
    \label{eq:intro:our-mgn-bound}
    \Ep{\pi}{Q} \leq \frac{\Rsmax}{1-\rho}. 
\end{equation}
The above two bounds are given in \Cref{sec:main-result} as direct corollaries of a more refined bound (\Cref{thm:main-upper-bound}), which has a better leading constant when $n = o(1/(1 - \rho))$.

Our bounds hold universally for all $\rho < 1$ and $n$ and have the order $O\big(1/(1-\rho)\big)$. 
Compared with the bounds in \citep{LiGol_25}, our bounds require stronger assumptions on the service-time distribution, but often have much smaller constants when the assumptions are satisfied. 
For example, in the \mgn queue whose service times follow the gamma distribution with the shape parameter $\alpha$ (i.e., with the squared coefficient of variation $1/\alpha$), $\Rsmax = \max(1, 1/\alpha)$, so \eqref{eq:intro:our-mgn-bound} implies $\Ep{\pi}{Q} \leq \max(1, 1/\alpha) / (1-\rho)$. 
We will provide more examples in \Cref{sec:main-result}. 

Our results also generalize to the \ggn queue with \emph{fully heterogeneous} servers. The details can be found in \Cref{sec:hetero}.

\paragraph*{Proof techniques}
On an intuitive level, our proof proceeds by quantifying the discrepancy between the \ggn queue and a \ggone queue. This \ggone queue has an interarrival time distribution $A$ and a service time distribution $S/n$; intuitively, it serves the same sequence of jobs using a ``superserver'' that is $n$ times faster than each server in \ggn. 
Observe that the \ggn matches the ``speed'' of the corresponding \ggone when all of its $n$ servers are busy, and is slower otherwise. 
Because this loss of speed is restricted to periods when \ggn has idle servers and an empty queue, one might expect the queue length of \ggn to be comparable to that of the corresponding \ggone.

The formal proof consists of three steps:
\begin{itemize}
    \item First, we construct a modified \ggn queue whose queue length stochastically dominates that of the original system.
    \item Second, we derive an identity that decomposes the mean queue length of this modified system into two components: the mean queue length of the \ggone and a set of ``covariance'' terms that capture the consequence of the ``loss of speed'' when the queue length hits zero. 
    \item Finally, we bound these covariance terms.
\end{itemize}

The first two steps are based on known techniques. The modified \ggn is a standard construction in the literature, recently adopted by \citet{LiGol_25} to obtain the first $O(1/(1-\rho))$ bound. 
In the second step, the decomposition of the mean queue length is derived using the Basic Adjoint Relationship (BAR) \citep[see, e.g.,][]{miyazawa_decomposition_1994, miyazawa_diffusion_2015, braverman_heavy_2017, bra_23_bar}. 
Our technical innovation lies in the third step, where we introduce a novel \emph{leave-one-out coupling} to control the covariance terms. 
We note that similar covariance terms have appeared in previous analyses of \ggn and have been a major obstacle to obtaining a universal $O\big(1/(1-\rho)\big)$ bound \citep{Hok_85_ggs, GroHarSch_22_wfcs, scully_gittins_2020, HonWan_24_ss}.

A detailed proof sketch is provided in \Cref{sec:tech-overview}, and a roadmap to the rigorous proof can be found in \Cref{sec:tech-overview-roadmap}.

\paragraph*{Paper organization}
We first review the related work in \Cref{sec:related-work}. 
In \Cref{sec:model-homo}, we formally set up the model, introduce the modified \ggn queue, and state the assumptions about these systems. 
In \Cref{sec:main-result}, we state our main results and provide some examples.  
In Section~\ref{sec:tech-overview}, we provide an informal proof sketch and a roadmap for the formal proof. 
In \Cref{sec:lemmas-homo}, we formally set up the analysis framework and state some supporting lemmas. In \Cref{sec:pf-main-upper-bound}, we prove the main results. 
In \Cref{sec:hetero}, we generalize our results to the \ggn queue with heterogeneous servers. 
Finally, we conclude the paper in \Cref{sec:conclusion}.

\paragraph*{General notation}
For any two real numbers $a, b$, we use $a\wedge b$ to denote their minimum. 
For any positive integer $k$, we let $[k] \triangleq \{1,2,\dots, k\}$. We let $\R$ be the set of real numbers, let $\Rnn$ be the set of non-negative real numbers, let $\Znn$ be the set of non-negative integers, and let $\Zp$ be the set of positive integers. 
We denote probability, expectation, and variance by $\Prob{\cdot}$, $\E{\cdot}$, and $\Var{\cdot}$, respectively. The subscripts of these three symbols specify the underlying probability distributions: for example, $\Probp{\nu}{\cdot}$ and $\Ep{\nu}{\cdot}$ are taken under the distribution $\nu$. 
When the distribution is clear from context, we omit the subscript.

\section{Related work}
\label{sec:related-work}
\paragraph*{General prior work on \ggn queue}
There has been a rich body of literature on the queue-length scaling of \ggn queue. 
Prior to \citet{LiGol_25}, most papers focus on a certain representative or useful scaling regime of $\rho$ and $n$, without providing general bounds that hold for arbitrary $\rho$ and $n$.
In the following, we briefly review these papers and refer the readers to \citep[]{LiGol_25} for a comprehensive review. 

The most well-studied regime is the classical heavy-traffic regime, where $n$ is fixed, and $\rho\uparrow 1$ \citep[e.g.,][]{Kol_74_heavy_1,Kol_74_heavy_2,Loulou_73,Hok_85_ggs,Bor_65,Kin_70_ggk,Mor_75,GroHarSch_22_wfcs}. 
In this regime, relatively simple bounds or approximations are available, and the queue length is known to be $O\big(1/(1-\rho)\big)$  under proper moment assumptions. 
Some of these papers contain bounds that hold beyond the classical heavy-traffic regime, but they scale worse than $1/(1-\rho)$ when $n$ is large \citep[e.g.,][]{Kol_74_heavy_2,Loulou_73,Hok_85_ggs,GroHarSch_22_wfcs,Mor_75}. For example, the bound proved by \citep{GroHarSch_22_wfcs} scales as $O(1/(1-\rho) + n)$.

More recent papers also study the Halfin-Whitt regime (HW), where $n\to\infty$ and $\rho = 1-\Theta(1/\sqrt{n})$
\citep[e.g.,][]{HalWhi_81,Ree_09,GamGol_13_GGn_HW,AghRam_20_hw,DaiDieGao_14_hw_validity,BraDai_16_Erlang_C,BraDaiFen_17,BraDai_17,BraDaiFan_24_high_order,Bra_17,GurHuaMan_14_Erlang_A,ManMasWilRei_98,JinPanXuXu_25_hw,GamSto_12_hw_lyapunov,PuhRei_00_hw_gphn,DaiHeTez_10_GPhnG_hw_process_level,ManMom_08_GDkN_hw_process,JelManMom_04_GDN_hw,JanLeeZwa_08_GDN_hw,JanLeeZwa_11_hw,GamMom_08_hw_lyapunov}, or the Non-Degenerate Slowdown (NDS) regime, where $n\to\infty$ and $\rho=1-\Theta(1/n)$ \citep[e.g.,][]{Ata_12,AtaSol_11_NDS,GurHuaMan_14_Erlang_A}. 
The queue length in these regimes has been shown to be of the order $O\big(1/(1-\rho)\big)$. 
However, a key difference from the classical heavy-traffic regime is that the analysis for the HW and NDS regimes is typically asymptotic, focusing on the system's limiting behavior as $n\to\infty$. 
Although some work provides convergence rates \citep[e.g.][]{BraDai_17} or proves the tightness of properly scaled queue lengths \citep[e.g.][]{GamGol_13_GGn_HW,DaiDieGao_14_hw_validity,GamMom_08_hw_lyapunov,GamSto_12_hw_lyapunov}, the bounds are generally \emph{inexplicit} except for certain special cases (e.g., exponential service times \citep{BraDai_16_Erlang_C,BraDaiFen_17,PraDanMag_24_exp_tail,GamSto_12_hw_lyapunov}, deterministic service times \citep{JelManMom_04_GDN_hw,JanLeeZwa_08_GDN_hw}, or lattice service times \citep{GamMom_08_hw_lyapunov}). It is thus difficult to translate these results into the type of explicit, non-asymptotic performance bounds that are the focus of our work.

Some papers also consider scaling regimes with load further lighter than HW, i.e. $\rho = 1-\omega(1/\sqrt{n})$ \citep[e.g.][]{HonWan_24_ss, WanXieHar_21_pomacs,PraDanMag_24_exp_tail,BraDai_16_Erlang_C,BraDaiFen_17,GurHuaMan_14_Erlang_A}. Such regimes are referred to as \textit{sub-Halfin-Whitt regimes} (sub-HW). In the sub-HW, the mean queue length of the \ggn queue could be exponentially smaller than $1/(1-\rho)$, although the precise scaling of the mean queue length is only known for very special models, such as the $M/M/n$ queue \citep[see, e.g.][]{PraDanMag_24_exp_tail,BraDai_16_Erlang_C}. 

Finally, apart from \citep{LiGol_25}, some papers also provide results that hold across regimes, but under highly specialized distributional assumptions. 
Some of them assume exponential service times and provide approximations with universal error bounds \citep[e.g.,][]{BraDai_16_Erlang_C,BraDaiFen_17,GurHuaMan_14_Erlang_A,PraDanMag_24_exp_tail}.
Another paper \citep{HonWan_24_ss} provides an $O\big(1/(1-\rho)\big)$ bound for some regimes between HW and NDS, assuming Poisson arrivals and hyperexponential service times.

\paragraph*{Basic Adjoint Relationship and similar methods}
Basic Adjoint Relationship (BAR), also known as the \emph{stationary equation} in the earlier literature \citep{miyazawa_rate_1994}, is a set of equations that characterize the stationary distribution of Markov processes.
BAR allows us to extract information about the stationary distribution of a Markov process by taking suitable test functions. BAR and BAR-based analytical techniques have been studied in the literature under different names, such as the drift method \citep[see, e.g.,][]{ErySri_12} or the rate conservation law \citep[see, e.g.,][]{miyazawa_decomposition_1994,miyazawa_diffusion_2015}. 
In this paper, we use BAR to refer to the particular framework in \citep[][]{braverman_heavy_2017,bra_23_bar}, which handles Markov processes with both continuous and discrete state changes, utilizing the Palm distribution. 
This BAR framework has recently been applied to various queueing systems with general interarrival-time and service-time distributions under heavy traffic \citep[e.g.,][]{miyazawa_diffusion_2015,braverman_heavy_2017,BraDaiMiy_23_bar_ss,DaiGuaXu_24_jsq,DaiGlyXu_25_multiscale_jackson,DaiHuo_24_multiscale_sbp,GuaCheDai_25_uniform_bdd_multiscale,hong_performance_2023,GroHonHarSch_24_reset,BraScu_24_gg1}.

Some prior work has derived queue length bounds for the \ggn queue based on BAR or equivalent approaches
\citep[e.g.][]{Hok_85_ggs,GroHarSch_22_wfcs,scully_gittins_2020, HonWan_24_ss}. 
These bounds typically have small constant factors in front of the $1/(1-\rho)$-scaling term, but they also contain additive error terms that depend on $n$. These additive error terms cause these bounds to scale worse than $1/(1-\rho)$ when $n$ is large, reflecting a common technical challenge in analyzing the \ggn queue using BAR-based approaches. We will discuss this challenge in more detail in \Cref{sec:tech-overview}.

\paragraph*{Leave-one-out techniques}
Our core technical innovation, the leave-one-out technique, is similar in spirit to the leave-one-out cross-validation in statistics \citep{LacMik_68_loocv}, as well as the related methods in learning theory \citep{ShaShaSreSri_10_alg_stability} and matrix completion \citep{DinChe_20_loo}. 
On a high level, these approaches quantify the influence of a single input on a stochastic system by analyzing the sensitivity of the outcome to that component’s removal or replacement.

\section{Problem setup}%
\label{sec:model-homo}

In this section, we define two queueing systems with homogeneous servers: the \ggn queue, whose queue length we want to upper bound, and a proxy system named \emph{modified \ggn queue}, which will be the focus of our analysis.

\subsection{\ggn queue}
\label{sec:model-homo:ggn-original}
\ifoutline
\begin{outline}
    \1 Parameters: $n$, $\mu$, $\lambda$, $\rho$, $S$, $A$
    \1 Primitive stochastic elements: $S_i^k$, $A^k$; all independent
    \1 State representation: $Q(t)$, $R_{\si}(t)$, $R_{a}(t)$
    \1 FCFS policy and system dynamics
    \1 Stability
\end{outline}
\else\fi

We consider the multiserver queue with general independent interarrival-time and service-time distributions, also known as the \ggn queue, under the first-come, first-served (FCFS) scheduling policy. 
As illustrated in \Cref{fig:g-g-n}, this queueing system consists of a central queue and $n$ identical servers, serving externally arriving jobs with independent and identically distributed (i.i.d.) interarrival times. 
Each job joins the end of the queue if all servers are busy, or starts service immediately if there are any idle servers. 
After entering the service, the job picks an idle server uniformly at random, occupies the server for a random amount of service time, and leaves the system after completing the service. The service times of the jobs are also i.i.d. 
When a server completes a job, it either immediately takes the next job from the head of the queue to serve it if the queue is non-empty, or becomes idle if the queue is empty. 
If multiple servers complete the service simultaneously, we assign the head-of-line jobs to these servers uniformly at random.

The \ggn system starts running at the time $t=0$, possibly with some initial jobs in the queue and in service. We allow arbitrary initial queue length (denoted as $Q(0)$), time until first arrival, the set of initially busy servers, and the residual service times of the busy servers. These data describe the status of the system at the time $t=0$, and we refer to them collectively as the \emph{initial state}.  

The \ggn queue is driven by two independent i.i.d.\ sequences of random variables, which represent the interarrival times and service times. 
Specifically, under the FCFS policy, we can index all jobs by the order in which they begin service. 
For the $k$-th job that begins service after time~$0$, we let $S^k$ denote its service time and let $A^k$ denote the interarrival time between the $k$-th and $(k+1)$-th jobs.  
Note that $\{A^k\}_{k=1}^{Q(0)}$ correspond to jobs already in the queue at time~$0$, so they do not play a role in the analysis and can take arbitrary values. 
By the definition of the \ggn queue, the interarrival times $\{A^{k}\}_{k= Q(0)+1}^\infty$ and the service times $\{S^{k}\}_{k=1}^\infty$ are independent, and each sequence is i.i.d.\ with its own distribution. 
We represent the distributions of $A^k$ and $S^k$ using two generic random variables $A$ and $S$, respectively, and simply refer to them as the \emph{interarrival time} and the \emph{service time}. 
The \emph{arrival rate} $\Lambda$ and \emph{service rate} $\mu$ of the system can therefore be defined as $\Lambda \triangleq 1/\E{A}$ and $\mu \triangleq 1/\E{S}$. 
We also defined the \emph{scaled arrival rate} $\lambda$ to be $\lambda \triangleq \Lambda / n$, which represents the average arrival rate processed by each server. 
To characterize the tails of the unitized service time $\mu S$ and the unitized interarrival time $\Lambda A$, we consider the supremum of $\mu S$'s mean residual time $\Rsmax \triangleq \sup_{t\geq0}\E{\mu S - t \given \mu S\geq t}$, and the infimum of $\Lambda A$'s mean residual time $\Ramin = \inf_{t\geq 0}\E{\Lambda A - t \given \Lambda A \geq t}$.

Next, we state two assumptions on the distributions of $A$ and $S$. 

\begin{assumption}\label{assump:bdd-exp-remain}
    We assume that $\E{A^2} < \infty$ and $\Rsmax < \infty$. 
\end{assumption}

\begin{assumption}
    \label{assump:non-lattice}
    We assume that the distribution of $S$ is non-lattice, i.e., the support of $S$ is not a subset of $\{\delta, 2\delta, 3\delta, \dots\}$ for any $\delta > 0$. 
\end{assumption}

\Cref{assump:bdd-exp-remain} requires that the interarrival time has a finite second moment, and the service time $S$ has a bounded mean residual time. 
The assumption $\Rsmax < \infty$ has also been used in \citep{Mor_75,Dow_91_bdd_sync_overhead,GroHarSch_22_wfcs}. As shown in \citep{Dow_91_bdd_sync_overhead}, $\Rsmax < \infty$ implies that the tail CDF of $S$ decays at least exponentially and hence that all moments of $S$ are finite; for completeness, we also provide a proof in \smartref{app:Rsmax-finite-implies-exponential}{Appendix}{\valAppTail}. 
The mean residual time is bounded for many common classes of light-tailed distributions, including the phase-type, Increasing Failure Rate (IFR), New Better Than Used in Expectation (NBUE), gamma distributions, etc. We will show $\Rsmax$ of some example distributions in \Cref{sec:main-result}.

\Cref{assump:non-lattice} can be roughly viewed as a continuous-time analogue of the aperiodicity condition for renewal processes with the inter-event distribution $S$. The non-lattice condition is required for some common versions of renewal theorem \citep[][Theorem V.4.3]{Asm_03}, which we will use in the proof of \Cref{lem:indep-R-Qtilde}.

The \ggn queue is a Markov process whose \emph{state} at time $t$ can be represented as
\begin{equation}
    \label{eq:x-def}
    X(t)\triangleq (R_{\sone}(t), R_{\stwo}(t), \dots R_{\sn}(t), R_a(t), Q(t)), 
\end{equation}
where $Q(t)$ denotes the \emph{queue length} (not including the jobs in service), $R_a(t)$ denotes the \emph{residual arrival time}, and $R_{\si}(t)$ denotes the
\emph{residual service time} of the $i$-th server, for $i=1,2,\dots, n$. 
We use the convention that all the state variables have right-continuous sample paths, and let $R_{\si}(t) = 0$ if the $i$-th server is empty at time $t$. 
The set of all possible realizations of $X(t)$ constitutes the \emph{state space} of \ggn queue, denoted as $\spacemain \triangleq \Rnn^{n+1}\times\Znn$.

The load of the \ggn system is defined as $\rho\triangleq \Lambda / (n\mu) = \lambda / \mu$. We assume that $\rho < 1$, which implies that the system is positive Harris recurrent \citep[][Theorem XII.2.2]{Asm_03}.

\subsection{Modified \ggn queue}
\label{sec:modified-ggn}

Now we set up the \emph{modified \ggn} queue, which we will focus on in our analysis as a proxy of the original \ggn queue. 
On a high-level, the modified \ggn queue adds virtual jobs to the \ggn queue to keep the servers busy. 
Specifically, when the queue is empty and a server is about to idle, the server immediately begins to serve a virtual job with a service time independently sampled from the distribution of $S$. The server then works non-preemptively on this virtual job, becoming available to serve the next job only after completing the virtual job. 
We initialize the modified \ggn queue with the same set of jobs as the original system, supplemented by the virtual jobs assigned to any servers that would otherwise be idle at the time $t=0$.

The modified \ggn queue is a well-established construction in the queueing literature \citep{Bor_65, KelSta_06_modified_ggn, IglWhi_70_a, ChaThoKia_94_modified_gg1, HalWhi_81, Whi_02_book}. Under the coupling described above, the queue length distribution of the modified system stochastically dominates that of the original \ggn at all times $t \geq 0$. 
This dominance, which was rigorously established in \citep[][Proposition 1]{GamGol_13_GGn_HW}, ensures that any upper bound derived for the modified queue length applies automatically to the original system. 
Following the approach of \citet{LiGol_25}, we therefore focus our analysis exclusively on the modified \ggn. 
For completeness, we provide a self-contained proof of this stochastic dominance in \smartref{app:modified-dominance}{Appendix}{\valAppDom}.

A nice property of the modified \ggn queue is that its sample path is \emph{determined} by $(n+1)$ mutually independent renewal processes \citep[][Corollary~1]{GamGol_13_GGn_HW}: the service-completion of each server is a renewal process with the inter-event distribution $S$, and the arrival process is a renewal process with the inter-event distribution $A$.

The modified \ggn queue is a Markov process whose state can be represented by the same tuple $X(t)$ as the original \ggn queue \eqref{eq:x-def}, with the state space $\spacemain = \Rnn^{n+1}\times \Znn$. The dynamics of this Markov process is summarized below:
\begin{itemize}%
\item \textbf{Continuous dynamics.}
At any time $t$, the residual times (i.e., forward recurrence times) of the renewal processes, $R_a(t)$ and $R_{\si}(t)$ for $i\in[n]$, decrease at the constant rate $1$. 
\item \textbf{Discrete events.}
When one or more residual times reach zero at time $t$, discrete events are triggered. 
Each type of event corresponds to a subroutine that maps an input state, $X\bef=(R_{\sone}\bef, \dots R_{\sn}\bef, R_{a}\bef, Q\bef)$, to an output state, $X\aft=(R_{\sone}\aft, \dots R_{\sn}\aft, R_{a}\aft, Q\aft)$, according to the following rules: 
\begin{enumerate}%
    \item[(i) ] \textbf{Arrival event} ($R_a(t-)=0$): 
    $R_a\aft = R_a\bef + A$ for some interarrival time $A$ independent of $X\bef$; $Q\aft = Q\bef + 1$; other coordinates of the state remain unchanged. 
    \item[(ii)] \textbf{Completion event at server $i$} ($R_{\si}(t-)=0$): $R_{\si}\aft = R_{\si}\bef + S$ for some service time $S$ independent of $X\bef$; $Q\aft = Q\bef - \indibrac{Q\bef > 0}$; other coordinates of the state remain unchanged. 
\end{enumerate}
Typically, only one event happens at a time, and the post-event state $X(t)$ is generated by applying the corresponding subroutine to the pre-event state $X(t-)$. 
In rare cases where multiple events occur simultaneously, the subroutines are applied sequentially. Different orders of applying the subroutines result in different system dynamics, so we stick to a fixed order to avoid ambiguities: we first apply the arrival subroutine if there is an arrival event, and then any completion subroutines in increasing order of their server indices. 
\end{itemize}

Next, we make a stability assumption for the modified \ggn queues. Note that in addition to the main modified \ggn queue under study, we will also construct a few more auxiliary instances of modified \ggn queues (\Cref{sec:leave-one-out}); these auxiliary systems have different numbers of servers but same interarrival-time and service-time distributions as the main modified \ggn queue under study. 
Our assumption should also hold for all these systems; the load of each of these systems is defined as the ratio of its arrival rate and the total service rate of its servers.

\begin{assumption}\label{assump:modified-ggn-stable}
    Each modified \ggn queue with homogeneous servers considered in this paper is positive Harris recurrent if  its load is less than $1$. 
    In addition, let $\nu$ be its unique stationary distribution, then $\lim_{T\to\infty} \frac{1}{T} \int_0^T \E{Q(t)} \odv t = \Ep{\nu}{Q} < \infty$.
\end{assumption}

\begin{remark}
    \label{remark:stability}
    \Cref{assump:modified-ggn-stable} has two parts: the positive Harris recurrence of the modified \ggn queue, and the finiteness of its steady-state mean queue length. 
    Essentially, only the first part is restrictive, as the finiteness of the steady-state mean queue length could be implied by the existing queue length bounds for modified \ggn queues. 
    In particular, Corollary~2 in \citep{LiGol_25} holds for any modified \ggn queue with load less than $1$ that satisfies positive Harris recurrent, aperiodicity, and some standard finite moment assumptions implied by \Cref{assump:bdd-exp-remain}. 
    We will discuss \Cref{assump:modified-ggn-stable} in more detail in \smartref{app:stability}{Appendix}{\valAppStable}, where we argue that certain regularity conditions on the distributions of $A$ and $S$ are likely to imply \Cref{assump:modified-ggn-stable}. 
\end{remark}

\section{Main results}%
\label{sec:main-result}
In this section, we state our main results for the \ggn queue with homogeneous servers under Assumptions~\ref{assump:bdd-exp-remain},  \ref{assump:non-lattice} and \ref{assump:modified-ggn-stable}, and provide some examples.  
We will also compare our results with those of \citep{LiGol_25}. 
We will present generalizations to the heterogeneous setting in Section~\ref{sec:hetero}. 

We begin with our main bound, presented in \Cref{thm:main-upper-bound} below. This bound is a more refined version of the bounds \eqref{eq:intro:our-ggn-bound} and \eqref{eq:intro:our-mgn-bound} from the introduction. 

\begin{theorem}\label{thm:main-upper-bound}
    Consider the \ggn queue with homogeneous servers under the FCFS policy, whose service-time and interarrival-time distributions satisfy Assumptions~\ref{assump:bdd-exp-remain}, \ref{assump:non-lattice} and \ref{assump:modified-ggn-stable}. When the load $\rho < 1$, let $\pi$ be the unique stationary distribution of the system. Then the steady-state mean queue length, $\Ep{\pi}{Q}$, is bounded as
    \begin{equation}
        \label{eq:main-upper-bound}
        \begin{aligned}
            \Ep{\pi}{Q} &\leq \frac{\rho\Var{\Lambda A} + \Var{\mu S} + 1-\rho}{2(1-\rho)} + \min\Big(n, \frac{1}{1-\rho}\Big) \Big(\Rsmax - \frac{1}{2} \E{(\mu S)^2}\Big) \\
            &\mspace{23mu} + \Big(\frac{1}{2}\E{(\Lambda A)^2} - \Ramin\Big).
        \end{aligned}
    \end{equation}
\end{theorem}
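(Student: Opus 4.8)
The plan is to reduce to the modified \ggn queue, extract an exact expression for its steady-state mean queue length via the stationarity of a quadratic test function, and then control the resulting error terms by a leave-one-out coupling. Since the queue length of the modified \ggn queue stochastically dominates that of the original \ggn queue whenever the two start from the same state (\Cref{app:modified-dominance}), it suffices to upper bound $\Ep{\nu}{Q}$ in place of $\Ep{\pi}{Q}$, where $\nu$ denotes the stationary distribution of the modified \ggn queue; that $\nu$ exists and $\Ep{\nu}{Q} < \infty$ is exactly what \Cref{assump:modified-ggn-stable} provides.

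For the first step, I would apply the Basic Adjoint Relationship for $\nu$ --- in the form of \citet{braverman_heavy_2017,bra_23_bar}, which accommodates the mixed continuous/jump dynamics of the modified queue via Palm distributions --- to $f(X) = Q^2$, together with the bilinear corrections $Q R_a$ and $Q \Rsi$ that cancel the residual-time cross terms left over by $f = Q^2$. The two structural facts that make this tractable are: the modified queue is driven by $n+1$ mutually independent renewal processes --- so that every freshly sampled service or interarrival time is independent of the pre-jump state, and, via \Cref{assump:non-lattice} and the key renewal theorem (cf.\ \Cref{lem:indep-R-Qtilde}), each stationary residual time has the equilibrium law --- and the rate-conservation identity $\mu\sum_{i=1}^n \Probp{\si}{Q^- > 0} = \Lambda$, which disposes of the boundary probability $\Prob{Q = 0}$ with the correct (favorable) sign. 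The outcome should be a master relation of the schematic form
\[
\Ep{\nu}{Q} \;=\; \mathcal{Q}_1 \;+\; \sum_{i=1}^n \mathcal{C}_{\si} \;+\; \mathcal{C}_a,
\]
where $\mathcal{Q}_1$ is a $GI/GI/1$-type mean queue length, each $\mathcal{C}_{\si}$ is a covariance coupling $Q$ with the residual service time $\Rsi$, and $\mathcal{C}_a$ a covariance coupling $Q$ with $R_a$. A Kingman-type estimate then bounds $\mathcal{Q}_1$ by $\tfrac{\rho\Var{\Lambda A} + \Var{\mu S} + 1 - \rho}{2(1-\rho)}$, the first term of \eqref{eq:main-upper-bound}.

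The second step is the heart of the argument. For a fixed server $i$, the leave-one-out coupling realizes the stationary modified \ggn queue jointly with the stationary leave-one-out queue $\Qdropi$ --- a modified $GI/GI/(n-1)$ queue obtained by deleting server $i$ and rescaling the remaining servers so that the load stays below $1$ --- driven by the same arrival process and the same renewal processes of the servers in $[n] \setminus \{i\}$. The coupling yields a pointwise comparison of $Q$ and $\Qdropi$ with a nonnegative discrepancy $\Delta_i$; since $\Qdropi$ is a function of data independent of server $i$'s renewal process, it is independent of $\Rsi$, so $\mathcal{C}_{\si}$ reduces to a covariance of $\Delta_i$ against $\Rsi$. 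Bounding $\Delta_i$ --- roughly the number of jobs whose service is displaced by removing server $i$ --- and its correlation with $\Rsi$ by quantities governed by the conditional mean residual service time $\E{\mu S - t \given \mu S \geq t}$ gives a per-server estimate, whose sum over $i \in [n]$ yields the branch $n\bigl(\Rsmax - \tfrac12\E{(\mu S)^2}\bigr)$; a complementary estimate bounding the aggregate $\sum_i \mathcal{C}_{\si}$ directly --- rather than server by server, exploiting that only a $(1-\rho)$ fraction of the modified queue's service capacity is spent on virtual jobs --- yields the branch $\tfrac1{1-\rho}\bigl(\Rsmax - \tfrac12\E{(\mu S)^2}\bigr)$, and we keep the smaller of the two. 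Finally, since arrivals are independent of all server processes, $R_a$ observed at a completion epoch has the equilibrium interarrival law, and $\mathcal{C}_a$ --- which weights only a bounded functional of the state --- collapses into the constant $\tfrac12\E{(\Lambda A)^2} - \Ramin$. Combining the three contributions gives \eqref{eq:main-upper-bound}.

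I expect the main obstacle to be the service-side covariance bound: controlling $\Delta_i$ and its joint law with $\Rsi$ sharply enough that the correction is governed by $\Rsmax$ \emph{uniformly} in $n$ and $\rho$, and arranging the complementary aggregate estimate responsible for the $\min(n, \tfrac1{1-\rho})$ factor --- in particular handling the heavy-traffic regime $\rho \geq 1 - 1/n$, in which a literal deletion of a server would leave an unstable system and the rescaling becomes essential. A secondary difficulty is the Palm-calculus bookkeeping in the first step: choosing the right linear combination of quadratic test functions, verifying the applicability of the BAR, and passing correctly among the arrival-Palm, completion-Palm, and time-stationary measures.
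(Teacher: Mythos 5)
Your proposal reproduces the paper's architecture faithfully at the top level: pass to the modified \ggn queue, apply BAR with a quadratic test function (the paper uses $(Q+\mu\sum_j R_{\sj}-\Lambda R_a)^2$, equivalent to your $Q^2$ plus bilinear corrections) to get $(1-\rho)\Ep{\pi}{Q}$ equal to a Kingman-type term plus $n+1$ covariance terms, and attack the service-side covariances $\Gamma_{\sj}=\Ep{s}{\indibracplain{Q=0}\mu R_{\sj}}-(1-\rho)\Ep{\pi}{\mu R_{\sj}}$ with a leave-one-out coupling. However, the execution of the covariance step --- which is the entire content of the theorem --- has two genuine gaps. First, the rescaling. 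You define the leave-one-out queue by deleting server $i$ \emph{and rescaling the remaining servers so the load stays below $1$}, while also insisting it be driven by \emph{the same} renewal processes as servers $[n]\setminus\{i\}$. These requirements are incompatible: rescaling changes the completion renewal processes, which destroys exactly the coupling that yields the pathwise domination $\Qdropi(t)\geq Q(t)$ (hence the crucial sign $\indibracplain{Q=0}-\indibracplain{\Qdropi=0}\geq 0$) and the rate-conservation identity $\frac1n\sum_{i\neq j}\Probp{\si}{\Qdropj=0}=1-\rho-\frac1n$ that pins the Palm mass of the discrepancy at exactly $1/n$. The paper does \emph{not} rescale: the leave-one-out system is unstable precisely when $\rho\geq 1-1/n$, and in that regime the leave-one-out is abandoned entirely in favor of the crude bound $\Gamma_{\sj}\leq \Probp{s}{Q=0}\Rsmax-(1-\rho)\Ep{\pi}{\mu R_{\sj}}=(1-\rho)(\Rsmax-\tfrac12\E{(\mu S)^2})$, which suffices there only because $1-\rho\leq 1/n$.

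Second, and more seriously, you attach the two branches of the $\min$ to the wrong arguments. The ``idle-fraction'' estimate (using $\Probp{s}{Q=0}=1-\rho$ and the conditional bound $\Ep{\si}{\mu R_{\sj}\given Q}\leq\Rsmax$) gives $\Gamma_{\sj}\leq(1-\rho)(\cdots)$ per server; after summing over $j$ and dividing by $1-\rho$ this yields the $n$ branch and \emph{cannot} produce the $\tfrac{1}{1-\rho}$ branch as you claim. The $\tfrac{1}{1-\rho}$ branch requires $\Gamma_{\sj}=O(1/n)$ per server uniformly in $\rho$, and that is exactly what the leave-one-out delivers: write $\indibracplain{Q=0}=\indibracplain{\Qdropj=0}+(\indibracplain{Q=0}-\indibracplain{\Qdropj=0})$, compute the first piece exactly as $(1-\rho-\tfrac1n)\Ep{\pi}{\mu R_{\sj}}$ using the independence of $\Qdropj$ from server $j$'s renewal process together with the key renewal theorem, and bound the second piece by $\Rsmax$ times its Palm expectation, which equals exactly $\tfrac1n$ by rate conservation. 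Your version, in which the per-server leave-one-out estimate sums to $n(\Rsmax-\tfrac12\E{(\mu S)^2})$, would leave the universal $1/(1-\rho)$ scaling unproven whenever $n\gg 1/(1-\rho)$, i.e.\ in precisely the regime the paper is about. (A smaller inaccuracy: $R_a$ observed at a completion epoch does not have the equilibrium interarrival law for general $A$; the paper only needs the one-sided bound $\Lambda\Ep{\si}{R_a\given Q}\geq\Ramin$ together with $\Ep{\pi}{\Lambda R_a}=\tfrac12\E{(\Lambda A)^2}$.)
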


We first comment on the order of the bound in \eqref{eq:main-upper-bound}. 
Observe that this bound only depends on the distributions of the \emph{unitized} service time $\mu S$ and the unitized interarrival time $\Lambda A$. Thus, once these distributions are fixed, this bound remains $O\big(1/(1-\rho)\big)$ universally for any load $\rho < 1$ and server count $n$. 
When specialized to the Halfin-Whitt regime \citep{HalWhi_81} and the Non-Degenerate Slowdown regime \citep{Ata_12}, the bound has the orders $O(\sqrt{n})$ and $O(n)$, respectively, which match the correct order in these two regimes. 
The ``$\min$'' in the second term of \eqref{eq:main-upper-bound} leads to a smaller leading constant when $n = o\big(1/(1-\rho)\big)$, which includes the classical heavy-traffic regime with $n$ fixed.

As we might notice, the bound in \eqref{eq:main-upper-bound} shares some similarities to Kingman's approximation for the \ggone queue \citep{kin_62_approx}, so it is natural wonder what each term in \eqref{eq:main-upper-bound} represents and where it comes from. 
In the following, we provide some high-level comments and leave more detailed discussions to the technical overview (\Cref{sec:tech-overview}). 
\begin{enumerate} %
    \item The first term in \eqref{eq:main-upper-bound} matches Kingman's approximation for the \ggone queue whose server with interarrival time $A$ and service time $S/n$, up to low-order errors; 
    this \ggone queue can be interpreted as having a ``super-server'' that is $n$-times faster than each server in the \ggn queue. 
    Because the first term in \eqref{eq:main-upper-bound} is dominant as $\rho\uparrow 1$ with $n$ fixed, it reflects the shared asymptotical behavior of \ggn and \ggone in the classical heavy-traffic regime. This connection between \ggn and \ggone is well known in the literature \citep[see, e.g.,][]{Hok_85_ggs,scully_gittins_2020,GroHarSch_22_wfcs}. In our analysis, this comparison between \ggn and \ggone is performed \emph{implicitly} using the BAR technique. %
    \item The second and third terms in \eqref{eq:main-upper-bound} bound the difference between the (modified) \ggn queue and the \ggone queue. 
    In particular, the second term in \eqref{eq:main-upper-bound} is associated with the conditional distribution of the residual service time given $Q = 0$ in the modified \ggn queue, reflecting a certain ``boundary effect'' when the queue is empty. 
    Note that this term vanishes when $S$ is exponentially distributed, since $\E{(\mu S)^2} / 2$ is the expectation of $\mu S$'s excess distribution, and recall that $\Rsmax = \sup_{t\geq 0} \E{\mu S - t \given \mu S \geq t}$. 
    \item The third term in \eqref{eq:main-upper-bound} is associated with the conditional distribution of the residual interarrival time given $Q = 0$ in the modified \ggn queue, and it vanishes when $A$ is exponentially distributed. 
\end{enumerate}

\Cref{thm:main-upper-bound} implies the following simpler bound after relaxing the minimum in \eqref{eq:main-upper-bound} to $1/(1-\rho)$: 
\begin{equation}
    \label{eq:main-upper-bound-simplified}
    \Ep{\pi}{Q} \leq \frac{\rho \Var{\Lambda A} - \rho + 2\Rsmax}{2(1-\rho)} + \frac{1}{2}\E{(\Lambda A)^2} - \Ramin.
\end{equation}
When we consider the \mgn queue, the bound in \eqref{eq:main-upper-bound-simplified} can be further simplified into:
\begin{equation}
    \label{eq:main-upper-bound-MGn}
     \Ep{\pi}{Q} \leq \frac{\Rsmax}{1-\rho}.
\end{equation}

Next, we apply the bound in \eqref{eq:main-upper-bound-MGn} to some examples. We assume that the examples under consideration satisfy \Cref{assump:modified-ggn-stable}. %

\begin{example}[NBUE distribution]
    Consider the \mgn queue whose service-time distribution is non-lattice and New Better Than Used in Expectation (NBUE). 
    NBUE is a large class of distribution that contains all Increasing Failure Rate distributions; a distribution is NBUE if its mean residual time conditioned on any age is no larger than its original mean. 
    Then by definition, we have $\Rsmax \leq \E{\mu S} = 1$, so \eqref{eq:main-upper-bound-MGn} implies $\Ep{\pi}{Q} \leq 1 / (1-\rho)$. 
\end{example}

\begin{example}[Gamma distribution]
    Consider the \mgn queue with gamma-distributed service times. 
    Suppose $S$ follows the gamma distribution with shape parameter $\alpha$ and scale parameter $\theta$, then the unitized service time $\mu S$ is also gamma-distributed with shape $\alpha$ and scale $1/\alpha$. 
    When $\alpha > 1$, $\mu S$ has increasing failure rates, so $\Rsmax = \E{\mu S} = 1$. When $\alpha < 1$, direct calculation shows that $\Rsmax = 1 / \alpha$. 
    Because the gamma distribution is always non-lattice, \eqref{eq:main-upper-bound-MGn} implies $\Ep{\pi}{Q} \leq \max(1/\alpha, 1) / (1-\rho)$. 
\end{example}

\begin{example}[Phase-type distribution]
    Consider the \mgn queue whose service-time distribution follows the phase-type distribution with $d$ phases. For $k\in[d]$, let $\tau_i$ be the expected residual time of $S$ conditioned on it being in the phase $k$. Then it is not hard to see that $\Rsmax \leq \mu\max_{k\in[d]} \tau_k < \infty$. Moreover, phase-type distributions are non-lattice. 
    Therefore, \eqref{eq:main-upper-bound-MGn} implies $\Ep{\pi}{Q} \leq \mu\max_{k\in[d]} \tau_k / (1-\rho)$. 
\end{example}

\begin{example}[Bounded distribution]
    Consider the \mgn queue whose service-time distribution is non-lattice and bounded by some constant $M > 0$. Then we have $\Rsmax \leq \mu M$, so \eqref{eq:main-upper-bound-MGn} implies $\Ep{\pi}{Q} \leq \mu M / (1-\rho)$.
\end{example}

\paragraph*{Comparing with \citet{LiGol_25}}
\citet{LiGol_25} has proved a wide range of queue length bounds of the order $O\big(1/(1-\rho)\big)$, which primarily focus on the tail of the steady-state queue length distribution (Theorems 1,2, and 3). These tail bounds imply first-moment bounds as corollaries. For example:
\begin{theorem*}[Adapted from Corollary 2 of \citep{LiGol_25}]
Consider the \ggn queue with homogeneous servers under the FCFS policy. Assume that (1) $\E{A^2} < \infty$; (2) there exists $\epsilon\in(0, 0.5)$ s.t. $\E{S^{2+\epsilon}}<\infty$; (3) $\rho < 1$; (4) the system has a unique limiting distribution $\pi$. Then 
\begin{equation}
    \label{eq:LiGol-bound-example}
   \begin{aligned}
    &\Ep{\pi}{Q} \\
    &\leq \left(2.1 \times 10^{21}  \E{(\mu S)^2}  \Big(\E{(\mu S)^2}^{1+\epsilon} + \E{(\mu S)^{2+\epsilon}}\Big) \big(\frac{1}{\epsilon}\big)^4 + 49 \E{(\Lambda A)^2}\right) \frac{1}{1-\rho},
    \end{aligned}
    \end{equation}
\end{theorem*}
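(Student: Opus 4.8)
The plan is to follow the route of \citet{LiGol_25}: first reduce to the modified \ggn queue, and then bound its steady-state queue length by a concentration argument over a superposition of independent renewal processes. By the stochastic dominance recalled in \Cref{sec:modified-ggn}, it suffices to bound $\Ep{\nu}{Q}$, where $\nu$ is the stationary law of the modified \ggn queue; and this system is driven by $n+1$ mutually independent renewal processes (the arrival process with inter-event time $A$, and the $n$ per-server service-completion processes with inter-event time $S$). A Loynes-type backward construction then represents the stationary queue length as an all-time supremum,
\[
  Q \;\stackrel{d}{=}\; \sup_{t\ge 0}\ \Big( \widetilde N_A(t) - \sum_{i=1}^{n} \widetilde N_{S,i}(t)\Big)^{+},
\]
where the tilded processes are the corresponding (delayed) renewal counting processes run backward from stationarity. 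The one-dimensional process inside the supremum has negative drift $\Lambda - n\mu = -n\mu(1-\rho)$, and the whole difficulty is to turn this drift into a tail bound on $Q$ whose constant does not depend on $n$.

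I would estimate $\Prob{Q > x}$ by a dyadic decomposition of the time axis. Fix a scale $t_0 \asymp x / (n\mu(1-\rho))$ and write $\Prob{Q>x} \le \sum_{k\ge 0}\Prob{\,\sup_{t\in[2^k t_0,\,2^{k+1}t_0]}\big(\widetilde N_A(t)-\sum_{i}\widetilde N_{S,i}(t)\big) > x}$. On the $k$-th block the mean of $\widetilde N_A(t)-\sum_i \widetilde N_{S,i}(t)$ is at most $-\tfrac12 n\mu(1-\rho)\,2^k t_0 \asymp -\tfrac12 x\,2^k$, so the event forces the \emph{centered} process $(\widetilde N_A(t)-\Lambda t) - \sum_i(\widetilde N_{S,i}(t)-\mu t)$ to exceed $\asymp x\,2^{k}$. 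Controlling this centered fluctuation over a time interval of length $\asymp 2^k t_0$ is exactly where the hypothesis $\E{S^{2+\epsilon}}<\infty$ is used: a renewal counting process centered at its mean has its $(2+\epsilon)$-th absolute moment over $[0,T]$ of order $(\mu T)^{1+\epsilon/2}$ once $T$ is not too small, with a constant depending polynomially on $1/\epsilon$ (via a renewal analogue of the Marcinkiewicz--Zygmund / Rosenthal inequalities together with an optimal-stopping step in the spirit of the bounds in \citet{Dow_91_bdd_sync_overhead}). Summing $n$ independent such processes gives fluctuations of order $\sqrt{n\mu T}$, hence a Markov-type bound $\Prob{\sup_{[0,T]}|\,\cdot\,|>y}\lesssim (n\mu T)^{1+\epsilon/2}/y^{2+\epsilon}$; substituting $T\asymp 2^k t_0$ and $y\asymp x\,2^{k}$ makes the $k$-th block contribute $\lesssim 2^{-k\epsilon/2}\cdot C_S/\big((1-\rho)\,x\big)$ for an explicit constant $C_S$. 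The geometric sum over $k$ converges, yielding $\Prob{Q>x}\lesssim C_S/((1-\rho)x)$ on the relevant range of $x$; integrating this tail — and adding the much simpler second-moment estimate for the arrival process, which contributes the $49\,\E{(\Lambda A)^2}$ term — gives $\Ep{\nu}{Q}\le C_S'/(1-\rho)$ with the stated explicit constant, which then transfers to the original \ggn queue by dominance.

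The main obstacle is precisely the step that produces the astronomical factor $2.1\times 10^{21}$ and the $(1/\epsilon)^4$: one needs a concentration inequality for the superposition of $n$ i.i.d.\ renewal processes that is simultaneously (i) uniform in $n$, (ii) valid under only a $(2+\epsilon)$-th moment on $S$, and (iii) sharp enough in the time variable that the dyadic union bound recovers the true $1/(1-\rho)$ scaling rather than $1/(1-\rho)^2$ or worse. Achieving (i)--(iii) requires chaining several lossy steps — comparing each renewal process to a sum of i.i.d.\ increments or a martingale, applying a maximal inequality, bounding the $n$-fold superposition via an $L^{2+\epsilon}$ (Rosenthal-type) inequality, and finally taking the dyadic union bound — and the multiplicative constants of these steps compound. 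Keeping everything explicit while paying only for $2+\epsilon$ moments is what forces both the enormous numerical constant and the loose polynomial dependence on $1/\epsilon$; removing this blow-up, at the cost of stronger distributional assumptions on $S$, is exactly the motivation for the BAR-plus-leave-one-out approach developed in the remainder of this paper.
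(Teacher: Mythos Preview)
This theorem is not proved in the present paper at all. It is explicitly introduced with the words ``Here, we restate one of their bounds on $\Ep{\pi}{Q}$ as an example'' and labeled ``Adapted from Corollary~2 of \citep{LiGol_25}''; it appears solely so that the authors can contrast the large constant $2.1\times 10^{21}$ with their own bound \eqref{eq:main-upper-bound}. There is therefore no ``paper's own proof'' against which to compare your proposal.

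Your sketch is a plausible high-level reconstruction of the \citet{LiGol_25} argument (modified queue, Loynes-type supremum representation, dyadic time decomposition, Rosenthal-type moment bounds on centered renewal counts), and you correctly identify why the constants blow up. But since this paper neither reproves nor even summarizes that argument, the comparison you were asked for is vacuous here. If the intent was to assess a proof of one of \emph{this} paper's results, the relevant statement is \Cref{thm:main-upper-bound}, whose proof (via BAR with the smoothed quadratic test function and the leave-one-out coupling, \Cref{lem:apply-bar-main-quadratic} and \Cref{lem:covariance-bounds}) is entirely different from what you wrote.
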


The bound in \eqref{eq:LiGol-bound-example} achieves the order $O\big(1/(1-\rho)\big)$, with the constant factor only depending on $\E{(\Lambda A)^2}$, $\E{(\mu S)^2}$ and $\E{(\mu S)^{2+\epsilon}}$. However, this bound involves a huge numerical constant. 
Similar large numerical constants also appear in all other $O\big(1/(1-\rho)\big)$ bounds in \citep{LiGol_25}. 
Our bounds improve upon the bounds in \citep{LiGol_25}
when $\Rsmax$ is reasonably small.

\section{Proof sketch and roadmap}
\label{sec:tech-overview}

This section overviews the proof of \Cref{thm:main-upper-bound}. We will consider four cases with gradually increasing complexity: (1) $M/M/1$ queue, (2) \ggone queue, (3) \ggn queue with $n\leq 1/(1-\rho)$, and (4) \ggn queue with $n > 1 / (1-\rho)$. 
Each case builds upon the previous one and requires at most one additional technique. 

Cases (1)--(3) consist of \emph{known results}, and the tutorial-style proof sketch provided here contains all important arguments of the proof at an informal level. Case (4) represents the main technical novelty of this work. At the end of the section, we provide a roadmap for the proof of this last hard case, which we will focus on in the next two sections.

\subsection{$M/M/1$: quadratic test function}
Consider $M/M/1$ queue with the arrival rate $\lambda$ and the service rate $\mu$. 
The queue length $Q$ jumps at the rate $\lambda + \mu$, and when a jump happens, the post- and pre-jump states $Q\aft$ and $Q\bef$ satisfy: 
\begin{equation}
    \label{eq:mm1-update-rule}
    Q\aft =
    \begin{cases}
        Q\bef + 1 \quad &\text{ with probability } \lambda / (\lambda + \mu) \\
        Q\bef -\indibrac{Q\bef > 0} \quad &\text{ with probability } \mu / (\lambda + \mu).
    \end{cases}
\end{equation}
In steady state, both $Q\aft$ and $Q\bef$ follow the stationary distribution $\pi$ due to PASTA; the update rule \eqref{eq:mm1-update-rule} thus establishes an invariance relation that allows us to derive equalities or inequalities about $\pi$. 
In particular, to compute the first moment $\Ep{\pi}{Q}$, we use the stationarity of the second moment, i.e., $\E{(Q\aft)^2} = \E{(Q\bef)^2}$. Denote $\dQ \triangleq Q\aft - Q\bef$, then we have
\begin{align}
    \nonumber
    0 &= \E{(Q\bef + \dQ)^2 - (Q\bef)^2} \\
    \label{eq:mm1-drift-method-key}
      &= \E{2(\dQ) Q\bef} + \E{(\dQ)^2}. 
\end{align}
The key observation is that $\E{\dQ \given Q\bef} = (\lambda - \mu \indibrac{Q\bef > 0}) / (\lambda + \mu)$ is almost a constant, making the first term in \eqref{eq:mm1-drift-method-key} proportional to $\E{Q\bef}$:
\[
     \E{2(\dQ) Q\bef} = \frac{2(\lambda - \mu)}{\lambda + \mu} \E{Q\bef}. 
\]
In addition, one can verify that $\E{(\dQ)^2} = 2\lambda / (\lambda + \mu)$. 
Substituting these two terms back into \eqref{eq:mm1-drift-method-key}, we get $\Ep{\pi}{Q} = \E{Q\bef} = \lambda / (\mu-\lambda) = \rho / (1-\rho)$.

Intuitively, the queue length of $M/M/1$ drifts towards zero at a constant mean rate, with noisy transitions that occasionally causes increases. 
The steady-state mean $\Ep{\pi}{Q}$ is fully determined by the rate at which the queue length drifts towards zero, $\E{\dQ \given Q\bef}$, and the second moment of the noise, $\E{(\dQ)^2}$, via Equation \eqref{eq:mm1-drift-method-key}. %

\subsection{\ggone: ``smoothed'' quadratic test function}
While the quadratic test function argument may not appear necessary for $M/M/1$ whose stationary distribution is available in closed form, 
it turns out to be incredibly useful for \ggone: one can get an equation similar to \eqref{eq:mm1-drift-method-key} and manipulate it into a simple bound on $\Ep{\pi}{Q}$, without figuring out everything about the stationary distribution $\pi$. 
Below, we walk through the \ggone analysis in detail. We first present a naive adaptation of the $M/M/1$ argument, explain why it fails, and then present a small refinement to fix the argument. 
The techniques presented here serve as the foundation for our analysis of \ggn.

To derive an upper bound for the queue length of \ggone, we focus on its modified version (\Cref{sec:modified-ggn}). Let $A$ and $S$ denote generic interarrival and service times, and let $\lambda \triangleq 1/\E{A}$ and $\mu\triangleq 1/\E{S}$ be the arrival and completion rates. 
Then the queue length of the modified \ggone is modulated by two renewal processes, the arrival and completion processes, with inter-event time distributions $A$ and $S$, respectively. The jump in queue length $\dQ = Q\aft - Q\bef$ is given by:  
\begin{alignat}{2}
    & \text{At arrival:}    \quad & \Df{Q} &= 1                   \\
    & \text{At completion:} \quad & \Df{Q} &= -\indibrac{Q\bef > 0}. 
\end{alignat}

We first consider the naive quadratic test function $Q^2$. Intuitively, the long-run average rate of change in $Q^2$ caused by arrivals and completions should add up to zero. This intuition is captured by the following equation:
\begin{align}
    \label{eq:gg1-first-attempt}
    0 &= \lambda \Ep{a}{(Q\bef + \Df{Q})^2 - (Q\bef)^2} + \mu \Ep{s}{(Q\bef + \Df{Q})^2 - (Q\bef)^2} \\
    \label{eq:gg1-first-attempt-step-2}
    &= 2\lambda \Ep{a}{Q\bef} -  2\mu \Ep{s}{Q\bef} + \lambda + \mu\Ep{s}{Q\bef > 0}. 
\end{align}
Here, $\Ep{a}{\cdot}$ denotes the expectation with respect to the system's state distribution observed at a random arrival event in the long run, so the first term $\lambda \Ep{a}{(Q\bef + \Df{Q})^2 - (Q\bef)^2}$ is the long-run average rate of changes in $Q^2$ witnessed by arrivals;
$\Ep{s}{\cdot}$ is defined analogously for the completions; $\Ep{a}{\cdot}$ and $\Ep{s}{\cdot}$ are known are \emph{Palm expectations} and will be formalized in \Cref{sec:steady-state-anaysis:bar}. %

Equations \eqref{eq:gg1-first-attempt}--\eqref{eq:gg1-first-attempt-step-2} are \emph{not} useful here because $\Ep{a}{Q\bef}$ and $\Ep{s}{Q\bef}$ are generally different from $\Ep{\pi}{Q}$. 
Intuitively, the queue observed right before a completion should generally be longer than average, whereas the queue observed before an arrival should generally be shorter.

To fix this, the idea is to consider a ``smoothed'' proxy for $Q$ which (1) \emph{continuously} drifts towards zero at a \emph{constant} mean rate, as in the case of $M/M/1$, and (2) has zero expected jump sizes at arrival and completion events to avoid introducing $\Ep{a}{Q\bef}$ and $\Ep{s}{Q\bef}$. 
This smoothed proxy is given by 
\begin{equation}
    \label{eq:qproxy-def-gg1-overview}
    \Qproxy \triangleq Q + \mu R_s - \lambda R_a,
\end{equation}
where $R_s$ and $R_a$ are the residual service and arrival times. 
Let $\Df{\Qproxy} \triangleq \Qproxy\aft - \Qproxy\bef$ denote $\Qproxy$'s jump size at a generic event, and let $\df{\Qproxy}$ denote $\Qproxy$'s continuous rate of change, then
\begin{alignat}{3}
    & \text{At an arrival:}    \;\; & \Df{\Qproxy} &= 1 - \lambda A                   & \quad \implies \quad & \E{\Df{\Qproxy} \given Q\bef, R_a\bef, R_s\bef} = 0 \\
    & \text{At a completion:} \;\; & \Df{\Qproxy} &= -\indibrac{Q\bef > 0} + \mu S   & \quad \implies \quad & \E{\Df{\Qproxy} \given Q\bef, R_a\bef, R_s\bef} = \indibrac{Q\bef=0} \\
    & \text{At any time:}   \;\; & \df{\Qproxy} &= -\mu + \lambda                  & &
\end{alignat}
Here, $A$ denotes the new interarrival time sampled at the arrival time and is independent of the pre-jump state $(Q\bef, R_a\bef, R_s\bef)$; $S$ is defined analogously.

Consider the quadratic test function $\Qproxy^2$. The long-run average rate of change in $\Qproxy^2$ being zero leads to the next equation; compared to \eqref{eq:gg1-first-attempt}, there is an additional term $\Ep{\pi}{2\df{\Qproxy} \Qproxy}$ corresponding to continuous changes:
\begin{align}
    \label{eq:gg1-smoothed-bar:original}
    0 &= \Ep{\pi}{2\df{\Qproxy} \Qproxy} + \lambda \Ep{a}{(\Qproxy\bef + \Df{\Qproxy})^2 - (\Qproxy\bef)^2} + \mu \Ep{s}{(\Qproxy\bef + \Df{\Qproxy})^2 - (\Qproxy\bef)^2} \\
    \label{eq:gg1-smoothed-bar:expand-quadratic}
    &= \Ep{\pi}{2\df{\Qproxy} \Qproxy} + \lambda \Ep{a}{2(\Df{\Qproxy}) \Qproxy\bef} + \mu \Ep{s}{2(\Df{\Qproxy})\Qproxy\bef}  + \lambda \Ep{a}{(\Df{\Qproxy})^2} +  \mu \Ep{s}{(\Df{\Qproxy})^2} \\
    \label{eq:gg1-smoothed-bar:substitute-drift}
    &= -2(\mu-\lambda) \Ep{\pi}{\Qproxy} + 2\mu \Ep{s}{\indibrac{{Q\bef=0}}\Qproxy\bef} + \lambda \Ep{a}{(\Df{\Qproxy})^2} +  \mu \Ep{s}{(\Df{\Qproxy})^2},
\end{align}
where we expand the squares in \eqref{eq:gg1-smoothed-bar:expand-quadratic} and substitute the values of $\df{\Qproxy}$ and $\E{\Df{\Qproxy} \given \Qproxy\bef}$ to get \eqref{eq:gg1-smoothed-bar:substitute-drift}. 
We move $-2(\mu-\lambda) \Ep{\pi}{\Qproxy}$ in \eqref{eq:gg1-smoothed-bar:substitute-drift} to the left-hand side and substitute $\Ep{a}{(\Df{\Qproxy})^2} = \Var{\lambda A}$ and $\Ep{s}{(\Df{\Qproxy})^2} = \Var{\mu S} + 1-\rho$, which yields
\[
    2(\mu-\lambda) \Ep{\pi}{\Qproxy} = 2\mu \Ep{s}{\indibrac{{Q\bef=0}}\Qproxy\bef} + \lambda \Var{\lambda A} +  \mu \big(\Var{\mu S} + 1-\rho\big). 
\]
Because $Q = \Qproxy-(\mu R_s - \lambda R_a)$, further rearranging the terms yields
\begin{equation}
    \label{eq:gg1-bar}
    \begin{aligned}
        \Ep{\pi}{Q} &=  \frac{\rho\Var{\lambda A}+ \Var{\mu S} + 1 - \rho}{2(1-\rho)}   \\
        &\mspace{23mu}  +  \frac{1}{1-\rho}\Ep{s}{\indibrac{Q\bef=0}(\mu R_{s}\bef - \lambda R_a\bef)} - \Ep{\pi}{\mu R_{s}  - \lambda R_a}. 
    \end{aligned}
\end{equation}
It is not hard to see that the second and third terms above are low-order terms: in particular, the second term is non-positive since $R_s\bef = 0$ under $\Ep{s}{\cdot}$; the third term is bounded by $O(1)$. 
We skip presenting the explicit bounds for these terms. 

Equations like \eqref{eq:gg1-first-attempt} and \eqref{eq:gg1-smoothed-bar:original} follow from the Basic Adjoint Relationship (BAR); we will formalize them in \Cref{sec:steady-state-anaysis:bar}, following the framework in \citep[][]{Bra_23_jsq}. 
The ``smoothed'' test function $\Qproxy^2$ is adapted from \citet{miyazawa_diffusion_2015}; 
this test function and its variants have been the core of many recent advances in the steady-state analysis of queueing systems with general distributions \citep[see, e.g.,][]{miyazawa_diffusion_2015,braverman_heavy_2017,BraDaiMiy_23_bar_ss,DaiGuaXu_24_jsq,DaiGlyXu_25_multiscale_jackson,DaiHuo_24_multiscale_sbp,GuaCheDai_25_uniform_bdd_multiscale,hong_performance_2023,GroHonHarSch_24_reset,BraScu_24_gg1}. 

\subsection{\ggn}
\label{sec:tech-overview-ggn}

Now we are ready to consider the $GI/GI/n$ queue. For the purpose of deriving an upper bound for $\mathbb{E}_{\pi}[Q]$, we focus on the modified $GI/GI/n$ system and refer to it simply as $GI/GI/n$ for brevity. We omit the superscript ``$-$'' on random variables within the Palm expectations $\mathbb{E}_{s}[\cdot]$ or $\mathbb{E}_{a}[\cdot]$ when the context is clear.

The \ggn queue can be viewed as ``partly'' a \ggone: when all servers are busy $(Q> 0)$, the total service rate is constant, and the $n$ servers act as a single ``super-server''. However, \ggn deviates from the super-server ideal at the boundary ($Q=0$), during which virtual jobs could be added. 
Consequently, upper-bounding the queue length of \ggn boils down to quantifying how much this boundary effect could increase the queue length in the steady state.

To quantify the difference between \ggn and \ggone in queue lengths, we generalize the test function used for \ggone and substitute it in the BAR of \ggn. 
Specifically, consider a smoothed proxy for $Q$ given by:
\begin{equation}
    \label{eq:Qproxy-def-ggn-overview}
    \Qproxy \triangleq Q + \sum_{i=1}^n \mu R_{s,i} - \Lambda R_a,
\end{equation}
where $R_{s,i}$ is the residual service time of server~$i$. Expanding the BAR for the quadratic test function $\Qproxy^2$ and following the same steps as in \eqref{eq:gg1-smoothed-bar:expand-quadratic}--\eqref{eq:gg1-smoothed-bar:substitute-drift}, we get a decomposition of $\Ep{\pi}{Q}$ that generalizes the single-server result \eqref{eq:gg1-bar} (see \Cref{sec:pf-main:apply-bar-quadratic} for the detailed calculation):
\begin{equation}
    \label{eq:ggn-bar-overview}
    \mathbb{E}_{\pi}[Q] = \frac{\rho\text{Var}(\Lambda A) + \text{Var}(\mu S) + 1-\rho}{2 (1-\rho)} + \frac{1}{1-\rho} \bigg( \sum_{j=1}^n \Gamma_{s,j} - \Gamma_a \bigg),
\end{equation}
where $\Gamma_{s,j}$ and $\Gamma_{a}$ are the boundary terms, defined as:
\begin{align}
    \label{eq:gamma-s-def-overview}
    \Gamma_{s,j} &\triangleq \Ep{s}{\indibrac{Q=0} \mu R_{s,j}} - (1-\rho) \Ep{\pi}{\mu R_{s,j}}, \\
    \label{eq:gamma-a-def-overview}
    \Gamma_{a} &\triangleq \Ep{s}{\indibrac{Q=0} \Lambda R_a} - (1-\rho) \Ep{\pi}{\Lambda R_{a}},
\end{align}
where $\Ep{s}{\cdot}$ is the expectation corresponding to the long-run averaging at the completion events of \emph{all servers}. 
Contrasting with \eqref{eq:gg1-bar}, we see that the first term on the right-hand side of  \eqref{eq:ggn-bar-overview} is approximately the mean queue length of \ggone with the super server. Therefore, $\Gamma_{s,j}$ and $\Gamma_{a}$ capture the difference between \ggn and the corresponding \ggone. 

A useful heuristic is to roughly view $\Gamma_{s,j}$ (or $\Gamma_a$) as the \emph{covariance} between the event $\{Q=0\}$ and the residual service (or arrival) time, if we assume $\Ep{s}{\cdot} \approx \Ep{\pi}{\cdot}$ and show that $\Probp{s}{Q=0} = 1-\rho$. 

Although these covariances are hard to calculate exactly, we will prove that they are small:
by their definitions, it suffices to show that $\{Q=0\}$ occurs with a low probability, or to show that the expected residual service (or arrival) times \emph{conditioned on} $Q=0$ are not significantly larger (or smaller) than the unconditional expectations. 

This heuristic view of $\Gamma_{s,j}$ and $\Gamma_a$ is also consistent with the following intuition about the mean queue length: the mean queue length of \ggn is close to that of \ggone if either (1) $Q$ rarely hits zero, so the boundary effect is naturally small, or (2) the next completion time right after $Q$ hits zero is not too far away, so $Q$ \emph{will not build up significantly before seeing the next completion}.

Motivated by the above discussion, we divide the rest of the proof into two cases, based on the relationship between $n$ and the load $\rho$, as they determine the probability of $Q$ hitting $0$:
\begin{itemize}
    \item Simpler case: $n \leq 1/(1-\rho)$. This corresponds to a heavily loaded regime ($\rho \geq 1 - 1/n$) where $\{Q=0\}$ occurs with a low probability in the steady state. This case has been studied more extensively in the literature. 
    \item Harder case: $n > 1/(1-\rho)$. This corresponds to a more lightly loaded regime  where $\{Q=0\}$ occurs with relatively higher probability in the steady state, so we need to more precisely understand the conditional expectation of the residual times, $\Ep{s}{R_{s,j} \given Q=0}$ and $\Ep{s}{R_a \given Q=0}$. 
    Addressing this case is the \emph{main technical challenge} of this work. 
\end{itemize}

\subsubsection{Simpler case: $GI/GI/n$ with $n \leq 1/(1-\rho)$}
\label{sec:tech-overview-ggn-simple} 
In this case, it suffices to show
\begin{equation}
    \label{eq:ggn-gamma-small-n-bound}
    \sum_{j=1}^n \Gamma_{s,j} \leq C (1-\rho)n,
\end{equation}
for some constant $C$ independent of $\rho$ and $n$. Because $\Gamma_a \geq 0$, combined with \eqref{eq:ggn-bar-overview}, we get $\mathbb{E}_{\pi}[Q] = O(1/(1-\rho))$. To establish \eqref{eq:ggn-gamma-small-n-bound}, we first bound $\Gamma_{s,j}$ by $O(1-\rho)$ for each $j$. Specifically, we can show that
\begin{align}
    \nonumber
    \Gamma_{\sj} &= \Epbig{s}{\indibrac{Q=0} \mu R_{\sj}} - (1-\rho) \Ep{\pi}{\mu R_{\sj}} \\
    \nonumber
    &= \Epbig{s}{\mu R_{\sj} \givenbig Q = 0} \Probpbig{s}{Q=0} - (1-\rho) \Ep{\pi}{\mu R_{\sj}}  \\
    \label{eq:gamma-s-upper-bound-overview}
    &\leq (1-\rho) \Big(\Rsmax - \E{(\mu S)^2}/2\Big),
\end{align}
where \eqref{eq:gamma-s-upper-bound-overview} follows from the facts that $\Epplain{s}{\mu R_{\sj} \givenplain Q = 0} \leq \Rsmax$, $\Probp{s}{Q=0} = 1-\rho$, and  $\Ep{\pi}{\mu R_{\sj}} = \E{(\mu S)^2}/2$. This leads to the final bound:
\begin{equation}
\label{eq:ggn-small-n-final}
     \Ep{\pi}{Q}
    \leq  \frac{\rho\Var{\Lambda A} + \Var{\mu S} + 1-\rho}{2 (1-\rho)} + n \left(\Rsmax - \frac{\E{(\mu S)^2}}{2}\right) +  \left(\frac{\E{(\Lambda A)^2}}{2} - \Ramin\right),
\end{equation}
where the $-\Ramin$ term comes from a more refined upper bound of $-\Gamma_a$, obtained using an argument similar to \eqref{eq:gamma-s-upper-bound-overview}. 
The bound in \eqref{eq:ggn-small-n-final} holds for all $n$ and $\rho$, and when $n \leq 1/(1-\rho)$, it achieves the desired order, $O(1/(1-\rho))$, and coincides with the inequality stated in \Cref{thm:main-upper-bound}.

Bounds exhibiting $O(1/(1-\rho))$ scaling for small $n$ have been established in the literature for \ggn and its variants \citep[e.g.,][]{Mor_75,Loulou_73,Kol_74_heavy_2,scully_gittins_2020,GroHarSch_22_wfcs,HonWan_24_ss}. The proof sketch presented here follows a two-step structure common to this line of work: the relevant performance metric is first decomposed into a $1/(1-\rho)$ term with explicit coefficients and several boundary terms, which are subsequently shown to be small. 
For instance, the results in \citep{GroHarSch_22_wfcs} implies an $O(1/(1-\rho) + n)$ bound for $M/GI/n$, with the same coefficients for the $1/(1-\rho)$ term as in \eqref{eq:ggn-small-n-final}; a key step of their analysis (Lemma~3) utilizes a similar decomposition as \eqref{eq:ggn-bar-overview}. 
In the following section, we show that this two-step framework can yield an $O(1/(1-\rho))$ bound, if we adopt a more refined analysis of the boundary terms when $n > 1/(1-\rho)$.

\subsubsection{Harder case: $GI/GI/n$ with $n > 1/(1-\rho)$}
\label{sec:tech-overview-hard}
We show for any $n > 1/(1-\rho)$,
\begin{equation}
    \label{eq:ggn-gamma-large-n-bound}
    \sumj \Gamma_{\sj} \leq C,
\end{equation}
for some constant $C$ independent of $\rho$ and $n$, 
so that $\Ep{\pi}{Q} = O(1/(1-\rho))$. By symmetry, \eqref{eq:ggn-gamma-large-n-bound} is equivalent to
\begin{equation}
    \label{eq:ggn-gamma-large-n-bound-2}
    \Gamma_{\sone} \triangleq \Ep{s}{\indibrac{Q=0} \mu R_{\sone}} - (1-\rho) \Ep{\pi}{\mu R_{\sone}} \leq \frac{C}{n},
\end{equation}
where we take $j=1$ to focus on the residual service time of the first server, $R_{\sone}$.

To bound $\Gamma_{\sone}$, the naive bound in \eqref{eq:gamma-s-upper-bound-overview} is no longer helpful as it only gives $\Gamma_{\sone} \leq C(1-\rho)$, which is too loose when $n$ is much larger than $1/(1-\rho)$.

To prove a more refined bound, recall the intuition that $\Gamma_{\sone}$ is roughly the covariance between $\indibrac{Q=0}$ and $R_{\sone}$. We further observe that $\indibrac{Q=0}$ and $R_{\sone}$ are \emph{asymptotically independent} when $n$ gets large: Intuitively, the queue length $Q$ in a modified \ggn queue is the joint effect of $n$ independent completion processes, so each process should only have a $1/n$ fraction of ``responsibility'' for the realization of $\indibrac{Q = 0}$. 

To quantify the effect of the first server's completion process on the queue length, consider the following motivating question: 
\begin{center}
    How would the queue length change if a server had not existed?
\end{center}

An attempt to answer this question inspires a clean proof of \eqref{eq:ggn-gamma-large-n-bound-2}. 
We consider the counterfactual queue length, $\Qdropone$, if the first server had not existed in the modified \ggn queue from the beginning, but all arrivals and completions of other servers had remained the same (see \Cref{sec:leave-one-out} for the precise definition of $\Qdropone$). Then $\Qdropone$ is independent of $R_{\sone}$, and $\Qdropone \geq Q$. 
Consider the indicator variable $\indibrac{\Qdropone=0}$, which has two key properties: 
\begin{enumerate}
    \item[Property 1:] $\indibrac{\Qdropone=0}$ is independent of $R_{\sone}$.
    \item[Property 2:] $\indibrac{Q=0} \geq \indibrac{\Qdropone=0}$ and $\Ep{s}{\indibrac{Q=0} - \indibrac{\Qdropone=0}} = O(1/n)$.
\end{enumerate}
In other words, $\indibrac{Q=0}$ is approximated by a random variable independent of $R_{\sone}$, which captures our intuition of asymptotic independence. 

To formally show $\Gamma_{\sone} \leq C/n$, consider the decomposition:
\begin{align}
    \Gamma_{\sone}  
    \label{eq:pf-main:leave-one-out-decomposition-overview-term-1}
    &=\Ep{s}{\Big(\indibrac{Q=0} -  \indibrac{\Qdropone=0}\Big)\mu R_{\sone}} \\
    \label{eq:pf-main:leave-one-out-decomposition-overview-term-2}
    &\mspace{23mu} + \Ep{s}{\indibrac{\Qdropone=0} \mu R_{\sone}}  - (1-\rho) \Ep{\pi}{\mu R_{\sone}}.
\end{align}
The term on the right-hand side of \eqref{eq:pf-main:leave-one-out-decomposition-overview-term-1} can be bounded by $\Rsmax/n$ using Property 2, and the terms in \eqref{eq:pf-main:leave-one-out-decomposition-overview-term-2} can be explicitly calculated to be $-\E{(\mu S)^2}/(2n)$ with Property 1. Putting together, we get
\[
    \Gamma_{\sone} \leq \frac{1}{n} \Big(\Rsmax -\E{(\mu S)^2}/2\Big),
\]
which proves \eqref{eq:ggn-gamma-large-n-bound-2} and thus \Cref{thm:main-upper-bound} when $n > 1/(1-\rho)$. The rigorous argument can be found in \Cref{sec:leave-one-out-core-argument}.

\subsection{Roadmap for the rigorous proof}
\label{sec:tech-overview-roadmap}

\Cref{sec:lemmas-homo,sec:pf-main-upper-bound} provide the formal development of the arguments outlined in this overview. We focus on the $n > 1/(1-\rho)$ case as it constitutes the main technical contribution of this paper; we skip the $n \leq 1/(1-\rho)$ case as all important arguments have been provided in \Cref{sec:tech-overview-ggn-simple}. Note that our paper is still self-contained, as the proof for all regimes is provided for the heterogeneous generalization (\Cref{sec:hetero}). 

Here is a detailed breakdown of the content in \Cref{sec:lemmas-homo,sec:pf-main-upper-bound}. 
\Cref{sec:lemmas-homo} establishes preliminary definitions and supporting lemmas:
\begin{itemize}
    \item \Cref{sec:leave-one-out}: The definition of \emph{leave-one-out systems}, formalizing $\Qdropone$ used in \Cref{sec:tech-overview-hard}. 
    \item \Cref{sec:steady-state-anaysis:bar}: Basic Adjoint Relationship and relevant concepts (e.g., $\Ep{s}{\cdot}$ and $\Ep{a}{\cdot}$). 
    \item \Cref{sec:other-supporting-lemmas}: Statements of intuitive facts and supporting properties used in the proof sketch, such as the values of $\Probp{s}{Q=0}$ and $\Ep{\pi}{R_a}$, the bound $\Ep{s}{\mu R_{\sj}\given Q=0} \leq \Rsmax$, and the independence between $Q_{-1}$ and $R_{s,1}$. 
\end{itemize}
\Cref{sec:pf-main-upper-bound} contains the central proof, which consists of two parts: 
\begin{itemize}
    \item \Cref{sec:pf-main:apply-bar-quadratic}: Proving the decomposition of $\Ep{\pi}{Q}$ stated in \eqref{eq:ggn-bar-overview}. 
    \item \Cref{sec:leave-one-out-core-argument}: Formalizing the argument in \Cref{sec:tech-overview-hard} to bound $\Gamma_{\sj}$ and $\Gamma_{a}$. 
\end{itemize}

Finally, \Cref{sec:hetero} generalizes these arguments to \ggn with heterogeneous servers, following essentially the same set of ideas. 

The appendix of the paper contains the proofs of some supporting lemmas. Although these proofs may appear to be long, they prove intuitive facts, so the readers should be able to understand the main body of this paper without reading the appendices.

\section{Technical framework and supporting lemmas}
\label{sec:lemmas-homo}
In this section, we formally set up the technical framework and state the supporting lemmas for \Cref{thm:main-upper-bound}. 
We define the leave-one-out systems in \Cref{sec:leave-one-out}, set up the Basic Adjoint Relationship in \Cref{sec:steady-state-anaysis:bar}, prove some additional supporting lemmas in \Cref{sec:other-supporting-lemmas}. 

To simplify the presentation, we adopt the following assumption. Our results hold without this assumption, and we will outline the necessary modifications for the general case in \Cref{remark:simultaneous}. %

\begin{assumption}
    \label{assump:simultaneous}
    In the modified \ggn queue, no two events (i.e., an arrival and a completion, two arrivals, or two completions) happen at the same time.
\end{assumption}

In addition, unless otherwise stated, we assume $\rho < 1-1/n$. 

\subsection{Leave-one-out systems}
\label{sec:leave-one-out}
\ifoutline
\begin{outline}
    \1 Definition. Introduce notation $\Qdropj$.
    \1 Upper bound
    \1 Comment on stability
\end{outline}
\else\fi

In this section, we construct a set of auxiliary systems, referred to as \emph{leave-one-out systems}. Each leave-one-out system maintains a counterfactual queue length if a certain server were removed from the main $n$-server modified \ggn system under study (referred to as the ``main system'' or simply ``the modified \ggn queue'' from now on). 

As illustrated in \Cref{fig:leave-one-out-system}, 
for each $j\in[n]$, the $j$-th leave-one-out system is a modified $GI/GI/(n-1)$ system coupled with the main system, and it shares all arrival events and the completion events of all but the $j$-th server with the main system. 
More concretely, each job arrival in the main system also increases $\Qdropj(t)$ by $1$ for each $j\in[n]$, and each completion of the $i$-th server reduces $\Qdropj(t)$ by $\indibracbig{\Qdropj(t-) > 0}$ for $i,j\in[n]$ such that $i\neq j$. 
We initialize $\Qdropj(0) = Q(0)$ all $j\in[n]$, and denote the joint queue lengths of all leave-one-out systems as $\Qdrop(t) \triangleq (\Qdropj(t))_{j\in[n]}$.

\begin{figure}
    \includegraphics[width=12cm]{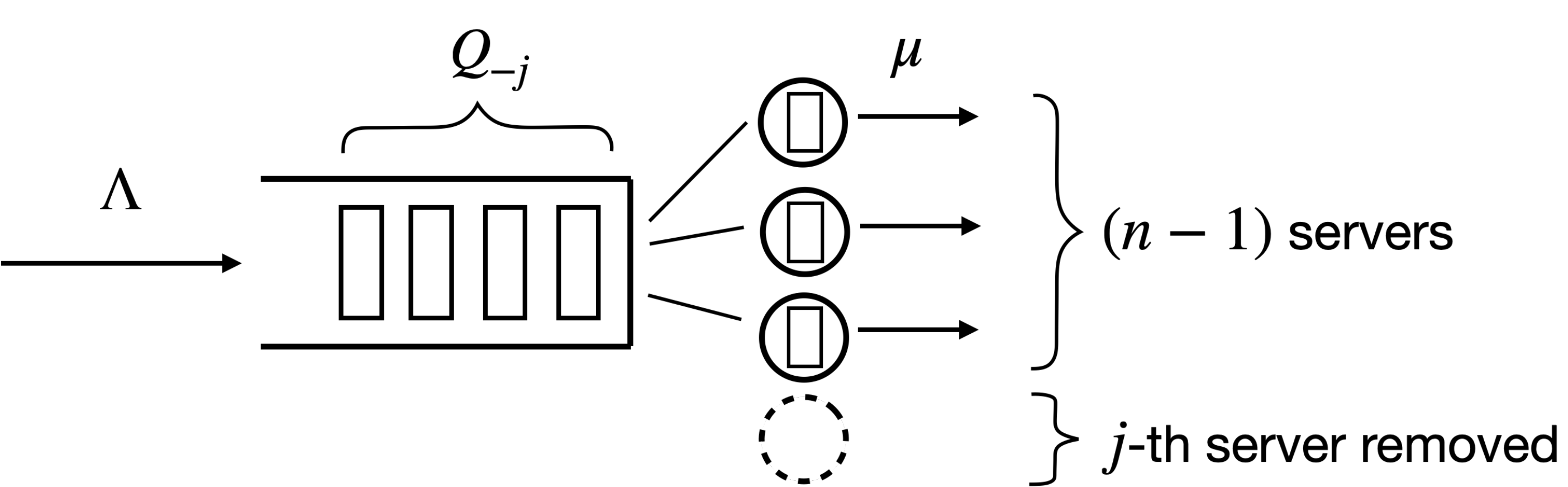}
    \centering
    \caption{An illustration of the $j$-th leave-one-out system, where the $j$-th server is removed and does not take jobs from the queue.}
    \label{fig:leave-one-out-system}
\end{figure}

As one would expect, the queue length of each leave-one-out system pathwise dominates the queue length of the main system: 

\begin{lemma}\label{lem:leave-one-out-upper-bound}
    For each $j\in[n]$,
    the queue length of the $j$-th leave-one-out system pathwise upper bounds the queue length of the main system, i.e., 
    \[
        \Qdropj(t) \geq Q(t) \quad \forall t\geq 0.
    \]
\end{lemma}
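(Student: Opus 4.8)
The plan is to establish the inequality pathwise by tracking the difference process $D(t) \triangleq \Qdropj(t) - Q(t)$ and showing it never becomes negative. First I would record the structural facts that make such an argument work: both $Q(\cdot)$ and $\Qdropj(\cdot)$ are $\Znn$-valued and right-continuous, they agree at time $0$ (so $D(0)=0$), and they change only at arrival or completion events, of which there are finitely many in any bounded interval almost surely. Under \Cref{assump:simultaneous} these events occur one at a time, so it suffices to verify the invariant ``$D(t-)\ge 0 \Rightarrow D(t)\ge 0$'' at each event; for the general case one applies the event subroutines sequentially and checks that the invariant is preserved after each application (cf.\ \Cref{remark:simultaneous}).

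The induction step then splits into three cases according to the type of event at time $t$. At an arrival, both queue lengths increase by $1$, so $D$ is unchanged. At a completion of server $j$ --- which is present in the main system but absent from the $j$-th leave-one-out system --- the main queue length can only decrease while $\Qdropj$ stays put, so $D$ can only increase. At a completion of a server $i\in[n]\setminus\{j\}$, which the two systems share, we get $D(t) = D(t-) - \indibrac{\Qdropj(t-)>0} + \indibrac{Q(t-)>0}$, and one checks that in each subcase $D(t)\ge 0$: if $\Qdropj(t-)=0$ then $D(t-)\ge 0$ forces $Q(t-)=0$ and $D$ is unchanged; if $\Qdropj(t-)>0$ and $Q(t-)>0$ the two indicators cancel; and if $\Qdropj(t-)>0$ but $Q(t-)=0$ then $D(t-)=\Qdropj(t-)$, which is a \emph{positive integer}, hence $D(t)=D(t-)-1\ge 0$.

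This is a short argument and I do not anticipate a genuine obstacle; the only point demanding care is the last subcase above, where it is essential to use that $\Qdropj(t-)$ is integer-valued --- knowing merely $\Qdropj(t-)>0$ would not suffice to absorb the $-1$ decrement. A secondary bookkeeping point is to state the coupling precisely (the $j$-th leave-one-out system is a modified $GI/GI/(n-1)$ queue driven by the arrival renewal process and by the $n-1$ completion renewal processes of servers $[n]\setminus\{j\}$, all inherited verbatim from the main system, so that the three event types above are genuinely exhaustive), which is immediate from the construction in \Cref{sec:leave-one-out}.
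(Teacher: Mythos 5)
Your proposal is correct and takes essentially the same approach as the paper: a pathwise case analysis over the three event types (arrival, completion of server $j$, completion of a shared server $i\neq j$) showing the difference $\Qdropj(t)-Q(t)$ can never drop below zero. The paper packages this as a contradiction via the first-crossing time $\tau=\inf\{t\colon \Qdropj(t)\le Q(t)-1\}$ rather than a forward induction over events, and your explicit treatment of the subcase $\Qdropj(t-)>0=Q(t-)$ (where integrality absorbs the decrement) is if anything slightly more careful than the paper's terse assertion that $\Qdropj(\tau-)=Q(\tau-)$.
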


\begin{proof}
    We define the stopping time $\tau = \inf\{t \geq 0\colon \Qdropj(t) \leq Q(t) - 1\}$. It suffices to show that $\tau = \infty$ for all sample paths. First, because $\Qdropj(0) = Q(0)$, by the right-continuity of $\Qdropj(t)$ and $Q(t)$, 
    we must have $\tau  > 0$. 
    Suppose there exists a sample path such that $\tau < \infty$, then we have $\Qdropj(\tau-) = Q(\tau-)$ and $\Qdropj(\tau) \leq Q(\tau) - 1$.
    This implies that at time $\tau$, there must be at least a completion event that affects the $j$-th leave-one-out system but not the main system, or an arrival event that affects the main system but not the $j$-th leave-one-out system. As neither of these cases is possible, we get a contradiction. Consequently, for all sample paths, $\tau = \infty$, and thus $\Qdropj(t) \geq Q(t)$ for all $t\geq 0$. 
    \qedhere
\end{proof}

Finally, by \Cref{assump:modified-ggn-stable}, a leave-one-out system is positive Harris recurrent if $\rho < 1-1/n$.

\subsection{Basic Adjoint Relationship}
\label{sec:steady-state-anaysis:bar}
\ifoutline
\begin{outline}
    \1 Define the stationary distribution $\pi$ on the space of $\spacemain$ or $\spacesuper\times \Znn^n$; it exists by assumption
    \1 What BAR does and how it works
        \2 Focus on the stationary setting
        \2 We can analyze stationary distribution using stationarity of expectation with test function $f$ s.t. certain regularity conditions
        \2 Intuition: continuous change, and discrete jumps are zero in the long-run
        \2 Continuous change: differentiable function, and the operator $\opinner$
        \2 Discrete jumps: at each jump event. quantify by Palm distributions below. 
    \1 Palm distributions
        \2 Define as a measure over  $\spacesuper^2$, using $\sdsuper$
        \2 Why it sums to one (rate of the event is ...)
        \2 Why it is the same for any $h$ (stationarity)
    \1 BAR
        \2 Lemma statement, involving BAR, and the form of $\Xsuper\aft$
        \3 Comment on simultaneous events        
\end{outline}
\else\fi

As outlined in \Cref{sec:tech-overview}, we adopt the Basic Adjoint Relationship (BAR) approach to perform the steady-state analysis. In this section, we formally set up the preliminaries of BAR, following the framework in \citep[][Section~6]{BraDaiMiy_23_bar_ss}.

We will establish BAR for the state-augmented Markov process $\{\Xsuper(t)\}_{t\geq 0}$, defined as:
\begin{equation}
    \label{eq:xsuper-def}
    \Xsuper(t) \triangleq (R_{\sone}(t), R_{\stwo}(t), \dots R_{\sn}(t), R_a(t), Q(t),\Qdropone(t),\dots, \Qdropn(t)),
\end{equation}
with the state space $\spacesuper \triangleq \Rnn^{n+1}\times \Znn^{n+1}$. 
Since we assume $\rho < 1 - 1/n$ throughout \Cref{sec:lemmas-homo,sec:pf-main-upper-bound}, \Cref{assump:modified-ggn-stable} ensures that $\{\Xsuper(t)\}_{t\geq 0}$ possesses a unique stationary distribution $\pi$. 
From this point forward until the end of \Cref{sec:pf-main-upper-bound}, we slightly abuse the notation to let $\Xsuper(0)\sim\pi$, so that $\Xsuper(t)\sim\pi$ for all $t\geq 0$. 

Next, we formalize the three ways in which $\Xsuper(t)$ evolves: the continuous decrease of residual times, and the discrete jumps associated with arrivals and service completions.

To characterize the continuous changes, 
we define the differential operator $\mathcal{A}$ as
\begin{equation}
    \opinner f(\xsup) \triangleq - \sum_{\ell=1}^{n+1} \partial^+_{\ell} f(\xsup) \quad \forall \xsup\in\spacesuper,
\end{equation}
where $\partial^+_{\ell}$ denotes the right partial derivative with respect to the $\ell$-th coordinate, and $f\colon \spacesuper \to \R$ is any function that has right partial derivatives with respect to the first $(n+1)$ coordinates (the coordinates corresponding to $R_{\si}$'s and $R_a$). 
This operator captures the instantaneous rate of change of $f(\Xsuper(t))$ when $\Xsuper(t)=\xsup$.

Next, we consider changes in $\Xsuper(t)$ caused by the arrivals. Since arrivals occur at discrete points, we cannot define their rates. 
Instead, we use the so-called \emph{Palm distribution}, which is a probability measure over $\spacesuper^2$. The Palm distribution captures the steady-state probability of the pair of states before and after a random jump. 
Formally, the \emph{Palm distribution at arrivals} $\Pnb_a$ is defined as
\begin{equation}
    \label{eq:palm-a-def}
     \Probp{a}{B} = \frac{1}{\Lambda h} \E{\sum_{k=1}^\infty \indibrac{\Big(\Xsuper(\tau_a^k), \Xsuper(\tau_a^k-)\Big) \in B} \indibrac{\tau_a^k \leq  h}}, 
\end{equation}
where $B$ is any measurable subset of $\spacesuper^2$ and $\tau_a^k$ is the time of the $k$-th arrival event; $h$ is any positive number, and $\Pnb_a$ does not depend on the choice of $h$ due to the stationarity of $\Xsuper(t)$ \citep[][Theorem~VII.6.3]{Asm_03}. 
Roughly speaking, $\Pnb_a$ represents the long-run average distribution evaluated at arrival events; this can be seen by taking a large $h$, and note that $\Lambda$ is the long-term average frequency of arrival events. 
We refer the readers to \citep[][Section~VII.6]{Asm_03} or \citep[][Section~6]{BraDaiMiy_23_bar_ss} for a detailed background of Palm distributions induced by point processes.  

Given the Palm distribution $\Pnb_a$, we can define the corresponding Palm expectation $\Ep{a}{\cdot}$: 
For any measurable function $g\colon \spacesuper^2 \to \R$, we have
\begin{equation}
    \Ep{a}{g\big(\Xsuper\aft, \Xsuper\bef\big)} \triangleq \frac{1}{\Lambda h} \E{\sum_{k=1}^\infty g\Big(\Xsuper(\taua^k), \Xsuper(\taua^k-)\Big) \indibrac{\taua^k \leq h}},
\end{equation}
where $(\Xsuper\aft, \Xsuper\bef)$ on the left-hand side is a pair of dummy variables following the distribution $\Pnb_{a}$, and $h$ is any positive number. 

We similarly define $\Pnb_{\si}$ and $\Ep{\si}{\cdot}$ for the completion events of the $i$-th server for $i\in[n]$. 

Now we are ready to state the main lemma for the BAR approach, which connects the stationary distribution $\pi$ with the Palm distributions $\Pnb_a$ and $\Pnb_{\si}$ defined above, and characterize these Palm distributions. 
The proof of \Cref{lem:bar} is provided in \smartref{app:pf-BAR}{Appendix}{\valAppBAR}, which is adapted from \citep[][Section 6.1]{BraDaiMiy_23_bar_ss}.

\begin{restatable}[Basic Adjoint Relationship for \ggn]{lemma}{bar}
     \label{lem:bar}
     Consider the modified \ggn system satisfying Assumptions~\ref{assump:bdd-exp-remain}, \ref{assump:non-lattice} and \ref{assump:modified-ggn-stable}, with load $\rho < 1 - 1/n$. 
    For any function $f\colon \spacesuper\to \R$ that is bounded, locally Lipschitz continuous, and partially differentiable from the right with respect to its first $(n+1)$ coordinates, we have 
    \begin{equation}
        \label{eq:bar}
        \Ep{\sdsuper}{\opinner f(\Xsuper)} + \Lambda \Ep{a}{f(\Xsuper\aft) -f(\Xsuper\bef)} +   \mu \sumi \Ep{\si}{f(\Xsuper\aft) - f(\Xsuper\bef)} = 0, 
    \end{equation}
    where $\Xsuper, \Xsuper\aft, \Xsuper\bef \in\spacesuper$ are random elements following the distributions specified in the subscripts of expectations. 
    Moreover, the Palm distributions $\Pnb_a$ and $\Pnb_{\si}$ satisfy the following with probability $1$:
   \begin{enumerate}%
        \item[(i)] \label{item:Pa-jumps} If $(\Xsuper\aft, \Xsuper\bef)\sim\Pnb_a$, then $R_{a}\bef = 0$ and the coordinates of $\Xsuper\aft$ and $\Xsuper\bef$ satisfy the relations:
        \begin{align}
            \label{eq:Pnba-char-ra}
            R_a\aft &= R_a\bef + A \\
            \label{eq:Pnba-char-q}
            Q\aft &= Q\bef + 1  \\
            \label{eq:Pnba-char-q-j}
            \Qdropj\aft &= \Qdropj\bef + 1 \quad \forall j\in[n],
        \end{align}
        where $A$ represents a generic interarrival time and is independent of $\Xsuper\bef$; other coordinates of $\Xsuper\aft$ are identical to $\Xsuper\bef$. \footnote{Any superscript applied to an element of $\spacesuper$ (e.g., $\Xsuper\bef$ or $\Xsuper\aft$) propagates to all of its coordinates.}
        \item[(ii)] \label{item:Pi-jumps} For each $i\in[n]$, if $(\Xsuper\aft, \Xsuper\bef)\sim\Pnb_{\si}$, then $R_{\si}\bef=0$ and the coordinates of $\Xsuper\aft$ and $\Xsuper\bef$ satisfy the relations:
        \begin{align}
            \label{eq:Pnbsi-char-rsi}
            R_{\si}\aft &= R_{\si}\bef + S \\
            \label{eq:Pnbsi-char-q}
            Q\aft &= Q\bef - \indibrac{Q\bef > 0} \\
            \label{eq:Pnbsi-char-q-j}
            \Qdropj\aft &= \Qdropj\bef - \indibrac{\Qdropj\bef > 0} \quad \forall j\in[n]\backslash\{i\},
        \end{align}
        where $S$ represents a generic service time and is independent of $\Xsuper\bef$; other coordinates of $\Xsuper\aft$ are identical to $\Xsuper\bef$. 
    \end{enumerate}
\end{restatable}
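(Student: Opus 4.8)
The plan is to run the standard BAR argument: follow the process $t\mapsto f(\Xsuper(t))$ along its sample path, decompose its increment into a continuous part generated by $\opinner$ and a sum of jumps at the arrival and completion epochs, take expectations in the stationary regime, and convert the jump sums into Palm expectations using the definitions from \Cref{sec:steady-state-anaysis:bar}. This is a Dynkin-type identity combined with the rate-conservation law for stationary point processes, following Section~6.1 of \citep{BraDaiMiy_23_bar_ss}. \Cref{assump:simultaneous} guarantees that events happen one at a time, so the jump sum splits unambiguously into the arrival jumps and the completion jumps of each server.

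Concretely, fix $T>0$ and let $\Xsuper(0)\sim\pi$, so $\Xsuper(t)\sim\pi$ for all $t\geq 0$. Between consecutive events the residual-time coordinates of $\Xsuper(t)$ decrease at rate $1$ and the remaining coordinates are constant; since $f$ is locally Lipschitz and bounded, $t\mapsto f(\Xsuper(t))$ is absolutely continuous on each inter-event interval, and the chain rule along this linear flow identifies its time-derivative with $\opinner f(\Xsuper(t))$ at a.e.\ $t$. As the arrival process and each server's completion process are renewal processes with finite mean gaps, $[0,T]$ contains a.s.\ finitely many events, and we obtain the pathwise identity
\begin{equation}
    f(\Xsuper(T)) - f(\Xsuper(0)) = \int_0^T \opinner f(\Xsuper(t))\,\odv t + \sum_{k\colon \tau^k\leq T}\Big(f(\Xsuper(\tau^k)) - f(\Xsuper(\tau^k-))\Big),
\end{equation}
the sum ranging over all event epochs $\tau^k$ in $[0,T]$. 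Taking expectations, the left-hand side vanishes by stationarity. For the integral, Fubini and stationarity give $\E{\int_0^T\opinner f(\Xsuper(t))\,\odv t}= T\,\Ep{\pi}{\opinner f(\Xsuper)}$. For the jump term, separate the epochs into arrivals and into completions of each server, and invoke the definitions of $\Ep{a}{\cdot}$ and $\Ep{\si}{\cdot}$ with $h=T$: since $\Ep{a}{g} = \frac{1}{\Lambda T}\E{\sum_k g(\Xsuper(\tau_a^k),\Xsuper(\tau_a^k-))\indibrac{\tau_a^k\leq T}}$, and similarly for $\Ep{\si}{\cdot}$ with normalization $\mu T$ (the normalizations $\Lambda$ and $\mu$ being precisely the stationary intensities of the arrival process and of each server's completion process in the modified \ggn queue, where every server is perpetually busy serving i.i.d.\ $S$-distributed jobs), the expectation of the jump term equals $\Lambda T\,\Ep{a}{f(\Xsuper\aft)-f(\Xsuper\bef)} + \mu T\sumi\Ep{\si}{f(\Xsuper\aft)-f(\Xsuper\bef)}$. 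Dividing by $T$ yields \eqref{eq:bar}.

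The characterizations (i) and (ii) are read off directly from the dynamics in \Cref{sec:modified-ggn} and the leave-one-out coupling in \Cref{sec:leave-one-out}. At an arrival epoch $R_a\bef=0$, and the arrival subroutine sets $R_a\aft = R_a\bef + A$ for a fresh interarrival time $A$, increments $Q$ by $1$, and increments every $\Qdropj$ by $1$ (all leave-one-out systems share the arrivals), leaving the other coordinates unchanged. At a completion epoch of server $i$, $R_{\si}\bef = 0$, and the completion subroutine sets $R_{\si}\aft = R_{\si}\bef + S$ for a fresh service time $S$, decrements $Q$ by $\indibrac{Q\bef>0}$, and decrements $\Qdropj$ by $\indibrac{\Qdropj\bef>0}$ for each $j\neq i$ (every leave-one-out system except the $j=i$ one shares server $i$'s completions), leaving the other coordinates unchanged. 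The independence of the fresh $A$ (resp.\ $S$) from $\Xsuper\bef$ under the Palm measure follows from the representation of the modified \ggn queue as a deterministic functional of $n+1$ mutually independent renewal processes: the increment created at an event is independent of everything generated before it.

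The step requiring the most care is the rigorous justification of the Dynkin-type decomposition and the interchange of expectation with the time-integral and the jump sum. Non-explosion of the number of events on $[0,T]$ is immediate from the renewal structure; since $f$ is bounded, the jump sum is dominated by $2\|f\|_\infty$ times the (integrable) event count, so the exchange there is routine; the delicate point is that the regularity hypotheses on $f$ must be used both to show that $\opinner f(\Xsuper(t))$ is indeed the a.e.\ time-derivative of $f(\Xsuper(t))$ along the flow and to ensure that $\Ep{\pi}{\opinner f(\Xsuper)}$ is finite. In the applications of this lemma the test functions are (suitably truncated) quadratic functions of the coordinates, for which $\opinner f$ has at most linear growth and all relevant moments are finite under \Cref{assump:bdd-exp-remain}, so this integrability holds. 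Finally, \Cref{assump:non-lattice} is not needed for this lemma itself, and \Cref{assump:simultaneous} only serves to avoid the bookkeeping of coincident events; dropping it, a coincident epoch contributes the telescoping chain of intermediate differences produced by applying the event subroutines in the fixed order, each still captured by an appropriate Palm term.
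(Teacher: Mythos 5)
Your proposal is correct and follows essentially the same route as the paper's proof: the pathwise Dynkin-type decomposition of $f(\Xsuper(t))$ into the $\opinner$-integral plus jump sums over inter-event intervals (using absolute continuity from local Lipschitz continuity), stationarity to kill the left-hand side, identification of the jump sums with the Palm expectations via their definitions, and the explicit event subroutines plus the independent-renewal-process representation to establish (i) and (ii). The only difference is one of detail: the paper verifies the independence of the fresh $A$ (resp.\ $S$) from $\Xsuper\bef$ under the Palm measure by an explicit computation with product test functions, whereas you assert it from the renewal structure, but the substance is the same.
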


We now provide a brief interpretation of the statement in \Cref{lem:bar}. 

Equation~\eqref{eq:bar} states that the long-run average rate of change of $f(\Xsuper(t))$ is zero. In this expression, $\Ep{\sdsuper}{\opinner f(\Xsuper)}$ represents the average rate of change due to the continuous drift of residual times, while the remaining terms capture the discrete changes associated with arrivals and service completions. 

Furthermore, Equations~\eqref{eq:Pnba-char-ra}--\eqref{eq:Pnbsi-char-q-j} characterize the conditional Palm distribution of the post-jump state $\Xsuper\aft$ given the pre-jump state $\Xsuper\bef$. 
The specific relations between $\Xsuper\aft$ and $\Xsuper\bef$ defined by these identities are consistent with the construction of the modified \ggn in \Cref{sec:modified-ggn}.

Next, we define some convenient notational shorthand. 
We define the Palm distribution $\Probp{s}{\cdot} \triangleq (1/n)\sumi \Probp{\si}{\cdot}$ and the corresponding expectation $\Ep{s}{g(\Xsuper\aft,\Xsuper\bef)} \triangleq \frac{1}{n}\sumi \Ep{\si}{g(\Xsuper\aft,\Xsuper\bef)}$. Under this notation, \eqref{eq:bar} can be compactly rewritten as 
\begin{equation}
    \label{eq:bar-Es-form}
    \Ep{\pi}{\opinner f(\Xsuper)} + \Lambda \Ep{a}{f(\Xsuper\aft) - f(\Xsuper\bef)} + n\mu \Ep{s}{f(\Xsuper\aft) - f(\Xsuper\bef)} = 0.
\end{equation}
We also adopt the simplification that, when a Palm expectation only involves the pre-event state, $\Xsuper\bef$, we omit its superscript and simply write $\Xsuper$.

\begin{remark}[Simultaneous events]
\label{remark:simultaneous}
    We have adopted the assumption that no events occur simultaneously (\Cref{assump:simultaneous}) to simplify the presentation. Nevertheless, our results still hold without this assumption, and we outline the necessary changes as follows. 
    
    The main changes are in the definitions of the Palm distributions. When $M(t) \geq 1$ events occur simultaneously at time $t$, we let $\Xsuper(t, 0), \dots, \Xsuper(t, M(t))$ denote the intermediate states produced by sequentially applying the state-updating subroutines (see \Cref{sec:modified-ggn}); in particular, $\Xsuper(t, 0) \triangleq \Xsuper(t-)$ and $\Xsuper(t, M(t)) \triangleq \Xsuper(t)$. Consider the sequence of all pairs $(t, m)$ such that $\Xsuper(t, m)$ is an intermediate state immediately preceding an arrival update. We order these pairs lexicographically and let $(\tau_a^k, m_a^k)$ denote the $k$-th element in this sequence. The definition of $\mathbb{P}_a$ is then generalized as:    
    \begin{equation}
        \label{eq:palm-a-def-simultaneous}
         \Probp{a}{B} = \frac{1}{\Lambda h} \E{\sum_{k=1}^\infty \indibrac{\Big(\Xsuper(\tau_a^k, m_a^k+1), \Xsuper(\tau_a^k, m_a^k)\Big) \in B} \indibrac{\tau_a^k \leq  h}}. 
    \end{equation}
    The Palm distributions $\Pnb_{\si}$ for $i\in[n]$ should be generalized analogously. Intuitively, $\Pnb_a$ and $\Pnb_{\si}$ still represent the average state distributions observed by arrival events or completion events, although the states at some time points could be counted multiple times or involved in multiple Palm distributions. The same definition has also been used in \citep{BraDaiMiy_23_bar_ss} to deal with simultaneous events.
    
    With the new definitions of $\Pnb_a$ and $\Pnb_{\si}$, the statement of \Cref{lem:bar} is valid without requiring any other changes. %
    The other lemmas, including those for heterogenous \ggn in \Cref{sec:hetero}, also hold verbatim. 
    We leave the detailed verifications to readers. 

    Finally, we mention an interesting detail. As we can see in \Cref{sec:modified-ggn}, when constructing the modified \ggn, the order of updating the state in the presence of simultaneous events could affect the system dynamics. 
    However, our generalized proof should work for any order of updates. 
    Intuitively, whether we apply the arrival subroutine before or after a completion subroutine would affect the distribution of the distribution of $R_{a}\bef$ under $\Pnb_{\si}$, but not those facts stated in \Cref{lem:bar}.

\end{remark}

\subsection{Other supporting lemmas}
\label{sec:other-supporting-lemmas}

In this section, we state the remaining supporting lemmas for the proof of \Cref{thm:main-upper-bound}. These lemmas formalize several intuitive properties of the system; we briefly discuss their intuitions and defer their proofs to \smartref{app:technical-lemmas}{Appendix}{\valAppSupp}.

The first lemma, \Cref{lem:R-bdd}, bounds the conditional mean residual times given the queue lengths, $Q$ and $\Qdrop \triangleq (\Qdropj)_{j\in[n]}$. It is proved in \smartref{app:pf-Rbdd}{Appendix}{\valAppBdd}.

\begin{restatable}[Bounded mean residual times]{lemma}{Rbdd}\label{lem:R-bdd}
Consider the modified \ggn queue satisfying Assumptions~\ref{assump:bdd-exp-remain} and \ref{assump:modified-ggn-stable}, with the load $\rho < 1-1/n$. For any $j\in[n]$, 
    \begin{align}
        \label{eq:R-sj-bound-stationary}
        \mu\Ep{\nu}{\Rsj \given Q, \Qdrop} &\leq \Rsmax \\ 
         \label{eq:R-a-bound-stationary}
        \Lambda\Ep{\nu}{R_a \given Q, \Qdrop} &\geq \Ramin 
    \end{align}
    where $\Ep{\nu}{\cdot}$ can denote either $\Ep{\pi}{\cdot}$ or $\Ep{\si}{\cdot}$ for $i\in [n]\backslash \{j\}$. 
\end{restatable}

The intuition behind \Cref{lem:R-bdd} is that $\Rsmax$ is defined to be an upper bound on the mean residual service time conditioned on any \emph{age} (the time elapsed since the last service completion). Because the service times are i.i.d., this age is a sufficient statistic for the mean residual service time. Consequently, $Q$ and $\Qdrop$, which are functions of past arrival and completion times, provide no additional predictive information once the age is known; thus, conditioning on them does not change the bound.  
A similar logic applies to the lower bound $\Ramin$ for the arrival process.

The next lemma establishes that for any $i,j\in[n]$ such that $i\neq j$, $R_{\sj}$ and $\indibrac{\Qdropj=0}$ are uncorrelated under the Palm distribution $\Pnb_{\si}$. It is proved in \smartref{app:pf-uncorrelation}{Appendix}{\valAppIndep}. 

\begin{restatable}[Uncorrelation lemma]{lemma}{indep}
\label{lem:indep-R-Qtilde}
Consider the modified \ggn queue satisfying Assumptions~\ref{assump:bdd-exp-remain}, \ref{assump:non-lattice}, and \ref{assump:modified-ggn-stable}, with the load $\rho < 1-1/n$. For any $i, j\in[n]$ such that $i\neq j$, we have
\begin{equation}
     \Ep{\si}{\indibrac{\Qdropj=0}\Rsj} = \Probp{\si}{\Qdropj=0}\Ep{\pi}{\Rsj}.
\end{equation}
\end{restatable}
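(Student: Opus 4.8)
The plan is to evaluate the Palm expectation by conditioning away the randomness that is private to server $j$ and then letting the renewal theorem do the rest. Using the defining formula for $\Pnb_{\si}$, the identity is equivalent to the statement that for every $h>0$,
\[
\E{\textstyle\sum_{k}\indibrac{\Qdropj(\tau_{\si}^k-)=0}\,R_{\sj}(\tau_{\si}^k-)\,\indibrac{\tau_{\si}^k\le h}}
=\Ep{\pi}{\Rsj}\cdot\E{\textstyle\sum_{k}\indibrac{\Qdropj(\tau_{\si}^k-)=0}\,\indibrac{\tau_{\si}^k\le h}},
\]
where $\tau_{\si}^k$ is the $k$-th completion epoch of server $i$. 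The structural input is the representation of the modified \ggn queue together with all its leave-one-out systems as a deterministic functional of $n+1$ mutually independent renewal processes \citep{GamGol_13_GGn_HW}: conditionally on the initial state, server $\ell$'s completion process is generated by $\big(R_{\sell}(0),S_\ell^1,S_\ell^2,\dots\big)$ with $\{S_\ell^k\}_{k\ge1}$ i.i.d.\ $\sim S$ and independent of the initial state, of the other servers, and of the arrival stream. Let $\mathcal H$ be the $\sigma$-algebra generated by everything except server $j$'s future service times $\{S_j^k\}_{k\ge1}$. Then $R_{\sj}(0)$, the epochs $\{\tau_{\si}^k\}_k$ (which read off server $i$'s renewal process, $i\neq j$), and the whole trajectory $\indibrac{\Qdropj(\cdot-)=0}$ (which reads off $\Xsuper(0)$, the arrivals, and the servers $\ell\neq j$) are $\mathcal H$-measurable, whereas $\{S_j^k\}_{k\ge1}$ is independent of $\mathcal H$.

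Conditioning on $\mathcal H$ and using that $R_{\sj}(\tau_{\si}^k-)=R_{\sj}(\tau_{\si}^k)$ is the forward recurrence time at time $\tau_{\si}^k$ of a delayed $S$-renewal process started at delay $R_{\sj}(0)$, one gets $\E{R_{\sj}(\tau_{\si}^k-)\mid\mathcal H}=\psi\big(R_{\sj}(0),\tau_{\si}^k\big)$ for a deterministic function $\psi$. Two facts about $\psi$ drive the argument. First, $\psi(r,t)\le\max\!\big(r,\Rsmax/\mu\big)\le r+\Rsmax/\mu$ for all $t$ (distinguish $t\le r$ from $t>r$, and bound the residual life of an ordinary $S$-renewal process by the worst-case mean residual life of $S$), which together with $\Ep{\pi}{\Rsj}<\infty$ and $\E{R_{\sj}(0)^2}<\infty$ (both consequences of $\Rsmax<\infty$) provides an integrable dominating envelope. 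Second --- and this is the sole use of \Cref{assump:non-lattice} --- the key renewal theorem for non-lattice interval distributions yields $\psi(r,t)\to\kappa:=\E{S^2}/\big(2\E{S}\big)$ as $t\to\infty$, uniformly over $\{\,t-r\ge M'\,\}$ for each fixed $M'$.

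Plugging this conditional expectation back in, dividing by $h$, and sending $h\to\infty$, it remains to check that $\tfrac1h\E{\sum_{k:\tau_{\si}^k\le h}\indibrac{\Qdropj(\tau_{\si}^k-)=0}\big(\psi(R_{\sj}(0),\tau_{\si}^k)-\kappa\big)}\to0$, since the second sum above, divided by $h$, converges to $\mu\,\Probp{\si}{\Qdropj=0}$. I would split this sum into the terms with $\tau_{\si}^k\le M$ (finitely many in expectation, hence $O(1/h)$), the terms with $M<\tau_{\si}^k\le h$ and $R_{\sj}(0)\le\tau_{\si}^k-M'$ (where $|\psi-\kappa|$ is at most the uniform gap from the previous paragraph, which $\to0$ as $M'\to\infty$), and the terms with $M<\tau_{\si}^k\le h$ and $R_{\sj}(0)>\tau_{\si}^k-M'$ (whose total expected mass is $h$-independent, by the envelope, the independence of server $i$'s renewal process from $R_{\sj}(0)$, and $\E{R_{\sj}(0)^2}<\infty$, so again $O(1/h)$); letting $h\to\infty$ and then $M,M'\to\infty$ gives $\Ep{\si}{\indibrac{\Qdropj=0}\Rsj}=\kappa\,\Probp{\si}{\Qdropj=0}$. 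Finally, applying \Cref{lem:bar} with the test function $f(\Xsuper)=\tfrac12R_{\sj}^2$ (or $\tfrac12(R_{\sj}\wedge M)^2$ and $M\to\infty$, if an unbounded test function is not admissible) leaves only the server-$j$ completion term and yields $\Ep{\pi}{\Rsj}=\tfrac12\mu\,\E{S^2}=\kappa$, which lets us replace $\kappa$ by $\Ep{\pi}{\Rsj}$ and completes the proof.

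The step I expect to be the main obstacle is this last limiting argument: the renewal-theorem convergence $\psi(r,t)\to\kappa$ is not uniform in $r$, so one must show that the contributions of small completion epochs and of atypically large $R_{\sj}(0)$ are negligible after the $1/h$ normalization --- which is exactly where the second-moment bound on $R_{\sj}(0)$ and the mutual independence of distinct servers' renewal processes (under the stationary law) are used. A related technical point that warrants care is the precise claim that, under $\pi$, server $j$'s future service times are independent of $\mathcal H$: this is a mild strengthening of the ``$n+1$ independent renewal processes'' representation from a deterministic initial state to the stationary one.
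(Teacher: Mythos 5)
Your proposal is correct and follows essentially the same route as the paper's proof: condition away server $j$'s private randomness so that the completion epochs $\taui^k$ and the trajectory of $\indibracbig{\Qdropj(\cdot-)=0}$ become measurable, apply the key renewal theorem to server $j$'s forward recurrence time (the only place \Cref{assump:non-lattice} enters), and show that the pre-asymptotic terms vanish after the $1/h$ normalization, finally identifying the limit $\E{S^2}/(2\E{S})$ with $\Ep{\pi}{\Rsj}$ via BAR. The only substantive difference is in the bookkeeping for the early epochs: the paper bounds the age-conditioned expectation $\E{R_{\sj}(t)\given \agej(0)}$ almost surely by $\Rsmax/\mu$ and controls the number of early completion epochs via a stopping time $K^\epsilon$ together with Wald's equation and Lorden's inequality (so only first moments of $R_{\sj}(0)$ are needed), whereas you use the envelope $r+\Rsmax/\mu$ and consequently must invoke $\Ep{\pi}{R_{\sj}^2}<\infty$ --- both routes go through.
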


Intuitively, $R_{\sj}$ depends only on the $j$-th server's completion process, whereas $\Qdropj$ and $\Pnb_{\si}$ are determined by the other $n$ renewal processes; these $n+1$ renewal processes are mutually independent. 
The formal proof of \Cref{lem:indep-R-Qtilde} also utilizes a mixing argument for the completion processes, which relies on $S$ being non-lattice (\Cref{assump:non-lattice}). Notably, this lemma is the only part of our analysis where the non-lattice assumption is required.

The next lemma shows several basic equalities about the residual times of the renewal processes and the idle probability of the queue. 
This lemma is proved in \smartref{app:pf-equalities-and-finiteness}{Appendix}{\valAppEquality}. 

\begin{restatable}{lemma}{applybarpreliminary}
\label{lem:apply-bar-lots-of}
    Consider the modified \ggn queue satisfying Assumptions~\ref{assump:bdd-exp-remain} and \ref{assump:modified-ggn-stable}. If $\rho < 1$, 
    for $i,j\in[n]$ such that $i\neq j$, we have
    \begin{align}
        \label{eq:E-pi-R-s}
        \Ep{\sdsuper}{\Rsi} &= \frac{1}{2}\mu\E{S^2} \\
        \label{eq:E-pi-R-a}
        \Ep{\sdsuper}{R_a} &= \frac{1}{2}\Lambda \E{A^2} \\
        \label{eq:E-s-R-s}
        \Ep{\si}{\Rsj} + \Ep{\sj}{\Rsi} &= \mu\E{S^2} \\
        \label{eq:E-sa-R-sa}
        \Ep{\si}{R_a} + \Ep{a}{\Rsi} &= \frac{1}{2}\mu\E{S^2} + \frac{1}{2}\Lambda \E{A^2} \\
        \label{eq:E-s-Q}
        \Probp{s}{Q=0} &= 1 - \rho
    \end{align}
    In addition, if $\rho < 1-1/n$, for each $j \in [n]$, the $j$-th leave-one-out system satisfies
    \begin{equation}
        \label{eq:E-s-Qtilde}
        \frac{1}{n}\sum_{i\colon i\neq j} \Probp{\si}{\Qdropj=0} = 1- \rho - \frac{1}{n}.
    \end{equation}
\end{restatable}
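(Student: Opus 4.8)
The plan is to derive each identity by applying the Basic Adjoint Relationship (\Cref{lem:bar}) to a carefully chosen test function, exploiting the structure of the jumps described in parts (i) and (ii) of that lemma. For the first two identities, \eqref{eq:E-pi-R-s} and \eqref{eq:E-pi-R-a}, I would take the linear test function $f(\xsup) = R_{\si}$ (respectively $f(\xsup)=R_a$). Then $\opinner f \equiv -1$, so $\Ep{\pi}{\opinner f(\Xsuper)} = -1$. At an arrival event $f$ is unchanged (for $f=R_{\si}$) or jumps by $A$ (for $f=R_a$); at a completion event of server $i$, $f=R_{\si}$ jumps by $S$ while $f=R_a$ is unchanged; other completion events leave $f$ fixed. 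Plugging into \eqref{eq:bar} and using $\Lambda\E{A} = n\mu\E{S}\cdot\frac{1}{n}\cdot n = \ldots$ — more precisely, that the completion rate of server $i$ alone is $\mu$ — gives $-1 + \mu\,\Ep{\si}{S} = 0$ type relations; but since $S$ is independent of the pre-jump state, $\Ep{\si}{R_{\si}\aft - R_{\si}\bef} = \E{S}$, which only recovers $\mu\E{S}=1$, not the second-moment identity. To get the second moment I instead use the quadratic test function $f(\xsup) = R_{\si}^2$: then $\opinner f = -2R_{\si}$, and the jump at a server-$i$ completion is $(R_{\si}\bef + S)^2 - (R_{\si}\bef)^2 = 2R_{\si}\bef S + S^2$, which in expectation under $\Pnb_{\si}$ equals $2\Ep{\si}{R_{\si}}\E{S} + \E{S^2} = \E{S^2}$ since $R_{\si}\bef = 0$ under $\Pnb_{\si}$ by part (ii). Hence BAR gives $-2\Ep{\pi}{R_{\si}} + \mu\E{S^2} = 0$, i.e.\ \eqref{eq:E-pi-R-s}; \eqref{eq:E-pi-R-a} is symmetric using $f = R_a^2$ and $R_a\bef = 0$ under $\Pnb_a$.

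For the cross identities \eqref{eq:E-s-R-s} and \eqref{eq:E-sa-R-sa}, I would apply BAR to the bilinear test functions $f(\xsup) = R_{\si}R_{\sj}$ and $f(\xsup) = R_{\si}R_a$, respectively. For $f = R_{\si}R_{\sj}$ we have $\opinner f = -(R_{\si} + R_{\sj})$, arrivals leave $f$ unchanged, a server-$i$ completion changes $f$ by $(R_{\si}\bef + S)R_{\sj} - R_{\si}\bef R_{\sj} = S\,R_{\sj}$ (using $R_{\si}\bef = 0$), and symmetrically for server $j$; other completions leave $f$ fixed. Taking expectations and using independence of the fresh $S$ from the pre-jump state, \eqref{eq:bar} becomes $-\Ep{\pi}{R_{\si}} - \Ep{\pi}{R_{\sj}} + \mu\E{S}\big(\Ep{\si}{R_{\sj}} + \Ep{\sj}{R_{\si}}\big) = 0$. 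Since $\mu\E{S}=1$ and, by \eqref{eq:E-pi-R-s}, $\Ep{\pi}{R_{\si}} = \Ep{\pi}{R_{\sj}} = \frac12\mu\E{S^2}$, this rearranges to \eqref{eq:E-s-R-s}. The computation for $f = R_{\si}R_a$ is entirely parallel, now with a contribution $\mu\E{S}\,\Ep{\si}{R_a}$ from server-$i$ completions and $\Lambda\E{A}\,\Ep{a}{R_{\si}}$ from arrivals; combined with \eqref{eq:E-pi-R-s}--\eqref{eq:E-pi-R-a} and $\Lambda\E{A}=1$ this yields \eqref{eq:E-sa-R-sa}.

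The probabilistic identities \eqref{eq:E-s-Q} and \eqref{eq:E-s-Qtilde} follow from the ``smoothed'' linear test function. For \eqref{eq:E-s-Q}, take $f(\xsup) = Q + \mu\sum_{\ell=1}^n R_{\sell} - \Lambda R_a$; then $\opinner f = -\mu n + \Lambda = -n\mu(1-\rho)$ (a constant, so its $\pi$-expectation is itself), while by parts (i)--(ii) of \Cref{lem:bar} the jump in $f$ has zero Palm expectation at an arrival (the $+1$ in $Q$ cancels $\Lambda\E{A}$) and equals $-\indibrac{Q\bef>0} + \mu\E{S} = \indibrac{Q\bef=0}$ in expectation at a server-$i$ completion. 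Thus \eqref{eq:bar-Es-form} gives $-n\mu(1-\rho) + n\mu\,\Probp{s}{Q=0} = 0$, i.e.\ \eqref{eq:E-s-Q}. For \eqref{eq:E-s-Qtilde} I would run the same argument in the $j$-th leave-one-out system, which is a modified $GI/GI/(n-1)$ queue: take $f = \Qdropj + \mu\sum_{\ell\neq j}R_{\sell} - \Lambda R_a$, so $\opinner f = -(n-1)\mu + \Lambda = -n\mu(1-\rho) + \mu$, arrivals again contribute zero, a server-$i$ completion with $i\neq j$ contributes $\indibrac{\Qdropj = 0}$ in expectation, and a server-$j$ completion does not affect $\Qdropj$ or the $R_{\sell}$ with $\ell\neq j$ in $f$ but does reset $R_{\sj}$ — however $R_{\sj}$ does not appear in $f$, so that event contributes zero. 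This gives $-n\mu(1-\rho) + \mu + \mu\sum_{i\colon i\neq j}\Probp{\si}{\Qdropj=0} = 0$, which is \eqref{eq:E-s-Qtilde} after dividing by $n\mu$.

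The main obstacle is purely bookkeeping: one must verify that each test function satisfies the boundedness and local-Lipschitz hypotheses of \Cref{lem:bar} — which the quadratic and bilinear functions do \emph{not} literally satisfy on the unbounded state space — so strictly the identities should be obtained by a truncation/localization argument (apply BAR to $f$ capped at level $M$ and let $M\to\infty$, using \Cref{assump:modified-ggn-stable} and the finite-second-moment assumptions to justify the limit). I expect this limiting step, rather than the algebra, to be where care is needed; it is handled in \Cref{app:pf-equalities-and-finiteness}.
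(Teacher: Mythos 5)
Your proposal is correct and follows essentially the same route as the paper: BAR applied to the truncated quadratic test functions $(R_{\si}\wedge\kappa)^2$, $(R_a\wedge\kappa)^2$, the bilinear ones $(R_{\si}\wedge\kappa)(R_{\sj}\wedge\kappa)$ and $(R_{\si}\wedge\kappa)(R_a\wedge\kappa)$, with the truncation removed by monotone convergence exactly as you anticipate. The only (immaterial) difference is in the last two identities, where the paper takes the bare test functions $Q\wedge\kappa$ and $\Qdropj\wedge\kappa$ — so every jump term is a bounded indicator and bounded convergence suffices — rather than your smoothed linear function, whose extra residual-time terms cancel and yield the same equations.
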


Intuitively, \eqref{eq:E-pi-R-s} and \eqref{eq:E-pi-R-a} characterize the mean residual times \emph{under the steady-state distribution}, while \eqref{eq:E-s-R-s} and \eqref{eq:E-sa-R-sa} characterize these means \emph{observed by independent renewal processes}. 
For instance, $\Ep{\si}{R_{\sj}}$ represents the mean residual time of the $i$-th service process observed at the completion times of the $j$-th service process, for some $j$ different from $i$; \eqref{eq:E-s-R-s} implies that $\Ep{\si}{R_{\sj}}$ matches the steady-state mean $\mu \E{S^2} / 2$ under the assumption that $\Ep{\si}{R_{\sj}} = \Ep{\sj}{R_{\si}}$, although our analysis requires neither the proof nor the assumption of such symmetry.

\Cref{eq:E-s-Q} identifies the fraction of service completions that occur when $Q=0$; these completions are ``wasted'' in the sense that a virtual job is immediately added upon the completion of the previous job. Since the ratio of the arrival rate to the total service rate is $\rho$, exactly an $(1-\rho)$ fraction of the completions must be wasted in the long run to maintain stability. A similar intuition also applies to \eqref{eq:E-s-Qtilde}.

Finally, \Cref{lem:expfinite} establishes that the queue length $Q$ has a finite mean under the Palm distributions $\mathbb{P}_a$ and $\mathbb{P}_{s,i}$. This result follows from the finiteness of $\Ep{\pi}{Q}$ (\Cref{assump:modified-ggn-stable}). 
This lemma is proved in \smartref{app:pf-equalities-and-finiteness}{Appendix}{\valAppEquality}.

\begin{restatable}{lemma}{expfinite}
\label{lem:expfinite}
    Consider the modified \ggn system satisfying Assumptions~\ref{assump:bdd-exp-remain} and \ref{assump:modified-ggn-stable}, with the load $\rho < 1$. Then we have $\Ep{a}{Q} < \infty$ and $\Ep{\si}{Q} < \infty$ for all $i\in[n]$. 
\end{restatable}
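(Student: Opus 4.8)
Our plan is to feed \Cref{lem:bar} a family of \emph{bounded} test functions whose continuous drift is proportional to a truncated queue length, which converts the Palm expectation of $Q$ at an event into $\Ep{\pi}{Q}$ (finite by \Cref{assump:modified-ggn-stable}) plus a handful of residual-time Palm expectations already controlled by \Cref{lem:apply-bar-lots-of}. For integers $M,N\ge 1$ set $h_M(q)\triangleq q\wedge M$. To handle $\Ep{a}{Q}$, take $f_{M,N}(x)\triangleq h_M(Q)\,(R_a\wedge N)$; this is bounded by $MN$, Lipschitz in the continuous coordinates, and right-differentiable, so \Cref{lem:bar} applies, and since it depends on the first $n+1$ coordinates only through $R_a$ we have $\opinner f_{M,N}(x)=-h_M(Q)\,\indibrac{R_a<N}$. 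By \Cref{lem:bar}, at an arrival $R_a\bef=0$, so $f_{M,N}(\Xsuper\bef)=0$ and $f_{M,N}(\Xsuper\aft)=h_M(Q\bef+1)(A\wedge N)$ with $A$ independent of $\Xsuper\bef$; at every completion $R_a$ is unchanged and $Q$ drops by $\indibrac{Q>0}$. Since $\lvert h_M(q-\indibrac{q>0})-h_M(q)\rvert\le 1$, rearranging \eqref{eq:bar-Es-form} gives
\begin{equation*}
\Lambda\,\E{A\wedge N}\,\Ep{a}{h_M(Q+1)}=\Ep{\pi}{h_M(Q)\,\indibrac{R_a<N}}-n\mu\,\Ep{s}{\big(h_M(Q-\indibrac{Q>0})-h_M(Q)\big)(R_a\wedge N)},
\end{equation*}
and the last term has absolute value at most $n\mu\,\Ep{s}{R_a}$, which is finite because \eqref{eq:E-sa-R-sa} together with $\Rsi\ge 0$ forces $\Ep{\si}{R_a}<\infty$ for every $i$. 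Letting $N\to\infty$ (monotone convergence in the first two factors, dominated convergence in the last, using $h_M\le M$ and the bound just stated) and then $M\to\infty$ ($h_M(Q+1)\uparrow Q+1$, monotone convergence) yields $\Ep{a}{Q+1}\le \Ep{\pi}{Q}+n\mu\,\Ep{s}{R_a}<\infty$, so $\Ep{a}{Q}<\infty$.

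For $\Ep{\si}{Q}<\infty$, fix $i\in[n]$ and rerun the argument with $f_{M,N}(x)\triangleq h_M(Q)\,(\Rsi\wedge N)$, so $\opinner f_{M,N}=-h_M(Q)\,\indibrac{\Rsi<N}$. The only structural change occurs at completions of server $i$: there $\Rsi\bef=0$, so $f_{M,N}(\Xsuper\bef)=0$ while $f_{M,N}(\Xsuper\aft)=h_M(Q\bef-\indibrac{Q\bef>0})(S\wedge N)$ with $S$ independent of $\Xsuper\bef$, contributing a term $\mu\,\E{S\wedge N}\,\Ep{\si}{h_M(Q-\indibrac{Q>0})}$; at the arrival and at completions of servers $j\ne i$, $\Rsi$ is unchanged. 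Isolating the server-$i$ term in \eqref{eq:bar-Es-form}, letting $N\to\infty$ (dominated convergence, using $\Ep{a}{\Rsi}<\infty$ from \eqref{eq:E-sa-R-sa} and $\Ep{\sj}{\Rsi}<\infty$ for $j\ne i$ from \eqref{eq:E-s-R-s}), and using $\mu\E{S}=1$, we obtain
\begin{equation*}
\Ep{\si}{h_M(Q-\indibrac{Q>0})}=\Ep{\pi}{h_M(Q)}-\Lambda\,\Ep{a}{\big(h_M(Q+1)-h_M(Q)\big)\Rsi}-\mu\sum_{j\ne i}\Ep{\sj}{\big(h_M(Q-\indibrac{Q>0})-h_M(Q)\big)\Rsi},
\end{equation*}
whose right-hand side is at most $\Ep{\pi}{Q}+\Lambda\,\Ep{a}{\Rsi}+\mu\sum_{j\ne i}\Ep{\sj}{\Rsi}$ uniformly in $M$. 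Since $h_M(q-\indibrac{q>0})\ge h_M(q)-1$ for every $q\in\Znn$, this bounds $\Ep{\si}{h_M(Q)}$ uniformly in $M$, and monotone convergence gives $\Ep{\si}{Q}<\infty$.

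The computations above are mechanical once the BAR framework is set up; the parts that require care are the double truncation (in $Q$ and in the residual time), which is what keeps the test functions inside the hypotheses of \Cref{lem:bar}, and the two nested limit interchanges, which I would justify by monotone convergence as $M\to\infty$ and dominated convergence as $N\to\infty$ with the explicit dominating functions exhibited above. The one subtlety worth stating explicitly is the absence of circularity: the residual-time identities of \Cref{lem:apply-bar-lots-of} are themselves obtained from BAR applied to bounded test functions and do not presuppose $\Ep{a}{Q}<\infty$ or $\Ep{\si}{Q}<\infty$, so they are legitimately available here. I expect this bookkeeping to be the only real obstacle; everything else is a direct application of \Cref{lem:bar}.
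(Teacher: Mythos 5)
Your proposal is correct, and it takes a genuinely different route from the paper. The paper proves the (heterogeneous) statement with a single quadratic test function $(Q\wedge\kappa - 2\Lambda R_a\wedge\kappa)^2$: expanding the square makes $\Ep{a}{Q\wedge\kappa}$ and $\Ep{s}{Q\wedge\kappa}$ appear with coefficients that are eventually positive (the factor $2$ on $\Lambda R_a$ is chosen precisely so that $4\E{\Lambda A\wedge\kappa}-2>0$ for large $\kappa$), bounded above by $\Ep{\pi}{Q}$ and the same finite residual-time Palm moments you invoke, after which monotone convergence finishes. You instead decouple the two claims with the bilinear test functions $h_M(Q)(R_a\wedge N)$ and $h_M(Q)(\Rsi\wedge N)$, exploiting that $R_a\bef=0$ at arrivals (resp.\ $\Rsi\bef=0$ at completions of server $i$) so that the Palm expectation of the truncated queue length at the event of interest emerges with a clean coefficient $\Lambda\E{A\wedge N}$ (resp.\ $\mu\E{S\wedge N}$) tending to $1$. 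Both arguments rest on exactly the same inputs --- \Cref{lem:bar}, $\Ep{\pi}{Q}<\infty$ from \Cref{assump:modified-ggn-stable}, and the finiteness of $\Ep{\si}{R_a}$, $\Ep{a}{\Rsi}$, $\Ep{\sj}{\Rsi}$ extracted from \eqref{eq:E-s-R-s}--\eqref{eq:E-sa-R-sa} --- and your remark on non-circularity is accurate, since \Cref{lem:apply-bar-lots-of} is proved from BAR with bounded test functions that never touch $\Ep{a}{Q}$ or $\Ep{\si}{Q}$. What your version buys is transparency: each identity isolates exactly one Palm expectation of $Q$, and the mechanism (residual time vanishing at the event) is visible; what it costs is two separate applications of BAR and a double truncation with nested limit interchanges, where the paper's single quadratic delivers both bounds in one pass. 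The small steps you rely on all check out: $\partial^+_{R_a}(R_a\wedge N)=\indibrac{R_a<N}$, $\lvert h_M(q-\indibrac{q>0})-h_M(q)\rvert\le 1$, $h_M(q-\indibrac{q>0})\ge h_M(q)-1$, and the dominating functions $R_a$ and $\Rsi$ have finite Palm expectations under the relevant measures.
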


\section{Proof of main theorem}
\label{sec:pf-main-upper-bound}

This section is dedicated to proving \Cref{thm:main-upper-bound}, following the two-step approach outlined in \Cref{sec:tech-overview}:
\begin{enumerate}%
    \item[Step 1] (\Cref{sec:pf-main:apply-bar-quadratic}): We derive an exact expression for the mean queue length, $\Ep{\pi}{Q}$. This expression connects $\Ep{\pi}{Q}$ to the load $\rho$, the variances $\Var{\mu S}$ and $\Var{\Lambda A}$, and some ``covariance'' terms, $\Gamma_{\sj}$ and $\Gamma_{a}$.  %
    \item[Step 2] (\Cref{sec:leave-one-out-core-argument}): We bound $\Gamma_{\sj}$ and $\Gamma_{a}$. 
\end{enumerate}
The first step follows from the established framework of BAR, while the second step contains the novel argument based on our leave-one-out technique. Substituting the bound in Step 2 into the equality in Step 1 proves \Cref{thm:main-upper-bound}.

\subsection{Step 1: exact expression of $\E{Q}$}
\label{sec:pf-main:apply-bar-quadratic}
\begin{lemma}
\label{lem:apply-bar-main-quadratic}
    Consider the modified \ggn system satisfying Assumptions~\ref{assump:bdd-exp-remain} and \ref{assump:modified-ggn-stable}, with the load $\rho < 1$. We have
    \begin{equation}
        \label{eq:apply-bar-main-quadratic-equality}
        (1-\rho) \Ep{\pi}{Q}
        =  \frac{\rho\Var{\Lambda A} + \Var{\mu S} + 1-\rho}{2} +  \sumj \Gamma_{\sj}  - \Gamma_a \;, 
    \end{equation}
    where $\Gamma_{\sj}$ and $\Gamma_{a}$ are defined as: 
    \begin{align}
        \label{eq:gamma-s-def}
        \Gamma_{\sj} &= \Ep{s}{\indibrac{Q=0} \mu R_{\sj}} - (1-\rho) \Ep{\pi}{\mu R_{\sj}} \quad \forall j\in [n]\\
        \label{eq:gamma-a-def}
        \Gamma_{a} &= \Ep{s}{\indibrac{Q=0} \Lambda R_a} - (1-\rho) \Ep{\pi}{\Lambda R_{a}},
    \end{align}
    and $\Ep{s}{\cdot} \triangleq \frac{1}{n} \sumi \Ep{\si}{\cdot}$ is the Palm expectation with respect to all completion events.
\end{lemma}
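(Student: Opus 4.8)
The plan is to apply the Basic Adjoint Relationship (\Cref{lem:bar}) with the ``smoothed'' quadratic test function $f(\Xsuper) = W^2$, where $W \triangleq Q + \mu\sumj R_{\sj} - \Lambda R_a$, and then rearrange the resulting identity \eqref{eq:bar-Es-form} into \eqref{eq:apply-bar-main-quadratic-equality}. Since $W^2$ is unbounded, \Cref{lem:bar} does not apply to $f$ directly, so I would first carry out the argument with bounded, smoothly truncated versions $f_K$ (chosen so that each $f_K$ is bounded, locally Lipschitz and right-differentiable in its first $n+1$ coordinates, $f_K \to W^2$ pointwise as $K\to\infty$, and $|\opinner f_K|$ and $|f_K(\Xsuper\aft) - f_K(\Xsuper\bef)|$ are dominated by integrable random variables), and then pass to the limit $K\to\infty$ by dominated convergence in all three terms of \eqref{eq:bar-Es-form}. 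The integrability needed for domination is supplied by the supporting results already in hand: $\Ep{\pi}{Q}<\infty$ (\Cref{assump:modified-ggn-stable}), $\Ep{a}{Q}, \Ep{\si}{Q}<\infty$ (\Cref{lem:expfinite}), finiteness of $\E{S^2}$ and $\E{A^2}$ (from \Cref{assump:bdd-exp-remain}), and finiteness of the Palm means of the residual times (from \Cref{lem:apply-bar-lots-of} and \eqref{eq:E-pi-R-s}--\eqref{eq:E-pi-R-a}). This localization step is the only delicate point; the remaining computation is routine.

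For that computation I would evaluate the three terms of \eqref{eq:bar-Es-form} separately. \emph{Drift term:} since $\partial^+_{R_{\si}} f = 2\mu W$ for every $i$ and $\partial^+_{R_a} f = -2\Lambda W$, we get $\opinner f = -2W(n\mu - \Lambda) = -2n\mu(1-\rho)\,W$, so $\Ep{\pi}{\opinner f(\Xsuper)} = -2n\mu(1-\rho)\Ep{\pi}{W}$. \emph{Arrival term:} by \eqref{eq:Pnba-char-ra}--\eqref{eq:Pnba-char-q} together with $R_a\bef = 0$, we have $W\aft = W\bef + (1 - \Lambda A)$, hence $f(\Xsuper\aft) - f(\Xsuper\bef) = 2W\bef(1-\Lambda A) + (1-\Lambda A)^2$; since $A$ is independent of $\Xsuper\bef$ under $\Pnb_a$ and $\E{\Lambda A} = 1$, the cross term vanishes in expectation and $\Lambda\Ep{a}{f(\Xsuper\aft) - f(\Xsuper\bef)} = \Lambda\E{(1-\Lambda A)^2} = \Lambda\Var{\Lambda A}$. \emph{Completion term:} by \eqref{eq:Pnbsi-char-rsi}--\eqref{eq:Pnbsi-char-q} together with $R_{\si}\bef = 0$, at a completion of server $i$ we have $W\aft = W\bef + (\mu S - \indibrac{Q\bef > 0})$, hence $f(\Xsuper\aft) - f(\Xsuper\bef) = 2W\bef(\mu S - \indibrac{Q\bef > 0}) + (\mu S - \indibrac{Q\bef > 0})^2$; averaging over $i$ via $\Ep{s}{\cdot} = \frac1n\sumi\Ep{\si}{\cdot}$ and using that $S$ is independent of $\Xsuper\bef$, $\mu\E{S} = 1$, $\E{(\mu S)^2} = \Var{\mu S} + 1$, and $\Probp{s}{Q = 0} = 1-\rho$ (from \eqref{eq:E-s-Q}), the squared term contributes $\Var{\mu S} + 1 - \rho$ and the cross term collapses to $2\Ep{s}{W\bef\indibrac{Q\bef = 0}}$. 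Finally, on the event $\{Q\bef = 0\}$ one has $W\bef = \mu\sumj R_{\sj}\bef - \Lambda R_a\bef$, so $\Ep{s}{W\bef\indibrac{Q\bef = 0}} = \sumj\Ep{s}{\mu R_{\sj}\indibrac{Q = 0}} - \Ep{s}{\Lambda R_a\indibrac{Q = 0}}$.

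Substituting the three expressions into \eqref{eq:bar-Es-form}, dividing by $2n\mu$ and using $\Lambda/(n\mu) = \rho$ gives
\[
(1-\rho)\Ep{\pi}{W} = \frac{\rho\Var{\Lambda A} + \Var{\mu S} + 1-\rho}{2} + \sumj\Ep{s}{\mu R_{\sj}\indibrac{Q=0}} - \Ep{s}{\Lambda R_a\indibrac{Q=0}}.
\]
Expanding $\Ep{\pi}{W} = \Ep{\pi}{Q} + \sumj\Ep{\pi}{\mu R_{\sj}} - \Ep{\pi}{\Lambda R_a}$ and moving the residual-time means to the right-hand side turns this into \eqref{eq:apply-bar-main-quadratic-equality}, with $\Gamma_{\sj}$ and $\Gamma_a$ exactly as in \eqref{eq:gamma-s-def}--\eqref{eq:gamma-a-def}. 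The main obstacle is thus the truncation argument required to apply \Cref{lem:bar} to the unbounded test function $f$; the rest is bookkeeping with the identities from \Cref{lem:apply-bar-lots-of}.
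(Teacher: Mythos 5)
Your proposal is correct and follows essentially the same route as the paper: the paper also applies BAR to a truncated version of $\bigl(Q + \mu\sumj R_{\sj} - \Lambda R_a\bigr)^2$ (truncating each coordinate at a level $\kappa$), passes to the limit by dominated convergence using exactly the finiteness facts you list, and then performs the same three-term computation and rearrangement. The only cosmetic difference is that you leave the truncation $f_K$ abstract while the paper uses the explicit $\wedge\,\kappa$ truncation.
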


\begin{remark}
    When $n$ is large,
    the term $\Gamma_{\sj}$ can be roughly viewed as the covariance between $\indibrac{Q=0}$ and $\mu R_{\sj}$ under the measure $\Pnb_s$, because $\Probp{s}{Q=0} = 1-\rho$ and $\Ep{s}{R_{\sj}} = \frac{1}{n} \sum_{i\colon i\neq j} \Ep{\si}{R_{\sj}} = \frac{n-1}{n}\Ep{\pi}{R_{\sj}}$. 
    Here we use the fact that $\Ep{\si}{R_{\sj}} = \Ep{\pi}{R_{\sj}}$, which is slightly stronger than \eqref{eq:E-s-R-s} in \Cref{lem:apply-bar-lots-of}, but can be shown by replacing $\indibracbig{\Qdropj=0}$ with $1$ in the proof of \Cref{lem:indep-R-Qtilde}.  
    We skip the detailed argument because it is not needed to prove our main result. 

    The term $\Gamma_a$ can be interpreted similarly if the interarrival-time distribution is non-lattice. 
\end{remark}

Next, we prove \Cref{lem:apply-bar-main-quadratic}, focusing on presenting the case where $\rho < 1-1/n$. The case where $\rho \geq 1-1/n$ follows from identical algebraic steps, with the only modification being that the BAR (\Cref{lem:bar}) must be stated for the original state, $X(t)$, rather than the augmented state, $\Xsuper(t)$. This distinction arises because the augmented state involves $\Qdropj(t)$, which is unstable when $\rho \geq 1-1/n$; nevertheless, $\Qdropj(t)$ is not required for the proof of \Cref{lem:apply-bar-main-quadratic}.

\begin{proof}[Proof of \Cref{lem:apply-bar-main-quadratic}]
    We define the random variable 
    \[
        \Qproxy \triangleq Q + \mu \sum_{j=1}^n R_{s,j} - \Lambda R_a.
    \]
    As discussed in \Cref{sec:tech-overview-ggn}, $\Qproxy$ serves as a smoothed version of the queue length $Q$. 
    Because $\Qproxy$ possesses more tractable dynamics, we focus the majority of our proof on calculating $\Ep{\pi}{\Qproxy}$, from which the original goal of calculating $\Ep{\pi}{Q}$ can be straightforwardly recovered.

    To apply the Basic Adjoint Relationship (BAR, \Cref{lem:bar}) to the quadratic term $\Qproxy^2$, we must first address the requirement that the test function should be bounded. We thus consider the truncated version of $\Qproxy$, 
    \[
        \Qproxy_{\kappa} \triangleq (Q \wedge \kappa) + \mu \sum_{j=1}^n (R_{s,j} \wedge \kappa) - \Lambda (R_a \wedge \kappa)
    \]
    for some integer $\kappa > 0$. 
    
    Let $f(\Xsuper) = \Qproxy_{\kappa}^2$.
    Since $f$ is bounded, locally Lipschitz continuous, and has right partial derivatives with respect to $R_a$ and $R_{\si}$'s, BAR implies that
     \begin{align*}
         0 &= \Ep{\pi}{2\Qproxy_{\kappa}\,\Big(-\mu \sumj \indibrac{R_{\sj} \leq \kappa} + \Lambda \indibrac{R_a\leq \kappa} \Big)} \\
         &\mspace{23mu} +\Lambda \Ep{a}{\Big(\Qproxy_{\kappa} + \indibrac{Q\leq \kappa-1} - \Lambda A\wedge\kappa\Big)^2 - \Qproxy_{\kappa}^2} \\
         &\mspace{23mu} + n\mu \Ep{s}{\Big(\Qproxy_{\kappa} - \indibrac{0<Q\leq \kappa} + \mu S\wedge\kappa\Big)^2 - \Qproxy_{\kappa}^2}.
     \end{align*}
     Expanding the squares and rearranging the terms, the above equation simplifies into
     \begin{align*}
         &2\Ep{\pi}{\Qproxy_{\kappa} \,\Big(\mu \sumj \indibrac{R_{\sj} \leq \kappa} - \Lambda \indibrac{R_a\leq \kappa} \Big)} \\
         &\qquad = \Lambda \Ep{a}{2\Qproxy_{\kappa} \Big(\indibrac{Q\leq \kappa-1} - \Lambda A\wedge\kappa\Big) + \Big(\indibrac{Q\leq \kappa-1} - \Lambda A\wedge\kappa\Big)^2} \\
         &\qquad \mspace{23mu} + n\mu \Ep{s}{2\Qproxy_{\kappa} \Big(- \indibrac{0<Q\leq \kappa} + \mu S\wedge\kappa\Big) + \Big(- \indibrac{0<Q\leq \kappa} + \mu S\wedge\kappa\Big)^2}. 
     \end{align*}
     Now we want to let $\kappa \to\infty$ in the previous equation and apply the dominated convergence theorem (DCT). To apply DCT, recall the following finiteness results: $\Ep{\pi}{Q} < \infty$ (\Cref{assump:modified-ggn-stable}), $\Ep{a}{Q}, \Ep{s}{Q} < \infty$ (\Cref{lem:expfinite}), $\Ep{\pi}{R_a}, \Ep{\si}{R_a}, \Ep{\pi}{R_{\si}}, \Ep{a}{R_{\si}}, \Ep{\si}{R_{\sj}} <\infty$ (\Cref{lem:apply-bar-lots-of}), $\Ep{a}{R_a} = \Ep{\si}{R_{\si}} = 0$, and $\E{A^2},\E{S^2}<\infty$ (\Cref{assump:bdd-exp-remain}).
     Then it is not hard to see that, in the previous displayed equation, 
     the expression inside $\Ep{\pi}{\cdot}$ is dominated by 
     $(Q + \sumj \mu R_{\sj} + \Lambda R_a)(n\mu + \Lambda)$, which is independent of $\kappa$ and has a finite mean under the distribution $\pi$. 
     For similar reasons, the expressions inside $\Ep{a}{\cdot}$ and $\Ep{s}{\cdot}$ are dominated by quantities which are independent of $\kappa$ and have finite means under $\Pnb_a$ and $\Pnb_s$, respectively. Therefore, DCT implies that
     \begin{align}
         \label{eq:quadratic-bar:cont-term}
         &2(n\mu  - \Lambda)\Ep{\pi}{\Qproxy} \\
         \label{eq:quadratic-bar:arrival-term}
         &\qquad = \Lambda \Ep{a}{2\Qproxy\Big(1 - \Lambda A\Big) + \Big(1 - \Lambda A\Big)^2} \\
         \label{eq:quadratic-bar:comp-term}
         &\qquad \mspace{20mu} + n\mu \Ep{s}{2\Qproxy\Big(- \indibrac{Q>0} + \mu S\Big)  + \Big(- \indibrac{Q>0} + \mu S\Big)^2}. 
     \end{align}

     Next, we simplify the terms in \eqref{eq:quadratic-bar:arrival-term}, which correspond to the arrival events:
     \begin{align}
        \nonumber
        &\Lambda \Ep{a}{2\Qproxy\Big(1 - \Lambda A\Big) + \Big(1 - \Lambda A\Big)^2} \\
        &\qquad = 2\Lambda \Ep{a}{\Qproxy}\E{1 - \Lambda A}  + \Lambda \E{(1 - \Lambda A)^2} \\
         \label{eq:quadratic-bar:arrival-term-simplify}
         &\qquad = \Lambda \Var{\Lambda A},
     \end{align}
     where the first equality is due to the fact that $A$ is independent of $Q$ and $R_{\sj}$; the second equality is due to $\Lambda \E{A} = 1$. 
     
     We then simplify the terms in \eqref{eq:quadratic-bar:comp-term}, which correspond to the completion events: 
     \begin{align}
        \nonumber
         & n\mu \Ep{s}{2\Qproxy\Big(- \indibrac{Q>0} + \mu S\Big)  + \Big(- \indibrac{Q>0} + \mu S\Big)^2}  \\
         \nonumber
         &\qquad = 2n\mu \Ep{s}{\Qproxy}\,\E{- 1 + \mu S} + 2n\mu \Ep{s}{\Qproxy \indibrac{Q=0}}   \\
         \nonumber
         &\qquad\mspace{23mu} + n\mu \E{(- 1 + \mu S)^2} + 2n\mu \Probp{s}{Q=0}\E{-1+\mu S} + n\mu\Probp{s}{Q=0} \\
         \nonumber
         &\qquad=  2n\mu \Ep{s}{\Qproxy\indibrac{Q=0}} + n\mu \Var{\mu S} + n\mu\Probp{s}{Q=0} \\
         \label{eq:quadratic-bar:comp-term-simplify}
         &\qquad= 2n\mu \Ep{s}{\Qproxy\indibrac{Q=0}} + n\mu \Var{\mu S} + n\mu(1-\rho),
     \end{align}
     where the first equality uses $\indibrac{Q>0}=1-\indibrac{Q=0}$ and the fact that $S$ is independence of $Q$, $R_{\sj}$, and $R_a$; the second equality is due to $\E{\mu S} = 1$; the third equality is due to $\Probp{s}{Q=0} = 1-\rho$ (\Cref{lem:apply-bar-lots-of}). 

    Substituting \eqref{eq:quadratic-bar:arrival-term-simplify} and \eqref{eq:quadratic-bar:comp-term-simplify} back to  \eqref{eq:quadratic-bar:arrival-term} and \eqref{eq:quadratic-bar:comp-term}, we get 
    \[
        2(n\mu  - \Lambda)\Ep{\pi}{\Qproxy}  = 2n\mu \Ep{s}{\Qproxy \indibrac{Q=0}} + \Lambda \Var{\Lambda A} + n\mu \Var{\mu S} + n\mu(1-\rho). 
    \]
    Dividing both sides by $2n\mu$ and using the fact that $\Lambda / (n\mu) = \rho$, the previous equality becomes
    \[
        (1-\rho) \Ep{\pi}{\Qproxy}  =  \Ep{s}{\Qproxy\indibrac{Q=0}} + \frac{\rho\Var{\Lambda A} + \Var{\mu S} + 1 - \rho}{2}. 
    \]
    Because $Q = \Qproxy - \mu \sumj R_{\sj} + \Lambda R_a$, rearranging the terms, we get
    \begin{align*}
        (1-\rho) \Ep{\pi}{Q} 
        &=  \frac{\rho\Var{\Lambda A} + \Var{\mu S} + 1 - \rho}{2} \\
        &\mspace{23mu} + \Epbigg{s}{\bigg(\mu \sumj R_{\sj} - \Lambda R_a\bigg)\indibrac{Q=0}} - (1-\rho)\Epbigg{\pi}{\mu \sumj R_{\sj}  - \Lambda R_a}, 
    \end{align*}
    which implies \eqref{eq:apply-bar-main-quadratic-equality}.
    \qedhere
\end{proof}

\subsection{Step 2: analyzing the covariance terms}
\label{sec:leave-one-out-core-argument}
In this section, we bound the covariance terms, $\Gamma_{\sj}$ and $\Gamma_{a}$, for $j\in[n]$. 

\begin{lemma}
\label{lem:covariance-bounds}
    Consider the modified \ggn system satisfying Assumptions~\ref{assump:bdd-exp-remain} and \ref{assump:modified-ggn-stable}, with the load $\rho < 1$. Let $\Gamma_{\sj}$ and $\Gamma_{a}$ be the terms defined in \eqref{eq:gamma-s-def} and \eqref{eq:gamma-a-def}, respectively. Then 
    \begin{align}
        \label{eq:s-covariance-bound}
       \Gamma_{\sj} &\leq \min\Big(1-\rho, \frac{1}{n}\Big) \Big(\Rsmax - \frac{1}{2} \E{(\mu S)^2}\Big) \\
       \label{eq:a-covariance-bound}
       - \Gamma_{a} &\leq (1-\rho) \bigg(\frac{1}{2}\E{(\Lambda A)^2} - \Ramin\bigg) 
    \end{align}
\end{lemma}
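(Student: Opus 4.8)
The plan is to prove each of the two inequalities by reducing it to a conditional residual-time estimate governed by \Cref{lem:R-bdd}, and --- only for the sharper $1/n$ factor in \eqref{eq:s-covariance-bound} --- by threading the leave-one-out queue length $\Qdropj$ through as an intermediary and applying the uncorrelation \Cref{lem:indep-R-Qtilde}. Before starting I would record two reusable facts: (a) under $\Pnb_{\sj}$ one has $R_{\sj}\bef = 0$ almost surely (\Cref{lem:bar}, part (ii)), so in any expansion $\Ep{s}{g\,\mu R_{\sj}} = \frac1n\sumi \Ep{\si}{g\,\mu R_{\sj}}$ the $i=j$ summand vanishes; and (b) $\mu\Ep{\pi}{R_{\sj}} = \tfrac12\E{(\mu S)^2}$ and $\Lambda\Ep{\pi}{R_a} = \tfrac12\E{(\Lambda A)^2}$, which are just \eqref{eq:E-pi-R-s} and \eqref{eq:E-pi-R-a} multiplied by $\mu$ and $\Lambda$ respectively.

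For \eqref{eq:a-covariance-bound} I would write $-\Gamma_a = (1-\rho)\,\Lambda\Ep{\pi}{R_a} - \Ep{s}{\indibrac{Q=0}\Lambda R_a}$, evaluate the first term as $(1-\rho)\tfrac12\E{(\Lambda A)^2}$ via fact (b), and for the second term condition on $Q$ inside each $\Ep{\si}{\cdot}$, apply the lower bound $\Lambda\Ep{\si}{R_a\mid Q}\ge\Ramin$ from \Cref{lem:R-bdd}, and average using $\Probp{s}{Q=0}=1-\rho$ (\eqref{eq:E-s-Q}) to get $\Ep{s}{\indibrac{Q=0}\Lambda R_a}\ge(1-\rho)\Ramin$. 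Subtracting gives the claim; this half needs nothing beyond Lemmas~\ref{lem:R-bdd} and~\ref{lem:apply-bar-lots-of}.

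For \eqref{eq:s-covariance-bound} I would split on the value of $\rho$. The bound $\Gamma_{\sj}\le(1-\rho)\bigl(\Rsmax-\tfrac12\E{(\mu S)^2}\bigr)$ --- which already suffices when $\rho\ge 1-1/n$, where $\min(1-\rho,1/n)=1-\rho$ --- is the mirror image of the $\Gamma_a$ argument: discard the $i=j$ term, apply $\mu\Ep{\si}{R_{\sj}\mid Q}\le\Rsmax$ from \Cref{lem:R-bdd} to get $\Ep{s}{\indibrac{Q=0}\mu R_{\sj}}\le\Rsmax\,\tfrac1n\sum_{i\ne j}\Probp{\si}{Q=0}\le(1-\rho)\Rsmax$, and subtract $(1-\rho)\tfrac12\E{(\mu S)^2}$. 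The substance is the regime $\rho<1-1/n$, where $\min(1-\rho,1/n)=1/n$ and the leave-one-out coupling is available. There I would decompose $\indibrac{Q=0}=\indibrac{\Qdropj=0}+\indibrac{Q=0,\,\Qdropj>0}$, valid because $\Qdropj\ge Q$ pathwise (\Cref{lem:leave-one-out-upper-bound}), hence $\{\Qdropj=0\}\subseteq\{Q=0\}$. The first piece contributes $\Ep{s}{\indibrac{\Qdropj=0}\mu R_{\sj}} = \bigl(\tfrac1n\sum_{i\ne j}\Probp{\si}{\Qdropj=0}\bigr)\,\mu\Ep{\pi}{R_{\sj}}$ by \Cref{lem:indep-R-Qtilde} and fact (a), which equals $(1-\rho-\tfrac1n)\,\mu\Ep{\pi}{R_{\sj}}$ by \eqref{eq:E-s-Qtilde}; combined with fact (b) and the definition of $\Gamma_{\sj}$ this telescopes to $\Gamma_{\sj} = -\tfrac1n\mu\Ep{\pi}{R_{\sj}} + \Ep{s}{\indibrac{Q=0,\Qdropj>0}\mu R_{\sj}}$. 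For the remaining ``discrepancy'' term I would condition on $(Q,\Qdrop)$ and use $\mu\Ep{\si}{R_{\sj}\mid Q,\Qdrop}\le\Rsmax$ to bound it by $\Rsmax\,\tfrac1n\sum_{i\ne j}\Probp{\si}{Q=0,\Qdropj>0}$; then, using $\{\Qdropj=0\}\subseteq\{Q=0\}$ together with \eqref{eq:E-s-Q} and \eqref{eq:E-s-Qtilde}, this sum simplifies exactly to $\tfrac1n\Probp{\sj}{Q>0}\le\tfrac1n$. Assembling the pieces yields $\Gamma_{\sj}\le\tfrac1n\bigl(\Rsmax-\tfrac12\E{(\mu S)^2}\bigr)$, completing the case split.

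The step I expect to demand the most care is the ``discrepancy'' estimate in the last paragraph: showing that the event $\{Q=0,\Qdropj>0\}$ --- the only configuration in which the main queue and the $j$-th leave-one-out queue disagree about being empty --- carries Palm mass exactly $\tfrac1n\Probp{\sj}{Q>0}$ after averaging over completion events $i\ne j$. This is precisely where the normalization in \eqref{eq:E-s-Qtilde} is needed, and it is also the point at which the whole bound inherits its reliance on \Cref{lem:indep-R-Qtilde} and therefore on the non-lattice \Cref{assump:non-lattice}. The rest is bookkeeping: checking that the $i=j$ summands drop out because $R_{\sj}\bef=0$ under $\Pnb_{\sj}$, and that the indicators being pulled out of the conditional expectations are measurable with respect to the conditioning $\sigma$-algebras used in \Cref{lem:R-bdd}.
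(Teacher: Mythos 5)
Your proposal is correct and follows essentially the same route as the paper: the same case split at $\rho=1-1/n$, the same leave-one-out decomposition of $\indibrac{Q=0}$ via $\indibrac{\Qdropj=0}$, the same use of Lemmas~\ref{lem:R-bdd}, \ref{lem:indep-R-Qtilde}, and \ref{lem:apply-bar-lots-of}. The only (harmless) difference is that you evaluate the discrepancy mass exactly as $\tfrac1n\Probp{\sj}{Q>0}$ before relaxing to $\tfrac1n$, whereas the paper upper-bounds $\tfrac1n\sum_{i\neq j}\Probp{\si}{Q=0}$ by $\Probp{s}{Q=0}$ directly; both yield the same final bound.
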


When proving \Cref{lem:covariance-bounds}, we focus on presenting the case of $\rho < 1-1/n$, since the case of $\rho \geq 1-1/n$ is relatively well understood, with the main argument sketched in \Cref{sec:tech-overview-ggn-simple}.

\begin{proof}
Recall that
\begin{align}
    \tag{\ref{eq:gamma-s-def}}
    \Gamma_{\sj} &= \Ep{s}{\indibrac{Q=0} \mu R_{\sj}} - (1-\rho) \Ep{\pi}{\mu R_{\sj}} \quad \forall j\in [n]\\
    \tag{\ref{eq:gamma-a-def}}
    \Gamma_{a} &= \Ep{s}{\indibrac{Q=0} \Lambda R_a} - (1-\rho) \Ep{\pi}{\Lambda R_{a}},
\end{align}
We first bound $\Gamma_{\sj}$ and then bound $-\Gamma_a$. 

\paragraph*{Bounding $\Gamma_{\sj}$}

Observe that when $\rho < 1 - 1/n$, the load of the $j$-th leave-one-out system is less than $1$, so it has a well-defined stationary distribution (\Cref{assump:modified-ggn-stable}). 
We consider the indicator $\indibracbig{\Qdropj=0}$ as a ``baseline'' and subtract it from $\indibrac{Q=0}$. This leads to the following \emph{key decomposition} of $\Ep{s}{\indibrac{Q=0}\mu R_{\sj}}$: 
\begin{equation}
    \label{eq:pf-main:leave-one-out-decomposition}
    \Ep{s}{\indibrac{Q=0}\mu R_{\sj}}
    = \Ep{s}{\Big(\indibrac{Q=0} -  \indibrac{\Qdropj=0}\Big)\mu R_{\sj}} + \Ep{s}{\indibrac{\Qdropj=0} \mu R_{\sj}}. 
\end{equation}
We then bound the two terms on the right-hand side of \eqref{eq:pf-main:leave-one-out-decomposition} separately. 

We bound the first term on the right-hand side of \eqref{eq:pf-main:leave-one-out-decomposition} using similar arguments as in Case 1, i.e., by applying the conditional expectation bound of $R_{\sj}$ proved in \Cref{lem:R-bdd}: 
\begin{align}
    \Ep{s}{\Big(\indibrac{Q=0} -  \indibrac{\Qdropj=0}\Big)\mu R_{\sj}}
    \label{eq:pf-main:apply-zero-prejump-Rs}
    &= \frac{1}{n} \sum_{i\colon i\neq j}\Ep{\si}{\Big(\indibrac{Q=0} -  \indibrac{\Qdropj=0}\Big)\mu R_{\sj}} \\
    \label{eq:pf-main:apply-lower-bound-and-R-bound}
    &\leq  \frac{1}{n} \sum_{i\colon i\neq j}\Ep{\si}{\indibrac{Q=0} -  \indibrac{\Qdropj=0}}\Rsmax \\
    \label{eq:pf-main:relax-to-Ps}
    &\leq  \Big(\Probp{s}{Q=0} -  \frac{1}{n}\sum_{i\colon i\neq j} \Probp{\si}{\Qdropj=0}\Big)\Rsmax \\
    \label{eq:pf-main:apply-q0-prob-2}
    &=  \Big(1-\rho -   \Big(1-\rho - \frac{1}{n}\Big)\Big)\Rsmax \\
    \nonumber
    &= \frac{1}{n}\Rsmax.
\end{align}
Specifically, the equality \eqref{eq:pf-main:apply-zero-prejump-Rs} holds because $R_{\si}=0$ almost surely under $\Pnb_{\si}$ (\Cref{lem:bar}). Here, we have suppressed the superscript $\bef$ for variables inside Palm expectations. The next inequality \eqref{eq:pf-main:apply-lower-bound-and-R-bound} follows from $\Epbig{\si}{\mu R_{\sj}\givenbig Q, \Qdropj} \leq \Rsmax$ (\Cref{lem:R-bdd}) and $\indibrac{Q=0} - \indibracbig{\Qdropj=0} \geq 0$ (\Cref{lem:leave-one-out-upper-bound}). Furthermore, \eqref{eq:pf-main:relax-to-Ps} follows from the definition $\Probp{s}{Q=0} \triangleq \sumi \Probp{\si}{Q=0} / n$, while the equality \eqref{eq:pf-main:apply-q0-prob-2} is obtained by substituting the values $\Probp{s}{Q=0} = 1-\rho$ and $\sum_{i\neq j} \Probpbig{\si}{\Qdropj=0} /n = 1-\rho-1/n$ (\Cref{lem:apply-bar-lots-of}).

To bound the second term on the right-hand side of \eqref{eq:pf-main:leave-one-out-decomposition}, recall that $\indibracbig{\Qdropj=0}$ and $R_{\sj}$ are uncorrelated under the distribution $\Pnb_{\si}$ for $i\neq j$ (\Cref{lem:indep-R-Qtilde}), so we can exactly calculate this term as
\begin{align}
    \Ep{s}{\indibrac{\Qdropj=0} \mu R_{\sj}}
    \nonumber
    &=  \frac{1}{n} \sum_{i\colon i\neq j}\Ep{\si}{\indibrac{\Qdropj=0} \mu R_{\sj}} \\
    \label{eq:pf-main:apply-indep}
    &=  \frac{1}{n}\sum_{i\colon i\neq j} \Probp{\si}{\Qdropj=0} \EpBig{\pi}{\mu R_{\sj}}\\
    \label{eq:pf-main:apply-q0-prob}
    &= \Big(1-\rho-\frac{1}{n}\Big)\Ep{\pi}{\mu R_{\sj}},
\end{align}
where \eqref{eq:pf-main:apply-indep} is due to \Cref{lem:indep-R-Qtilde}, and \eqref{eq:pf-main:apply-q0-prob} is due to \Cref{lem:apply-bar-lots-of}. 

Substituting the above calculations back to \eqref{eq:pf-main:leave-one-out-decomposition}, we get 
\[
    \Ep{s}{\indibrac{Q=0}\mu R_{\sj}} \leq \frac{1}{n}\Rsmax + \Big(1-\rho-\frac{1}{n}\Big)\Ep{\pi}{\mu R_{\sj}}.
\]
Therefore, 
\begin{align*}
    \Gamma_{\sj} 
    &= \Ep{s}{\indibrac{Q=0} \mu R_{\sj}} -  (1-\rho) \Ep{\pi}{\mu R_{\sj}} 
    \leq \frac{1}{n} \Big(\Rsmax - \Ep{\pi}{\mu R_{\sj}}\Big). %
\end{align*}
Because $\Ep{\pi}{\mu R_{\sj}} = \E{(\mu S)^2}/2$, this finishes the proof of \eqref{eq:s-covariance-bound}. %

\noindent \paragraph*{Bounding $-\Gamma_a$}
Finally, we prove \eqref{eq:a-covariance-bound}.
\begin{align}
    \nonumber
     - \Gamma_{a} &=  (1-\rho) \Ep{\pi}{\Lambda R_{a}} - \Ep{s}{\indibrac{Q=0} \Lambda R_a} \\
     \nonumber
     &= (1-\rho) \Ep{\pi}{\Lambda R_{a}} - \frac{1}{n}\sumi\Ep{\si}{\indibrac{Q=0} \Lambda R_a} \\
     \nonumber
     &= (1-\rho) \Ep{\pi}{\Lambda R_{a}} - \frac{1}{n}\sumi\Ep{\si}{\Lambda R_a \given Q=0} \, \Probp{\si}{Q=0} \\
     \label{eq:pf-main:apply-Ramin}
     &\leq  (1-\rho) \Ep{\pi}{\Lambda R_{a}} -  \Ramin \Probp{s}{Q=0} \\
     \label{eq:pf-main:Gamma-bd-substitute}
     &= (1-\rho) \Big(\frac{1}{2}\E{(\Lambda A)^2} - \Ramin\Big).
\end{align}
where \eqref{eq:pf-main:apply-Ramin} is due to the fact that $\Ep{\si}{\Lambda R_a \given Q=0} \geq \Ramin$ (\Cref{lem:R-bdd}), and \eqref{eq:pf-main:Gamma-bd-substitute} follows from substituting the values $\Ep{\pi}{\Lambda R_a} = \E{(\Lambda A)^2} / 2$ and $\Probp{s}{Q=0} = 1-\rho$ (\Cref{lem:apply-bar-lots-of}). 
\qedhere
\end{proof}

\section{Generalization to heterogeneous servers}
\label{sec:hetero}
In the previous sections, we have proved a $1/(1-\rho)$-scaling bound for the steady-state mean queue length of the \ggn queue with homogeneous servers. 
In this section, we generalize our bound to the \ggn queue with fully heterogeneous servers, where the servers could have mutually distinct service-time distributions. 

The organization of this section roughly mirrors the high-level structure of Sections~\ref{sec:model-homo}, \ref{sec:main-result}, \ref{sec:lemmas-homo}, and \ref{sec:pf-main-upper-bound}. Specifically, we first set up the \ggn queue and modified \ggn queue with heterogeneous servers in \Cref{sec:setup-het}. Then in \Cref{sec:result-het}, we state the assumptions and the main theorem, \Cref{thm:main-upper-bound-het}. 
In \Cref{sec:supporting-lemmas-het}, we establish the technical framework and state the supporting lemmas, which then allows us to prove \Cref{thm:main-upper-bound-het} in \Cref{sec:pf-main-theorem-het}.

\subsection{\ggn queue and modified \ggn queue with heterogeneous servers}
\label{sec:setup-het}

We consider the \ggn queue with heterogeneous servers under the FCFS policy. 
This queueing system consists of a central queue and $n$ servers, where the service-time distributions could be \emph{server-dependent}. 
Jobs arrive according to a renewal process. A newly-arrived job joins the end of the queue if all servers are busy, or start being served by one of the idle servers. 
When a job is in service, it occupies the server for a random amount of time following the service-time distribution of the server; the job leaves the system after completing the service. 
When a server completes a job, it immediately starts serving a job from the head of the queue if the queue is non-empty. 
We allow an \emph{arbitrary routing policy} for assigning jobs to idle servers when a new job arrives or when multiple servers free up simultaneously.

The definitions of initial state and sample paths of the \ggn queue can be straightforwardly generalized to the heterogeneous setting, which we do not repeat here. 
We use $A$ to denote a generic interarrival time, and use $S_i$ to denote a generic service time of the $i$-th server. %
We denote the service rate of the $i$-th server as $\mu_i \triangleq 1/\E{S_i}$, and define the total service rate as $\mutotal \triangleq \sumi \mu_i$. Let the load $\rho = \Lambda / \mutotal$. 
Let $\Ramin \triangleq \inf_{t\geq 0}\E{\Lambda A - t \given \Lambda A\geq t}$ and $\Rsmaxi\triangleq \sup_{t\geq 0} \E{\mu_i S_i - t \given \mu_i S_i\geq t}$ for $i\in[n]$. 

The \ggn queue with heterogeneous servers has the same state representation as the homogeneous setting: at any time $t \geq 0$, its state $X(t)$ is given by
\begin{equation*}
    X(t)\triangleq (R_{\sone}(t), R_{\stwo}(t), \dots R_{\sn}(t), R_a(t), Q(t)), 
\end{equation*}
where $R_{\si}(t)$ is the residual service time of the $i$-th server, $R_a(t)$ is the residual arrival time, and $Q(t)$ is the queue length. 
The state space of $\{X(t)\}_{t\geq 0}$ is $\spacemain \triangleq \R^{n+1}\times \Znn$. 

As in the homogeneous setting, here we also consider the modified \ggn queue, which introduces a virtual job whenever a server frees up and the queue is empty. The modified system provides a queue-length upper bound for the original system, as shown in \Cref{lem:modified-dominates-hetero} below. We prove \Cref{lem:modified-dominates-hetero} in \smartref{app:modified-dominance}{Appendix}{\valAppDom}. 

\begin{restatable}{lemma}{modifieddominance}
    \label{lem:modified-dominates-hetero}
    With the same initial states, at any time $t$, the queue length of the \ggn queue with heterogeneous server is stochastically dominated by the queue length of the corresponding modified \ggn queue. 
\end{restatable}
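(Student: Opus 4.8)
The plan is to establish the stronger statement that, on a suitable coupling of the two systems, the original queue length is \emph{pathwise} dominated by the modified one; the stochastic dominance claimed in \Cref{lem:modified-dominates-hetero} then follows at once. Concretely, I would place the \ggn queue with heterogeneous servers and the corresponding modified \ggn queue on a common probability space so that: they are driven by the same external arrival renewal process $N_a$; for each server $i\in[n]$ they use the same i.i.d.\ sequence $(S_i^k)_{k\ge1}$ with distribution $S_i$, server $i$ taking $S_i^k$ as the service time of the $k$-th job it serves (real or virtual in the modified system, real only in the original); they have the same initial state; and the same (arbitrary) routing rule is applied in both systems, to each system's own current configuration, whenever a real job must be dispatched to an idle server. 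Since in the modified system no server is ever idle after time $0$, the routing rule is effectively used only in the original system, and the modified sample path is a deterministic function of $N_a$, the $(S_i^k)$, and the initial state, consistent with Corollary~1 of \citep{GamGol_13_GGn_HW}. Writing $Q(t)$ and $\widehat Q(t)$ for the two queue lengths, the goal is $Q(t)\le\widehat Q(t)$ for all $t\ge0$.

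For the comparison I would use the elementary identity $Q(t)=Q(0)+N_a\big((0,t]\big)-D(t)$, valid in either system, where $D(t)$ is the number of real jobs that have entered service during $(0,t]$. Because $Q(0)$ and $N_a$ are common to the two systems, it suffices to prove $D(t)\ge\widehat D(t)$ for all $t$; and since both systems serve jobs in FCFS order, this is equivalent to the statement that every real job begins service no later in the original system than in the modified one. The latter is proved by induction over the (a.s.\ locally finite) sequence of event epochs of the two coupled systems, carrying a coupling invariant that, besides $Q(t)\le\widehat Q(t)$, also records the relationship between the real jobs currently in service and their residual service times in the two systems. The extra information is needed because the only way $\widehat Q$ can decrease is a real completion in the modified system, and the invariant must ensure that the original system has either already cleared that job or completes it at the same instant, so that $Q$ never overtakes $\widehat Q$. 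One then checks that the invariant survives each type of event: an arrival (both queues rise by one, or a real job enters service in one or both systems); a completion in the modified system (split according to whether the finished job is real or virtual and whether the queue is empty); a completion in the original system; and, using the fixed order of subroutine applications from \Cref{sec:modified-ggn}, any simultaneous events.

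The principal difficulty lies in the bookkeeping of the inductive step when the routing policy is arbitrary: the sets of busy servers in the two systems generally differ, so the shared routing rule may send the same real job to different servers, and the $k$-th real job served by a given server need not be the same physical job in the two systems. The invariant must therefore be formulated robustly enough to survive an arbitrary dispatch decision — most conveniently in terms of aggregate quantities (the number of real jobs ever admitted to service, or the multiset of residual service times of real jobs in service) rather than a server-by-server comparison. An alternative that sidesteps part of this is to couple the per-server completion epochs through the $n$ independent completion renewal processes underlying the modified system and to compare cumulative potential-completion counts. Once the right invariant is isolated, each event case is routine, so the full verification is deferred to \Cref{app:modified-dominance}, paralleling the homogeneous argument of Proposition~1 in \citep{GamGol_13_GGn_HW}.
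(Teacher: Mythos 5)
Your overall strategy is the right one and matches the paper's: build a pathwise coupling, reduce $Q(t)\le\hat Q(t)$ via $Q(t)=Q(0)+N_a((0,t])-D(t)$ to the claim that every real job begins service no later in the original system, and prove that claim by induction. The reduction itself is sound. The gap is in the coupling of service times, and it is exactly the difficulty you flag as "principal" and then defer. You couple by giving server $i$ the i.i.d.\ sequence $(S_i^k)$ indexed by that server's own job count. In the heterogeneous setting, the same physical job can then be routed to different servers in the two systems (or to the same server but as a different per-server index), so it receives \emph{different} service durations in the two systems. This destroys the one inequality that drives the induction: you can no longer conclude that a job which started service earlier in the original system also \emph{departs} no later, so the count of still-in-service jobs among the first $k$ admitted is not monotone between systems, and the admission time of job $k+1$ cannot be compared. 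The aggregate invariants you propose do not rescue this: the "multiset of residual service times" comparison fails in the wrong direction as soon as the original system routes a job to a slow server that the modified system serves on a fast one, and the "completion renewal process" alternative is unavailable for the original system, whose servers idle and therefore do not generate renewal completion streams.

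The paper's proof closes this gap with a different, job-indexed coupling: the $k$-th real job to begin service is assigned, \emph{at its (possibly different) service-start epochs in the two systems}, the same server and the same service time $S^k$. With that choice the sequence $\{S^k\}$ is no longer i.i.d.\ (it depends on routing), but this is harmless; what matters is that job $k'$ departs at $\tau_s^{k'}+S^{k'}$ versus $\hat\tau_s^{k'}+S^{k'}$ with the \emph{same} $S^{k'}$, so the induction hypothesis $\tau_s^{k'}\le\hat\tau_s^{k'}$ for $k'\le k$ immediately gives $Z^k(t)\le\hat Z^k(t)$ (jobs in service among the first $k$), and since the modified system additionally carries $\hat V(t)\ge 0$ virtual jobs, job $k+1$ finds a free server no later in the original system. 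Your proposal is missing this coupling choice, which is the one nontrivial idea in the proof; without it the inductive step does not close, so the argument as proposed is incomplete.
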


With a slight abuse of notation, we also use $X(t)$ to denote the state of the modified \ggn system. Under the dynamics of the modified \ggn system, $R_{\si}(t)$ is the forward recurrence time of a renewal process with the inter-event distribution $S_i$, for each $i\in[n]$; $R_a(t)$ is the forward recurrence time of a renewal process with the inter-event distribution $A$; the queue length $Q(t)$ is determined by these $(n+1)$ independent renewal processes. 
The detailed update rule of $X(t)$ is identical to those summarized in \Cref{sec:modified-ggn}. %

\subsection{Assumptions and results}
\label{sec:result-het}

Our results hold under the following three assumptions, which are analogous to Assumptions~\ref{assump:bdd-exp-remain}, \ref{assump:non-lattice}, and \ref{assump:modified-ggn-stable} in the homogeneous setting. Note that in \Cref{assump:modified-ggn-stable-het}, the load of a modified \ggn queue refers to the ratio of its arrival rate and the total service rate of its servers. 

\begin{assumption}
    \label{assump:bdd-exp-remain-het}
    We assume that $\E{A^2} < \infty$ and $\Rsmaxi < \infty$ for all $i\in[n]$. 
\end{assumption}

\begin{assumption}
    \label{assump:non-lattice-het}
    For each $i\in[n]$, the service-time distribution $S_i$ is non-lattice.  
\end{assumption}

\begin{assumption}
    \label{assump:modified-ggn-stable-het}
    Each modified \ggn queue with heterogeneous servers considered in this section is positive Harris recurrent if its load is less than $1$. 
    In addition, let $\nu$ be its unique stationary distribution, then $\lim_{T\to\infty} \frac{1}{T} \int_0^T \E{Q(t)} \odv t = \Ep{\nu}{Q} < \infty$. 
\end{assumption}

\begin{theorem}
    \label{thm:main-upper-bound-het}
     Consider the \ggn queue with heterogeneous servers under the FCFS policy, satisfying Assumptions~\ref{assump:bdd-exp-remain-het}, \ref{assump:non-lattice-het}, and \ref{assump:modified-ggn-stable-het}. When the load $\rho < 1$, let $\pi$ be the unique stationary distribution of the system. Then the mean queue length under $\pi$ satisfies
    \begin{equation}
        \label{eq:main-upper-bound-het}
        \begin{aligned}
            \Ep{\pi}{Q} &\leq \frac{\rho\Var{\Lambda A} + \sumj (\mu_j / \mutotal) \Var{\mu_j S_j} + 1-\rho}{2(1-\rho)}  \\
            &\mspace{23mu} + \sumj \min\Big(1, \frac{\mu_j}{(1-\rho)\mutotal}\Big) \Big(\Rsmaxj - \frac{1}{2} \E{(\mu_j S_j)^2}\Big) \\
            &\mspace{23mu} + \frac{1}{2}\E{(\Lambda A)^2} - \Ramin.
        \end{aligned}
    \end{equation}
\end{theorem}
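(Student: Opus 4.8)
The plan is to mirror the proof of \Cref{thm:main-upper-bound}, carrying out the same two steps but with the completion Palm measures reweighted by the individual service rates and with a \emph{per-server} stability threshold. By \Cref{lem:modified-dominates-hetero} it suffices to bound $\Ep{\pi}{Q}$ for the modified \ggn queue with heterogeneous servers. First I would record the heterogeneous analogues of the supporting lemmas of \Cref{sec:lemmas-homo}: a BAR identity in which, since server $i$ completes at rate $\mu_i$, the natural pooled completion Palm measure is $\Pnb_s \triangleq \sumi (\mu_i/\mutotal)\Pnb_{\si}$ and BAR reads $\Ep{\pi}{\opinner f(\Xsuper)} + \Lambda\Ep{a}{f(\Xsuper\aft)-f(\Xsuper\bef)} + \mutotal\Ep{s}{f(\Xsuper\aft)-f(\Xsuper\bef)} = 0$; the bounded-mean-residual-time lemma (identical to \Cref{lem:R-bdd}, since its proof is per renewal process, giving $\mu_j\Ep{\si}{\Rsj\mid Q,\Qdrop}\le\Rsmaxj$ and $\Lambda\Ep{\si}{R_a\mid Q,\Qdrop}\ge\Ramin$ for $i\neq j$); the uncorrelation lemma, i.e.\ the analogue of \Cref{lem:indep-R-Qtilde} stating $\Ep{\si}{\indibrac{\Qdropj=0}\Rsj}=\Probp{\si}{\Qdropj=0}\Ep{\pi}{\Rsj}$ for $i\neq j$, which needs \Cref{assump:non-lattice-het} so that server $j$'s completion process mixes; and the rate-conservation and second-moment identities (analogue of \Cref{lem:apply-bar-lots-of}), of which I will use $\Probp{s}{Q=0}=1-\rho$, $\sum_{i\colon i\neq j}(\mu_i/\mutotal)\Probp{\si}{\Qdropj=0}=(1-\rho)-\mu_j/\mutotal$, $\Ep{\pi}{R_{\sj}}=\tfrac12\mu_j\E{S_j^2}$, $\Ep{\pi}{R_a}=\tfrac12\Lambda\E{A^2}$, and finiteness of $\Ep{a}{Q}$ and $\Ep{\si}{Q}$. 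Each follows from BAR with the same test functions as in the homogeneous case, now carrying the $\mu_i$ weights.

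\textbf{Step 1.} Apply the heterogeneous BAR to the truncated quadratic $f(\Xsuper) = (Q\wedge\kappa + \sumj \mu_j(R_{\sj}\wedge\kappa) - \Lambda(R_a\wedge\kappa))^2$, expand the squares, and let $\kappa\to\infty$ (justified by the finiteness lemmas). The continuous part contributes $-2(\mutotal-\Lambda)\Ep{\pi}{Q + \sumj\mu_j R_{\sj} - \Lambda R_a}$; the arrival part contributes $\Lambda\Var{\Lambda A}$, since the jump of $Q + \sumj\mu_j R_{\sj} - \Lambda R_a$ at an arrival is $1-\Lambda A$, mean zero and independent of the state; and the completion part at server $j$, where that quantity jumps by $\mu_j S_j - 1 + \indibrac{Q=0}$, contributes (after $\E{\mu_j S_j}=1$ and $\Probp{s}{Q=0}=1-\rho$) a variance piece $\sumj\mu_j\Var{\mu_j S_j}$, a boundary piece $2\mutotal\Ep{s}{(\sumj\mu_j R_{\sj}-\Lambda R_a)\indibrac{Q=0}}$, and $\mutotal(1-\rho)$. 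Dividing by $2\mutotal$ and rearranging yields the exact expression
\[
(1-\rho)\Ep{\pi}{Q} = \frac{\rho\Var{\Lambda A} + \sumj(\mu_j/\mutotal)\Var{\mu_j S_j} + (1-\rho)}{2} + \sumj\Gamma_{\sj} - \Gamma_a,
\]
with $\Gamma_{\sj}\triangleq\Ep{s}{\indibrac{Q=0}\mu_j R_{\sj}}-(1-\rho)\Ep{\pi}{\mu_j R_{\sj}}$ and $\Gamma_a\triangleq\Ep{s}{\indibrac{Q=0}\Lambda R_a}-(1-\rho)\Ep{\pi}{\Lambda R_a}$.

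\textbf{Step 2.} It remains to show $\Gamma_{\sj}\le\min(1-\rho,\tfrac{\mu_j}{\mutotal})(\Rsmaxj-\tfrac12\E{(\mu_j S_j)^2})$ and $-\Gamma_a\le(1-\rho)(\tfrac12\E{(\Lambda A)^2}-\Ramin)$; substituting into the displayed identity, dividing by $1-\rho$, and using $\Ep{\pi}{\mu_j R_{\sj}}=\tfrac12\E{(\mu_j S_j)^2}$ and $\Ep{\pi}{\Lambda R_a}=\tfrac12\E{(\Lambda A)^2}$ then gives exactly \eqref{eq:main-upper-bound-het} (the bracketed quantities are nonnegative, being $\Rsmaxj$ minus the mean of the equilibrium distribution of $\mu_j S_j$). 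The $j$-th leave-one-out system is a modified $GI/GI/(n-1)$ queue with heterogeneous servers and load $\rhodropj=\Lambda/(\mutotal-\mu_j)$, which is stable iff $\mu_j<(1-\rho)\mutotal$, i.e.\ $\rho<1-\mu_j/\mutotal$; this is the threshold for the case split. When $\rho\ge 1-\mu_j/\mutotal$ the leave-one-out system is unstable, but then $1-\rho\le\mu_j/\mutotal$ and the conventional bound suffices: using $R_{\sj}=0$ under $\Pnb_{\sj}$ to drop the $i=j$ term, $\mu_j\Ep{\si}{\Rsj\mid Q}\le\Rsmaxj$ from the heterogeneous \Cref{lem:R-bdd}, and $\Probp{s}{Q=0}=1-\rho$, one gets $\Gamma_{\sj}\le(1-\rho)(\Rsmaxj-\tfrac12\E{(\mu_j S_j)^2})$. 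When $\rho<1-\mu_j/\mutotal$ I would use the leave-one-out decomposition $\Ep{s}{\indibrac{Q=0}\mu_j R_{\sj}} = \Ep{s}{(\indibrac{Q=0}-\indibrac{\Qdropj=0})\mu_j R_{\sj}} + \Ep{s}{\indibrac{\Qdropj=0}\mu_j R_{\sj}}$: the first summand is at most $\tfrac{\mu_j}{\mutotal}\Rsmaxj$ by \Cref{lem:leave-one-out-upper-bound} ($\indibrac{Q=0}\ge\indibrac{\Qdropj=0}$), the heterogeneous \Cref{lem:R-bdd}, and the rate identity $\sum_{i\colon i\neq j}(\mu_i/\mutotal)\Probp{\si}{\Qdropj=0}=(1-\rho)-\mu_j/\mutotal$; the second summand equals $((1-\rho)-\tfrac{\mu_j}{\mutotal})\Ep{\pi}{\mu_j R_{\sj}}$ by the heterogeneous uncorrelation lemma. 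Combining, $\Gamma_{\sj}\le\tfrac{\mu_j}{\mutotal}(\Rsmaxj-\tfrac12\E{(\mu_j S_j)^2})$. The bound on $-\Gamma_a$ is verbatim the homogeneous argument, via $\Lambda\Ep{\si}{R_a\mid Q}\ge\Ramin$ and $\Probp{s}{Q=0}=1-\rho$.

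\textbf{Main obstacle.} The routine computation should go through mechanically; the two places needing care are (i) keeping the $\mu_i/\mutotal$ weights consistent across the BAR identity, the pooled measure $\Pnb_s$, and every rate-conservation identity — an error of size $\mu_j$ in $\sum_{i\neq j}(\mu_i/\mutotal)\Probp{\si}{\Qdropj=0}$ would destroy the cancellation that produces the clean $\min$ — and (ii) checking that the leave-one-out coupling and the uncorrelation lemma still hold when the removed server is arbitrary and the remaining servers are heterogeneous, so that the only structural input is the mutual independence of the $n+1$ renewal processes driving the modified system.
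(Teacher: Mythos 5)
Your proposal is correct and follows essentially the same route as the paper: the same $\mu_i/\mutotal$-weighted pooled Palm measure and truncated quadratic test function for the exact identity, the same case split at $\rhodropj\geq 1$ (equivalently $\rho \geq 1-\mu_j/\mutotal$), and the same leave-one-out decomposition with the uncorrelation lemma in the stable case. No gaps.
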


The bound in \eqref{eq:main-upper-bound-het} implies the following simpler bound after replacing each minimum with $\mu_j / \big((1-\rho)\mutotal\big)$:
\begin{equation}
    \label{eq:main-upper-bound-simplified-het}
    \Ep{\pi}{Q} \leq \frac{\rho\Var{\Lambda A} - \rho + 2\sumj (\mu_j / \mutotal) \Rsmaxj}{2(1-\rho)} + \frac{1}{2}\E{(\Lambda A)^2} - \Ramin. 
\end{equation}
When the arrival process is Poisson, the bound in \eqref{eq:main-upper-bound-simplified-het} further simplifies into 
\begin{equation}
    \label{eq:main-upper-bound-MGn-het}
     \Ep{\pi}{Q} \leq \frac{\sumj (\mu_j / \mutotal) \Rsmaxj}{1-\rho}. 
\end{equation}
Notably, the bounds in \eqref{eq:main-upper-bound-simplified-het} and \eqref{eq:main-upper-bound-MGn-het} only depend on the service-time distributions through the \emph{weighted average} of $\Rsmaxj$ across the servers. 
This makes these bounds relatively robust, in the sense that having a small fraction of servers with bad tail distributions does not significantly worsen the bounds. 
In particular, when $\Var{\Lambda A}$ and $\sumj (\mu_j / \mutotal) \Rsmaxj$ are bounded by constants, these bounds have the order $O\big(1/(1-\rho)\big)$ for all joint scalings of $n$ and $\rho$.

\subsection{Technical framework and supporting lemmas}
\label{sec:supporting-lemmas-het}
This subsection has a similar structure as \Cref{sec:lemmas-homo}: we define leave-one-out systems, state the Basic Adjoint Relationship (BAR), and then state four additional supporting lemmas. 
We also adopt the following no-simultaneous-event assumption to simplify the presentation. The argument for removing this assumption is identical to that provided \Cref{remark:simultaneous}, so we do not repeat it here. 

\begin{assumption}
    In the modified \ggn queue with heterogeneous servers, no two events (i.e., an arrival and a completion, two arrivals, or two completions) happen at the same time.
\end{assumption}

We first define the leave-one-out systems in the same way as in \Cref{sec:leave-one-out}: recall that for each $j\in[n]$, the $j$-th leave-one-out system is a modified \ggn queue whose queue length process, $\Qdropj(t)$, is driven by the arrival process $\R_a(t)$ and the completion processes $\{R_{\si}(t)\}_{i\in[n]\backslash\{j\}}$ of the main modified \ggn queue under study.

The following lemma states that the queue length of each leave-one-out system pathwise dominates that of the main system. We omit the proof as it is identical to the one provided for the homogeneous case in \Cref{lem:leave-one-out-upper-bound}.

\begin{lemma}\label{lem:leave-one-out-upper-bound-het}
    Consider the modified \ggn queue with heterogeneous servers.
    For each $j=1,2,\dots n$,
    the queue length of the $j$-th leave-one-out system pathwise dominates the queue length of the main system under study, i.e., 
    \[
        \Qdropj(t) \geq Q(t) \quad \forall t\geq 0.
    \]
\end{lemma}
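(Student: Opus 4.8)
The plan is to reproduce the pathwise (coupling) argument used for the homogeneous case in \Cref{lem:leave-one-out-upper-bound}, observing that the server heterogeneity and the arbitrary routing policy play no role in the dynamics of the quantities involved. Concretely, in both the main modified \ggn queue and its $j$-th leave-one-out system, the queue length is a deterministic functional of the underlying renewal processes: it increases by $1$ at each epoch of the shared arrival renewal process, and it decreases by $\indibrac{\cdot>0}$ at each completion epoch of its servers, the only difference being that the leave-one-out system does not see the completion epochs of server $j$. The routing rule never enters, because in a modified system every server is always busy, so there are no idle servers to route to and the updates of $Q(t)$ and $\Qdropj(t)$ depend only on which renewal process fires, not on the service-time laws $S_i$ or on how jobs are assigned.

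Given this, I would define the stopping time $\tau \triangleq \inf\{t\ge 0 : \Qdropj(t) \le Q(t) - 1\}$ and show $\tau = \infty$ on every sample path. Since $\Qdropj(0) = Q(0)$ and both processes are right-continuous with piecewise-constant integer-valued paths, we have $\tau > 0$. Suppose for contradiction that $\tau < \infty$ on some path. Then $\Qdropj(\tau-) = Q(\tau-)$ while $\Qdropj(\tau) \le Q(\tau) - 1$, so at time $\tau$ some event must either (i) decrease $\Qdropj$ without decreasing $Q$, or (ii) increase $Q$ without increasing $\Qdropj$. Event (ii) is impossible, since the only event that increases $Q$ is an arrival, which is shared and increases $\Qdropj$ by the same amount. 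For (i): a completion of a server $i \ne j$ is shared, and when $\Qdropj(\tau-) = Q(\tau-)$ it changes both quantities by the same indicator; a completion of server $j$ only affects $Q$ (weakly decreasing it), which moves the desired inequality in the favourable direction; and no other event decreases $\Qdropj$. Hence (i) is impossible as well, a contradiction, so $\tau = \infty$ and $\Qdropj(t) \ge Q(t)$ for all $t$.

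There is essentially no obstacle: the argument is verbatim that of \Cref{lem:leave-one-out-upper-bound}. The only point worth pausing on is to confirm that nothing in the heterogeneous generalization alters the coupling — namely that the $j$-th leave-one-out system is now a heterogeneous modified \ggn queue on the server set $[n]\setminus\{j\}$ rather than a homogeneous one, that the $S_i$ differ across $i$, and that an arbitrary routing policy is allowed. None of these matter, because the coupling is defined at the level of the arrival renewal process and the completion renewal processes of servers $i \ne j$, and the queue-length update rules for both systems are exactly those recorded in \Cref{sec:modified-ggn}, irrespective of the per-server service-time distributions or the routing.
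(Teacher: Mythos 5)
Your proof is correct and is essentially the paper's own argument: the paper omits the heterogeneous proof precisely because it is identical to the stopping-time/contradiction argument of \Cref{lem:leave-one-out-upper-bound}, which you reproduce faithfully. Your added observation that routing and the service-time laws never enter the queue-length update rules of the modified system is a valid (and worthwhile) justification for why the homogeneous argument transfers verbatim.
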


Our analysis focuses on the main modified \ggn system coupled with that subset of leave-one-out systems that have well-defined stationary distributions. Specifically, we denote the load of the $j$-th leave-one-out system as $\rhodropj \triangleq \Lambda / (\mutotal - \mu_j)$. We let $\indexstable \triangleq \{j\in[n]\colon\rhodropj < 1\}$ be the indices of leave-one-out systems with subcritical loads and let $\Qdropstable(t) \triangleq (\Qdropj(t))_{j\in\indexstable}$ denote the vector of their queue lengths at  the time $t$.

We then define the state-augmented Markov process $\{\Xsuper(t)\}_{t\geq 0}$ as
\begin{equation}
    \label{eq:xsuper-def-het}
    \Xsuper(t) \triangleq (R_{\sone}(t), R_{\stwo}(t), \dots R_{\sn}(t), R_a(t), Q(t), (\Qdropj(t))_{j\in\indexstable}). 
\end{equation}
The state space of $\{\Xsuper(t)\}_{t\geq 0}$ is thus given by $\spacesuper \triangleq \Rnn^{n+1} \times \Znn^{1+\abs{\indexstable}}$.

With the differential operator $\opinner$ and the Palm distributions defined in the same way as in \Cref{sec:steady-state-anaysis:bar}, we can prove BAR for the heterogeneous setting: 

\begin{restatable}[Basic Adjoint Relationship for \ggn with heterogeneous servers]{lemma}{barhet}
    \label{lem:bar-het}
    Consider the modified \ggn system with heterogeneous servers. Suppose Assumptions~\ref{assump:bdd-exp-remain-het}, \ref{assump:non-lattice-het} and \ref{assump:modified-ggn-stable-het} hold, and the load $\rho < 1$. 
    For any function $f\colon \spacesuper\to \R$ that is bounded, locally Lipschitz continuous, and partially differentiable from the right with respect to its first $(n+1)$ coordinates, we have 
    \begin{equation}
        \label{eq:bar-het}
        \Ep{\sdsuper}{\opinner f(\Xsuper)} + \Lambda \Ep{a}{f(\Xsuper\aft) -f(\Xsuper\bef)} + \sumi \mu_i \Ep{\si}{f(\Xsuper\aft) - f(\Xsuper\bef)} = 0, 
    \end{equation}
    where $\Xsuper, \Xsuper\aft, \Xsuper\bef \in\spacesuper$ are random elements following the distributions specified in the expectation subscripts. 
    Moreover, the Palm distributions $\Pnb_a$ and $\Pnb_{\si}$ satisfy the following with probability $1$:
   \begin{enumerate}%
        \item[(i)] \label{item:Pa-jumps-het} If $(\Xsuper\aft, \Xsuper\bef)\sim\Pnb_a$, then $R_a\bef=0$ and the coordinates of $\Xsuper\aft$ and $\Xsuper\bef$ satisfy the relations:
        \begin{align}
            \label{eq:Pnba-char-ra-het}
            R_a\aft &= R_a\bef + A \\
            \label{eq:Pnba-char-q-het}
            Q\aft &= Q\bef + 1  \\
            \label{eq:Pnba-char-q-j-het}
            \Qdropj\aft &= \Qdropj\bef + 1 \quad \forall j\in\indexstable,
        \end{align}
        where $A$ represents a generic interarrival time and is independent of $\Xsuper\bef$; $\indexstable \triangleq \{j\in[n]\colon\rhodropj < 1\}$; other coordinates of $\Xsuper\aft$ are identical to $\Xsuper\bef$. 
        \item[(ii)] \label{item:Pi-jumps-het} For each $i\in[n]$, if $(\Xsuper\aft, \Xsuper\bef)\sim\Pnb_{\si}$, then $R_{\si}\bef=0$ and the coordinates of $\Xsuper\aft$ and $\Xsuper\bef$ satisfy the relations:
        \begin{align}
            \label{eq:Pnbsi-char-rsi-het}
            R_{\si}\aft &= R_{\si}\bef + S_i \\
            \label{eq:Pnbsi-char-q-het}
            Q\aft &= Q\bef - \indibrac{Q\bef > 0} \\
            \label{eq:Pnbsi-char-q-j-het}
            \Qdropj\aft &= \Qdropj\bef - \indibrac{\Qdropj\bef > 0} \quad \forall j\in\indexstable \backslash \{i\},
        \end{align}
        where $S_i$ represents a generic service time of the $i$-th server and is independent of $\Xsuper\bef$; other coordinates of $\Xsuper\aft$ are identical to $\Xsuper\bef$. 
    \end{enumerate}
\end{restatable}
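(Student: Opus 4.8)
The plan is to follow the same route as the proof of the homogeneous BAR (\Cref{lem:bar}) given in \Cref{app:pf-BAR}, which itself adapts the rate-conservation framework of \citet{BraDaiMiy_23_bar_ss} (their Section~6.1). The only substantive modifications are: (i) the common completion rate $\mu$ is replaced throughout by the server-specific rates $\mu_i$; (ii) the generic service time attached to a completion event of server $i$ is $S_i$ rather than $S$; and (iii) the state is augmented only by those leave-one-out queue lengths $\Qdropj$ with $j\in\indexstable$, i.e.\ those whose load $\rhodropj$ is strictly below $1$ and which therefore possess a stationary distribution. Everything else transfers verbatim, so the proof is essentially a bookkeeping exercise on top of \Cref{app:pf-BAR}.

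First I would verify that $\Xsuper(t)=(X(t),\Qdropstable(t))$ is a piecewise-deterministic Markov process on $\spacesuper$. The main modified \ggn queue and each leave-one-out system indexed by $j\in\indexstable$ are coupled to share the single arrival renewal process (rate $\Lambda$) together with the $n$ service-completion renewal processes (server $i$ has inter-event distribution $S_i$); between events every residual-time coordinate decreases at rate $1$, and at events the coordinates update according to the rules of \Cref{sec:modified-ggn}. Reading these rules off at an arrival epoch (where necessarily $R_a\bef=0$) yields exactly relations \eqref{eq:Pnba-char-ra-het}--\eqref{eq:Pnba-char-q-j-het}, and at a completion epoch of server $i$ (where $R_{\si}\bef=0$) yields \eqref{eq:Pnbsi-char-rsi-het}--\eqref{eq:Pnbsi-char-q-j-het}. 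The only difference from the homogeneous case is the index set in the last relation of each pair: a completion at server $i$ affects $\Qdropj$ only for $j\in\indexstable\setminus\{i\}$, which is automatic when $i\notin\indexstable$ and holds by construction when $i=j$ (the $j$-th leave-one-out system ignores server $j$). By \Cref{assump:modified-ggn-stable-het} each coordinate process is positive Harris recurrent, and the coupled process admits a unique stationary distribution, which I denote $\pi$.

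Next I would apply the rate conservation law exactly as in \Cref{app:pf-BAR}: with $\Xsuper(0)\sim\pi$, the process $f(\Xsuper(t))$ is stationary, its continuous part accumulates at instantaneous rate $\opinner f(\Xsuper(t))$, and its jumps occur precisely at the arrival epochs and at the server-$i$ completion epochs whose long-run rates are $\Lambda$ and $\mu_i$ respectively; taking expectations over a unit interval and equating the net rate of change to zero gives \eqref{eq:bar-het}. The nontrivial content of this step is the same integrability and localization bookkeeping as in the homogeneous proof: one must check that $\Ep{\pi}{|\opinner f(\Xsuper)|}<\infty$ and that the Palm expectations $\Ep{a}{f(\Xsuper\aft)-f(\Xsuper\bef)}$ and $\Ep{\si}{f(\Xsuper\aft)-f(\Xsuper\bef)}$ are finite and that a truncated version of the identity passes to the limit. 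This uses the boundedness and local Lipschitz continuity of $f$ together with $\E{A^2}<\infty$ and $\Rsmaxi<\infty$ from \Cref{assump:bdd-exp-remain-het} (which in particular ensure finite first moments of $A$ and of each $S_i$, hence well-defined event rates). I expect this integrability/localization step to be the main --- and essentially the only --- obstacle; the remainder is identical to \Cref{app:pf-BAR}, and the case of simultaneous events is handled by the argument sketched in \Cref{remark:simultaneous}.
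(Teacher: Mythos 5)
Your proposal matches the paper's proof, which establishes the heterogeneous BAR directly by applying the fundamental theorem of calculus to $f(\Xsuper(t))$ between event epochs, using stationarity to kill $\E{f(\Xsuper(t))-f(\Xsuper(0))}$, identifying the jump sums with the Palm expectations, and then verifying (i)--(ii) from the sample-path update rules together with the independence of the attached generic $A$ or $S_i$ from the pre-event state. The only minor discrepancy is that no truncation or localization is needed in this lemma's proof itself---the assumed boundedness and local Lipschitz continuity of $f$ already justify all interchanges; truncation enters only later, when BAR is applied to the unbounded quadratic test functions.
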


\Cref{lem:bar-het} is proved in \smartref{app:pf-BAR}{Appendix}{\valAppBAR}. 
To simplify the notation, sometimes we would equivalently write the main equation of BAR \eqref{eq:bar-het} as
\begin{equation}
    \label{eq:bar-Es-form-het}
    \Ep{\pi}{\opinner f(\Xsuper)} + \Lambda \Ep{a}{f(\Xsuper\aft) - f(\Xsuper\bef)} + \mutotal \Ep{s}{f(\Xsuper\aft) - f(\Xsuper\bef)} = 0,
\end{equation}
where 
\[
    \Ep{s}{\cdot} \triangleq \sumi\frac{\mu_i}{\mutotal}\Ep{\si}{\cdot}.
\]
We omit the superscript $-$ from $\Xsuper\bef$ when it does not appear together with $\Xsuper\aft$.

Next, we state the four additional supporting lemmas that generalize Lemmas~\ref{lem:R-bdd}, \ref{lem:indep-R-Qtilde}, \ref{lem:apply-bar-lots-of}, and \ref{lem:expfinite} to the heterogeneous case. Their proofs are provided in \smartref{app:technical-lemmas}{Appendix}{\valAppSupp}.

\begin{restatable}[Bounded mean residual times, heterogeneous]{lemma}{Rbddhet}
    \label{lem:R-bdd-het}
    Consider the modified \ggn queue with heterogeneous servers. Suppose Assumptions~\ref{assump:bdd-exp-remain-het} and \ref{assump:modified-ggn-stable-het} hold and the load $\rho < 1$. Then for any $j\in[n]$,
        \begin{align}
            \tag{\ref{eq:R-sj-bound-stationary}}
            \mu_{j} \Ep{\nu}{\Rsj \given Q, \Qdropstable} &\leq \Rsmaxj \\ 
             \tag{\ref{eq:R-a-bound-stationary}}
            \Lambda\Ep{\nu}{R_a \given Q, \Qdropstable} &\geq \Ramin. 
        \end{align}
        where $\Ep{\nu}{\cdot}$ can denote either $\Ep{\pi}{\cdot}$ or $\Ep{\si}{\cdot}$ for $i\neq j$. 
\end{restatable}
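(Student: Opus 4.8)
The plan is to obtain both inequalities from pointwise bounds on conditional expectations with respect to a large $\sigma$-algebra, and then to project down onto $\sigma(Q,\Qdropstable)$ via the tower property. The structural fact I would lean on is the one recorded in \Cref{sec:setup-het}: in the modified \ggn queue with heterogeneous servers the whole sample path is a deterministic functional of $n+1$ mutually independent stationary renewal processes — the arrival process with inter-event law $A$ and, for each $i\in[n]$, server $i$'s completion process with inter-event law $S_i$. Fix a reference time: time $0$ when $\nu=\pi$, and a typical completion epoch of server $i$ when $\nu=\Pnb_{\si}$. Let $\cF$ be the $\sigma$-algebra generated by all renewal epochs (of all $n+1$ processes) that lie at or before the reference time. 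Then $Q$ and every coordinate of $\Qdropstable$ at the reference time are $\cF$-measurable, since each is the image of those epochs under a Skorokhod reflection map, well-defined by \Cref{assump:modified-ggn-stable-het}.

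First I would settle $\nu=\pi$. Conditionally on $\cF$ the age $\agej$ of server $j$'s renewal process is known, and by the renewal property together with the mutual independence of the $n+1$ processes, the inter-event interval of server $j$ straddling the reference time is, given $\cF$, distributed as $S_j$ conditioned on $\{S_j>\agej\}$. Since $\Rsj$ equals that interval minus $\agej$, we get $\mu_j\,\Ep{\pi}{\Rsj\given\cF}=\mu_j\,\E{S_j-\agej\given S_j>\agej}\leq\Rsmaxj$ almost surely, where the last step is the definition $\Rsmaxj=\sup_{a\geq0}\mu_j\E{S_j-a\given S_j\geq a}$ (the passage from the strict event $\{S_j>\agej\}$ to $\{S_j\geq a\}$ being the one-sided-limit manipulation already used in the homogeneous \Cref{lem:R-bdd}). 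As $\sigma(Q,\Qdropstable)\subseteq\cF$, a further application of the tower property yields $\mu_j\,\Ep{\pi}{\Rsj\given Q,\Qdropstable}\leq\Rsmaxj$. The bound $\Lambda\,\Ep{\pi}{R_a\given Q,\Qdropstable}\geq\Ramin$ is identical, using that the straddling arrival interval is, given $\cF$, distributed as $A$ conditioned on $\{A>\agea\}$ and that $\Ramin=\inf_{a\geq0}\Lambda\E{A-a\given A\geq a}$.

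For $\nu=\Pnb_{\si}$ with $i\neq j$ the only new point is that the Palm operation re-weights time by the intensity of server $i$'s completions, which a priori could distort the other coordinates. It does not: server $i$'s completion process is a measurable functional of server $i$'s renewal process alone, and that process is independent of server $j$'s renewal process and of the arrival process, so by the standard Palm-calculus fact for independent point processes the sub-configuration formed by server $j$'s process and the arrival process keeps its stationary law under $\Pnb_{\si}$ and is independent of the ``which event occurs at the reference time'' data. Since $j\neq i$, neither $\Rsj$ nor $R_a$ jumps at a completion of server $i$, so the computation of the previous paragraph applies verbatim at the Palm epoch: with $\cF$ now generated by all epochs at or before that epoch one obtains $\mu_j\,\Ep{\si}{\Rsj\given\cF}\leq\Rsmaxj$ and $\Lambda\,\Ep{\si}{R_a\given\cF}\geq\Ramin$ a.s., and $\sigma(Q,\Qdropstable)\subseteq\cF$ finishes the argument. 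This uses only \Cref{assump:bdd-exp-remain-het} (so the right-hand sides are finite) and \Cref{assump:modified-ggn-stable-het} (so $\pi$ and the Palm measures exist); Assumption~\ref{assump:non-lattice-het} (the non-lattice condition) is not needed here.

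I expect the only delicate step to be the Palm case — concretely, confirming that conditioning on a typical completion of server $i$ leaves the renewal structure of server $j$ (and of the arrival process) intact for $j\neq i$. This is precisely where the mutual independence of the $n+1$ driving renewal processes is essential, and it is the heterogeneous analogue of the corresponding step in the proof of \Cref{lem:R-bdd}; the remainder is routine bookkeeping with conditional expectations.
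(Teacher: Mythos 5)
Your proposal is correct and follows essentially the same route as the paper: condition on a $\sigma$-algebra rich enough to determine $(Q,\Qdropstable)$ and the age $\agej$ (resp.\ $\agea$) but not the straddling service (resp.\ interarrival) interval, apply the definition of $\Rsmaxj$ (resp.\ $\Ramin$), and tower down. The only difference is that for the Palm case you cite the standard fact that Palm conditioning on server $i$'s completions leaves the independent processes of server $j$ and of the arrivals stationary, whereas the paper proves this directly by summing over the completion epochs $\taui^k$ and decomposing on the renewal count $N_j(\taui^k-)$ — this is indeed the delicate step you identified, and your justification, while terser, is sound.
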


\begin{restatable}[Uncorrelation lemma, heterogeneous]{lemma}{indephet}
\label{lem:indep-R-Qtilde-het}
Consider the modified \ggn queue with heterogeneous servers. Suppose Assumptions~\ref{assump:bdd-exp-remain-het}, \ref{assump:non-lattice-het}, and \ref{assump:modified-ggn-stable-het} hold. Then for any $i, j\in[n]$ such that $i\neq j$ and $\rhodropj < 1$,
\begin{equation}
     \Ep{\si}{\indibrac{\Qdropj=0}\Rsj} = \Probp{\si}{\Qdropj=0}\Ep{\pi}{\Rsj}.
\end{equation}
\end{restatable}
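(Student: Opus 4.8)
The plan is to mirror the proof of the homogeneous uncorrelation lemma (\Cref{lem:indep-R-Qtilde}), changing only notation: $S_j,\mu_j$ replace $S,\mu$, and the stable index set $\indexstable$ replaces $[n]$ in deciding which leave-one-out queue lengths sit inside the augmented state $\Xsuper$. Fix $i\ne j$ with $\rhodropj<1$, so $j\in\indexstable$ and the $j$-th leave-one-out system has a well-defined stationary distribution by \Cref{assump:modified-ggn-stable-het}. Let $\tau_{\si}^1<\tau_{\si}^2<\cdots$ be the completion epochs of server $i$. By the definition of the Palm distribution $\Pnb_{\si}$ (which does not depend on the choice of $h$) and the convention $\Xsuper(0)\sim\pi$, for any $h>0$,
\[
  \Ep{\si}{\indibrac{\Qdropj=0}\Rsj}
  = \frac{1}{\mu_i h}\;\E{\sum_{k\ge 1}\indibrac{\Qdropj(\tau_{\si}^k-)=0}\,\Rsj(\tau_{\si}^k-)\,\indibrac{\tau_{\si}^k\le h}}.
\]
Since $\Rsj\ge 0$, every interchange of the sum with a (conditional) expectation below is legitimate by Tonelli, and $\Ep{\pi}{\Rsj}=\tfrac12\mu_j\E{S_j^2}$ is finite by \Cref{assump:bdd-exp-remain-het}, so no $0\cdot\infty$ ambiguity will arise.

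The structural input, identical to the homogeneous case, is that the modified \ggn queue with heterogeneous servers is a deterministic functional of $n+1$ mutually independent renewal processes: the arrival process and the $n$ per-server completion processes. Let $\cF_{-j}$ be the $\sigma$-field generated by all of these except server $j$'s completion process. Then the epochs $\tau_{\si}^k$ (recall $i\ne j$), the indicators $\indibrac{\tau_{\si}^k\le h}$, and $\Qdropj(t)$ for every $t$ (the $j$-th leave-one-out system never uses server $j$) are $\cF_{-j}$-measurable, whereas the forward-recurrence-time process $\Rsj(\cdot)$ is a functional of server $j$'s completion process alone. Conditioning on $\cF_{-j}$, on which both $\tau_{\si}^k$ and $\Qdropj(\tau_{\si}^k-)$ are deterministic, and using that $\Rsj(\cdot)$ is independent of $\cF_{-j}$ with $\E{\Rsj(t)}=\Ep{\pi}{\Rsj}$ constant in $t$, we get, for each $k$,
\[
  \E{\indibrac{\Qdropj(\tau_{\si}^k-)=0}\,\Rsj(\tau_{\si}^k-)\,\indibrac{\tau_{\si}^k\le h}\given\cF_{-j}}
  = \Ep{\pi}{\Rsj}\;\indibrac{\Qdropj(\tau_{\si}^k-)=0}\,\indibrac{\tau_{\si}^k\le h}.
\]
Taking expectations, summing over $k$, pulling the constant $\Ep{\pi}{\Rsj}$ out, and recognizing the remaining expectation as $\mu_i h\,\Probp{\si}{\Qdropj=0}$ by the definition of $\Pnb_{\si}$, we conclude $\Ep{\si}{\indibrac{\Qdropj=0}\Rsj}=\Ep{\pi}{\Rsj}\,\Probp{\si}{\Qdropj=0}$, which is the claim.

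The one delicate point --- and the only place the non-lattice assumption is used --- is the decoupling invoked above: the clean intuition (that $\Rsj$ is a functional of server $j$'s renewal process, while $\Qdropj$ and $\Pnb_{\si}$ depend only on the other $n$ processes) must be promoted to a genuine statement about the Markov-chain stationary distribution $\pi$, namely that under $\pi$ the completion process of server $j$ is independent of $\cF_{-j}$ and its residual $\Rsj$ is equilibrium-distributed, so that $\Rsj(\tau_{\si}^k-)$ can be treated as a fresh equilibrium draw regardless of where $\tau_{\si}^k$ falls or what $\Qdropj(\tau_{\si}^k-)$ equals. This requires a renewal-theoretic mixing argument that relies on $S_j$ being non-lattice (\Cref{assump:non-lattice-het}). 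Since both the $(n{+}1)$-independent-renewal-process representation and this mixing argument are insensitive to heterogeneity among the servers, the step carries over essentially verbatim from the proof of \Cref{lem:indep-R-Qtilde}; the remaining work (Tonelli, the finiteness of $\Ep{\pi}{\Rsj}$, and the bookkeeping with $\indexstable$) is routine.
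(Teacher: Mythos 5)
Your high-level strategy matches the paper's: the paper in fact proves the heterogeneous statement directly (in \Cref{app:pf-uncorrelation}) and obtains the homogeneous one as a special case, so "the homogeneous argument carries over with $S,\mu$ replaced by $S_j,\mu_j$ and $[n]$ by $\indexstable$" is exactly right as a description of the relationship between the two lemmas. The problem is in the one step you wrote out in detail: the displayed identity
\[
  \E{\indibrac{\Qdropj(\tau_{\si}^k-)=0}\,\Rsj(\tau_{\si}^k-)\,\indibrac{\tau_{\si}^k\le h}\given\cF_{-j}}
  = \Ep{\pi}{\Rsj}\;\indibrac{\Qdropj(\tau_{\si}^k-)=0}\,\indibrac{\tau_{\si}^k\le h}
\]
is false as an exact equality, and the reason is that $\cF_{-j}$ cannot avoid containing $\Qdropj(0)=Q(0)$ (otherwise $\Qdropj(\tau_{\si}^k-)$ is not $\cF_{-j}$-measurable), and under $\pi$ the initial queue length is correlated with server $j$'s age $\agej(0)$ --- indeed the entire paper is about the fact that $\indibrac{Q=0}$ and $\Rsj$ are \emph{not} exactly independent in steady state. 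Consequently $\Rsj(\cdot)$ is not independent of $\cF_{-j}$; what conditioning actually yields is $\E{\Rsj(\tau_{\si}^k-)\given\cF_{-j}}=\E{\Rsj(\tau_{\si}^k-)\given\agej(0)}$, the mean residual of a \emph{delayed} renewal process started from a $\pi$-distributed age, which differs from $\Ep{\pi}{\Rsj}$ for finite times. Your closing paragraph asks the "mixing argument" to deliver exact conditional equilibrium ("a fresh equilibrium draw regardless of where $\tau_{\si}^k$ falls"), which is not what renewal theory gives and not what the paper proves.

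What the paper actually does to close this gap is quantitative, not just a change of perspective: by the Key Renewal Theorem (this is where non-lattice $S_j$ enters, via \Cref{lem:E-R-t-conv}), for every $\epsilon>0$ there is a deterministic $T_0$ with $\absbig{\E{\Rsj(t)\given\agej(0)}-\Ep{\pi}{\Rsj}}\le\epsilon$ for all $t\ge T_0+\Rsj(0)$; the finitely many completion epochs $\tau_{\si}^k$ with $k\le K^\epsilon$ that fall before this time are handled by the crude bound $2\Rsmaxj/\mu_j$ together with $\E{K^\epsilon}<\infty$ (Wald plus Lorden's inequality), and their contribution is killed by taking the Palm window $h=T\to\infty$ before letting $\epsilon\to 0$. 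In particular the choice of $h$ is not innocuous here: the Palm expectation is $h$-independent, but the error bound $2\Rsmaxj\E{K^\epsilon}/(\mu_i T)+\epsilon$ only vanishes in the double limit. This truncation-and-limit argument is the substance of the proof, so deferring it as "routine" or as already contained in the homogeneous case (whose proof in the paper \emph{is} this argument) leaves the essential work undone.
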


\begin{restatable}{lemma}{applybarpreliminaryhet}
    \label{lem:apply-bar-lots-of-het}
    Consider the modified \ggn queue with heterogeneous servers. Suppose Assumptions~\ref{assump:bdd-exp-remain-het} and \ref{assump:modified-ggn-stable-het} hold, and the load $\rho < 1$. For any $i,j\in[n]$ such that $i\neq j$, we have
    \begin{align}
        \label{eq:E-pi-R-s-het}
        \Ep{\sdsuper}{\Rsi} &= \frac{1}{2}\mu_i\E{S_i^2} \\
        \label{eq:E-pi-R-a-het}
        \Ep{\sdsuper}{R_a} &= \frac{1}{2}\Lambda \E{A^2} \\
        \label{eq:E-s-R-s-het}
        \Ep{\si}{\Rsj} + \Ep{\sj}{\Rsi} &= \frac{1}{2}\mu_i \E{S_i^2} + \frac{1}{2} \mu_j \E{S_j^2} \\
        \label{eq:E-sa-R-sa-het}
        \Ep{\si}{R_a} + \Ep{a}{\Rsi} &= \frac{1}{2}\mu_{\si} \E{S_i^2} + \frac{1}{2}\Lambda \E{A^2} \\
        \label{eq:E-s-Q-het}
        \Probp{s}{Q=0} &= 1 - \rho 
    \end{align}
    In addition, for each $j \in [n]$, if $\rhodropj < 1$, the $j$-th leave-one-out system satisfies
    \begin{equation}
        \label{eq:E-s-Qtilde-het}
        \frac{1}{\mutotal} \sum_{i\colon i\neq j} \mu_i \Probp{\si}{\Qdropj=0} = 1 - \rho - \frac{\mu_j}{\mutotal}.
    \end{equation}
\end{restatable}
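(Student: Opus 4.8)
The plan is to obtain each of the six identities from the Basic Adjoint Relationship (\Cref{lem:bar-het}) by evaluating it on a carefully chosen test function. Since \Cref{lem:bar-het} requires a bounded test function, I would always start from a truncated version $f$ — capping each unbounded coordinate at a level $\kappa$ — which is bounded, locally Lipschitz, and right-differentiable in its residual-time coordinates, apply \Cref{lem:bar-het}, and then let $\kappa\to\infty$, passing the limit through the expectations by monotone or dominated convergence. I would prove the identities in three groups, ordered so that later ones may reuse earlier ones: first \eqref{eq:E-pi-R-s-het}–\eqref{eq:E-pi-R-a-het}, then \eqref{eq:E-s-R-s-het}–\eqref{eq:E-sa-R-sa-het}, then \eqref{eq:E-s-Q-het}–\eqref{eq:E-s-Qtilde-het}. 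This is the heterogeneous analogue of the proof of \Cref{lem:apply-bar-lots-of}.

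For \eqref{eq:E-pi-R-s-het} I would take $f(\Xsuper)=(\Rsi\wedge\kappa)^2$. Then $\opinner f=-2(\Rsi\wedge\kappa)\indibrac{\Rsi<\kappa}$; the Palm increment $f(\Xsuper\aft)-f(\Xsuper\bef)$ vanishes at arrivals and at completions of every server $\ell\neq i$; and at a completion of server $i$ we have $\Rsi\bef=0$ and $\Rsi\aft=S_i$, a fresh service time independent of $\Xsuper\bef$, so the increment is $(S_i\wedge\kappa)^2$ and its $\Pnb_{\si}$-expectation is $\E{(S_i\wedge\kappa)^2}$. Hence \Cref{lem:bar-het} reads $2\Ep{\pi}{(\Rsi\wedge\kappa)\indibrac{\Rsi<\kappa}}=\mu_i\E{(S_i\wedge\kappa)^2}$; both sides are nondecreasing in $\kappa$ and the right-hand side is bounded by $\mu_i\E{S_i^2}<\infty$ (\Cref{assump:bdd-exp-remain-het}), so monotone convergence gives $2\Ep{\pi}{\Rsi}=\mu_i\E{S_i^2}$, which is \eqref{eq:E-pi-R-s-het} and also establishes $\Ep{\pi}{\Rsi}<\infty$. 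Identity \eqref{eq:E-pi-R-a-het} follows identically from $f=(R_a\wedge\kappa)^2$, the only surviving Palm increment being $(A\wedge\kappa)^2$ at arrivals, using $\E{A^2}<\infty$.

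For \eqref{eq:E-s-R-s-het} (with $i\neq j$) I would use the product $f(\Xsuper)=(\Rsi\wedge\kappa)(\Rsj\wedge\kappa)$: then $\opinner f=-(\Rsj\wedge\kappa)\indibrac{\Rsi<\kappa}-(\Rsi\wedge\kappa)\indibrac{\Rsj<\kappa}$, whose $\pi$-expectation tends to $-\Ep{\pi}{\Rsi}-\Ep{\pi}{\Rsj}$ as $\kappa\to\infty$. The Palm increment vanishes at arrivals and at completions of servers $\ell\notin\{i,j\}$; at a completion of server $i$ it equals $(S_i\wedge\kappa)(\Rsj\wedge\kappa)$ because the pre-jump term dies ($\Rsi\bef=0$), and by symmetry it equals $(\Rsi\wedge\kappa)(S_j\wedge\kappa)$ at a completion of server $j$. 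Since the fresh service time is independent of $\Xsuper\bef$, hence of $\Rsj$ (resp.\ $\Rsi$), these two Palm terms increase to $\mu_i\E{S_i}\Ep{\si}{\Rsj}=\Ep{\si}{\Rsj}$ and to $\Ep{\sj}{\Rsi}$ — finiteness of which is forced by the pre-limit equation — so in the limit \Cref{lem:bar-het} becomes $\Ep{\si}{\Rsj}+\Ep{\sj}{\Rsi}=\Ep{\pi}{\Rsi}+\Ep{\pi}{\Rsj}$, and substituting \eqref{eq:E-pi-R-s-het} gives \eqref{eq:E-s-R-s-het}. The same computation with $f=(\Rsi\wedge\kappa)(R_a\wedge\kappa)$ — which now also picks up $(\Rsi\wedge\kappa)(A\wedge\kappa)$ at arrivals — yields $\Ep{\si}{R_a}+\Ep{a}{\Rsi}=\Ep{\pi}{\Rsi}+\Ep{\pi}{R_a}$, which together with \eqref{eq:E-pi-R-s-het} and \eqref{eq:E-pi-R-a-het} is \eqref{eq:E-sa-R-sa-het} (whose statement contains a harmless typo, $\mu_{\si}$ in place of $\mu_i$).

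For \eqref{eq:E-s-Q-het} I would use the bounded function $f(\Xsuper)=Q\wedge\kappa$, so that $\opinner f=0$, the arrival increment is $\indibrac{Q\leq\kappa-1}$, and a completion of any server $i$ contributes $-\indibrac{0<Q\leq\kappa}$; \Cref{lem:bar-het} then reads $\Lambda\Ep{a}{\indibrac{Q\leq\kappa-1}}=\sum_{i}\mu_i\Ep{\si}{\indibrac{0<Q\leq\kappa}}=\mutotal\Ep{s}{\indibrac{0<Q\leq\kappa}}$, and $\kappa\to\infty$ (dominated convergence, integrands $\leq1$) gives $\Lambda=\mutotal\Probp{s}{Q>0}$, i.e.\ \eqref{eq:E-s-Q-het}. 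For \eqref{eq:E-s-Qtilde-het}, assume $\rhodropj<1$ so that $\Qdropj$ is a coordinate of $\Xsuper$ and take $f(\Xsuper)=\Qdropj\wedge\kappa$: by \eqref{eq:Pnba-char-q-j-het} the arrival increment is $\indibrac{\Qdropj\leq\kappa-1}$, while by \eqref{eq:Pnbsi-char-q-j-het} a completion of server $i\neq j$ contributes $-\indibrac{0<\Qdropj\leq\kappa}$ and a completion of server $j$ leaves $\Qdropj$ unchanged, contributing $0$; letting $\kappa\to\infty$ yields $\Lambda=\sum_{i\colon i\neq j}\mu_i\Probp{\si}{\Qdropj>0}=(\mutotal-\mu_j)-\sum_{i\colon i\neq j}\mu_i\Probp{\si}{\Qdropj=0}$, which after dividing by $\mutotal$ is \eqref{eq:E-s-Qtilde-het}. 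I do not anticipate a real obstacle: the content is bookkeeping with \Cref{lem:bar-het}, and the only care needed is (i) inserting the truncation so the test functions meet the hypotheses and controlling the $\kappa\to\infty$ limit — for the residual-time identities this uses $\E{S_i^2},\E{A^2}<\infty$ and, as a byproduct, delivers finiteness of the relevant Palm expectations of residual times — and (ii) using the fact, built into \Cref{lem:bar-het}, that the fresh interarrival or service time created by a jump is independent of the pre-jump state, which is precisely what lets the Palm expectations of the product test functions factor.
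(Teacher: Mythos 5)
Your proposal is correct and follows essentially the same route as the paper: the same truncated test functions $(\Rsi\wedge\kappa)^2$, $(R_a\wedge\kappa)^2$, $(\Rsi\wedge\kappa)(\Rsj\wedge\kappa)$, $(\Rsi\wedge\kappa)(R_a\wedge\kappa)$, $Q\wedge\kappa$, and $\Qdropj\wedge\kappa$ in \Cref{lem:bar-het}, with the same monotone/bounded convergence passage to $\kappa\to\infty$. You also correctly flag the $\mu_{\si}$-for-$\mu_i$ typo in \eqref{eq:E-sa-R-sa-het}, and your coefficient $\mu_i$ in front of the $\Pnb_{\si}$ term fixes a small slip in the paper's own display for \eqref{eq:E-pi-R-s-het}, where $\Lambda$ appears where $\mu_i$ is meant.
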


\begin{restatable}{lemma}{expfinitehet}
\label{lem:expfinite-het}
    Consider the modified \ggn system with heterogeneous servers. Suppose Assumptions~\ref{assump:bdd-exp-remain-het} and \ref{assump:modified-ggn-stable-het} hold, and the load $\rho < 1$. Then we have $\Ep{a}{Q} < \infty$ and $\Ep{\si}{Q} < \infty$ for all $i\in[n]$. 
\end{restatable}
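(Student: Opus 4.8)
The plan is to follow the route of the (homogeneous) proof of \Cref{lem:expfinite}: apply the Basic Adjoint Relationship (\Cref{lem:bar-het}) to a family of \emph{bounded} truncated test functions, derive a bound on the truncated Palm expectation of $Q$ that is uniform in the truncation level, and then pass to the limit by monotone convergence. Concretely, for $\Ep{a}{Q}$ I would use $f_{m,\kappa}(\Xsuper) = (R_a\wedge m)(Q\wedge\kappa)$, and for $\Ep{\si}{Q}$ the function $g_{m,\kappa}(\Xsuper) = (\Rsi\wedge m)(Q\wedge\kappa)$; both are bounded by $m\kappa$, locally Lipschitz, and right-differentiable in the residual-time coordinates, hence admissible in \Cref{lem:bar-het}.

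For the arrival part, with $f = f_{m,\kappa}$ one has $\opinner f = -(Q\wedge\kappa)\indibrac{R_a<m}$, so $\Ep{\pi}{\opinner f}\in[-\Ep{\pi}{Q},0]$. At an arrival event $R_a\bef = 0$ forces $f(\Xsuper\bef)=0$, while $R_a\aft = A$ is independent of $\Xsuper\bef$ by \Cref{lem:bar-het}, so that $\Lambda\Ep{a}{f(\Xsuper\aft)-f(\Xsuper\bef)} = \Lambda\,\E{A\wedge m}\,\Ep{a}{(Q+1)\wedge\kappa}$; and at a completion of server $i$ the increment equals $(R_a\wedge m)\big[((Q-\indibrac{Q>0})\wedge\kappa)-(Q\wedge\kappa)\big]$, which lies in $[-(R_a\wedge m),0]$. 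Substituting these into \eqref{eq:bar-het} and rearranging yields
\begin{equation*}
    \Lambda\,\E{A\wedge m}\,\Ep{a}{(Q+1)\wedge\kappa} \;\le\; \Ep{\pi}{Q} + \sumi\mu_i\Ep{\si}{R_a},
\end{equation*}
and the right-hand side is finite and independent of $m,\kappa$: $\Ep{\pi}{Q}<\infty$ by \Cref{assump:modified-ggn-stable-het}, and $\Ep{\si}{R_a}<\infty$ follows from \eqref{eq:E-sa-R-sa-het} together with $\Ep{a}{\Rsi}\ge 0$ (here $\E{S_i^2}<\infty$ because $\Rsmaxi<\infty$, exactly as in the homogeneous case). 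Choosing any fixed $m>0$, for which $\E{A\wedge m}>0$, and letting $\kappa\to\infty$ by monotone convergence gives $\Ep{a}{Q}<\infty$.

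The completion part is symmetric. With $g = g_{m,\kappa}$ one has $\opinner g = -(Q\wedge\kappa)\indibrac{\Rsi<m}$; at a completion of server $i$ the pre-value vanishes ($\Rsi\bef = 0$) and $\Rsi\aft = S_i$ is independent of $\Xsuper\bef$, which contributes the term $\mu_i\,\E{S_i\wedge m}\,\Ep{\si}{(Q-\indibrac{Q>0})\wedge\kappa}$; the arrival increment is nonnegative and bounded by $\Rsi\wedge m$; and each completion of a server $j\neq i$ contributes an increment in $[-(\Rsi\wedge m),0]$. Rearranging \eqref{eq:bar-het}, and using $\Ep{\sj}{\Rsi}<\infty$ (from \eqref{eq:E-s-R-s-het} and $\Ep{\si}{\Rsj}\ge 0$), gives
\begin{equation*}
    \mu_i\,\E{S_i\wedge m}\,\Ep{\si}{(Q-\indibrac{Q>0})\wedge\kappa} \;\le\; \Ep{\pi}{Q} + \sum_{j\colon j\neq i}\mu_j\Ep{\sj}{\Rsi},
\end{equation*}
again finite and independent of $m,\kappa$. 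Fixing any $m>0$, letting $\kappa\to\infty$, and using $Q-\indibrac{Q>0}\ge Q-1$ then yields $\Ep{\si}{Q}<\infty$ for every $i\in[n]$.

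These manipulations are routine; the care is mostly bookkeeping. The three points that need attention are: (i) keeping the test functions bounded---hence the double truncation by $m$ and $\kappa$, rather than using $R_aQ$ or $\Rsi Q$ directly---so that \Cref{lem:bar-het} applies; (ii) invoking, under each Palm distribution, the independence of the freshly sampled interarrival or service time from the pre-jump state, which is precisely the content of \Cref{lem:bar-het}; and (iii) confirming that there is no circular dependence, since the inputs $\Ep{\si}{R_a}<\infty$ and $\Ep{\sj}{\Rsi}<\infty$ are supplied by \Cref{lem:apply-bar-lots-of-het}, whose proof uses only (truncated) linear test functions together with the finite second moments of $A$ and the $S_i$, and not the present lemma. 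Beyond that, the only real work is tracking the signs of the completion increments and exhibiting the dominating bounds before passing to the limit, so I do not expect a serious obstacle.
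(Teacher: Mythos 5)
Your proof is correct, and it takes a genuinely different route from the paper's. The paper proves this lemma with a single truncated quadratic test function $f(\Xsuper)=(Q\wedge\kappa-2\Lambda R_a\wedge\kappa)^2$: expanding the square produces cross terms involving $\Ep{a}{Q\wedge\kappa}$ and $\Ep{s}{(Q\wedge\kappa)\indibrac{0<Q\leq\kappa}}$ simultaneously, and the factor $2$ in $2\Lambda R_a$ is chosen precisely so that the coefficient $4\E{\Lambda A\wedge\kappa}-2$ of $\Ep{a}{Q\wedge\kappa}$ becomes positive for large $\kappa$; one then bounds the remaining terms by $\Ep{\pi}{Q}$, $\E{A^2}$, and $\Ep{s}{\Lambda R_a}$ (the latter finite by \eqref{eq:E-sa-R-sa-het}, exactly as in your argument) and passes to the limit by monotone convergence, recovering each $\Ep{\si}{Q}<\infty$ from the finiteness of the weighted average $\Ep{s}{Q}$. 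Your doubly truncated bilinear test functions $(R_a\wedge m)(Q\wedge\kappa)$ and $(\Rsi\wedge m)(Q\wedge\kappa)$ instead exploit the fact that the relevant residual time vanishes at the pre-jump state of the corresponding Palm distribution, so the target Palm expectation appears isolated with the explicit positive coefficient $\Lambda\E{A\wedge m}$ (resp.\ $\mu_i\E{S_i\wedge m}$), and all other contributions are controlled by $\Ep{\pi}{Q}$ and the cross-moments of residuals already supplied by \Cref{lem:apply-bar-lots-of-het}. This is arguably more transparent — no square to expand, no tuning of the constant $2$, and each $\Ep{\si}{Q}$ is obtained directly rather than through the average $\Ep{s}{Q}$ — at the cost of running the BAR argument twice and carrying the extra truncation parameter $m$. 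Your sign bookkeeping for the completion increments, the verification that the test functions are admissible for \Cref{lem:bar-het}, and the check that there is no circularity with \Cref{lem:apply-bar-lots-of-het} are all sound.
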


\subsection{Proof of \Cref{thm:main-upper-bound-het}}
\label{sec:pf-main-theorem-het}
We are now ready to prove \Cref{thm:main-upper-bound-het}. The proof follows the same two-step structure used for the homogeneous case (\Cref{sec:pf-main-upper-bound}), with the changes being primarily notational. We first derive an expression of $\Ep{\pi}{Q}$ involving covariance terms (\Cref{lem:apply-bar-main-quadratic-het}), and then bound these terms (\Cref{lem:covariance-bounds-het}) to establish the theorem. 
To avoid repeating the detailed arguments from the previous section, we present only the key steps of the proof and omit explicit citations to some of the corresponding lemmas.

\begin{lemma}
\label{lem:apply-bar-main-quadratic-het}
    Consider the modified \ggn system satisfying Assumptions~\ref{assump:bdd-exp-remain-het} and \ref{assump:modified-ggn-stable-het}, with the load $\rho < 1$. We have
    \begin{equation}
        (1-\rho) \Ep{\pi}{Q}
        =  \frac{\rho\Var{\Lambda A} + \sumi (\mu_i/\mutotal) \Var{\mu_i S_i}  + 1-\rho}{2} +  \sumj \Gamma_{\sj}  - \Gamma_a \;, 
    \end{equation}
    where $\Gamma_{\sj}$ and $\Gamma_{a}$ are defined as
    \begin{align}
        \label{eq:gamma-s-def-het}
        \Gamma_{\sj} &= \Ep{s}{\indibrac{Q=0}\mu_j R_{\sj}} - (1-\rho)\Ep{\pi}{\mu_j R_{\sj}} \quad \forall j\in[n] \\
        \label{eq:gamma-a-def-het}
        \Gamma_a &= \Ep{s}{\indibrac{Q=0}\Lambda R_a} - (1-\rho)\Ep{\pi}{\Lambda R_a}. 
    \end{align}
\end{lemma}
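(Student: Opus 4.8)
The plan is to replay the proof of \Cref{lem:apply-bar-main-quadratic} with the heterogeneous BAR of \Cref{lem:bar-het} in place of \Cref{lem:bar}, and with the rate $\mu$ replaced throughout by the server-specific rates $\mu_i$. First I would fix a truncation level $\kappa>0$ and apply \Cref{lem:bar-het} to the test function
\[
   f(\Xsuper)=\Big(Q\wedge\kappa+\sumj \mu_j R_{\sj}\wedge\kappa-\Lambda R_a\wedge\kappa\Big)^2,
\]
which is bounded, locally Lipschitz, and right-differentiable in its first $n+1$ coordinates. The operator term contributes $\Ep{\pi}{\opinner f(\Xsuper)}=-2\,\Ep{\pi}{\big(Q\wedge\kappa+\sumj\mu_j R_{\sj}\wedge\kappa-\Lambda R_a\wedge\kappa\big)\big(\sumj\mu_j\indibrac{R_{\sj}\leq\kappa}-\Lambda\indibrac{R_a\leq\kappa}\big)}$; the arrival Palm term is computed using $R_a\bef=0$, $R_a\aft=A$ and $Q\aft=Q\bef+1$ from \eqref{eq:Pnba-char-ra-het}--\eqref{eq:Pnba-char-q-het}; and the completion Palm term at server $i$ is computed using $R_{\si}\bef=0$, $R_{\si}\aft=S_i$ and $Q\aft=Q\bef-\indibrac{Q\bef>0}$ from \eqref{eq:Pnbsi-char-rsi-het}--\eqref{eq:Pnbsi-char-q-het}.

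After expanding the squares and rearranging, I would let $\kappa\to\infty$ and apply dominated convergence termwise. The dominating functions are built from $Q$, $R_a$, the $R_{\sj}$, $A$ and the $S_i$, all of which have finite first moments under $\pi$, $\Pnb_a$ and each $\Pnb_{\si}$: this uses $\Ep{\pi}{Q}<\infty$ (\Cref{assump:modified-ggn-stable-het}), $\Ep{a}{Q},\Ep{\si}{Q}<\infty$ (\Cref{lem:expfinite-het}), the residual-time identities of \Cref{lem:apply-bar-lots-of-het}, the almost-sure identities $\Ep{a}{R_a}=\Ep{\si}{R_{\si}}=0$, and $\E{A^2},\E{S_i^2}<\infty$ (\Cref{assump:bdd-exp-remain-het}). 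This yields the untruncated identity
\[
   2(\mutotal-\Lambda)\Ep{\pi}{Q+\sumj\mu_j R_{\sj}-\Lambda R_a}=\Lambda\Ep{a}{\cdots}+\sumi\mu_i\Ep{\si}{\cdots},
\]
where the operator term has become $-2(\mutotal-\Lambda)\Ep{\pi}{Q+\sumj\mu_j R_{\sj}-\Lambda R_a}$ because $\sumj\mu_j=\mutotal$.

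Next I would simplify the two Palm contributions. For the arrival term, $A$ is independent of the pre-arrival state and $\Lambda\E{A}=1$, so $\Lambda\Ep{a}{2(Q+\sumj\mu_j R_{\sj})(1-\Lambda A)+(1-\Lambda A)^2}$ collapses to $\Lambda\Var{\Lambda A}$. For the completion term, writing $\indibrac{Q>0}=1-\indibrac{Q=0}$ and using that $S_i$ is independent of $(Q,R_{\sj},R_a)$ under $\Pnb_{\si}$ together with $\E{\mu_i S_i}=1$: the pieces linear in $S_i$ cancel against the $-1$ drift and leave $2\mutotal\Ep{s}{\big(\sumj\mu_j R_{\sj}-\Lambda R_a\big)\indibrac{Q=0}}$, while the square contributes $\sumi\mu_i\big(\Var{\mu_i S_i}+\Probp{\si}{Q=0}\big)=\sumi\mu_i\Var{\mu_i S_i}+\mutotal(1-\rho)$ after invoking $\Probp{s}{Q=0}=1-\rho$ from \eqref{eq:E-s-Q-het}. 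Substituting back, dividing by $2\mutotal$, and using $\Lambda/\mutotal=\rho$ and $(\mutotal-\Lambda)/\mutotal=1-\rho$ gives
\[
   (1-\rho)\Ep{\pi}{Q+\sumj\mu_j R_{\sj}-\Lambda R_a}=\frac{\rho\Var{\Lambda A}+\sumi(\mu_i/\mutotal)\Var{\mu_i S_i}+1-\rho}{2}+\Ep{s}{\big(\sumj\mu_j R_{\sj}-\Lambda R_a\big)\indibrac{Q=0}},
\]
and moving the deterministic part of $\Ep{\pi}{Q+\sumj\mu_j R_{\sj}-\Lambda R_a}$ to the right-hand side recovers exactly $\sumj\Gamma_{\sj}-\Gamma_a$ with the definitions \eqref{eq:gamma-s-def-het}--\eqref{eq:gamma-a-def-het}, as claimed.

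The main obstacle is the same as in the homogeneous proof: producing the $\kappa$-independent integrable dominating functions that make the limit $\kappa\to\infty$ rigorous, which is where all the finiteness lemmas are consumed. The one genuinely new bookkeeping point is that each completion Palm measure $\Pnb_{\si}$ carries its own service time $S_i$, so the variance contribution of the unitized service time does not factor out as a single constant but accumulates as the rate-weighted average $\sumi(\mu_i/\mutotal)\Var{\mu_i S_i}$; keeping the index $i$ attached to $S_i$ throughout the expansion, rather than collapsing to a generic $S$, is the place where it is easiest to slip.
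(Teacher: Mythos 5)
Your proposal is correct and follows essentially the same route as the paper's proof: the same truncated quadratic test function in the heterogeneous BAR, the same dominated-convergence justification via \Cref{assump:modified-ggn-stable-het}, \Cref{lem:expfinite-het}, and \Cref{lem:apply-bar-lots-of-het}, and the same simplification of the arrival and per-server completion Palm terms (including the correct accounting of the rate-weighted variance $\sumi(\mu_i/\mutotal)\Var{\mu_i S_i}$). No gaps.
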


\begin{proof}
    For any integer $\kappa > 0$, we apply \eqref{eq:bar-Es-form-het} to the test function 
    \[
        f(\Xsuper) = \Big(Q\wedge \kappa + \sumj \mu_j R_{\sj}\wedge\kappa - \Lambda R_a\wedge\kappa \Big)^2,
    \]
    rearrange the terms. After taking $\kappa \to\infty$, we get
    \begin{align}
        &2 (\mutotal - \Lambda)\Ep{\pi}{Q + \sumj \mu_j R_{\sj} - \Lambda R_a  } \\
        \label{eq:quadratic-bar-het:arrival-term}
        &= \Lambda \Ep{a}{2\Big(Q + \sumj \mu_j R_{\sj}\Big)\Big(1-\Lambda A\Big) + \Big(1-\Lambda A\Big)^2} \\
        \label{eq:quadratic-bar-het:completion-term}
        &\mspace{20mu} + \sumi \mu_i \Ep{\si}{2\Big(Q + \sumj \mu_j R_{\sj} - \Lambda R_a \Big)\Big(\mu_i S_i  - \indibrac{Q > 0} \Big) + \Big( \mu_i S_i\ -\indibrac{Q > 0}\Big)^2 }.
    \end{align}
    The interchange of the limit $\kappa \to\infty$ and the expectation is justified by an argument analogous to that in the proof of \Cref{lem:apply-bar-main-quadratic} (see \Cref{sec:pf-main:apply-bar-quadratic}). 
    The key non-trivial condition for this step is the finiteness of the moments $\Ep{\pi}{Q}$, $\Ep{a}{Q}$, and $\Ep{\si}{Q}$, which is guaranteed by \Cref{assump:modified-ggn-stable-het} and \Cref{lem:expfinite-het}.  
    
    We now simplify the expressions in \eqref{eq:quadratic-bar-het:arrival-term} and \eqref{eq:quadratic-bar-het:completion-term}. First, due to the independence between $A$ and $(Q, (R_{\sj})_{j\in[n]})$, the terms in \eqref{eq:quadratic-bar-het:arrival-term} simplifies into $\Lambda \Var{\Lambda A}$. 
    Second, due to the independence between $S_i$ and $(Q, R_a, (R_{\sj})_{j\in[n]})$, the summation in \eqref{eq:quadratic-bar-het:completion-term} simplifies into:
    \[
         2\mu_i \Ep{\si}{\Big(\sumj \mu_j R_{\sj} - \Lambda R_a\Big)\indibrac{Q=0}} + \mu_i \Var{\mu_i S_i} + \mu_i (1-\rho).  
    \]
    Substituting these expressions back into  \eqref{eq:quadratic-bar-het:arrival-term}  and \eqref{eq:quadratic-bar-het:completion-term} yields the following:
    \begin{align}
        \nonumber
        &2 (\mutotal - \Lambda)\Ep{\pi}{Q + \sumj \mu_j R_{\sj} - \Lambda R_a} \\
        \nonumber
        &\qquad =  2\mutotal \Ep{s}{\Big(\sumj \mu_j R_{\sj} - \Lambda R_a\Big)\indibrac{Q=0}} +  \Lambda \Var{\Lambda A} + \sumi \mu_i \Var{\mu_i S_i} + \mutotal (1-\rho). 
    \end{align}
    Dividing both sides by $2\mutotal$ and rearranging the terms complete the proof of \Cref{lem:apply-bar-main-quadratic-het}. \qedhere
\end{proof}

\begin{lemma}
\label{lem:covariance-bounds-het}
    Consider the modified \ggn system satisfying Assumptions~\ref{assump:bdd-exp-remain-het}, \ref{assump:non-lattice-het} and \ref{assump:modified-ggn-stable-het}, with the load $\rho < 1$. Let $\Gamma_{\sj}$ and $\Gamma_{a}$ be the terms defined in \eqref{eq:gamma-s-def} and \eqref{eq:gamma-a-def}, respectively. Then
    \begin{align}
        \label{eq:s-covariance-bound-het}
       \Gamma_{\sj} &\leq \min\Big(1-\rho, \frac{\mu_j}{\mutotal}\Big) \Big(\Rsmaxj - \frac{1}{2} \E{(\mu_j S_j)^2}\Big) \\
       \label{eq:a-covariance-bound-het}
       - \Gamma_{a} &\leq (1-\rho) \bigg(\frac{1}{2}\E{(\Lambda A)^2} - \Ramin\bigg) 
    \end{align}
\end{lemma}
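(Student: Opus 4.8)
The plan is to follow the proof of Lemma~\ref{lem:covariance-bounds} essentially line by line, replacing the uniform completion weights $1/n$ by the heterogeneous weights $\mu_i/\mutotal$ and the homogeneous threshold $\rho = 1-1/n$ by the per-server threshold $\rho_{-j}<1$. The first step is to record the elementary equivalence $\rho_{-j} < 1 \iff \mu_j/\mutotal < 1-\rho$, which holds because $\rho_{-j} = \rho\mutotal/(\mutotal-\mu_j)$. Thus the two branches of $\min(1-\rho,\mu_j/\mutotal)$ in \eqref{eq:s-covariance-bound-het} correspond exactly to whether the $j$-th leave-one-out system is stable, i.e.\ whether $j\in\indexstable$ and $\Qdropj$ is a coordinate of the augmented state $\Xsuper$ on which the stationary distribution $\pi$ is defined.

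To bound $\Gamma_{\sj}$ when $\rho_{-j}\geq 1$ (so $\min(1-\rho,\mu_j/\mutotal)=1-\rho$), I use the conventional argument: expand $\Ep{s}{\indibrac{Q=0}\mu_j R_{\sj}} = \frac{1}{\mutotal}\sumi\mu_i\Ep{\si}{\indibrac{Q=0}\mu_j R_{\sj}}$, drop the $i=j$ term because $R_{\sj}=0$ a.s.\ under $\Pnb_{\sj}$ (Lemma~\ref{lem:bar-het}), bound $\Ep{\si}{\mu_j R_{\sj}\mid Q}\leq\Rsmaxj$ for $i\neq j$ (Lemma~\ref{lem:R-bdd-het}), relax $\frac{1}{\mutotal}\sum_{i\neq j}\mu_i\Probp{\si}{Q=0}$ up to $\Probp{s}{Q=0}$, and substitute $\Probp{s}{Q=0}=1-\rho$ from \eqref{eq:E-s-Q-het} and $\Ep{\pi}{\mu_j R_{\sj}} = \tfrac12\E{(\mu_j S_j)^2}$ from \eqref{eq:E-pi-R-s-het}. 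When $\rho_{-j}<1$ I use the leave-one-out step, which is the heart of the lemma, via the key decomposition
\[
\Ep{s}{\indibrac{Q=0}\mu_j R_{\sj}} = \Ep{s}{\bigl(\indibrac{Q=0} - \indibrac{\Qdropj=0}\bigr)\mu_j R_{\sj}} + \Ep{s}{\indibrac{\Qdropj=0}\mu_j R_{\sj}}.
\]
For the first term I apply Lemma~\ref{lem:R-bdd-het} conditioning on $(Q,\Qdropstable)$ (which includes $\Qdropj$), the pathwise domination $\indibrac{Q=0}\geq\indibrac{\Qdropj=0}$ from Lemma~\ref{lem:leave-one-out-upper-bound-het}, and the mass identities $\Probp{s}{Q=0}=1-\rho$ and $\frac{1}{\mutotal}\sum_{i\neq j}\mu_i\Probp{\si}{\Qdropj=0} = 1-\rho-\mu_j/\mutotal$ from \eqref{eq:E-s-Q-het}--\eqref{eq:E-s-Qtilde-het}, which telescope to an upper bound of $(\mu_j/\mutotal)\Rsmaxj$. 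For the second term I invoke the uncorrelation lemma (Lemma~\ref{lem:indep-R-Qtilde-het}) to factor $\Ep{\si}{\indibrac{\Qdropj=0}\mu_j R_{\sj}} = \Probp{\si}{\Qdropj=0}\Ep{\pi}{\mu_j R_{\sj}}$ and reuse \eqref{eq:E-s-Qtilde-het} to get $(1-\rho-\mu_j/\mutotal)\Ep{\pi}{\mu_j R_{\sj}}$. Adding the two terms, subtracting $(1-\rho)\Ep{\pi}{\mu_j R_{\sj}}$, and using $\Ep{\pi}{\mu_j R_{\sj}}=\tfrac12\E{(\mu_j S_j)^2}$ leaves $\Gamma_{\sj}\leq(\mu_j/\mutotal)\bigl(\Rsmaxj - \tfrac12\E{(\mu_j S_j)^2}\bigr)$, which is the claimed bound in this regime.

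Bounding $-\Gamma_a$ requires no leave-one-out: expand over the completion Palm measures, apply the lower bound $\Lambda\Ep{\si}{R_a\mid Q}\geq\Ramin$ (Lemma~\ref{lem:R-bdd-het}) so that $\Ep{\si}{\indibrac{Q=0}\Lambda R_a}\geq\Probp{\si}{Q=0}\Ramin$, and then use $\Probp{s}{Q=0}=1-\rho$ together with $\Ep{\pi}{\Lambda R_a}=\tfrac12\E{(\Lambda A)^2}$ from \eqref{eq:E-pi-R-a-het}; this gives $-\Gamma_a\leq(1-\rho)\bigl(\tfrac12\E{(\Lambda A)^2}-\Ramin\bigr)$. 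I do not expect any serious obstacle here, since all computations are routine given the homogeneous proof. The only genuine care points are bookkeeping: confirming that Lemma~\ref{lem:indep-R-Qtilde-het} and the identity \eqref{eq:E-s-Qtilde-het} are applied only for $j$ with $\rho_{-j}<1$ (so $\Qdropj$ is part of the Markov state and $\pi$ has the required marginal), and ensuring the conditional-expectation bounds of Lemma~\ref{lem:R-bdd-het} are used under $\Pnb_{\si}$ with $i\neq j$, the case $i=j$ being absorbed by $R_{\sj}=0$ a.s. These are precisely the distinctions the homogeneous proof already made, so beyond carrying the weights $\mu_i/\mutotal$ through, no new difficulty should arise.
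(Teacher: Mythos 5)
Your proposal matches the paper's proof essentially step for step: the same case split on $\rhodropj \geq 1$ versus $\rhodropj < 1$ (equivalently $\mu_j/\mutotal \lessgtr 1-\rho$), the same leave-one-out decomposition handled via Lemmas~\ref{lem:R-bdd-het}, \ref{lem:leave-one-out-upper-bound-het}, \ref{lem:indep-R-Qtilde-het} and the identities \eqref{eq:E-s-Q-het}--\eqref{eq:E-s-Qtilde-het}, and the same routine argument for $-\Gamma_a$. No gaps; the bookkeeping points you flag are exactly the ones the paper attends to.
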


\begin{proof}
    We first bound $\Gamma_{\sj}$ based on two cases:  $\rhodropj \geq 1$ or $\rhodropj < 1$; then we bound $-\Gamma_{a}$. 

    \paragraph*{Bounding $\Gamma_{\sj}$ when $\rhodropj \geq 1$} In this case, $\Lambda / ( \mutotal - \mu_j) \triangleq \rhodropj \geq 1$, so we have $\Lambda  \geq \mutotal - \mu_j$. Consequently, $1 - \rho = 1 - \Lambda / \mutotal \leq \mu_j / \mutotal$. It is thus sufficient to show that
    \[
        \Gamma_{\sj} \leq  (1-\rho)\Big(\Rsmaxj - \frac{1}{2} \E{(\mu_j S_j)^2}\Big).
    \]
    We bound $\Gamma_{\sj}$ as follows: 
    \begin{align}
    \nonumber
    \Gamma_{\sj} 
    &= \Ep{s}{\indibrac{Q=0} \mu_j R_{\sj}} -  (1-\rho) \Ep{\pi}{\mu_j R_{\sj}} \\
    \nonumber
    &= \frac{1}{\mutotal} \sumi \mu_i \Ep{\si}{\indibrac{Q=0} \mu_j R_{\sj}} -  (1-\rho) \Ep{\pi}{\mu R_{\sj}} \\
    \label{eq:pf-main:case-1:apply-R-sj-zero}
     &= \frac{1}{\mutotal}  \sum_{i\colon i\neq j}  \mu_i \Ep{\si}{\indibrac{Q=0} \mu_j R_{\sj}} -  (1-\rho) \Ep{\pi}{\mu R_{\sj}} \\
     \label{eq:pf-main:case-1:apply-R-bdd-het}
    &\leq \frac{1}{\mutotal} \sum_{i\colon i\neq j} \mu_i \Probp{\si}{Q=0} \Rsmaxj -  (1-\rho) \Ep{\pi}{\mu R_{\sj}} \\
    \nonumber
    &\leq \Probp{s}{Q=0}\Rsmaxj -  (1-\rho) \Ep{\pi}{\mu R_{\sj}} \\
    \label{eq:pf-main:case-1:apply-q0-prob-het}
    &= (1-\rho)\Rsmaxj - (1-\rho) \Ep{\pi}{\mu R_{\sj}}  \\
    \label{eq:covariance-bounds-heavy-traffic:final-het}
    &= (1-\rho)\Big(\Rsmaxj - \frac{1}{2} \E{(\mu_j S_j)^2}\Big),
\end{align}
where \eqref{eq:pf-main:case-1:apply-R-sj-zero} is because $R_{\si}=0$ almost surely under the distribution $\Pnb_{\si}$, as proved in \Cref{lem:bar-het} (we have skipped all superscripts $\bef$ for variables inside the Palm expectations in the above derivations). 
Furthermore, the inequality \eqref{eq:pf-main:case-1:apply-R-bdd-het} uses the fact that $\Ep{\si}{\mu_j R_{\sj}\given Q} \leq \Rsmaxj$ for $i\neq j$ (\Cref{lem:R-bdd-het}). 
Finally, the equalities \eqref{eq:pf-main:case-1:apply-q0-prob-het} and \eqref{eq:covariance-bounds-heavy-traffic:final-het} follow from substituting the values $\Probp{s}{Q=0}=1-\rho$ and $\Ep{\pi}{\mu R_{\sj}} = \E{(\mu_j S_j)^2} / 2$. 
This implies \eqref{eq:s-covariance-bound-het} when $\rhodropj \geq 1$. 

\paragraph*{Bounding $\Gamma_{\sj}$ when $\rhodropj < 1$} 
In this case, $1 - \rho > \mu_j / \mutotal$, so it suffices to show that 
\[
     \Gamma_{\sj} \leq \frac{\mu_j}{\mutotal} \Big(\Rsmaxj - \frac{1}{2} \E{(\mu_j S_j)^2}\Big).
\]
Because $\rhodropj < 1$, by \Cref{assump:modified-ggn-stable-het}, the $j$-th leave-one-out system has a well-defined stationary distribution. We can thus perform the following decomposition: 
\begin{equation}
    \label{eq:pf-main:leave-one-out-decomposition-het}
    \begin{aligned}
    &\Ep{s}{\indibrac{Q=0}\mu_j R_{\sj}} \\
    &\qquad = \Ep{s}{\Big(\indibrac{Q=0} -  \indibrac{\Qdropj=0}\Big)\mu_j R_{\sj}} + \Ep{s}{\indibrac{\Qdropj=0} \mu_j R_{\sj}}. 
    \end{aligned}
\end{equation}
We then bound the two terms on the right-hand side of \eqref{eq:pf-main:leave-one-out-decomposition-het} separately. 

We bound the first term on the right-hand side of \eqref{eq:pf-main:leave-one-out-decomposition-het} by applying the conditional expectation bound of $R_{\sj}$ in \Cref{lem:R-bdd-het}, and note that $\indibrac{Q=0} -  \indibracbig{\Qdropj=0}\geq 0$ (\Cref{lem:leave-one-out-upper-bound-het}): 
\begin{align}
    \nonumber
    &\Ep{s}{\Big(\indibrac{Q=0} -  \indibrac{\Qdropj=0}\Big)\mu_j R_{\sj}} \\
    \nonumber
    &\qquad = \sumi \frac{\mu_i}{\mutotal} \Ep{\si}{\Big(\indibrac{Q=0} -  \indibrac{\Qdropj=0}\Big)\mu_j R_{\sj}} \\
    \label{eq:pf-main:apply-lower-bound-and-R-bound-het}
    &\qquad \leq  \sum_{i\colon i\neq j}\frac{\mu_i}{\mutotal}\Ep{\si}{\Big(\indibrac{Q=0} -  \indibrac{\Qdropj=0}\Big)}\Rsmaxj  \\
    \nonumber
    &\qquad \leq   \Big(\Probp{s}{Q=0} -  \frac{1}{\mutotal}\sum_{i\colon i\neq j} \mu_i \Probp{\si}{\Qdropj=0}\Big)\Rsmaxj \\
    \nonumber %
    &\qquad =  \Big(1-\rho -   \Big(1-\rho - \frac{\mu_j}{\mutotal}\Big)\Big)\Rsmaxj \\
    \nonumber
    &\qquad = \frac{\mu_j}{\mutotal}\Rsmaxj. 
\end{align}
where the inequality \eqref{eq:pf-main:apply-lower-bound-and-R-bound-het} follows from three facts: $R_{\si}=0$ almost surely under $\Pnb_{\si}$ (\Cref{lem:bar-het}), $\Ep{\si}{\mu_j R_{\sj}\given Q, \Qdropstable} \leq \Rsmaxj$ for $i\neq j$ (\Cref{lem:R-bdd-het}), and $\indibrac{Q=0} \geq \indibrac{\Qdropj=0}$.

For the second term on the right-hand side of \eqref{eq:pf-main:leave-one-out-decomposition-het}, recall that $\indibracbig{\Qdropj=0}$ and $R_{\sj}$ are uncorrelated under the distribution $\Pnb_{\si}$ for $i\neq j$ (\Cref{lem:indep-R-Qtilde-het}). Therefore,
\begin{align}
    \Ep{s}{\indibrac{\Qdropj=0} \mu_j R_{\sj}}
    \nonumber
    &=  \sum_{i\colon i\neq j} \frac{\mu_i}{\mutotal} \Ep{\si}{\indibrac{\Qdropj=0} \mu_j R_{\sj}} \\
    \nonumber %
    &=  \sum_{i\colon i\neq j} \frac{\mu_i}{\mutotal} \Probp{\si}{\Qdropj=0} \EpBig{\pi}{\mu_j R_{\sj}}\\
    \nonumber %
    &= \Big(1-\rho-\frac{\mu_j}{\mutotal}\Big)\Ep{\pi}{\mu_j R_{\sj}}. 
\end{align}

Substituting the above calculation into \eqref{eq:pf-main:leave-one-out-decomposition-het}, we get 
\[
    \Ep{s}{\indibrac{Q=0}\mu_j R_{\sj}} \leq \frac{\mu_j}{\mutotal}\Rsmaxj + \Big(1-\rho-\frac{\mu_j}{\mutotal}\Big)\Ep{\pi}{\mu R_{\sj}}.
\]
Therefore, 
\begin{align*}
    \Gamma_{\sj} 
    &= \Ep{s}{\indibrac{Q=0} \mu_j R_{\sj}} -  (1-\rho) \Ep{\pi}{\mu_j R_{\sj}} 
    \leq \frac{\mu_j}{\mutotal} \Big(\Rsmaxj - \Ep{\pi}{\mu_j R_{\sj}}\Big). 
\end{align*}
Because $\Ep{\pi}{\mu_j R_{\sj}} = \E{(\mu_j S_j)^2}/2$, this finishes the proof of \eqref{eq:s-covariance-bound-het} when $\rhodropj < 1$.

\paragraph*{Bounding $-\Gamma_a$}
\begin{align}
    \nonumber
     - \Gamma_{a} &=  (1-\rho) \Ep{\pi}{\Lambda R_{a}} - \Ep{s}{\indibrac{Q=0} \Lambda R_a} \\
     \nonumber
     &= (1-\rho) \Ep{\pi}{\Lambda R_{a}} - \sumi \frac{\mu_i}{\mutotal}\Ep{\si}{\indibrac{Q=0} \Lambda R_a} \\
     \nonumber
     &\leq  (1-\rho) \Ep{\pi}{\Lambda R_{a}} - (1-\rho) \Ramin \\
     \nonumber
     &= (1-\rho) \Big(\frac{1}{2}\E{(\Lambda A)^2} - \Ramin\Big).
\end{align}
where the inequality is due to the conditional expectation lower bound of $R_a$ in \Cref{lem:R-bdd-het} and the fact that $\Probp{s}{Q=0}=1-\rho$.\qedhere %
\end{proof}

\section{Conclusion}
\label{sec:conclusion}
In this paper, we prove a new universal bound on the steady-state mean queue length for the \ggn queue. Our bound achieves the order $O\big(1/(1-\rho)\big)$ and, while requiring slightly stronger assumptions than those in \citep{LiGol_25}, provides significantly smaller constants. 
Our proof synthesizes three key elements: a modified \ggn queue, the Basic Adjoint Relationship (BAR), and our new leave-one-out technique.
We also extend our result to the \ggn queue with fully heterogeneous servers.

A natural direction for future work is to investigate the tightness of our bounds. While it may be unrealistic to expect any simple closed-form bound to be arbitrarily tight for the \ggn queue, there is potential for improvement. 
An ideal bound might resemble Kingman's bound, depending only on the low-order moments of the service-time distribution. Such a bound would yield smaller constants for heavy-tailed systems. 
Furthermore, in sub-Halfin-Whitt regimes, research in special cases suggests that better orders than $O\big(1/(1-\rho)\big)$ may be possible \citep[e.g.,][]{PraDanMag_24_exp_tail,BraDai_16_Erlang_C}. 

Another promising direction is to apply our proof techniques to other large-scale queueing systems, such as load-balancing models with general service-time distributions. 

Finally, it would be interesting to explore the theoretical connections between the leave-one-out technique and other existing techniques for proving asymptotic independence \citep[e.g.][]{WanHarJia_19}. Clarifying their similarities and respective scopes would enrich the analytical toolbox for queueing systems.

\bibliographystyle{imsart-nameyear-myver}
\bibliography{refs-yige-v260121}

\newpage
    \ifjournal
    \else
        \begin{appendix}
        
        \end{appendix}
    \fi

\end{document}